\newtheorem{theorem}{Theorem}[section]
\newtheorem{lemma}[theorem]{Lemma}
\newtheorem{proposition}[theorem]{Proposition}
\newtheorem{corollary}[theorem]{Corollary}
\newtheorem{conjecture}[theorem]{Conjecture}
\theoremstyle{definition}
\newtheorem{definition}[theorem]{Definition}
\newtheorem{construction}[theorem]{Construction}
\newtheorem{assumption}[theorem]{Assumption}
\newtheorem{example}[theorem]{Example}
\theoremstyle{remark}
\newtheorem{remark}[theorem]{Remark}
\numberwithin{equation}{section}
\numberwithin{figure}{section}
\newcommand{\bary}{\mathrm{bar}}
\newcommand{\length}{\operatorname{length}}
\newcommand{\cover}{\tau^{\mathbf w}_s}
\newcommand{\Cover}{\Delta^{\mathbf w}_{s} } 
\newcommand{\ray}{\frak s}
\newcommand{\defbase}{S^{\mathbf w}}
\newcommand{\defcone}{C^{\mathbf w}}
\newcommand{\effvert}{\widetilde \Gamma^{[0]}}
\newcommand{\vvert}{\tilde \Gamma^{[0]}_{\mathrm{leaf}}}
\newcommand{\doublebar}[1]{\shortstack{$\overline{\hphantom{#1}}
	$\\[-4pt] $\overline{#1}$}}
\newcommand{\bbfamily}{\fontencoding{U}\fontfamily{bbold}\selectfont}
\newcommand{\textbb}[1]{{\bbfamily#1}}
\newcommand {\lfor} {\mbox{\textbb{[}}}
\newcommand {\rfor} {\mbox{\textbb{]}}}
\newcommand{\NN} {\mathbb{N}}
\newcommand{\ZZ} {\mathbb{Z}}
\newcommand{\QQ} {\mathbb{Q}}
\newcommand{\RR} {\mathbb{R}}
\newcommand{\FF} {\mathbb{F}}
\newcommand{\PP} {\mathbb{P}}
\renewcommand{\AA} {\mathbb{A}}
\newcommand{\GG} {\mathbb{G}}
\newcommand {\shD}  {\mathcal{D}}
\newcommand {\shHom} {\mathcal{H}\!\text{\textit{om}}}
\newcommand {\shL}  {\mathcal{L}}
\newcommand {\shQ}  {\mathcal{Q}}
\newcommand {\shW}  {\mathcal{W}}
\newcommand {\shX}  {\mathcal{X}}
\newcommand {\shZ}  {\mathcal{Z}}
\newcommand {\foB}  {\mathfrak{B}}
\newcommand {\foD}  {\mathfrak{D}}
\newcommand {\foT}  {\mathfrak{T}}
\newcommand {\foU}  {\mathfrak{U}}
\newcommand {\foX}  {\mathfrak{X}}
\newcommand {\fob}  {\mathfrak{b}}
\newcommand {\foc}  {\mathfrak{c}}
\newcommand {\foj}  {\mathfrak{j}}
\newcommand {\fop}  {\mathfrak{p}}
\newcommand {\forr}  {\mathfrak{r}}
\newcommand {\fot}  {\mathfrak{t}}
\newcommand {\fou}  {\mathfrak{u}}
\newcommand {\Aut}  {\operatorname{Aut}}
\newcommand {\codim} {\operatorname{codim}}
\newcommand {\conv} {\operatorname{conv}}
\newcommand {\depth} {\operatorname{depth}}
\newcommand {\eps}  {\varepsilon}
\newcommand {\Hom}  {\operatorname{Hom}}
\newcommand {\id}  {\operatorname{id}}
\newcommand {\im}  {\operatorname{im}}
\newcommand {\Int}  {\operatorname{Int}}
\newcommand {\Joints} {\operatorname{Joints}}
\newcommand {\kk} {\Bbbk}
\newcommand {\leafvertices}  {\Gamma^{[0]}_{\mathrm{leaf}}}
\newcommand {\lra}  {\longrightarrow}
\newcommand {\ls}  {\dagger}
\newcommand {\M} {\mathcal{M}}
\renewcommand {\max} {{\operatorname{max}}}
\renewcommand{\O}  {\mathcal{O}}
\newcommand {\ol} {\overline}
\newcommand {\ord}  {\operatorname{ord}}
\renewcommand{\P}  {\mathscr{P}}
\newcommand {\rk} {\operatorname{rk}}
\newcommand {\rootvertex}  {V_{\mathrm{root}}}
\newcommand {\scrR}  {\mathscr{R}}
\newcommand {\scrB}  {\mathscr{B}}
\newcommand {\scrS}  {\mathscr{S}}
\newcommand {\Spec} {\operatorname{Spec}}
\newcommand {\Spf}  {\operatorname{Spf}}
\newcommand {\vmult} {\operatorname{vmult}}
\newcommand {\D} {\mathfrak D}
\newcommand {\X} {\mathfrak X}
\newcommand {\Z} {\mathfrak Z}
\newcommand{\set}[2]{ \left\{ \left. {#1}  \; \right| \;\: {#2} \right\} }
\newcommand{\targetring}{R_{g,\sigma}^k}
\def\mydate{\ifcase\month \or January\or February\or March\or
April\or May\or June\or July\or August\or September\or October\or 
November\or December\fi \space\number\day,\space\number\year}
\long\def\symbolfootnote[#1]#2{\begingroup%
\def\thefootnote{\fnsymbol{footnote}}\footnote[#1]{#2}\endgroup} 
\begin{document}

%===========================================================
\title[Tropical Landau-Ginzburg]{A tropical view on
Landau-Ginzburg models}

\author{Michael Carl}
\address{\tiny Alfred-Delp-Str.~2, 73430 Aalen, Germany}
\email{carlmi@gmx.net}

\author{Max Pumperla} 
\address{\tiny IU Internationale Hochschule, Juri-Gagarin-Ring~152, 99084~Erfurt, Germany\\
Anyscale, Inc., 2080 Addison St Ste 7, Berkeley, CA 94704, USA}
\email{max.pumperla@googlemail.com}

\author{Bernd Siebert} 
\address{\tiny Department of Mathematics, The Univ.\ of Texas at Austin,
2515 Speedway, Austin, TX 78712, USA}
\email{siebert@math.utexas.edu}

\begin{abstract}
This paper, largely written in 2009/2010, fits Landau-Ginzburg models into the
mirror symmetry program pursued by the last author jointly with Mark Gross since
2001. This point of view transparently brings in tropical disks of Maslov
index~$2$ via the notion of broken lines, previously introduced
in two dimensions by Mark Gross in his study of mirror symmetry for $\PP^2$.

A major insight is the equivalence of properness of the Landau-Ginzburg
potential with smoothness of the anticanonical divisor on the mirror side. We
obtain proper superpotentials which agree on an open part with those classically
known for toric varieties. Examples include mirror LG models for non-singular
and singular del Pezzo surfaces, Hirzebruch surfaces and some Fano threefolds.
\end{abstract}

\thanks{M.P.\ was supported by the \emph{Studienstiftung des
deutschen Volkes}; research by B.S.\ was supported by NSF grant DMS-1903437.}

\date{\today}
\maketitle

\tableofcontents
%\bigskip

%===========================================================
%===========================================================

\section*{Preface}
This paper, largely written in 2009/2010, investigates the incorporation of
Landau-Ginzburg models into the toric degeneration approach to mirror symmetry
of the last author with Mark Gross \cite{logmirror,affinecomplex}. At the time
we could not answer a key question concerning the existence of the algorithm in
\cite{affinecomplex} in the relevant unbounded case. We also felt that our
construction poses many interesting questions and more should be said. With two
of the authors leaving academia (M.C.~2010, M.P.~2011), the paper was eventually
left in preliminary form on the last author's Hamburg webpage in the state of
September~2010.\footnote{The last named author presented our findings at the
workshop ``Derived Categories, Holomorphic Symplectic Geometry, Birational
Geometry, Deformation Theory'' at IHP/Paris in May 2010 and at VBAC~2010 in
Lisbon in June~2010.} This preliminary version will remain available as an
ancillary file in the arXiv-submission. A variation of the last three sections
partly treating other cases appeared as part of the second author's
doctoral thesis \cite{MaxThesis}.

The key consistency proof of the superpotential in \S3 and \S4 has been
repeatedly used in the construction of generalized theta functions in the
surface and cluster variety cases, notably in \cite{GHK,GHKK}. The question of
existence of a consistent wall structure in the non-compact case eventually
turned out to be best answered trivially by a compactification requirement
(Definition~\ref{Def: compactifiable (B,P)}, Proposition~\ref{Prop: wall
structures exists for compactifiable cases}), or in the context of intrinsic
mirror symmetry \cite{Assoc, CanonicalWalls}. With a recent increase in studies
of the smooth anticanonical divisor case, the case of central interest in this
paper, it now seems the right time to finalize this paper.

To preserve the line of historical developments, we have mostly only done minor
edits for accuracy and clarity. The exceptions are the mentioned
compactification criterion in \S1, Remark~\ref{Rem: algebraizability} on
algebraizability of the superpotential, a new section on the fibers of the
superpotential (\S5), and a corrected treatment of the mirror of Hirzebruch
surfaces taken from \cite[\S5.3]{MaxThesis}. We have also added some references
to newer developments in the introduction and in some footnotes, but this did
not seem the right place to give a comprehensive overview and due credit to all
the wonderful developments that have happened around the topic in the last
decade. We apologize with everybody whose newer contributions are not mentioned
in this paper.

\S5 existed in some form for many years and has been distributed occasionally,
but the LG mirror map (Definition~\ref{Def: LG mirror map} and Theorem~\ref{Thm:
W versus w}) has only rather recently been spelled out in discussions with Helge
Ruddat in a joint project with Michel van Garrel on enumerative period integrals in Landau-Ginzburg models \cite{GRS}.

%===========================================================

\section*{Introduction}
Mirror symmetry has been suggested both by mathematicians \cite{Givental} and
physicists \cite{Witten,HoriVafa} to extend from Calabi-Yau varieties to a
correspondence between Fano varieties and Landau-Ginzburg models. Mathematically
a Landau-Ginzburg model is a non-compact K\"ahler manifold with a holomorphic
function, the superpotential. Until recently, the majority of studies confined
themselves to toric cases where the construction of the mirror is immediate. The
one exception we are aware of is the work of Auroux, Katzarkov and Orlov on
mirror symmetry for del Pezzo surfaces \cite{AKO}, where a symplectic mirror is
constructed by a surgery construction. The general Floer-theoretic perspective
for the mirror Landau-Ginzburg model of an SYZ fibered logarithmic Calabi-Yau
manifold has been discussed by Auroux in \cite{auroux1,auroux2}. In the
following we use the phrase log Calabi-Yau to refer to a pair $(X,D)$ of a
complete variety $X$ over a field with a non-zero effective anticanonical
divisor $D\subset X$.\footnote{In \cite{affinecomplex} log Calabi-Yau varieties
were referred to as Calabi-Yau pairs to avoid confusion with the central fiber
of toric degenerations of Calabi-Yau varieties.}

The purpose of this paper is to fit the Fano/Landau-Ginzburg mirror
correspondence into the mirror symmetry program via toric degenerations pursued
by the last author jointly with Mark Gross \cite{logmirror,affinecomplex}. The
program as it stands suggests a non-compact variety as the mirror of a log
Calabi-Yau variety, or rather toric degenerations of these varieties. So the
main new ingredient is the construction of the superpotential.

The key technical idea of \emph{broken lines} (Definition~\ref{def: broken
lines}) for the construction of the superpotential has already appeared in a
different context in the two-dimensional situation in Gross' mirror
correspondence for $\PP^2$ \cite{PP2mirror}. We replace his case-by-case study
of well-definedness with a scattering computation, making it work in all
dimensions.

Our main findings can be summarized as follows.
\begin{enumerate}
\item
From our point of view, the natural data on the Fano side is a toric degeneration
of log Calabi-Yau varieties as defined in \cite{affinecomplex}, Definition~1.8. In
particular, if arising from the localization of an algebraic family, the general
fiber is a pair $(\check X,\check D)$ of a complete variety $\check X$ and a
reduced anticanonical divisor $\check D$. No positivity property is ever used
in our construction apart from effectivity of the anticanonical bundle.
\item
The mirror is a toric degeneration of non-compact Calabi-Yau varieties, together
with a canonically defined regular function on the total space of the
degeneration (Proposition~\ref{Prop: wall structures exists for compactifiable cases}).
\item
The superpotential is proper if and only if the anticanonical divisor $\check D$
on the log Calabi-Yau side is locally irreducible (Theorem~\ref{Thm:
Properness}). These conditions also have clean descriptions on the underlying
tropical models governing the mirror construction from
\cite{logmirror,affinecomplex}. \S3 and \S4 give the all order construction of the superpotential, summarized in Theorem~\ref{Thm: all order W}.
\item
For smooth toric Fano varieties our construction provides a canonical (partial)
compactification of the Hori-Vafa construction \cite{HoriVafa}
(Corollary~\ref{Cor: reproduce Hori-Vafa} in the surface case,
\cite[Thm.\,5.4]{MaxThesis} in all dimensions). But note Remark~\ref{Rem:
algebraizability} concerning the general question of algebraizability of the
superpotential.
\item
The terms in the superpotential can be interpreted in terms of virtual numbers
of tropical disks, at least in dimension two (Proposition~\ref{Prop: Virtual
count}). On the Fano side these conjecturally count holomorphic disks with
boundary on a Lagrangian torus.\footnote{This picture has recently been
confirmed in \cite{CanonicalWalls} in terms of punctured Gromov-Witten
invariants.}
\item
The natural holomorphic parameters occurring in the construction on the Fano
side lie in $H^1(\check X_0,\Theta_{(\check X_0,\check D_0)})$ where
$\Theta_{(\check X_0,\check D_0)}$ is the logarithmic tangent bundle of the
central fiber $(\check X_0,\check D_0)$ in (1). This group rules infinitesimal
deformations of the pair $(\check X_0,\check D_0)$. We have not carefully
analyzed the parameters on the Landau-Ginzburg side and their correspondence to
the K\"ahler parameters on the log Calabi-Yau side.\footnote{The correspondence
is transparent from the intrinsic mirror symmetry perspective
\cite{Assoc,CanonicalWalls}. } Note however that all parameters come from
deformations of the underlying space, our superpotential does not add extra
parameters.
\item
Explicit computations include non-singular and singular del Pezzo surfaces, the
Hirzebruch surfaces $\FF_2$ and $\FF_3$, $\PP^3$ and a singular Fano
threefold (\S7--\S8).
\end{enumerate}
Throughout we work over an algebraically closed field $\kk$ of characteristic
$0$. We use check-adorned symbols $\check X,\check D, \check\X,\check\D, \check
B$ etc.\ for the log Calabi-Yau side, and unadorned symbols for the
Landau-Ginzburg side. For an integral polyhedron $\tau\subset\RR^n$ we denote by
$\Lambda_\tau$ the free abelian group of integral tangent vector fields on
$\tau$.

We would like to thank Denis Auroux, Mark Gross and Helge Ruddat for valuable
discussions. 

%===========================================================
%===========================================================

\section{Landau-Ginzburg tropical data and scattering diagram}
Throughout the paper we assume familiarity with the basic notions of toric
degenerations from \cite{logmirror}, as reviewed in
\cite[\S\S1.1--1.2]{affinecomplex}. We quickly review the basic picture. Let
$(\check B,\check \P,\check\varphi)$ be the polarized intersection complex or
\emph{cone picture} \cite[Expl.\,1.13]{affinecomplex} associated to a (schematic
or formal) proper polarized toric degeneration $(\check\pi:\check\X\to
T,\check\D)$ of log Calabi-Yau varieties \cite[Defs.\,1.8/1.9]{affinecomplex}.
Here $T$ is the (formal) spectrum of a discrete valuation $\kk$-algebra,
typically $\kk\lfor t\rfor$. Recall that $\check B$ is a topological manifold
with a $\ZZ$-affine structure outside a codimension two cell complex
$\check\Delta\subset \check B$, also called the discriminant locus; $\check\P$
is a decomposition of $\check B$ into integral, convex, but possibly unbounded
polyhedra containing $\check\Delta$ as subcomplex of the first barycentric
subdivision disjoint from vertices and the interiors of maximal cells; and
$\check\varphi$ is a (generally multivalued) strictly convex piecewise linear
function with integral slopes. The irreducible components of the central fiber
$\check X_0\subset\check\X$ are the toric varieties with momentum polytopes the
maximal cells in $\check \P$, and lower dimensional cells describe their
intersections.

Equivalently, one has the discrete Legendre dual data $(B,\P,\varphi)$, referred
to as the dual intersection complex or \emph{fan picture} of the same
degeneration \cite[Expl.\,1.11]{affinecomplex}, or the cone picture of the
mirror via discrete Legendre duality \cite[Constr.\,1.16]{affinecomplex}.

While \cite{logmirror} only treated the case of trivial canonical bundle or
closed $B$, \cite{affinecomplex} gave the straightforward generalization to the
case of interest here of log Calabi-Yau varieties, that is, a variety $\check X$
and an anticanonical divisor $\check D\subseteq \check X$. These correspond to
compact $\check B$ with locally convex boundary $\partial \check B$, with
$\partial\check B\neq\emptyset$ iff $\check D\neq\emptyset$. It holds $\partial
\check B\neq \emptyset$ iff the discrete Legendre-dual $B$ is non-compact. The
proof of \cite[Thm.\,5.4]{logmirror} then still shows that under a primitivity
assumption on the local affine geometry (``simple singularities,
\cite[Def.\,1.60]{logmirror}), the corresponding central fibers $(\check
X_0,\M_{\check X_0})$ of toric degenerations of log Calabi-Yau varieties
$(\check \X\to T,\check \D)$, as a log space, are classified by the cohomology
groups
\[
H^1(B, i_*\Lambda_B\otimes_\ZZ\GG_m(\kk))= H^1(\check
B,\check\imath_*\check\Lambda_{\check B}\otimes_\ZZ\GG_m(\kk))
\]
computed from the affine geometry of $B$ or $\check B$. Here $\Lambda_B$ denotes
the sheaf of integral tangent vectors on the complement of the real codimension
two singular locus $\Delta\subseteq B$, $i:B\setminus\Delta\to B$ is the
inclusion, and $\check\Lambda_B=\shHom(\Lambda_B,\ZZ_B)$. Similar notations
apply to $\check B$. The correspondence works via twisting toric patchings of
standard affine toric models for the degeneration via so-called \emph{open
gluing data} $s$ \cite[Def.\,1.18]{affinecomplex} and showing that any choice of
$s$ gives rise to a log structure on $\check X_0=\check X_0(s)$ over the
standard log point. This log structure is unique up to isomorphisms fixing $\check
X_0$. Unlike in abstract deformation theory, the space of deformations is not
just a torsor over the controlling cohomology group, but a group itself. In
particular, trivial gluing data $s=1$ lead to a distinguished choice of log
Calabi-Yau central fiber $(\check X_0,\M_{\check X_0})$. The log structure also
carries the information of the anticanonical divisor $\check D_0\subseteq \check
X_0$, which hence is suppressed in the notation.

Conversely, \cite[Thm.\,3.1]{affinecomplex} constructs a canonical formal
polarized toric degeneration $\pi:(\check\X\to T,\check\D)$ with given
logarithmic central fiber $(\check X_0,\M_{\check X_0})$, even under a weaker
assumption (\emph{local rigidity}, \cite[Def.\,1.26]{affinecomplex}) than
simplicity. Thus we understand this side of the mirror correspondence rather
well.

The objective of this paper is to give a similarly canonical picture on the
mirror side. The mirrors of Fano varieties are suggested to be so-called
Landau-Ginzburg models (LG-models). Mathematically these are non-compact
algebraic varieties with a holomorphic function, referred to as
\emph{superpotential}, see e.g.\
\cite{HoriVafa,ChoOh,AKO,FOOO1}. Following the general
program laid out in \cite{logmirror} and \cite{affinecomplex}, we construct
LG-models via deformations of a non-compact union of toric varieties. The
superpotential is constructed by canonical extension from the central fiber.

Our starting point in this paper is as follows.

\begin{assumption}
\label{overall assumption}
Let $(B,\P,\varphi)$ be a non-compact, polarized, (integral) tropical manifold
without boundary \cite[Def.1.2]{affinecomplex}. We further assume that
$(B,\P,\varphi)$ comes with a compatible sequence of consistent (wall)
structures $\scrS_k$ as defined in \cite[Defs.\,2.22, 2.28,
2.41]{affinecomplex}, for some choice of open gluing data $s$.
\end{assumption}

For applications in mirror symmetry, $(B,\P,\varphi)$ is the Legendre-dual of a
compact $(\check B,\check \P,\check \varphi)$ with locally convex boundary.
Unfortunately, the algorithmic construction of consistent structures in
\cite{affinecomplex} is problematic at several places in the non-compact case.

The only practical general assumption we are aware of to make the algorithm work
in the non-compact case adds the following convexity requirement at infinity.

\begin{definition}
\label{Def: compactifiable (B,P)}
We call a tropical manifold $(B,\P)$ with or without boundary
\emph{compactifiable} if there exists a compact subset $K\subseteq B$ containing
a neighborhood of the union of all bounded cells of $\P$ and a proper
continuous map $\psi: B\setminus K\to \RR_{\ge 0}$ with the following
properties: (1)~$\psi$ is locally on $B\setminus (K\cup\Delta)$ a convex
function. (2)~Each unbounded cell $\sigma\in\P$ has a finite integral polyhedral
decomposition $\P_\sigma$ such that $\psi|_{\sigma\cap (B\setminus K)}$ is a
convex, integral, piecewise affine function with respect to $\P_\sigma$.
\end{definition}

The existence of $\psi$ in Definition~\ref{Def: compactifiable (B,P)} makes it possible to exhaust $B$ by tropical manifolds as follows. 

\begin{lemma}
\label{Lem: Exhaustion}
Assume that $(B,\P)$ is compactifiable (Definition~\ref{Def: compactifiable
(B,P)}). Then there exists a sequence of compact subsets $\ol B_1\subseteq\ol
B_2\subseteq \ldots\subseteq B$ with (1)~$B=\bigcup_{\nu\ge 1} \ol B_\nu$, and
(2)~$(\ol B_\nu,\ol\P_\nu)$ with $\ol\P_\nu=\big\{\sigma\cap\ol B_\nu\,\big|\,
\sigma\in\P\big\}$ is a tropical manifold in the sense of
\cite[Def.\,1.32]{affinecomplex}. 
\end{lemma}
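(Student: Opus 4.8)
The plan is to build the $\ol B_\nu$ as sublevel sets of the function $\psi$ near infinity, glued to a fixed compact core, and then to verify the tropical-manifold axioms of \cite[Def.\,1.32]{affinecomplex} one at a time. Concretely, fix the compact set $K$ and the proper convex map $\psi:B\setminus K\to\RR_{\ge 0}$ supplied by compactifiability. Since $\psi$ is proper, the sublevel sets $\psi^{-1}([0,c])$ are compact for every $c\ge 0$; choose an increasing sequence $c_1<c_2<\cdots\to\infty$ of \emph{regular} values of $\psi$ in a suitable polyhedral sense (so that the fibre $\psi^{-1}(c_\nu)$ meets each unbounded cell $\sigma$ in a codimension-one polyhedral subset transverse to the faces of $\P_\sigma$), and set
\[
\ol B_\nu \;=\; K' \cup \psi^{-1}\big([0,c_\nu]\big),
\]
where $K'\supseteq K$ is a fixed compact neighborhood of the union of all bounded cells of $\P$, chosen once and for all so that $\partial K'$ is itself polyhedral and $\ol B_1\supseteq K'$. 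Refining $\P$ on $K'\setminus K$ and along the cuts $\psi=c_\nu$ by the decompositions $\P_\sigma$, we obtain the induced decomposition $\ol\P_\nu=\{\sigma\cap\ol B_\nu\mid\sigma\in\P\}$, which is a finite polyhedral complex because only finitely many cells of $\P$ meet the compact set $\ol B_\nu$. Property~(1), $B=\bigcup_\nu\ol B_\nu$, is immediate from properness of $\psi$: any point of $B$ lies in $K'$ or has finite $\psi$-value, hence lies in some $\ol B_\nu$.

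The substance is property~(2). The affine structure on $B$ away from $\Delta$ restricts to $\ol B_\nu$, so the only points needing attention are those on the new boundary $\partial\ol B_\nu$ created by the cut $\psi=c_\nu$; near the old discriminant $\Delta$ nothing changes, and $\Delta$ is disjoint from $\partial\ol B_\nu$ by the transversality in the choice of $c_\nu$. At an interior point of a top-dimensional face of $\{\psi=c_\nu\}$ the local model is a half-space in $\RR^n$, which is fine. At lower-dimensional strata of $\partial\ol B_\nu$ one must check the local convexity condition of \cite[Def.\,1.32]{affinecomplex}, i.e.\ that $\ol B_\nu$ looks locally like a convex polyhedral cone (with vertex along the stratum) inside the chart $\RR^n$: this is exactly where hypothesis~(1) of Definition~\ref{Def: compactifiable (B,P)}, that $\psi$ is locally convex on $B\setminus(K\cup\Delta)$, is used — the sublevel set $\{\psi\le c_\nu\}$ is then locally convex, and intersecting with the (already locally convex) cells of $\P$ preserves local convexity. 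Hypothesis~(2) guarantees that this cut is compatible with an \emph{integral} polyhedral structure, so that $\ol B_\nu$ together with the (unchanged) integral affine charts and the restriction of $\varphi$ is genuinely an integral tropical manifold, not merely a topological one. One also records that $\varphi$ remains strictly convex on $\ol B_\nu$ since strict convexity is a local condition on the unchanged cells.

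I expect the main obstacle to be the bookkeeping at the corners where the cut $\{\psi=c_\nu\}$ meets the boundary between two unbounded cells $\sigma,\sigma'$ of $\P$: there one must simultaneously use the two polyhedral refinements $\P_\sigma,\P_{\sigma'}$ and the local convexity of $\psi$ across the common face to see that the three local convexity conditions (from $\sigma$, from $\sigma'$, and from $\psi$) assemble into a single convex polyhedral chart for $\ol B_\nu$. This is morally forced by the compatibility built into Definition~\ref{Def: compactifiable (B,P)}, but verifying it cleanly requires choosing the regular values $c_\nu$ with a little care — generic enough to avoid the finitely many "bad" slopes where the cut would fail to be transverse to some face of some $\P_\sigma$ — after which the verification is routine. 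A secondary, purely technical point is to ensure the $\ol B_\nu$ are nested, which is automatic once $K'$ is fixed and $c_\nu$ is increasing. No compactness or positivity of $B$ is needed beyond what Definition~\ref{Def: compactifiable (B,P)} provides.
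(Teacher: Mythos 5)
Your overall route is the same as the paper's: take $\ol B_\nu$ to be the union of the compact core with a sublevel set of $\psi$, get (1) from properness, and get the boundary convexity required by \cite[Def.\,1.32,(2)]{affinecomplex} from the local convexity of $\psi$ in Definition~\ref{Def: compactifiable (B,P)},(1). The genuine gap is in how you choose the cut levels. You insist on \emph{generic} ``regular'' values $c_\nu$ (to get transversality of $\{\psi=c_\nu\}$ with the faces of the $\P_\sigma$), and then assert that hypothesis~(2) ``guarantees that this cut is compatible with an integral polyhedral structure.'' It does not, for generic $c_\nu$: the cells of $\ol\P_\nu$ are required to be \emph{integral} polyhedra, and for a generic (even a generic rational) level the new vertices of $\sigma\cap\ol B_\nu$, namely the points where $\{\psi=c_\nu\}$ meets the edges of $\P_\sigma$, are at best rational and in general irrational. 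Genericity and integrality pull in opposite directions here, and the lemma only needs \emph{some} sequence, so the correct move is the opposite of yours: take the levels to be integers, observe that since $\psi$ is integral piecewise affine with respect to the finite decomposition $\P_\sigma$ the truncated cell $\sigma\cap\ol B_\nu$ is a compact convex polyhedron defined over $\QQ$ with bounded denominators, and then pass to suitable multiples of $\nu$ (a subsequence) to clear the denominators. This is exactly the step the paper's proof supplies and your write-up is missing; without it the claimed $\ol\P_\nu$ need not consist of integral polyhedra at all. Relatedly, your intermediate refinement of $\P$ along the cut is not available to you: the lemma fixes $\ol\P_\nu=\{\sigma\cap\ol B_\nu\mid\sigma\in\P\}$, so the refinements $\P_\sigma$ may only be used as a tool to control $\psi$, not to change the decomposition.

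A secondary inaccuracy: you claim $\Delta$ is disjoint from $\partial\ol B_\nu$ ``by transversality.'' In dimension $\ge 3$ the discriminant has unbounded cells (see the threefold examples in \S8), so every sufficiently deep cut meets $\Delta$; disjointness cannot be arranged and, fortunately, is not needed — the paper's proof never uses it, since the convexity check at the new boundary is carried out via $\psi$ away from the vertices and the bounded cells. Finally, your relabelling remark is the right fix for the small-$\nu$ issue (ensuring the new boundary lies where $\psi$ is defined and convex), and matches the paper's concluding step.
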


\begin{proof}
Let $K\subseteq B$ and $\psi:B\setminus K\to \RR_{\ge0}$ be as in
Definition~\ref{Def: compactifiable (B,P)}. Define $\ol B_\nu= K\cup
\psi^{-1}\big([0,\nu]\big)$. Properness of $\psi$ implies $B=\bigcup_\nu \ol
B_\nu$ as claimed in (1). Next observe that all bounded cells of $\P$ are
contained in all $\ol B_\nu$ since they are contained in $K$. For an unbounded
cell $\sigma\in\P$, Definition~\ref{Def: compactifiable (B,P)},(2) implies that
the intersection $\sigma\cap \ol B_\nu$ is a compact convex polyhedron defined
over $\QQ$. The denominators appearing in the vertices of
$\partial(\sigma\cap\ol B_\nu)$ disappear when going over to an appropriate
multiple of $\nu$. Thus up to going over to a subsequence, $\P$ induces an
integral polyhedral decomposition $\ol\P_\nu$ of $\ol B_\nu$.

Convexity at boundary points as required in \cite[Def.\,1.32,(2)]{affinecomplex}
follows from convexity of $\psi$ posited in Definition~\ref{Def: compactifiable
(B,P)},(1), provided $\partial\ol B_\nu\cap K\neq\emptyset$. The last condition
clearly holds for $\nu\gg0$, hence can be achieved for all $\nu$ by relabelling
the $\ol B_\nu$.
\end{proof}

The proof of Lemma~\ref{Lem: Exhaustion} motivates the following definition.

\begin{definition}
\label{Def: truncation}
We call a tropical manifold $(\ol B,\ol \P)$, with compact $\ol B\subset B$
containing all bounded cells of $\P$ and intersecting the interior of each
unbounded cell, a \emph{truncation} of $(B,\P)$.
\end{definition}

With an exhaustion as in Lemma~\ref{Lem: Exhaustion} we can now apply the main
theorem of \cite{affinecomplex} to produce a compatible sequence of consistent wall structures on $(B,\P,\varphi)$.

\begin{proposition}
\label{Prop: wall structures exists for compactifiable cases}
Let $(B,\P,\varphi)$ be a polarized, tropical manifold with $(B,\P)$
compactifiable. Assume further that each $(\ol B_\nu,\ol \P_\nu)$ from
Lemma~\ref{Lem: Exhaustion} describes the intersection complex of a locally
rigid, positive, pre-polarized toric log Calabi-Yau variety $(\ol X_0,\ol B_0)$
\cite[Defs.\,1.4, 1.26, 1.23]{affinecomplex}\footnote{The assumptions are
fulfilled for example if $(B,\P)$ has simple singularities.}. Then there exists
a compatible sequence of consistent (wall) structures $\scrS_k$ on
$(B,\P,\varphi)$.

The formal toric degeneration \cite[Def.\,1.9]{affinecomplex} $\pi:
(\X,\D)\to\Spf\kk\lfor t\rfor$ defined by the $\scrS_k$ according to
\cite[Prop.\,2.42]{affinecomplex} has an open embedding into a proper formal
family $(\ol\X,\ol\D)\to\Spf\kk\lfor t\rfor$, with central fiber $\ol X_0$.

Moreover, assuming $H^1(\ol X_0,\O_{\ol X_0})=H^2(\ol X_0,\O_{\ol X_0})=0$, this
family is algebraizable: There exists a proper flat morphism
$\ol\pi:(\ol\shX,\ol\shD)\to \Spec\kk\lfor t\rfor$, a toric degeneration in the
sense of \cite[Def.1.8]{affinecomplex}, and a divisor $\shZ\subset \ol\shX$ flat
over $\Spec\kk\lfor t\rfor$ such that $\pi$ is isomorphic to the completion
of $\ol\pi|_{\ol\shX\setminus\shZ}$ at the central fiber.
\end{proposition}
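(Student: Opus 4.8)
I would prove the Proposition in three steps: first extract the compatible consistent wall structures $\scrS_k$ on all of $B$ from the compact case via the exhaustion of Lemma~\ref{Lem: Exhaustion}, then use the compactifiability datum to exhibit the resulting non-proper formal family as the complement of a divisor in a proper one, and finally algebraize by a polarization argument. For Step~1, fix an exhaustion $\ol B_1\subseteq\ol B_2\subseteq\cdots$ by truncations. By hypothesis each $(\ol B_\nu,\ol\P_\nu)$, together with the restriction of $\varphi$, is the intersection complex of a locally rigid, positive, pre-polarized toric log Calabi--Yau variety, so the main theorem of \cite{affinecomplex} applies and yields on each $\ol B_\nu$ a compatible sequence of consistent wall structures $\scrS_k^{(\nu)}$. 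I would then match these across $\nu$: when the algorithm of \cite{affinecomplex} is run on $\ol B_{\nu+1}$ one may process the joints in a neighborhood of $\ol B_\nu$ with the same choices as for $\ol B_\nu$, and since at each fixed order $k$ only finitely many walls of bounded support are produced, the scattering carried out at the joints of $\ol B_{\nu+1}\setminus\ol B_\nu$ does not affect the structure on the interior of $\ol B_\nu$. Hence for each $k$ the $\scrS_k^{(\nu)}$ stabilize on every compact subset of $B$, and their limit $\scrS_k$ is a locally finite wall structure on $(B,\P,\varphi)$; it is consistent since consistency is a condition at the joints, every joint lies in the interior of some $\ol B_\nu$, and there $\scrS_k$ agrees with the consistent $\scrS_k^{(\nu)}$. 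The $\scrS_k$ are compatible by construction, which proves the first assertion, and by \cite[Prop.\,2.42]{affinecomplex} they define the formal toric degeneration $\pi\colon(\X,\D)\to\Spf\kk\lfor t\rfor$.

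For Step~2, using the proper convex function $\psi$ of Definition~\ref{Def: compactifiable (B,P)} I would enlarge $(B,\P,\varphi)$ to a compact integral tropical manifold with boundary $(\ol B,\ol\P,\ol\varphi)$ containing $B$ as the complement of $\partial\ol B$: concretely, $\ol B$ arises from a sufficiently fine truncation by adjoining along each unbounded cell $\sigma$ the locus at infinity prescribed by the finite decomposition $\P_\sigma$ and the convex integral piecewise affine function $\psi|_\sigma$, with $\varphi$ extending over the new cells by the convexity and integrality of $\psi$. The convexity of $\psi$ also controls the walls of $\scrS_k$ in the unbounded cells well enough that $\scrS_k$ extends over $\partial\ol B$ to a wall structure $\ol\scrS_k$ on $\ol B$, consistent at the old joints by Step~1 and at the new joints because $\ol\scrS_k$ is there the continuation of the consistent $\scrS_k$. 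Applying \cite[Prop.\,2.42]{affinecomplex} to $(\ol B,\ol\P,\ol\varphi)$ and the $\ol\scrS_k$ produces a proper formal toric degeneration $\ol\pi\colon(\ol\X,\ol\D)\to\Spf\kk\lfor t\rfor$ with central fiber $\ol X_0$. Since the gluing construction of \cite[Prop.\,2.42]{affinecomplex} is local over the base, restricting $\ol\X$ to $B\subseteq\ol B$ recovers $\X$ exactly, and writing $\shZ$ for the part of $\ol\X$ over $\partial\ol B$ gives the asserted open embedding $\X=\ol\X\setminus\shZ\hookrightarrow\ol\X$.

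For Step~3, assume $H^1(\ol X_0,\O_{\ol X_0})=H^2(\ol X_0,\O_{\ol X_0})=0$ and set $R=\kk\lfor t\rfor$. The polarization $\ol\varphi$ equips $\ol X_0$ with a relatively ample line bundle $\ol\shL_0$. As $R$ is a discrete valuation ring, the kernel of $\O^{\times}_{\ol X_n}\to\O^{\times}_{\ol X_{n-1}}$ on the infinitesimal neighborhoods $\ol X_n$ of $\ol X_0$ in $\ol\X$ is isomorphic to $\O_{\ol X_0}$, so the obstruction to extending a line bundle one further order in $t$ lies in $H^2(\ol X_0,\O_{\ol X_0})$ and the set of extensions is a torsor under $H^1(\ol X_0,\O_{\ol X_0})$; the vanishing assumptions thus provide a unique formal line bundle $\ol\shL$ on $\ol\X$ restricting to $\ol\shL_0$, necessarily $\ol\pi$-ample because ampleness may be tested on the central fiber. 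By Grothendieck's existence theorem in formal geometry, a proper formal $\Spf R$-scheme carrying a relatively ample formal line bundle is the completion of a unique projective $R$-scheme; hence $\ol\X$ algebraizes to a proper flat $\ol\pi\colon\ol\shX\to\Spec\kk\lfor t\rfor$, which one checks is a toric degeneration in the sense of \cite[Def.\,1.8]{affinecomplex}. A further application of Grothendieck existence to the coherent ideal sheaves of $\ol\D$ and of $\shZ$ algebraizes them to divisors $\ol\shD,\shZ\subset\ol\shX$, flat over $\Spec R$ since the corresponding formal divisors are $t$-torsion free. Finally $\X=\ol\X\setminus\shZ$ is the formal completion of $\ol\shX\setminus\shZ$ along the central fiber, so $\pi$ is isomorphic to the completion of $\ol\pi|_{\ol\shX\setminus\shZ}$, as required.

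The delicate content is concentrated in Steps~1 and~2: one must show that the scattering near the boundary of a truncation cannot propagate back and disturb its interior, so that the truncated wall structures genuinely assemble into a single consistent structure on $B$, and that the convexity condition on $\psi$ is exactly what makes this structure extend consistently over the locus at infinity. Step~3, by contrast, is the standard deformation-theoretic and formal-GAGA package once the two cohomology groups are known to vanish.
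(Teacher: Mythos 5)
Your Steps~1 and~3 are in line with the paper's proof. For Step~1 the paper's justification for stabilization is worth making explicit: it is not merely that only finitely many walls appear at each order, but that \emph{by convexity of $\psi$ no walls emanate from $\partial\ol B_\nu$}, so enlarging $\nu$ cannot feed new scattering back into the interior of a smaller truncation. Step~3 is the standard Grothendieck existence argument, which is what the paper invokes via \cite[Cor.\,1.31]{affinecomplex}.

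Step~2, however, contains a genuine gap. You propose a compact tropical manifold with boundary $(\ol B,\ol\P,\ol\varphi)$ ``containing $B$ as the complement of $\partial\ol B$,'' that is, an integral affine compactification of $B$ by a locus at infinity. No such object exists. The boundary of a tropical manifold with boundary in the sense of \cite[Def.\,1.32]{affinecomplex} must be locally an integral affine hyperplane inside an affine chart of the manifold; each level set $\{\psi=\nu\}$ is such a hyperplane for finite $\nu$, but the limit ``$\psi=\infty$'' admits no affine chart at all (the coordinate change needed to bring infinity of an unbounded cell $\sigma$ into a finite chart is not affine). Consequently the extended wall structure $\ol\scrS_k$ on $\ol B$ is not defined, and the argument that $\X=\ol\X\setminus\shZ$ ``by restricting to $B\subseteq\ol B$'' has no content. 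Note that your own description is internally inconsistent: a ``sufficiently fine truncation'' $\ol B_\nu$ is a compact \emph{subset} of $B$, so it cannot simultaneously contain $B$ as the complement of its boundary. The paper avoids this entirely by never compactifying the affine base: it observes that for each fixed $k$ the proper schemes over $\kk[t]/(t^{k+1})$ obtained from $\scrS_{k,\nu}$ on the truncations $\ol B_\nu$ stabilize for $\nu\to\infty$, and then takes first the limit over $\nu$ and afterwards the inverse limit over $k$ to produce the proper formal scheme $\ol\X$ directly, with $(\X,\D)$ sitting inside as an open dense formal subscheme. Your Step~2 should be replaced by this scheme-level stabilization argument; the compactifying divisor $\shZ$ is then read off from the boundary strata of the truncations as in Remark~\ref{Rem: compactifying divisor}, not from any boundary of the affine base.
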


\begin{proof}
For each $\nu$, \cite[Thm.\,3.1]{affinecomplex} produces a compatible sequence
$\scrS_{k,\nu}$ of consistent wall structures on $(\ol B_\nu,\ol\P_\nu)$.
Comparing the inductive construction of $\scrS_{k,\nu}$ for fixed $k$, but
taking $\nu\to\infty$ shows that the sets of walls stabilize on any compact
subset of $B$. Note that by convexity no walls emanate from $\partial \ol
B_\nu$. We can therefore take the limit over $\nu$ to define $\scrS_k$ on $B$.
Mutual compatibility and consistency follows by consideration on compact subsets
of $B$, using consistency and compatibility for the wall structure on $(\ol
B_\nu,\ol\P_\nu)$ for $\nu$ sufficiently large.

The compactification $(\ol\X,\ol\D)$ is constructed by observing that for each
$k$ the proper schemes over $\kk[t]/(t^{k+1})$ obtained from $\scrS_{k,\nu}$
stabilize for $\nu\to\infty$. Taking this limit first and then the limit over
$k$ now defines the formal scheme $\ol\X$ proper over $\Spf\kk\lfor t\rfor$ and
containing $(\X,\D)$ as an open dense formal subscheme.

With the additional assumptions, algebraizability follows from the Grothendieck
algebraization theorem as in \cite[Cor.\,1.31]{affinecomplex}.
\end{proof}

\begin{remark}
\label{Rem: compactifying divisor}
The compactifying divisor $\Z\subset\ol\X$ with $\X=\ol\X\setminus\Z$ can be
described by inspection of the polyhedral decomposition $\P_\nu$ of $\ol B_\nu$
for any sufficiently large $\nu$ and the asymptotic behavior of $\scrS_k$. For
each sequence $F=(F_\nu)_\nu$ of mutually parallel maximal flat affine subsets
$F_\nu\subseteq\partial\ol B_\nu$ there exists a maximal closed reduced
subscheme $\Z_F\subseteq\Z$, and $\Z$ is the schematic union of the $\Z_F$. The
restriction of $\pi$ to $\Z_F$ is itself a toric degeneration defined by the
walls of $\scrS_k$ that intersect $F_\nu$ for all $\nu$, with wall functions
obtained by disregarding all monomials not tangent to $F_\nu$.

These statements follow directly from the gluing construction
\cite[\S2.6]{affinecomplex}. See \S\ref{sect: special fiber} for a further
discussion in the context of fibers of the superpotential.
\end{remark}

%===========================================================
%===========================================================

\section{The superpotential at $t$-order zero}
Recall from \cite[Constr.\,2.7]{affinecomplex} the definition of the rings
$R_{g,\sigma}^k$ used to build $\X$ over $\kk[t]/(t^{k+1})$. These depend on an
inclusion $g:\omega\to \tau$ of cells $\omega,\tau\in\P$ and a maximal reference
cell $\sigma$ containing $\tau$ (hence $\omega$). The ring $R_{g,\sigma}^k$
provides the $k$-th order thickening inside $\X$ of the stratum $X_\tau\subseteq
X_0$ with momentum polytope $\tau$ in an affine open neighbourhood of the
$\omega$-stratum $X_\omega\subseteq X_\tau$. The order is measured with respect
to all irreducible components of $X_0$ containing the $\tau$-stratum. The
reference cell $\sigma$ is necessary to fix the affine chart to work on. The
rings obtained in this way from affine toric models are then localized at all
functions carried by codimension one cells (``\emph{slabs}'') containing
$\tau$.

The details of this construction are rather irrelevant for what follows except
that the ring $R^k_{g,\sigma}$ is a localization of a monomial ring, with each
monomial $z^m$ having an associated underlying tangent vector $\ol
m\in\Lambda_\sigma$, and an order of vanishing $\ord_{\sigma'} z^m$ for each
maximal cell $\sigma'\supseteq\tau$. If $\tau=\sigma$ and $\ord_\sigma z^m=l$
then $z^m$ restricts to $z^{\ol m}t^l$ in the Laurent polynomial ring
$R^k_{\id_\sigma,\sigma}= A[\Lambda_\sigma]$ describing the trivial $k$-th order
deformation of the big cell of the irreducible component $X_\sigma\subseteq X_0$
defined by $\sigma$ over $A=\kk[t]/(t^{k+1})$.

We need the following definition.

\begin{definition}
\label{Def: parallel edges}
We call unbounded edges $\omega,\omega'\in\P$ \emph{parallel} if there exists a
sequence of unbounded edges $\omega=\omega_0,\omega_1,\ldots,\omega_r=\omega'$
and maximal cells $\sigma_1,\ldots,\sigma_r\in\P$ with
$\omega_{i-1},\omega_i\subseteq\sigma_i$ parallel
($\Lambda_{\omega}=\Lambda_{\omega'}$ as sublattices of $\Lambda_\sigma$) and
unbounded in the same direction.

A tropical manifold $(B,\P)$ with all unbounded edges parallel is
called \emph{asymptotically cylindrical}.
\end{definition}

Let now $\sigma\in\P$ be an unbounded maximal cell. For each unbounded edge
$\omega\subseteq \sigma$ there is a unique monomial $z^{m_\omega}\in
R^0_{\id_\sigma,\sigma}$ with $\ord_\sigma (m_\omega)=0$ and
$-\overline{m_\omega}$ a primitive generator of $\Lambda_\omega\subseteq
\Lambda_\sigma$ pointing in the unbounded direction of $\omega$. Denote by
$\scrR(\sigma)$ the set of such monomials $m_\omega$. We identify monomials for
parallel unbounded edges $\omega,\omega'$. So these contribute only one exponent
$m_\omega= m_{\omega'}$ to $\scrR(\sigma)$.

Now at any point of $\partial\sigma$, the tangent vector $-\overline{m_\omega}$
points into $\sigma$. Hence
\begin{equation}
\label{Eqn: W^0(sigma)}
W^0(\sigma):= \sum_{m\in \scrR(\sigma)}
z^m
\end{equation}
extends to a regular function on the component $X_{\sigma}\subseteq X_0$
corresponding to $\sigma$. For bounded $\sigma$ define $W^0(\sigma)=0$. To
simplify the following discussion, from now on we only consider the following
situation.\footnote{This assumption was missing in preliminary versions of this
paper and fixes a subtlety arising with non-trivial gluing data. The correct
treatment of gluing data is given in \cite[\S5.2]{Theta}.}

\begin{assumption}
\label{Ass: trivial gluing data}
One of the following two conditions hold.
\begin{enumerate}
\item
If $\omega,\omega'\in \P$ are parallel edges there exists a maximal cell $\sigma\in\P$ with $\omega\cup\omega'\subseteq \sigma$.
\item
The open gluing data $s$ are trivial.
\end{enumerate}
\end{assumption}

Since by Assumption~\ref{Ass: trivial gluing data} the restrictions of the
$W^0(\sigma)$ to lower dimensional toric strata agree, they define a function
$W^0\in\O(X_0)$. This is our \emph{superpotential at order $0$}. A motivation
for this definition in terms of counts of tropical analogues of holomorphic
disks will be given in Section~\ref{sect:tropical disks}.

One insight in this paper is that in studying LG-models tropically it is
advisable to restrict to asymptotically cylindrical $B$.

\begin{proposition}
\label{properness criterion}
The order zero superpotential $W^0:X_0\to\AA^1$ is proper iff $(B,\P)$ is
asymptotically cylindrical. 
\end{proposition}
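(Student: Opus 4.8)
The plan is to characterize properness of $W^0$ by localizing to each irreducible component $X_\sigma\subseteq X_0$ and analyzing when the restriction $W^0(\sigma)\colon X_\sigma\to\AA^1$ is proper, then gluing these statements along the toric strata. Since $X_0$ is a union of the toric varieties $X_\sigma$ glued along lower-dimensional toric strata, $W^0$ is proper if and only if its restriction to each $X_\sigma$ is proper \emph{and} the components fit together without creating an ``escape to infinity'' along a stratum; the asymptotic cylindricity condition will turn out to be exactly what encodes both. First I would recall that $X_\sigma$ is the (possibly non-compact, non-complete) toric variety with momentum polytope $\sigma$, and that $W^0(\sigma)=\sum_{m\in\scrR(\sigma)}z^m$ is a sum of toric monomials indexed by the unbounded edges of $\sigma$ (up to the parallelism identification), with $-\ol{m_\omega}$ the primitive inner direction of the edge $\omega$. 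The key observation is that $X_\sigma$ fails to be compact precisely in the unbounded directions of $\sigma$, i.e.\ along the recession cone $C_\infty(\sigma)$, and a sequence escaping to infinity in $X_\sigma$ does so along some ray in $C_\infty(\sigma)$.

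The core computation is then: $W^0(\sigma)$ is proper on $X_\sigma$ if and only if for every primitive ray $\rho\in C_\infty(\sigma)$ there is some $m_\omega\in\scrR(\sigma)$ with $\langle \rho,\ol{m_\omega}\rangle<0$, equivalently $z^{m_\omega}\to\infty$ along that ray. Since the inner edge directions $-\ol{m_\omega}$ generate $C_\infty(\sigma)$ (the unbounded edges of a polyhedron span its recession cone), the pairing $\langle\rho,\ol{m_\omega}\rangle$ is $\le 0$ for the edge directions spanning the minimal face of $C_\infty(\sigma)$ containing $\rho$, and is strictly negative unless $\rho$ lies in a proper face — so the only way properness can fail is if $C_\infty(\sigma)$ is not strictly convex, i.e.\ contains a line, OR if, upon gluing, two components share an unbounded stratum along which their respective superpotential monomials all vanish. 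Here is where I bring in the global structure: if $(B,\P)$ is \emph{not} asymptotically cylindrical, there exist two unbounded edges that are not parallel in the sense of Definition~\ref{Def: parallel edges}; tracing through a maximal cell $\sigma$ containing non-parallel unbounded edges, or two adjacent cells whose unbounded edges point in genuinely different directions, I produce a sequence of points in $X_0$ (lying on the corresponding unbounded toric stratum) along which every monomial $z^{m_\omega}$ appearing in $W^0$ either vanishes or stays bounded away from the relevant direction, so $W^0$ stays bounded while the points escape — contradicting properness. Conversely, if $B$ is asymptotically cylindrical, all unbounded edges are parallel to a single direction, every $\scrR(\sigma)$ reduces essentially to one monomial $z^{m}$ with $-\ol m$ the common unbounded direction, and $|z^m|\to\infty$ along the unique way each $X_\sigma$ can escape to infinity; one checks the monomials for adjacent cells restrict compatibly to the shared unbounded stratum (this is exactly Assumption~\ref{Ass: trivial gluing data}, which guarantees $W^0$ is well-defined globally), so a sequence in $X_0$ with $W^0$ bounded cannot escape, giving properness.

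I would organize the write-up as: (i) reduce properness of $W^0$ on $X_0$ to a statement about sequences escaping to infinity along unbounded toric strata, using that $X_0$ is a finite union of the $X_\sigma$ along closed strata; (ii) for a single unbounded maximal cell, prove the recession-cone criterion above by the direct toric monomial estimate; (iii) assemble the ``if'' direction from the cylindrical hypothesis, noting the common direction makes every escape detected by the single surviving monomial; (iv) prove the ``only if'' direction by contrapositive, explicitly building the bounded-$W^0$ escaping sequence from a pair of non-parallel unbounded edges. The main obstacle I anticipate is step (iv): I have to be careful that non-parallelism as defined (which allows chains through several cells and requires both ``$\Lambda_\omega=\Lambda_{\omega'}$ inside some $\sigma$'' and ``same unbounded direction'') genuinely forces the existence of a stratum and a direction along which \emph{all} relevant monomials of $W^0$ — not just those of one cell — fail to blow up; this requires understanding how the monomials $z^{m_\omega}$ for different cells restrict to a common unbounded stratum, and ruling out that some monomial from a \emph{far-away} cell could still dominate. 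I expect this to be handled by observing that a monomial $z^{m_\omega}$ is regular on $X_{\sigma'}$ only when $-\ol{m_\omega}$ points into $\sigma'$, so only finitely many monomials are even defined near a given unbounded stratum, and among those one can arrange the pairing with the chosen recession ray to be $\ge 0$ precisely because that ray was chosen transverse to the asymptotic direction witnessed by the non-parallel edge.
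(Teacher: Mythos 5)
Your reduction to a single non-compact component $X_\sigma$ and your treatment of the cylindrical direction (one surviving monomial, hence proper) agree with the paper, but the ``core computation'' you base the converse on is false, and this is a genuine gap rather than a presentational one. You claim $W^0(\sigma)$ is proper iff along every ray of the recession cone \emph{some} monomial of $\scrR(\sigma)$ blows up, and you deduce that within a single cell properness can only fail when $C_\infty(\sigma)$ contains a line. Test this on a cell with two non-parallel unbounded edges spanning a pointed cone, say $\sigma$ affinely isomorphic to $\RR_{\ge 0}^2$: then $X_\sigma\simeq\AA^2$ and, in the coordinates $x=z^{m_{\omega_1}}$, $y=z^{m_{\omega_2}}$, one has $W^0(\sigma)=x+y$. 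Every toric direction to infinity makes $x$ or $y$ blow up, so your criterion predicts properness; but the fiber $W^0(\sigma)^{-1}(0)$ is the line $x+y=0$, which is not complete, so the map is not proper. The mechanism that kills properness when two exponents differ is \emph{cancellation between distinct monomials along a non-compact fiber}, not boundedness of all monomials along a toric ray. Consequently your step (iv) -- producing an escaping sequence along which every monomial stays bounded or vanishes -- cannot succeed in exactly the case that matters (two unbounded edges of one maximal cell with $m_\omega\neq m_{\omega'}$), and the cross-cell bookkeeping you single out as the main obstacle is beside the point: non-properness is already visible on one component, which is why the paper's proof never needs it. A secondary issue is that over an arbitrary algebraically closed field the sequence/limit language should be replaced by an algebraic test (non-completeness of a fiber, or the valuative criterion).

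For comparison, the paper's converse handles the cancellation directly: assuming $m_\omega\neq m_{\omega'}$ for two unbounded edges of $\sigma$, it picks a supporting hyperplane $H$ of a positive-dimensional face of $\conv\{0,m_0,\dots,m_r\}$ not containing $0$, truncates $\sigma$ to a bounded polytope $\tilde\sigma=\sigma\cap(H-\RR_{\ge0}m_0)$ with one new facet $\tau$, so that $X_{\tilde\sigma}$ compactifies $X_\sigma$ by the divisor $X_\tau$, and then shows that $z^{-m_0}\,W^0(\sigma)$ restricts on the big cell of $X_\tau$ to $1+\sum_{i=1}^s z^{m_i-m_0}$, a Laurent polynomial with non-empty zero locus. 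Hence the closure of $\{W^0(\sigma)=0\}$ meets the divisor at infinity, the zero fiber is non-compact, and $W^0(\sigma)$ is not proper. If you want to keep your outline, you must replace the ray-wise criterion by an argument of this kind about fibers of the \emph{sum} of monomials; the positive (cylindrical) direction of your proposal can stand essentially as written.
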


\begin{proof}
It suffices to show the claimed equivalence after restriction to a non-compact
irreducible component $X_\sigma\subseteq X_0$, that is, for $W^0(\sigma)$ from
\eqref{Eqn: W^0(sigma)}. If all unbounded edges are parallel, $W^0(\sigma)$ is a
multiple of a monomial with compact zero locus, and hence is proper.

For the converse we show that if $m_{\omega}\neq m_{\omega'}$ for some
$\omega,\omega' \subseteq\sigma$ then $W^0(\sigma)$ is not proper. The idea is
to look at the closure of the zero locus of $W^0(\sigma)$ in an appropriate
toric compactification $X_{\tilde \sigma}\supset X_\sigma$.

Let $\omega_0,\ldots,\omega_r$ be the unbounded edges of $\sigma$ and write
$m_i=m_{\omega_i}$ for their primitive generators. By assumption
$\conv\{0,m_0,\ldots,m_r\}$ has a face not containing $0$ of dimension at least
one. Let $H\subseteq \Lambda_\sigma$ be a supporting affine hyperplane of such a
face. After relabeling we may assume $m_0,\ldots,m_s$ are the vertices of this
face. Note that all $m_i-m_0$ are contained in the affine half-space $H-\RR_{\ge
0} m_0$. Now $\tilde\sigma= \sigma\cap(H-\RR_{\ge 0} m_0)$ is a rational bounded
polytope $\tilde\sigma\subseteq \sigma$ with a single facet $\tau\subset
\tilde\sigma$ not contained in a facet of $\sigma$. Thus the toric variety
$X_{\tilde\sigma}$ with momentum polytope $\tilde\sigma$ contains $X_\sigma$ as
the complement of the toric prime divisor $X_\tau\subset X_{\tilde \sigma}$.
Note that $\Lambda_\tau= H-m_0$ by construction. 

To study the closure of the zero locus of $W^0(\sigma)$ in $X_{\tilde\sigma}$
consider the rational function $z^{-m_0}\cdot W^0(\sigma)$ on
$X_{\tilde\sigma}$. This rational function does not contain $X_\tau$ in its
polar locus, and its restriction to the big cell of $X_\tau$ is
\[
1+\sum_{i=1}^s z^{m_i-m_0}\in \kk[\Lambda_\tau].
\]
In fact, $z^{m_i-m_0}$ for $i>s$ vanishes along $X_\tau$. Since $s\ge 1$ this
Laurent polynomial has a non-empty zero locus. This proves that unless $m_i=m_j$
for all $i,j$ the closure of the zero locus of $W^0(\sigma)$ in
$X_{\tilde\sigma}$ has a non-empty intersection with $X_\tau$, and hence
$W^0(\sigma)$ can not be proper. 
\end{proof}

Thus if one is to study LG models via our degeneration approach, then to obtain
the full picture one has to restrict to asymptotically cylindrical $(B,\P)$.

The interpretation on the mirror side of the condition that $(B,\P)$ be
asymptotically cylindrical brings us to one of the main insights of this paper.
We first formulate the Legendre-dual of Definition~\ref{Def: parallel edges}.

\begin{definition}
A tropical manifold $(\check B,\check \P)$ is said to have \emph{flat boundary}
if $\partial\check B$ is locally flat in the affine structure.
\end{definition}

\begin{theorem}
\label{Thm: Properness}
Let $\X\to \Spf\kk\lfor t\rfor$ and $(\pi:\check\X\to T,\check\D)$ be polarized toric
degenerations with Legendre dual intersection complexes $(B,\P,\varphi)$,
$(\check B,\check\P,\check\varphi)$. Then the following are equivalent.
\begin{enumerate}
\item
The order zero superpotential $W^0:X_0\to\AA^1$ defined in \eqref{Eqn:
W^0(sigma)} is proper.
\item
$(B,\P)$ is asymptotically cylindrical.
\item
$(\check B,\check \P)$ has flat boundary.
\item 
The restriction $\pi|_{\check\D}:\check\D\to T$ of $\pi:\check\X\to T$ is
itself a toric degeneration.
\end{enumerate}
\end{theorem}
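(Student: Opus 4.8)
The plan is to take (1)$\Leftrightarrow$(2) as already established — it is Proposition~\ref{properness criterion} — and to prove (2)$\Leftrightarrow$(3) and (3)$\Leftrightarrow$(4) separately.

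For (2)$\Leftrightarrow$(3) I would argue by discrete Legendre duality, localizing at the unbounded cells. Under the duality $(B,\P,\varphi)\leftrightarrow(\check B,\check\P,\check\varphi)$, the unbounded cells of $\P$ correspond exactly to the cells of $\check\P$ contained in $\partial\check B$, an unbounded edge $\omega\in\P$ being matched to an $(n-1)$-cell $\check\omega\subseteq\partial\check B$, with parallel unbounded edges identified with the same $\check\omega$. An unbounded maximal cell $\sigma$ is matched to a boundary vertex $\check\sigma$, and the local structure of $\check\P$ near $\check\sigma$ is the normal fan of $\sigma$, whose support is the dual cone of the recession cone $C_\sigma$ of $\sigma$; the boundary facets of $\check B$ through $\check\sigma$ are precisely those dual to the parallelism classes of unbounded edges of $\sigma$. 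The key local computation is then: the boundary of the dual cone of $C_\sigma$ is affinely flat near the apex iff $C_\sigma$ is one-dimensional iff all unbounded edges of $\sigma$ are parallel. This gives (2)$\Rightarrow$(3) at once; for (3)$\Rightarrow$(2), flatness forces all unbounded edges within each unbounded maximal cell to be parallel, and I would then propagate parallelism along chains of adjacent unbounded maximal cells, using connectedness of $\partial\check B$ (equivalently of the subcomplex of unbounded cells of $\P$; it is Legendre dual to $\check D_0\subseteq\check X_0$, which is connected in the situations considered) to conclude that $(B,\P)$ is asymptotically cylindrical.

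For (3)$\Leftrightarrow$(4) the plan is to show that both conditions are equivalent to the statement that $\partial\check B$, with the induced polyhedral decomposition $\check\P|_{\partial\check B}$, the restricted affine structure, and the restriction $\check\varphi|_{\partial\check B}$, is a polarized tropical manifold whose associated toric degeneration is $\pi|_{\check\D}$. First I would check that flatness of $\partial\check B$ is precisely the condition under which the restricted affine structure is a genuine integral affine structure with singularities — with the discriminant remaining of codimension $\ge2$ inside $\partial\check B$ by the local description of tropical manifolds with boundary \cite[Def.\,1.32]{affinecomplex} — and that $\check\varphi|_{\partial\check B}$ is still strictly convex with integral slopes; this makes (3) equivalent to $(\partial\check B,\check\P|_{\partial\check B},\check\varphi|_{\partial\check B})$ being a polarized tropical manifold. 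Then I would identify the resulting toric degeneration with $\pi|_{\check\D}$: the components of $\check D_0\subseteq\check X_0$ are the toric strata of $\check X_0$ with momentum polytopes the facets of $\partial\check B$, glued along the lower strata induced from $\check X_0$, so by the gluing construction \cite[\S2.6]{affinecomplex} the log space $\check D_0$ together with its embedding into $\check X_0$ is the central fiber of the toric degeneration built from $(\partial\check B,\check\P|_{\partial\check B},\check\varphi|_{\partial\check B})$; and since, \'etale-locally, $\check\X\to T$ is a standard affine toric degeneration with $\check\D$ cut out by the corresponding toric boundary monomial, $\pi|_{\check\D}$ is itself \'etale-locally such a degeneration, hence coincides with the one just built.

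I expect the combinatorial input for (2)$\Leftrightarrow$(3) to be essentially formal once the Legendre-dual dictionary is laid out. The main obstacle should be in (3)$\Leftrightarrow$(4): verifying that all axioms of a polarized tropical manifold with singularities are inherited by $\partial\check B$, and — the more delicate point — matching the abstractly constructed toric degeneration of $\partial\check B$ with the concrete relative divisor $\check\D\subseteq\check\X$, which requires carrying the open gluing data and log structures through the restriction from $\check X_0$ to $\check D_0$ and checking that the ambient structure genuinely induces the expected boundary structure.
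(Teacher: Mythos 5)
Your treatment of (1)$\Leftrightarrow$(2) (quoting Proposition~\ref{properness criterion}) and of (2)$\Leftrightarrow$(3) is sound; the local computation identifying flatness of $\partial\check B$ at the vertex dual to an unbounded maximal cell $\sigma$ with one-dimensionality of the recession cone of $\sigma$, followed by propagation along chains of adjacent unbounded cells, is in fact more explicit than the paper, which simply reads (2)$\Leftrightarrow$(3) off from discrete Legendre duality. Your (3)$\Rightarrow$(4) --- restrict the slab functions to $\check D_0$, run the gluing construction \cite[\S2.6,\S2.7]{affinecomplex} on $\partial\check B$ with the induced polyhedral and polarization data, and match the result with $\check\D\subseteq\check\X$ through the local toric models --- is essentially the paper's argument for that direction.

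The genuine gap is the implication (4)$\Rightarrow$(3). Your plan declares that both (3) and (4) are equivalent to the auxiliary statement that $(\partial\check B,\check\P|_{\partial\check B},\check\varphi|_{\partial\check B})$ is a polarized tropical manifold whose associated toric degeneration is $\pi|_{\check\D}$, but what you actually sketch only gives (3)$\Leftrightarrow$S and S$\Rightarrow$(4); nothing in the proposal excludes the possibility that $\pi|_{\check\D}$ is a toric degeneration while $\partial\check B$ has a corner, and knowing merely that $\pi|_{\check\D}$ is \emph{some} toric degeneration does not by itself force flatness or identify its intersection complex with $\partial\check B$. Closing the equivalence requires a local argument at a corner, which is what the paper supplies: if $\rho,\rho'\subseteq\partial\check B$ are adjacent $(n-1)$-cells with $\Lambda_\rho\neq\Lambda_{\rho'}$ at $v\in\rho\cap\rho'$, the local model of $\check\X\to T$ near the stratum $\check X_{\rho\cap\rho'}$ is $\Spec\kk[P_{\rho\cap\rho',\sigma}]$ with $\check\D$ corresponding to $\rho\cup\rho'$, and one checks that $\check\D\to T$ is then not formally a smoothing of $\check D_0$ at the generic point of $\check X_{\rho\cap\rho'}$: the double locus of the two branches of $\check D_0$ persists in codimension one, contradicting the definition of a toric degeneration, which allows such non-toroidal behavior only in codimension at least two. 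Without an argument of this type your chain of implications is not closed, and this converse is the only place in (3)$\Leftrightarrow$(4) where actual geometry of the local models, rather than the identification of gluing data, is needed.
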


\begin{proof}
The equivalence of (1) and (2) is the content of Proposition~\ref{properness criterion}.

The Legendre dual of the condition that $(B,\P)$ is asymptotically cylindrical
says that $\partial \check B\subseteq \check B$ is itself an affine manifold
with singularities to which our program applies. If this is the case then from
the definition of $\check\D\subseteq \check\X$ it follows that $\check\D\to T$
is obtained by restricting the slab functions to $\check D_0\subseteq \check
X_0$ and run our gluing construction \cite[\S2.6,\S2.7]{affinecomplex} on
$\partial\check B$. The result is hence a toric degeneration of Calabi-Yau
varieties.

Conversely, assume that $\rho,\rho'\subseteq\partial \check B$ are two
neighboring $(n-1)$-faces with $\Lambda_\rho\neq\Lambda_{\rho'}$ as subspaces of
$\Lambda_v$ for $v\in\rho\cap\rho'$. In the notation of
\cite[Constr.\,2.7]{affinecomplex}, the toric local model of $\check\X\to T$ is
given by $\kk[P_{\rho\cap\rho',\sigma}]$ for $\sigma\in\P$ a maximal cell
containing $\rho\cap\rho'$, with $\check\D$ locally corresponding to
$\rho\cup\rho'$. It is now easy to see that $\check\D\to T$ is not formally a
smoothing of $\check D_0$ at the generic point of $\check X_{\rho\cap\rho'}$
unless $\partial \check B$ is straight at $\rho\cap\rho'$. Hence $\partial\check
B$ has to be smooth for $\check\D\to T$ to be a toric degeneration.
\end{proof}

Theorem~\ref{properness criterion},(4) motivates the following definition.

\begin{definition}
\label{Def: compact type}
A toric degeneration of log Calabi-Yau varieties $(\pi:\check\X\to T,\check\D)$
is called \emph{of compact type} if $\check\D\to T$ is as well a toric
degeneration of Calabi-Yau varieties.
\end{definition}

As a first example we consider the case of $\PP^2$.

\begin{example}
\label{Expl: PP2}
The standard method to construct the LG-mirror for $\PP^2$ is to start from the
momentum polytope $\Xi= \conv\{(-1,-1),(2,-1),(-1,2)\}$ of $\PP^2$ with its
anticanonical polarization \cite{HoriVafa} . The rays of the corresponding
normal fan associated to this polytope (using inward pointing normal vectors as
in~\cite{affinecomplex}) are generated by $(1,0), (0,1), (-1,-1)$. Calling the
monomials corresponding to the generators of the first two rays $x$ and $y$,
respectively, we obtain the usual (non-proper) Landau-Ginzburg model on the big
torus $(\GG_m(\kk))^2$, the function $x+y+\frac{1}{xy}$.
\begin{figure}
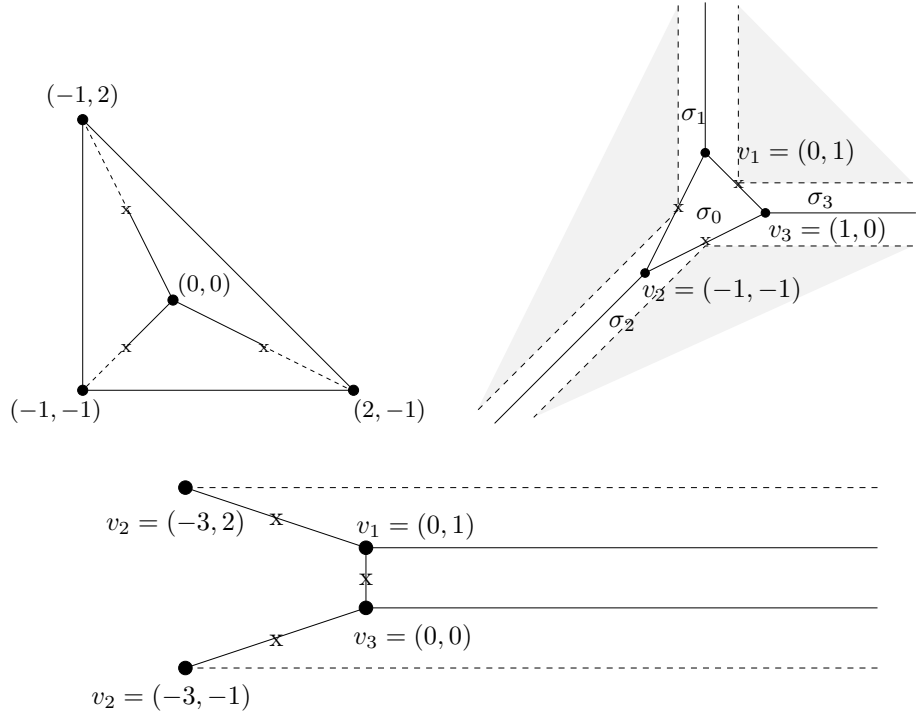

\input{PP2.pspdftex}\hspace{1cm}
\input{mirrorPP2.pspdftex} \vspace{0.5cm}
\input{scattering_mirrorPP2.pspdftex}
\caption{An intersection complex $(\check B,\check \P)$ for $\PP^2$
with straight boundary and its Legendre dual $(B,\varphi)$
for the minimal polarization, with a chart on the complement of the
shaded region and a chart showing the three parallel unbounded
edges.}
\label{fig:PP^2}
\end{figure}

To obtain a proper superpotential we need to make the boundary of the momentum
polytope flat in affine coordinates. To do this we trade the corners with
singular points in the interior. The simplest choice is a decomposition $\check
\P$ of $\check B=\Xi$ into three triangles with three singular points with
simple monodromy, that is, conjugate to $\left(\begin{smallmatrix}1&0\\1&1
\end{smallmatrix}\right)$, as depicted in Figure~\ref{fig:PP^2}. A minimal
choice of the PL-function $\check \varphi$ takes values $0$ at the origin and
$1$ on $\partial \check B$.\footnote{\cite[Expl.\,6.2]{Theta} identifies the
generic fiber of the algebraization $(\check X\to T,\check D)$ of the resulting
toric degeneration with the family of elliptic curves
$g(t)(x_0^3+x_1^3+x_2^3)+x_0 x_1 x_2$ in the trivial deformation of $\PP^2$.
Here $g(t)=t+O(t^2)$ is an analytic change of parameter related to Jacobian
theta functions.} For this choice of $\check \varphi$ the Legendre dual of
$(\check B,\check\P,\check \varphi)$ is shown in Figure~\ref{fig:PP^2} on the
right. Note that the unbounded edges are indeed parallel, so each unbounded edge
comes with copies of the other two unbounded edges parallel at integral
distance~$1$. 

Now let us compute $X_0$ and $W_{\PP^2}^0$. The polyhedral decomposition has one
bounded maximal cell $\sigma_0$ and three unbounded maximal cells
$\sigma_1,\sigma_2,\sigma_3$. The bounded cell is the momentum polytope of the
$X_{\sigma_0}$, a $\ZZ/3$-quotient of $\PP^2$. Each unbounded cell is
affine isomorphic to $[0,1]\times \RR_{\ge0}$, the momentum polytope of
$\PP^1\times\AA^1=: X_{\sigma_i}$, $i=1,2,3$. The $X_{\sigma_i}$ glue together
by torically identifying pairs of $\PP^1$'s and $\AA^1$'s as prescribed by the
polyhedral decomposition to yield $X_0$. By definition, $W_{\PP^2}^0$ vanishes
identically on the compact component $X_{\sigma_0}$. Each of the unbounded
components has two parallel unbounded edges, leading to the pull-back to
$\PP^1\times\AA^1$ of the toric coordinate function of $\AA^1$, say $z_i$ for
the $i$-th copy. Thus $W^0_{\PP^2}|_{X_{\sigma_i}}=z_i$ for $i=1,2,3$ and
$W^0:X_0\to\AA^1$ is proper. These functions are clearly compatible with the
toric gluings.
\end{example}

\begin{remark}
An interesting feature of the degeneration point of view is that the mirror
construction respects the finer data related to the degeneration such as the
monodromy representation of the affine structure. In particular, this poses a
question of uniqueness of the Landau-Ginzburg mirror. For the anticanonical
polarization such as the chosen one in the case of $\PP^2$, the tropical data
$(\check B,\check \P)$ is essentially unique, see Theorem~\ref{Thm: unique} for
a precise statement. For larger polarizations (thus enlarging $\check B$) there
are certainly many more possibilities. For example, as an affine manifold with
singularities one can perturb the location of the singular points transversally
to the invariant directions over the rational numbers and choose an adapted
integral polyhedral decomposition after appropriate rescaling. It is not clear
to us if all $(\check B,\check \P)$ with flat boundary leading to $\PP^2$ can be
obtained by this procedure.
\end{remark}

%===========================================================
%===========================================================

\section{Scattering of monomials}
\label{sect: scattering of
monomials}
A central tool in \cite{affinecomplex} are scattering diagrams. The purpose of
this section is to study the propagation of monomials through scattering
diagrams. Assume $\scrS_k$ is a structure that is consistent to order $k$ and
let $\foj$ be a \emph{joint} of $\scrS_k$. Recall that a joint for a wall
structure is a codimension two cell of the polyhedral decomposition of $B$ with
codimension one skeleton the union of walls in $\scrS_k$. Thus each joint is the
intersection of the walls that contain the joint. These walls containing $\foj$
define various \emph{scattering diagrams} $\D=(\forr_i, f_\foc)$ that collect
the data carried by $\scrS_k$ relevant around $\foj$. Each joint defines
several, closely related scattering diagrams, one for each choice of vertex
$v\in\sigma_\foj$ and $\omega\in\P$ with $\omega\subseteq \sigma_\foj$ see
\cite[Def.\,3.3, Constr.\,3.4]{affinecomplex}. Here $\sigma_\foj\in\P$ is
the minimal cell containing $\foj$. A scattering diagram is a collection of
half-lines $\RR_{\ge0}\doublebar m$ in the two-dimensional quotient space
$Q_{\foj,\RR}^v=\big(\Lambda_{B,v}/ \Lambda_{\foj,v}\big)_\RR$ along with a
function attached to each half-line. The double-bar notation denotes the image
of an element or subset in $Q_{\foj,\RR}^v$, such as $\doublebar m$,
$\doublebar\sigma$. The functions lie in the ring $\kk[P_x]$ defining the local
model of $\X$ at some $x\in (\foj\cap \Int \omega)\setminus\Delta$. Any
codimension one cell $\rho\in\P$ containing $\foj$ produces one or two
half-lines, the latter in the case $\foj\cap\Int\rho\neq\emptyset$. Half-lines
obtained from codimension one-cells are called \emph{cuts}, all other
half-lines \emph{rays}.

For an exponent $m_0$ with $\overline m_0\in\Lambda_v\setminus \Lambda_\foj$ we
wish to define the scattering of the monomial $z^{m_0}$, which we think of
traveling along the ray $-\RR_{\ge0} \doublebar m_0$ into the origin of
$\shQ_{\foj,\RR}^v\simeq\RR^2$. In a scattering diagram monomials travel along
\emph{trajectories}. These are defined in exactly the same way as rays
\cite[Def.\,3.3]{affinecomplex}, but will have an additive meaning.

\begin{definition}
A \emph{trajectory} in $\shQ_{\foj,\RR}^v$ is a triple $(\fot,m_\fot, a_\fot)$,
where $m_\fot$ is a monomial on a maximal cell $\sigma\ni v$ with $\pm
\doublebar m_\fot\in \doublebar \sigma$ and $m\in P_x$ for all $x\in
\foj\setminus\Delta$, $\fot=\pm \RR_{\ge 0} \doublebar m$, and $a_\fot\in \kk$.
The trajectory is called \emph{incoming} if $\fot=\RR_{\ge 0} \doublebar m$, and
\emph{outgoing} if $\fot=-\RR_{\ge 0} \doublebar m$. By abuse of notation we
often suppress $m_\fot$ and $a_\fot$ when referring to trajectories.
\end{definition}

Here is the generalization of the central existence and uniqueness result for
scattering diagrams \cite[Prop.\,3.9]{affinecomplex} incorporating trajectories.
This result is crucial for the existence proof of the superpotential in
Lemmas~\ref{lem:independence of p} and~\ref{lem:change of chambers}.

\begin{proposition}
\label{prop: trajectory scattering}
Let $\D$ be the scattering diagram defined by $\scrS_k$ for
$\foj\in\Joints(\scrS_k)$, $g:\omega\to\sigma_\foj$, $v\in\omega$. Let
$(\RR_{\ge0}\doublebar m_0,m_0,1)$ be an incoming trajectory and $\sigma\supset
\foj$ a maximal cell with $\doublebar m_0\in \doublebar \sigma$. For $\doublebar
m\in \shQ_{\foj,\RR}^v\setminus \{0\}$ denote by
\[
\theta_{\scriptsize\doublebar m}: R^k_{g,\sigma'} \to R^k_{g,\sigma}
\]
the ring isomorphism defined by $\D$ for a path connecting $-\doublebar m$ to
$\doublebar m_0$, where $\sigma'$ is a maximal cell with $-\doublebar
m\in\doublebar{\sigma'}$.

Then there is a set $\foT$ of outgoing trajectories such that
\begin{equation}
\label{eqn:monomial scattering}
z^{m_0}= \sum_{\fot\in\foT} \theta_{\scriptsize\doublebar m_\fot}
(a_\fot z^{m_\fot})
\end{equation}
holds in $R^k_{g,\sigma}$. Moreover, $\foT$ is unique if $a_{\fot}z^{m_\fot}\neq
0$ in $R^k_{g,\sigma'}$ for all $\fot\in\foT$, and if $m_\fot\neq m_{\fot'}$
whenever $\fot\neq\fot'$.
\end{proposition}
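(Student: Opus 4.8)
The plan is to adapt the existence–uniqueness proof for scattering diagrams in \cite[Prop.\,3.9]{affinecomplex} to the additive situation of propagating a single monomial. The key structural observation is that, since $\scrS_k$ is consistent to order $k$, the path-ordered product $\theta_\gamma$ around the joint $\foj$ is the identity on $R^k_{g,\sigma}$; so the whole construction takes place inside the automorphism group of $R^k_{g,\sigma}$, working order-by-order in the maximal ideal $\maxid_k$ generated by $t$ and the wall parameters. I would set up the induction exactly as in \cite{affinecomplex}: assume the identity \eqref{eqn:monomial scattering} has been solved modulo $\maxid_k^{\,\ell}$ by a finite set $\foT_\ell$ of outgoing trajectories, and show it can be corrected modulo $\maxid_k^{\,\ell+1}$ in a unique way. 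The base case $\ell=1$ is trivial: taking $\foT_1=\{(\RR_{\ge0}\doublebar m_0 \text{ reflected},m_0,1)\}$, i.e.\ the outgoing trajectory carrying $z^{m_0}$ itself, the two sides of \eqref{eqn:monomial scattering} agree modulo $\maxid_k$ because every $\theta_{\doublebar m}$ and every wall-crossing automorphism reduces to the identity there.

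For the inductive step I would first apply the ring isomorphisms $\theta_{\doublebar m_\fot}$ to move every term of the current approximate sum into $R^k_{g,\sigma}$, collect $z^{m_0}-\sum_{\fot\in\foT_\ell}\theta_{\doublebar m_\fot}(a_\fot z^{m_\fot})$, and observe that by the inductive hypothesis this error lies in $\maxid_k^{\,\ell}\cdot R^k_{g,\sigma}$. Now I would feed the error as an incoming datum into the degree-$\ell$ part of the scattering, just as in \cite{affinecomplex}: the error, regarded as a sum of monomials on the maximal cell $\sigma$ with coefficients in $\maxid_k^{\,\ell}/\maxid_k^{\,\ell+1}$, is a ``Lie-algebra valued'' element that the consistency of $\scrS_k$ forces to be expressible — uniquely, working in the graded piece where the non-abelian corrections vanish — as a sum of outgoing rays. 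Each such ray contributes a new outgoing trajectory $(\fot,m_\fot,a_\fot)$ with $a_\fot\in\kk$, $\overline m_\fot$ the corresponding primitive direction, and one enlarges $\foT_\ell$ to $\foT_{\ell+1}$ accordingly; finiteness at each order is inherited from the fact that only finitely many walls of $\scrS_k$ meet $\foj$ and each carries a polynomial wall function. Uniqueness of the correction at each order follows because $\theta_{\doublebar m}$ is a ring isomorphism (so subtracting two solutions gives a homogeneous relation among distinct monomials $z^{m_\fot}$, forcing all coefficients to vanish), using exactly the hypotheses in the last sentence of the statement: $a_\fot z^{m_\fot}\neq 0$ in $R^k_{g,\sigma'}$ and $m_\fot\neq m_{\fot'}$ for $\fot\neq\fot'$. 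Passing to the limit over $\ell$ (the process terminates modulo $\maxid_k^{\,k+1}$ anyway, since we only work to order $k$) produces the desired finite $\foT$.

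The main obstacle I anticipate is bookkeeping the interaction between the \emph{multiplicative} wall-crossing automorphisms $\theta_{\doublebar m}$ and the \emph{additive} trajectories: one must check that the linearization at each graded step genuinely reduces the problem to the abelian statement ``every degree-$\ell$ error is a unique sum of monomials along outgoing rays,'' which is the content that makes \cite[Prop.\,3.9]{affinecomplex} work, and that no trajectory is ever forced into a \emph{cut} or an \emph{incoming} direction (this is where the placement of $-\doublebar m_0$ as the unique incoming direction and the convexity/positivity built into $\scrS_k$ are used). A secondary subtlety is the choice of paths defining $\theta_{\doublebar m_\fot}$: one must verify that \eqref{eqn:monomial scattering} is independent of these choices up to the stated equivalence of trajectory sets, which again reduces to consistency of $\scrS_k$ around $\foj$ — the path-ordered product around a loop being trivial. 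I would handle this by fixing, once and for all, a reference chamber containing $\doublebar m_0$ and defining all $\theta_{\doublebar m}$ relative to it, exactly paralleling the treatment of rays in \cite[Def.\,3.3, Constr.\,3.4]{affinecomplex}.
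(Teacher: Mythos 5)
There is a genuine gap: your induction is set up as if every wall-crossing map were the identity to leading order, so that the error at each step sits in a higher graded piece and can be matched ``abelianly'' by new outgoing trajectories. That is true precisely when the joint lies in the interior of a maximal cell ($\codim\sigma_\foj=0$, no cuts), and in that case your argument is essentially the paper's first, easy step (induction on $l=\ord_{\sigma_\foj}$, noting $\theta_{\scriptsize\doublebar m}(az^m)=az^m$ for the correction terms). But when $\foj$ lies in a codimension one or two cell, the scattering diagram contains \emph{cuts} coming from slabs, and crossing a cut multiplies a monomial by $f_{\rho,v}^{\pm\langle m,n\rangle}$, where the slab function $f_{\rho,v}$ has a nontrivial constant term and monomials of $\ord_{\sigma_\foj}$ equal to zero. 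So already in your base case the claim ``every $\theta_{\scriptsize\doublebar m}$ reduces to the identity'' fails, the transported terms acquire denominators $f_{\rho,v}^{-i}$ and do not move up the filtration by $\maxid_k$, and the ``linearization at each graded step'' you flag as the main obstacle is exactly the point that breaks down: there is no abelian graded statement to reduce to in the presence of cuts. Likewise your uniqueness argument (``a homogeneous relation among distinct monomials forces all coefficients to vanish'') is a relation among the \emph{transported} terms $\theta_{\scriptsize\doublebar m}(a_m z^m)$, which are no longer distinct monomials once slab functions enter; linear independence is not automatic and is the substance of what must be proved.

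The paper fills this gap with two ingredients you do not supply. For existence it first computes the model case of a diagram with exactly two cuts, where $\theta(z^{m_0})=f_{\rho,v}^{\langle m_0,n\rangle}z^{m_0}$ is expanded explicitly into finitely many outgoing trajectories, and then handles the general case by a perturbation argument: shift the incoming trajectory generically so that all interactions with rays and cuts occur at distinct points away from the origin, build the perturbed diagram inductively, and pass to the associated asymptotic diagram (in the style of \cite[Constr.\,4.5]{affinecomplex}). For uniqueness it argues separately in codimension two (where all $\theta_{\scriptsize\doublebar m}$ extend to $\kk(\Lambda_{\sigma_\foj})$-algebra automorphisms after localization, restoring an induction on $\ord_{\sigma_\foj}$) and in codimension one (where one groups terms by the value $\langle m,n\rangle$, uses that $f_{\rho,v}$ only involves order-zero monomials, and multiplies back by $f_{\rho,v}^{i}$ to kill each graded piece). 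Without these steps --- or substitutes for them --- your proposal establishes the proposition only in the cut-free case.
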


\begin{proof}
The proof is by induction on $l\le k$. We first discuss the case that
$\sigma_\foj$ is a maximal cell, that is, $\codim \sigma_\foj=0$. Then $\D$
has only rays, no cuts. In particular, any $\theta_{\scriptsize\doublebar m}$ is
an automorphism of $R^k_{g,\sigma}$ that is the identity modulo
$I_{g,\sigma}^{>0}$. Thus for $l=0$, \eqref{eqn:monomial scattering} forces one
outgoing trajectory $(-\RR_{\ge 0}\doublebar m_0,m_0,1)$ if
$\ord_{\sigma_\foj}(m_0)=0$ or none otherwise. For the induction step assume
\eqref{eqn:monomial scattering} holds in $R^{l-1}_{g,\sigma}$. Then in
$R^l_{g,\sigma}$ the difference of the two sides of \eqref{eqn:monomial
scattering} is a sum of monomials $a z^m$ with $\ord_{\sigma_\foj} (m)=l$. Since
$l>0$ and since there are no cuts (these represent slabs containing $\foj$), it
holds $\theta_{\scriptsize\doublebar m}(a z^m)=a z^m$. Thus after adding
appropriate trajectories $(-\RR_{\ge0}\doublebar m,m,a)$ with
$\ord_{\sigma_\foj}(m)=l$ to $\foT$, Equation~\eqref{eqn:monomial scattering}
holds in $R^l_{g,\sigma}$. This is the unique minimal choice of $\foT$. This finishes the proof in the case $\codim\sigma_\foj=0$.

Under the presence of cuts we have several rings $R^k_{g,\sigma'}$ for various
maximal cells $\sigma'\supset\foj$. This possibly brings in denominators that
are powers of $f_{\rho,v}$ for cells $\rho\supset \foj$ of codimension one. In
this case we show existence by a perturbation argument. To this end consider
first the simplest scattering diagram in codimension one consisting of only two
cuts $\foc_\pm=(\pm \foc,f_{\rho,v})$ dividing $\shQ$ into two halfplanes
$\doublebar\sigma_\pm$ and with the same attached function. The signs are chosen
in such a way that $\doublebar m_0\in \doublebar \sigma_-$. Let $\theta:
R^k_{g,\sigma_-}\to R^k_{g,\sigma_+}$ be the isomorphism defined by a path from
$\doublebar \sigma_-$ to $\doublebar \sigma_+$ and let
$n\in\Lambda_\rho^\perp\subseteq \Lambda_v^*$ be the primitive integral vector
that is positive on $\sigma_-$. Then $\langle m_0,n \rangle \ge 0$ and
\[
\theta(z^{m_0})= f_{\rho,v}^{\langle  m_0,n\rangle} \cdot z^{m_0}.
\]
Expanding yields the finite sum
\begin{equation}
\label{Eqn: theta(z^m)}
\theta(z^{m_0})= \sum_{\langle m,n\rangle\ge 0} a_m z^m
= \theta\Big(\sum_{\langle m,n\rangle\ge 0} a_m
\theta^{-1}(z^m)\Big)
=\theta\Big(\sum_{\langle m,n\rangle\ge 0}
\theta_{\scriptsize\doublebar m}(a_m z^m)\Big),
\end{equation}
for some $a_m\in\kk$. Note that $\theta^{-1}(z^m)=\theta_{\scriptsize\doublebar
m}(z^m)$ by the definition of $\theta_{\scriptsize\doublebar m}$. Now
\eqref{Eqn: theta(z^m)} equals $\theta$ applied to \eqref{eqn:monomial
scattering} for the set of trajectories
\[
\foT:=\big\{(-\RR_{\ge0}\doublebar m,m,a_m)\,\big|\,\langle
m,n\rangle\ge 0, a_mz^m\neq 0 \big\}.
\]
Hence existence is clear in this case.

In the general case we work with
perturbed trajectories as suggested by Figure~\ref{fig:perturbed scattering}.
\begin{figure}
\input{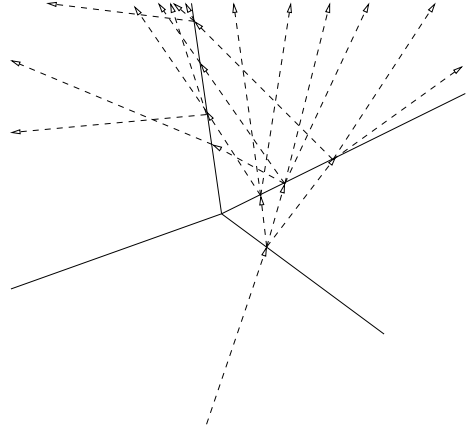}
\caption{Scattering diagram with perturbed trajectories (cuts
and rays solid, perturbed trajectories dashed).}
\label{fig:perturbed scattering}
\end{figure}
More precisely, a perturbed trajectory is a trajectory with the origin shifted.
There is one unbounded perturbed incoming trajectory, a translation of
$(-\RR_{\ge 0}\doublebar m_0,m_0,1)$, and a number of perturbed outgoing
trajectories, each the result of scattering of other trajectories with rays or
cuts. At each intersection point of a trajectory with a ray or cut, the incoming
and outgoing trajectories at this point fulfill an equation analogous to
\eqref{eqn:monomial scattering}. Similar to \cite[Constr.\,4.5]{affinecomplex}
with our additive trajectories replacing the multiplicative
$s$-rays\footnote{For technical reasons $s$-rays were not asked to be piecewise
affine. In the present situation we insist in piecewise affine objects.}, there
is then an asymptotic scattering diagram with trajectories obtained by taking
the limit $\lambda\to 0$ of rescaling the whole diagram by $\lambda\in\RR_{>0}$.
Any choice of perturbed incoming trajectory determines a unique minimal
scattering diagram with perturbed trajectories. Moreover, for a generic choice
of perturbed incoming trajectory the intersection points of trajectories with
rays or cuts are pairwise disjoint, and they are in particular different from
the origin. Hence the perturbed diagram can be constructed uniquely by induction
on $l\le k$. Taking the associated asymptotic scattering diagram with
trajectories now establishes the existence in the general case.

Next we show uniqueness for $\codim\sigma_\foj>0$. For $\codim\sigma_\foj=2$ any
monomial $z^m$ in $\kk[\Lambda_{\sigma_\foj}]$ fulfills
$\ord_{\sigma_\foj}(m)=0$, and all $\theta_{\scriptsize\doublebar m}$ extend to
$\kk(\Lambda_{ \sigma_\foj})$-algebra automorphisms of
$R^k_{g,\sigma}\otimes_{\kk[\Lambda_{\sigma_\foj}]} \kk(\Lambda_{\sigma_\foj})$.
Hence we can deduce uniqueness by induction on $l\le k$ as in codimension~$0$ by
taking the factors $a$ of trajectories to be polynomials with coefficients in
$\kk(\Lambda_{\sigma_\foj})$. Thus we combine all trajectories $\fot$ with the
same $\doublebar m_\fot$ and the same $\ord_{\sigma_\foj} (m_\fot)$. It is clear
that such generalized trajectories can be split uniquely into proper
trajectories with all $m$ distinct, showing uniqueness in this case.

Finally, for uniqueness in codimension one we can not argue just with
$\ord_{\sigma_\foj}$ because there are monomials $z^m$ with
$\ord_{\sigma_\foj}(m)=0$ but $\doublebar m\neq0$. Instead we look closer at the
effect of adding trajectories. By induction it suffices to study the insertion
of trajectories $(-\RR_{\ge 0}\doublebar m, m,a)$ with $\ord_{\sigma_\foj}
(m)=l$ for each $m$ and such that \eqref{eqn:monomial scattering} continues to
hold. By working with a perturbed scattering diagram as in Figure~4.1 of
\cite{affinecomplex} and the asociated asymptotic scattering diagram as in
\cite[\S4.3]{affinecomplex}, it suffices to consider the case of only two cuts
as already considered above. In this case we have
\[
0= \sum_m \theta_{\scriptsize\doublebar m} (a_m z^m)
\;=\;\sum_{i=0}^l \sum_{\langle m,n\rangle=i}
\theta_{\scriptsize\doublebar m} (a_m z^m)\\
=\sum_{i=0}^l  f_{\rho,v}^{-i}\sum_{\langle m,n\rangle=i} a_m z^m.
\]
Since all monomials in $f_{\rho,v}$ have vanishing $\ord_{\sigma_{\foj}}$ and
only monomials $z^m$ with the same value of $\langle m,n\rangle$ can cancel,
this equation implies
\[
f_{\rho,v}^{-i}\sum_{\langle m,n\rangle=i} a_m z^m=0
\]
holds for each $i$. Multiplying by $f_{\rho,v}^i$ thus shows $\sum_{\langle
m,n\rangle=i} a_m z^m=0$ in $R_{g,\sigma}^l$, and hence $a_m=0$ for all $m$.
This proves uniqueness also in codimension one.
\end{proof}

%===========================================================
%===========================================================

\section{The superpotential via broken lines}
\label{Ch: Broken lines}
The easiest way to define the superpotential in full generality is by the method
of broken lines. Broken lines have been introduced by Mark Gross for $\dim B=2$
in his work on mirror symmetry for $\PP^2$ \cite{PP2mirror}. We assume we are
given a locally finite scattering diagram $\scrS_k$ for the non-compact
intersection complex $(B,\P,\varphi)$ of a polarized LG-model that is consistent
to order $k$, as given by Assumption~\ref{overall assumption}. The notion of
broken lines is based on the transport of monomials by changing chambers of
$\scrS_k$. Recall from \cite[Def.\,2.22]{affinecomplex} that a chamber is the
closure of a connected component of $B\setminus|\scrS_k|$.

\begin{definition}
\label{def: monomial transport}
Let $\fou,\fou'$ be neighboring chambers of $\scrS_k$, that is,
$\dim(\fou\cap\fou') =n-1$. Let $a z^m$ be a monomial defined at all points of
$\fou\cap\fou'$ and assume that $\overline m$ points from $\fou'$ to $\fou$. Let
$\tau:= \sigma_\fou\cap \sigma_{\fou'}$ and
\[
\theta: R^k_{\id_\tau,\sigma_\fou}\to R^k_{\id_\tau,\sigma_{\fou'}}
\]
be the gluing isomorphism changing chambers. Then if
\begin{equation}
\label{eqn: transport}
\theta(az^m)=\sum_i a_iz^{m_i}
\end{equation}
we call any summand $a_i z^{m_i}$ with $\ord_{\sigma_{\fou'}}(m_i)\le k$ a
\emph{result of transport of $az^m$} from $\fou$ to $\fou'$.
\end{definition}

Note that since the change of chamber isomorphisms commute with changing strata,
the monomials $a_iz^{m_i}$ in Definition~\ref{def: monomial transport} are
defined at all points of $\fou\cap\fou'$. 

\begin{definition}
\label{def: broken lines}(Cf.\ \cite[Def.\,4.9]{PP2mirror})
A \emph{broken line} for $\scrS_k$ is a proper continuous maps
\[
\beta: (-\infty,0]\to B
\]
with image disjoint from any joint of $\scrS_k$, along with a sequence
$-\infty=t_0<t_1<\ldots< t_{r-1}\le t_r=0$ for some $r\ge 0$ with $\beta(t_i)\in
|\scrS_k|$, and for $i=1,\ldots,r$ monomials $a_i z^{m_i}$ defined at all points
of $\beta([t_{i-1},t_i])$ (for $i=1$, $\beta((-\infty,t_1])$), subject to the
following conditions.
\begin{enumerate}
\item
$\beta|_{(t_{i-1},t_i)}$ is a non-constant affine map with image disjoint from
$|\scrS_k|$, hence contained in the interior of a unique chamber $\fou_i$ of
$\scrS_k$, and $\beta'(t)=-\overline m_i$ for all $t\in (t_{i-1},t_i)$.
Moreover, if $t_r=t_{r-1}$ then $\fou_r\neq \fou_{r-1}$.
\item
$a_1=1$ and\footnote{The normalization condition $a_1=1$ has to be modified if
there are parallel unbounded edges not contained in one cell and the gluing data
are not trivial, see \cite[\S5.2]{Theta}.} there exists a (necessarily
unbounded) $\omega\in\P^{[1]}$ with $\overline m_1\in\Lambda_\omega$ primitive
and $\ord_\omega(m_1)=0$.
\item
For each $i=1,\ldots,r-1$ the monomial $a_{i+1} z^{m_{i+1}}$ is a result of
transport of $a_i z^{m_i}$ from $\fou_i$ to $\fou_{i+1}$ (Definition~\ref{def:
monomial transport}).
\end{enumerate}
The \emph{type} of $\beta$ is the tuple of all $\fou_i$ and $m_i$. By abuse of
notation we suppress the data $t_i,a_i, m_i$ when talking about broken lines,
but introduce the notation
\[
a_\beta:= a_r,\quad m_\beta:=m_r.
\]
For $p\in B$ the set of broken lines $\beta$ with $\beta(0)=p$ is denoted
$\foB(p)$.
\end{definition}

\begin{remark}
\label{rem: broken lines}
1)\ \ If $(B,\P)$ is asymptotically cylindrical (Definition~\ref{Def: parallel
edges}) then in Definition~\ref{def: broken lines} the existence of a one-cell
$\omega\in\P$ with $\overline m_1\in\Lambda_\omega$ in (2) follows from
(1).\\[1ex]
2)\ \ A broken line $\beta$ is determined uniquely by specifying its endpoint
$\beta(0)$ and its type. In fact, the coefficients $a_i$ are determine
inductively from $a_1=1$ by Equation~\eqref{eqn: transport}. 
\end{remark}

According to Remark~\ref{rem: broken lines},(2) the map $\beta\mapsto \beta(0)$
identifies the space of broken lines of a fixed type with a subset of $\fou_r$.
This subset is the interior of a polyhedron:

\begin{proposition}
\label{prop: broken line moduli polyhedral}
For each type $(\fou_i,m_i)$ of broken lines there is an integral, closed,
convex polyhedron $\Xi$, of dimension $n$ if non-empty, and an affine immersion
\[
\Phi: \Xi\lra \fou_r,
\]
so that $\Phi\big (\Int\Xi\big)$ is the set of endpoints $\beta(0)$ of broken
lines $\beta$ of the given type.
\end{proposition}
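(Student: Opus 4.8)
The plan is to construct $\Xi$ and $\Phi$ by induction on the number of breaks $r$, peeling off the segments of the broken line one at a time. For $r=1$ the broken line is a single ray of direction $-\overline m_1$ starting at infinity along an unbounded edge, so its endpoints form the interior of a chamber $\fou_1$ (or rather the interior of the intersection of $\fou_1$ with a translate), giving an $n$-dimensional polyhedron and $\Phi$ the inclusion. For the inductive step, suppose the endpoints of the truncated broken line (forgetting the last segment and its bending point $t_{r-1}$) of type $(\fou_1,m_1),\ldots,(\fou_{r-1},m_{r-1})$ are parametrized by $\Phi':\Xi'\to\fou_{r-1}$. The last segment leaves from the wall piece $\fou_{r-1}\cap\fou_r$ in direction $-\overline m_r$; so an endpoint $p\in\fou_r$ of a full broken line of the given type arises exactly when the open ray $p+\RR_{\ge 0}\overline m_r$ meets $\Phi'(\Int\Xi')$ inside the common wall and then continues into $\Int\fou_r$ without leaving it before reaching $p$. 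I would encode this as follows: set $\Xi$ to be the closure in $\Xi'\times\RR_{\ge 0}$ of the set of pairs $(\xi,\ell)$ with $\Phi'(\xi)\in\sigma_{\fou_{r-1}}\cap\sigma_{\fou_r}$ on the relevant wall and $\Phi'(\xi)-\ell\,\overline m_r\in\fou_r$, and let $\Phi(\xi,\ell)=\Phi'(\xi)-\ell\,\overline m_r$.

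The key points to verify are then: (a) this $\Xi$ is an integral closed convex polyhedron; (b) $\Phi$ is an affine immersion; (c) $\dim\Xi=n$ when $\Xi\neq\emptyset$; and (d) $\Phi(\Int\Xi)$ is exactly the set of endpoints of broken lines of the given type. For (a), convexity and the polyhedral nature come from the fact that being parametrized by $\Xi'$ it is cut out of an affine space by finitely many affine inequalities: the inequalities defining $\Xi'$, the equations/inequalities expressing that $\Phi'(\xi)$ lies on the relevant facet of $\fou_{r-1}$ (dropping down one dimension onto the wall face — here one uses that $\Phi'$ is an affine immersion so preimages of faces are faces), the inequality $\ell\ge 0$, and the inequalities saying $\Phi'(\xi)-\ell\,\overline m_r$ satisfies the facet inequalities of $\fou_r$. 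Integrality follows because all the data — the chamber facets, the wall, and the exponent vectors $\overline m_i$ — are integral. For (b), $\Phi$ is the composition of the affine map $(\xi,\ell)\mapsto (\Phi'(\xi),\ell)$ with $(q,\ell)\mapsto q-\ell\,\overline m_r$; injectivity on fibers / local injectivity reduces to the fact that moving along $-\overline m_r$ is transverse to the wall containing $\Phi'(\Xi')$ (because $\overline m_r$ points from $\fou_{r-1}$ into $\fou_r$, hence is not tangent to the separating hyperplane), combined with the immersion property of $\Phi'$.

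For (c), the dimension count: $\Xi'$ has dimension $n$, but restricting $\Phi'(\xi)$ to lie on an $(n-1)$-dimensional wall cuts one dimension, and then the free parameter $\ell$ restores it, so $\dim\Xi=n$; one must check that when the resulting polyhedron is nonempty these constraints are indeed ``independent'' in the relevant sense, which again is the transversality of $\overline m_r$ to the wall. Statement (d) is essentially the definition of a broken line of a fixed type together with Remark~\ref{rem: broken lines},(2): given $p\in\Phi(\Int\Xi)$ the associated $(\xi,\ell)$ with $\xi\in\Int\Xi'$ reconstructs the truncated broken line, the point $\Phi'(\xi)$ in the interior of the wall is the bending locus $\beta(t_{r-1})$, and the final monomial $a_rz^{m_r}$ is the prescribed result of transport; conversely any broken line of the given type yields such a pair, and the requirement that the interior points correspond to broken lines whose bending points avoid joints and whose segments stay in chamber interiors matches ``$\Int\Xi$'' rather than $\Xi$. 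I expect the main obstacle to be the bookkeeping in (a)–(b): carefully checking that $\Phi'$ being an affine immersion (rather than an embedding) still lets one pull back the facet structure of $\fou_{r-1}$ to get a genuine polyhedral face of $\Xi'$, and that the two affine immersions compose to an affine immersion without collapsing dimensions — in other words, that the various affine constraints interact transversally. The one subtle edge case is the degenerate last segment $t_r=t_{r-1}$ allowed in Definition~\ref{def: broken lines},(1): there $\ell=0$ and one is simply on the wall changing chambers, which is handled by allowing $\ell=0$ in $\Xi$ and noting the corresponding endpoints lie in the boundary of $\fou_r$, consistent with $\Phi(\Int\Xi)$ excluding them unless the type genuinely continues.
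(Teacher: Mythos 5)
Your overall strategy (peel off the last segment; the break point contributes $n-1$ parameters on the wall and the segment length $\ell$ restores the $n$-th; transversality of $\overline m_r$ to the wall comes from the fact that transport only changes exponents by wall-tangent vectors) is exactly the intended ``exercise in polyhedral geometry'': the paper gives no argument beyond the hint that disjointness from joints matters, and your transversality observation is the correct mechanism behind that hint, since joints are precisely the relative boundaries of the codimension-one cells on which break points must lie.

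There is, however, a genuine gap in the inductive step as written. You characterize endpoints of the full type by requiring the backward ray from $p$ to meet $\Phi'\big(\Int\Xi'\big)$ \emph{inside the wall} $\fou_{r-1}\cap\fou_r$, and later assert that the parameter $\xi$ lies in $\Int\Xi'$. But no point of the wall can lie in $\Phi'(\Int\Xi')$: by the inductive hypothesis $\Phi'$ is an affine immersion from an $n$-dimensional polyhedron, so $\Phi'(\Int\Xi')$ is open in the ambient affine space, while points just across the wall lie in $\fou_r$ and are not endpoints of the truncated type (their last open segment would cross $|\scrS_k|$); equivalently, in your construction the slice $\{\xi\in\Xi':\Phi'(\xi)\in\fou_{r-1}\cap\fou_r\}$ has the facet inequality of $\fou_{r-1}$ supporting the wall tight, hence sits in $\partial\Xi'$. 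So the inductive hypothesis, which only describes $\Phi'(\Int\Xi')$, says nothing about which points of that boundary face are admissible break points, and as stated your step (d) characterizes the empty set. The repair is to strengthen the induction: show along the way that on the relative interior of the face of $\Xi'$ lying over the relative interior of the wall, the \emph{only} condition that degenerates is ``endpoint lands on the wall'' (all $t_i$ remain distinct, the open segments stay in open chambers, no joint is met), so these points are exactly the valid break points, while zero-length segments, segments absorbed into walls, and break points on joints occur on other faces. Alternatively one can bypass the induction: since every crossing is transverse, each break point is an affine function of $p$ in a chart unfolded along the broken line, and the conditions (each $q_i$ in its wall, each segment in its chamber, the first segment an unbounded ray in $\fou_1$) are finitely many integral affine inequalities in $p$, giving $\Xi$ and the immersion $\Phi$ directly. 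Two smaller points: the $(n-1)$-dimensionality of the break-point locus, when the type is realized, follows from relative openness of that locus in the wall (not from transversality of $\overline m_r$, which instead gives the immersion property and the extra $\ell$-dimension), and for types admitting no broken line you should simply take $\Xi=\emptyset$ rather than the possibly nonempty lower-dimensional set your closure produces.
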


\begin{proof}
This is an exercise in polyhedral geometry left to the reader. For the statement
on dimensions it is important that broken lines are disjoint from joints.
\end{proof}

\begin{remark}
\label{degenerate broken lines}
A point $p\in \Phi(\partial \Xi)$ still has a meaning as an endpoint of a
piecewise affine map $\beta:(-\infty,0]\to B$ together with data $t_i$ and $a_i
z^{m_i}$, defining a \emph{degenerate broken line}. For $\beta$ not to be a
broken line, $\im(\beta)$ has to intersect a joint.

Indeed, if $\beta$ violates Definition~\ref{def: broken lines},(1) then
$t_{i-1}=t_i$ or there exists $t\in (t_{i-1},t_i)$ with $\beta(t)\in|\scrS_k|$.
In the first case denote by $\fop_j\in\scrS$ the wall with
$\beta'(t_j)\in\fop_j$ for all $\beta'\in \Phi(\Int\Xi)$. Then
$\beta(t_{i-1})=\beta(t_i)$ lies in $\fop_{i-1}\cap\fop_i$, hence in a joint.
In the second case convexity of the chambers implies that
$\beta([t_{i-1},t])\subset\partial \fou$ or $\beta([t,t_i])\subset\partial
\fou$. Thus $\beta$ maps a whole open interval to $|\scrS_k|$. The break
point $t_{i-1}$ or $t_i$ contained in this interval again intersects two
walls, hence is contained in a joint. The other conditions in the definition of
broken lines are closed.

The set of endpoints $\beta(0)$ of degenerate broken lines of a given type is
the $(n-1)$-dimensional polyhedral subset $\Phi(\partial\Xi)\subseteq \fou_r$.
The set of degenerate broken lines \emph{not transverse} to some joint of
$\scrS_k$ is polyhedral of smaller dimension.
\end{remark}

Any finite structure $\scrS_k$ involves only finitely many slabs and walls, and
each polynomial coming with each slab or wall carries only finitely many
monomials. Hence broken lines for $|\scrS_k|$ exist only for finitely many
types. The following definition is therefore meaningful.

\begin{definition}
\label{def: general points}
A point $p\in B$ is called \emph{general} (for the given structure $\scrS_k$) if
it is not contained in $\Phi(\partial \Xi)$, for any $\Phi$ as in
Proposition~\ref{prop: broken line moduli polyhedral}.
\end{definition}

Recall from \cite[\S2.6]{affinecomplex} that $\scrS_k$ defines a $k$-th order
deformation of $X_0$ by gluing the sheaf of rings defined by
$R^k_{g,\sigma_\fou}$, with $g:\omega\to\tau$ and $\fou$ a chamber of $\scrS_k$
with $\omega\cap\fou\neq\emptyset$, $\tau\subseteq\sigma_\fou$. Given a general
$p\in \fou$ we can now define the \emph{superpotential} up to order $k$ locally
as an element of $R^k_{g,\sigma_\fou}$ by
\begin{equation}
\label{Eqn: W^k}
W^k_{g,\fou}(p):=\sum_{\beta\in \foB(p)} a_\beta z^{m_\beta}.
\end{equation}
The existence of a canonical extension $W^k$ of $W^0$ to $X_k$ follows once we
check that (i)~$W^k_{g,\fou}(p)$ is independent of the choice of a general
$p\in\fou$ and (ii)~the $W^k_{g,\fou}(p)$ are compatible with changing strata or
chambers \cite[Constr.\,2.24]{affinecomplex}. This is the content of the
following two lemmas.

\begin{lemma}
\label{lem:independence of p}
Let $\fou$ be a chamber of $\scrS_k$ and $g:\omega\to\tau$ with $\omega\cap\fou
\neq\emptyset$, $\tau\subseteq\sigma_\fou$. Then $W^k_{g,\fou}(p)$ is
independent of the choice of $p\in\fou$.
\end{lemma}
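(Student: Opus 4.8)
The plan is to join $p$ and $p'$ by a path inside $\fou$ and to check that $W^k_{g,\fou}$ is unchanged as the path crosses the locus where broken lines degenerate. Since $\scrS_k$ is locally finite, only finitely many types of broken lines occur, so by Proposition~\ref{prop: broken line moduli polyhedral} the ``non-general'' locus $\bigcup_\Xi \Phi(\partial\Xi)\subseteq\fou$ is a finite union of codimension-one polyhedral subsets, and by Remark~\ref{degenerate broken lines} those degenerate broken lines that are non-transverse to the joints, or meet more than one joint, or meet a stratum deeper than a joint, account only for a subset of codimension $\ge 2$. Hence any two general points $p,p'\in\fou$ can be joined by a path $\gamma\subseteq\fou$ meeting $\bigcup_\Xi\Phi(\partial\Xi)$ in finitely many points, each lying in the relative interior of a top-dimensional stratum of this set; for a suitable choice of $\gamma$, at each such point $q$ the degenerate broken lines with endpoint $q$ all pass transversally, each exactly once, through one and the same joint $\foj$, and their images are disjoint from all lower-dimensional strata. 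It then suffices to show that $W^k_{g,\fou}$ is unchanged across one such elementary crossing.

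First I would discard the broken lines not affected by the crossing. If the type of $\beta\in\foB(p)$ has $q$ in the image $\Phi(\Int\Xi)$ of the interior of its moduli polyhedron (resp.\ misses $\Phi(\Xi)$ altogether), then the same holds for all endpoints near $q$ on both sides of $\gamma$, so a broken line of this type exists for both $p$ and $p'$ (resp.\ for neither); and by Remark~\ref{rem: broken lines},(2) its contribution $a_\beta z^{m_\beta}$ depends only on the type. Such broken lines therefore contribute equally to $W^k_{g,\fou}(p)$ and $W^k_{g,\fou}(p')$, and it remains to treat the types with $q\in\Phi(\partial\Xi)$, i.e.\ the broken lines whose endpoint can be chosen on one side of $\gamma$ but not the other. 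By the reductions above, each of these, as its endpoint tends to $q$, converges to the unique degenerate broken line $\beta_0$ through $\foj$, agreeing with $\beta_0$ away from a small neighbourhood of $\foj$.

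The heart of the matter is then local at $\foj$. All the affected broken lines share the common initial portion of $\beta_0$ that ends just before $\foj$; transporting its monomial through $\scrS_k$ into the ring $R^k_{g,\sigma}$ governing the local model of $\X$ at $\foj$---for the data $g:\omega\to\sigma_\foj$, $v\in\omega$ read off from the local position of $\beta_0$, as in \cite[Constr.\,3.4]{affinecomplex}---one obtains, by linearity, a scalar multiple of a single monomial $z^{m_0}$, with $\overline{m_0}$ pointing towards $\foj$, i.e.\ defining the incoming trajectory $(\RR_{\ge0}\doublebar{m_0},m_0,1)$ in $\shQ_{\foj,\RR}^v$. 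Now a broken line that passes near $\foj$ and breaks off the walls through $\foj$ is nothing but the concatenation of the corresponding monomial transports from Definition~\ref{def: monomial transport}, which is precisely what the maps $\theta_{\scriptsize\doublebar m}$ of Proposition~\ref{prop: trajectory scattering} encode. On the side of $\gamma$ where $\beta_0$'s continuation can be followed without meeting a wall of $\scrS_k$ near $\foj$, the affected broken lines contribute (after transporting the common initial portion and restoring the common scalar) just $z^{m_0}$; on the other side they break, realizing the outgoing trajectories $\fot$, and Proposition~\ref{prop: trajectory scattering}---its existence part to produce the set $\foT$, its uniqueness part to pin it down---identifies their total contribution with $\sum_{\fot\in\foT}\theta_{\scriptsize\doublebar m_\fot}(a_\fot z^{m_\fot})$. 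Since $\scrS_k$ is consistent to order $k$, the relevant transports near $\foj$ are independent of which side of $\foj$ the connecting path runs, so both contributions are computed in the same ring $R^k_{g,\sigma_\fou}$; Equation~\eqref{eqn:monomial scattering} then shows they agree. Hence $W^k_{g,\fou}(p)=W^k_{g,\fou}(p')$ across the crossing, and, summing over all crossings of $\gamma$, for all general $p,p'\in\fou$.

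I expect the main obstacle to be this last localization step: setting up the precise dictionary between the affected broken lines near $\foj$, on each side of $\gamma$, and the incoming and outgoing trajectories of the $\foj$-scattering diagram, and verifying that scalars and the whole chain of transport isomorphisms match up \emph{exactly}---in particular keeping track of the sign conventions ($-\overline m$ versus $\overline m$, and which side of $\foj$ is which), of the common initial monomial, and of the passage from the local ring $R^k_{g,\sigma}$ at $\foj$ to the ring $R^k_{g,\sigma_\fou}$ at $p$. The genericity reductions in the first two paragraphs, though routine, rely on the finiteness of the set of types together with the codimension estimates recorded in Remark~\ref{degenerate broken lines}.
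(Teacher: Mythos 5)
Your overall strategy (reduce to crossing the codimension-one locus of endpoints of degenerate broken lines, then compare the two one-sided sums using the scattering of monomials of Proposition~\ref{prop: trajectory scattering} and consistency) is the same as the paper's, but two of the reductions you make are not valid, and the step you yourself flag as the ``main obstacle'' is precisely the substance of the paper's proof, so as it stands the argument has genuine gaps. First, the claim that after a suitable choice of $\gamma$ every degenerate broken line at a crossing point meets exactly one joint is not justified: Remark~\ref{degenerate broken lines} only controls \emph{non-transverse} incidences, and a degenerate broken line can meet two joints which impose the \emph{same} linear condition on the endpoint (e.g.\ a second joint lying further along the same segment of travel); this locus is codimension one in $\fou$ and cannot be avoided by perturbing the path. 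The paper therefore does not reduce to a single joint; it shows that each one-sided sum equals the transport of the initial monomial along the fixed underlying map $\overline\beta_0$, obtained by applying \eqref{eqn:monomial scattering} successively at \emph{every} joint met, and your local analysis would at least have to be iterated, tracking how outgoing trajectories at one joint become incoming data at the next.

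Second, the asserted dichotomy---that on one side of the crossing the affected broken lines meet no wall near $\foj$ and contribute just $z^{m_0}$, while on the other side they break and realize the outgoing trajectories---is false in general: the walls containing $\foj$ lie on both sides of the incoming direction, so broken lines with endpoints on either side of the crossing generically break, only at different walls, and (as the paper stresses) there need not even be a bijection between the two families. What must be proved is that \emph{each} one-sided sum equals the canonical right-hand side of \eqref{eqn:monomial scattering}, and this identification is exactly the dictionary you leave open. The paper establishes it by deforming all affected broken lines so that they pass through a common moving point $x_s$ on a transverse hyperplane $U$ placed far out along the unbounded direction (where $\overline\beta_0$ crosses no walls), observing that the resulting family is precisely a scattering diagram with perturbed trajectories as in the proof of Proposition~\ref{prop: trajectory scattering}, and reading off the one-sided sum from the associated asymptotic diagram; uniqueness of the set $\foT$ then gives equality of the two sides. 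Without this construction, and without treating the several-joint case, the proposal is an outline of the right shape rather than a proof.
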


\begin{proof}
By Proposition~\ref{prop: broken line moduli polyhedral} the set $A \subseteq
\fou$ of non-general points is a finite union of nowhere dense polyhedra.
Moreover, since all $\Phi$ in Proposition~\ref{prop: broken line moduli
polyhedral} are local affine isomorphisms, for each path $\gamma: [0,1]\to
\fou\setminus A$ and broken line $\beta_0$ with $\beta_0(0)= \gamma(0)$ there
exists a unique family $\beta_s$ of broken lines with endpoints
$\beta_s(0)=\gamma(s)$ and with the same type as $\beta_0$. Hence
$W^k_{g,\fou}(p)$ is locally constant on $\fou\setminus A$.

To pass between the different connected components of $\fou\setminus A$,
consider the set $A'\subseteq A$ of endpoints of degenerate broken lines that
are not transverse to the joints of $\scrS_k$. More precisely, for each type of
broken line, the set of endpoints of broken lines intersecting a given joint
defines a polyhedral subset of $\fou$ of dimension at most $n-1$. Then $A'$ is
the union of the $(n-2)$-cells of these polyhedral subsets, for any joint and
any type of broken line. Since $\dim A'= n-2$, we conclude that $\fou\setminus
A'$ is path-connected. It thus suffices to study the following situation. Let
$\gamma:[-1,1]\to \Int \fou\setminus A'$ be an affine map with $\gamma(0)$ the
only point of intersection with $A$. Let $\overline\beta_0: (-\infty,0]\to B$ be
the underlying map of a degenerate broken line with endpoint $\gamma(0)$. The
point is that $\overline\beta_0$ may arise as a limit of several different types
of broken lines with endpoints $\gamma(s)$ for $s\neq0$. The lemma follows once
we show that the contributions to $W^k_{g,\fou} \big(\gamma(s)\big)$ of such
broken lines for $s<0$ and for $s>0$ coincide. Note we do not claim a bijection
between the sets of broken lines for $s<0$ and for $s>0$, which in fact need not
be true.

For later use let $V\subset \Int\fou$ be a local affine hyperplane intersecting
$\ol\beta_0$ only in $\ol\beta_0(0)=\gamma(0)$ and containing $\im(\gamma)$.
Note that $V$ is also transverse to $A$. Thus by the unique continuation
statement at the beginning of the proof, each broken line $\beta$ with endpoint
$\beta(0)\in V$ fits into a unique family of broken lines of the same type and
with endpoint any other point of the connected component of $\beta(0)$ in
$V\setminus A$.

In particular, since $\gamma^{-1}(A)=\{0\}$ any broken line $\beta$ with
endpoint $\gamma(s_0)$ for $s_0\neq 0$ extends uniquely to a family of broken
lines $\beta_s$ for $s\in [-1,0)$ or $s\in(0,1]$. Thus $\beta$ has a unique
limit $\lim\beta:=\lim_{s\to 0} \beta_s$, a possibly degenerate broken line. For
$s\neq 0$ denote by $\foB_s$ the space of broken lines $\beta$ with endpoint
$\gamma(s)$ and such that the map underlying $\lim\beta$ equals
$\overline\beta_0$. Since $\overline{\beta_0}$ is the underlying map of a
degenerate broken line, $\foB_s\neq \emptyset$ for some sufficiently small $s$,
hence also for all $s$ of the same sign, by unique continuation. Possibly by
changing signs in the domain of $\gamma$ we may thus assume $\foB_s\neq
\emptyset$ for $s<0$. We have to show
\begin{equation}
\label{eqn: equality of contributions}
\sum_{\beta\in\foB_{-1}} a_\beta z^{m_{\beta}}
= \sum_{\beta\in\foB_{1}} a_\beta z^{m_{\beta}}.
\end{equation}
Denote by $\foT_s$ the set of types of broken lines in $\foB_s$. Obviously,
$\foT_s$ only depends on the sign of $s$.

The central observation is the following. Let $J\subset B$ be the union of the
joints of $\scrS_k$ intersected by $\im \overline\beta_0$. Let $x:=
\overline\beta_0(t)$ for $t\ll 0$ be a point far off to $-\infty$. Thus $x$ lies
in one of the unbounded cells of $\P$ and $\overline\beta_0$ is asymptotically
parallel to an unbounded edge and does not cross a wall for $t'<t$. Let
$U\subseteq B$ be a local affine hyperplane intersecting $\overline{\beta_0}$
transversally at $x$. By transversality with $J$ the images of the degenerate
broken lines of types contained in any $\foB_s$ lie in a local affine hyperplane
$H\subset U$ ($\dim H=n-2$). Note that each joint that $\ol\beta_0$ meets
defines such a local hyperplane $H\subset U$, and if $\ol\beta_0$ meets several
joints, the hyperplanes agree locally near $x$ since $\gamma(0)\not\in A'$.
Moreover, the images of degenerate broken lines arising as the limit of broken
lines of type in $\foT_{-1}\cup\foT_1$ separate a contractible neighbourhood of
$\im\ol\beta_0$ in $B$ into two connected components. It follows that broken
lines of type in $\foT_s$ for $s<0$ intersect $U$ only on one side of $H$, and
for $s>0$ only on the other.

Now let $\breve\beta_s\in\foB_s$ be one family of broken lines, say for $s<0$.
Denote by $\fot$ the type of $\breve\beta_s$. Thus $\breve\beta_s$ is the unique
broken line $\beta$ of type $\fot$ with endpoint $\beta(0)=\gamma(s)$. Since
$\lim\breve\beta_s= \ol\beta_0$, the broken line $\breve\beta_s$ does not pass a
wall before hitting $U$, thus is a straight line. Denote by $x_s\in U\setminus
H$ the point of intersection of $\breve\beta_s$ with $U$. The $x_s$ vary affine
linearly with $s$ with $\lim_{s\to0} x_s= x$, hence define an affine line segment
in $U$. This line segment can be viewed as a lift of $\gamma([-1,0])\subset\fou$
to a line segment in $U$ via broken lines of type $\fot$ and their limits.

If $\beta_s$ is another family of broken lines in $\foB_s$ for $s<0$, of type
$\fot'$, then by the same argument $\beta_s$ hits $U$ in another point $x'_s$ in
the same connected component of $U\setminus H$. Moreover, there is a unique such
$\beta'_s$ with endpoint $\beta'_s(0)\in V$, where $V\subset\Int \fou$ is the
local affine hyperplane chosen above. In particular, for each $s$ there is a
unique broken line $\beta'_s$ of type $\fot'$ passing through $x_s$. In other
words, each broken line $\beta_s\in\foB_s$, $s<0$, deforms uniquely to a broken
line $\beta'_s$ with endpoint on $V$ and passing through $x_s$.

The set of $\beta'_s$ obtained in this way can alternatively be constructed as
follows. For $s<0$ all broken lines in $\foB_s$ have the same first chamber
$\fou_1$ and monomial $z^{m_1}$. Now start with the straight broken line ending
at $x_s$ and of type $(\fou_1,m_1)$. This broken line can be continued until it
hits a wall or slab, where it splits into several broken lines, one for each
summand in \eqref{eqn: transport}. Iterating this process leads to the infinite
set of all broken lines with asymptotic given by $m_1$ and running through
$x_s$. The $\beta'_s$ are the subset of the considered types, that is, with the
unique deformation for $s\to 0$ having underlying map $\overline\beta_0$ and
endpoint on $V$.

From this point of view it is clear that at each joint $\foj$ intersected by
$\im\overline \beta_0$ the $\beta'_s$ compute a scattering of monomials as
considered in \S\ref{sect: scattering of monomials}. In fact, the union of the
$\beta'_s$ with the same incoming part $az^m$ near $\foj$ induce a scattering
diagram with perturbed trajectories as considered in the proof of
Proposition~\ref{prop: trajectory scattering}. Thus the corresponding sum of
monomials leaving a neighborhood of $\foj$ can be read off from the right-hand
side of \eqref{eqn:monomial scattering} in this proposition, applied to the
incoming trajectory $(\RR_{\ge0}\doublebar m,m,a)$.

We conclude that $\sum_{\beta\in\foB_s} a_\beta z^{m_\beta}$ for $s<0$ computes
the transport of $z^{m_1}$ along $\overline \beta_0$. This transport is defined
by applying \eqref{eqn:monomial scattering} instead of \eqref{eqn: transport} at
each joint intersected transversally by $\overline\beta_0$. The same argument holds for $s>0$,
thus proving \eqref{eqn: equality of contributions}.
\end{proof}

\begin{remark}
\label{rem:generalized broken lines}
The proof of Lemma~\ref{lem:independence of p} really shows that the scattering
of monomials introduced in \S\ref{sect: scattering of monomials} allows to
replace the condition that broken lines have image disjoint from joints by
transversality with joints. In the following we refer to these as
\emph{generalized broken lines}. The set of degenerate broken lines with
endpoint $p$ is denoted $\ol\foB(p)$.
\end{remark}

\noindent
By Lemma~\ref{lem:independence of p} and Remark~\ref{rem:generalized broken
lines} we are now entitled to define
\begin{equation}
\label{Eqn: W from degenerate broken lines}
W^k_{g,\fou}:= W^k_{g,\fou}(p)= \sum_{\beta\in \ol\foB(p)} a_\beta z^{m_\beta},
\end{equation}
for any choice of $p\in \fou\setminus A'$, $A'$ the set of endpoints in $\fou$
of degenerate broken lines \emph{not} transverse to all joints of $\scrS_k$.

\begin{lemma}
\label{lem:change of chambers}
The $W^k_{g,\fou}$ are compatible with changing strata and changing chambers.
\end{lemma}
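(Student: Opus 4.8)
The plan is to verify the two compatibilities separately, in each case reducing the statement about the superpotential $W^k_{g,\fou}$ to a statement about individual (generalized) broken lines and the transport rule in Definition~\ref{def: monomial transport}, which in turn encodes the gluing isomorphisms of \cite[\S2.6]{affinecomplex}.

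\textbf{Changing strata.} Fix a chamber $\fou$ and inclusions $g:\omega\to\tau$, $g':\omega'\to\tau'$ with $\omega\cap\fou\neq\emptyset$, $\tau\subseteq\sigma_\fou$, and similarly for the primed data, together with a map relating them (say $\tau'\subseteq\tau$, so that there is a localization/restriction homomorphism $R^k_{g,\sigma_\fou}\to R^k_{g',\sigma_\fou}$ from \cite[Constr.\,2.7]{affinecomplex}). I would first pick a general $p\in\fou$ (in the sense of Definition~\ref{def: general points}) lying in $\omega\cap\omega'\cap\fou$ if necessary, so that both $W^k_{g,\fou}(p)$ and $W^k_{g',\fou}(p)$ are computed by the \emph{same} set $\foB(p)$ of broken lines. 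The point is that the set of broken lines ending at $p$ depends only on $p$ and $\scrS_k$, not on the auxiliary cell data; what changes is only the ambient ring in which the monomials $a_\beta z^{m_\beta}$ are interpreted. Each monomial $a_iz^{m_i}$ appearing along a broken line is, by the remark following Definition~\ref{def: monomial transport}, defined at all points of the relevant wall-crossing and in particular its image under $R^k_{g,\sigma_\fou}\to R^k_{g',\sigma_\fou}$ is again the ``same'' monomial. Hence applying this homomorphism to \eqref{Eqn: W from degenerate broken lines} term by term gives $W^k_{g',\fou}$, which is exactly compatibility with changing strata.

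\textbf{Changing chambers.} Let $\fou,\fou'$ be neighboring chambers, $\tau=\sigma_\fou\cap\sigma_{\fou'}$, and $\theta:R^k_{\id_\tau,\sigma_\fou}\to R^k_{\id_\tau,\sigma_{\fou'}}$ the gluing isomorphism of Definition~\ref{def: monomial transport}. I must show $\theta\big(W^k_{g,\fou}\big)=W^k_{g,\fou'}$, which (after the stratum-changing step just proved, reducing to $g=\id_\tau$) amounts to $\theta\big(\sum_{\beta\in\ol\foB(p)}a_\beta z^{m_\beta}\big)=\sum_{\beta'\in\ol\foB(p')}a_{\beta'}z^{m_{\beta'}}$ for suitable general points $p\in\fou$, $p'\in\fou'$. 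I would take $p=p'$ a general point on the wall $\fou\cap\fou'$ and compare the two sets of generalized broken lines ending there. A generalized broken line $\beta'$ ending at $p$ with last chamber $\fou'$ is obtained from one ending at $p$ with last chamber $\fou$ by appending one more wall-crossing at $t_r=0$ across $\fou\cap\fou'$ — or, in the degenerate/limiting cases, by the reverse. Concretely: the summands on the right-hand side are, by definition of transport \eqref{eqn: transport}, precisely the monomials $a_iz^{m_i}$ appearing in $\theta(a_\beta z^{m_\beta})$ as $\beta$ ranges over $\ol\foB(p)$ with last chamber $\fou$ and with $\overline{m_\beta}$ pointing from $\fou'$ into $\fou$; broken lines whose last monomial points the other way contribute after crossing in the opposite direction. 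So the identity is essentially the statement that summing the transport rule over all broken lines reproduces $\theta$ applied to the sum — together with the bookkeeping that every generalized broken line ending in $\fou'$ arises exactly once this way. The bijection is not literal (just as in Lemma~\ref{lem:independence of p}, \eqref{eqn:monomial scattering} shows broken lines can merge or split when a joint lies on the wall), so I would again invoke Proposition~\ref{prop: trajectory scattering}: near any joint $\foj\subseteq\fou\cap\fou'$ the relevant broken lines compute a scattering of monomials, and \eqref{eqn:monomial scattering} guarantees that the outgoing sum on the $\fou'$ side equals $\theta$ applied to the incoming sum on the $\fou$ side.

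\textbf{Main obstacle.} The routine part is the stratum-changing compatibility; the substantive point is handling the chamber change \emph{at points lying on walls and joints}, where the naive bijection between broken lines on the two sides fails. The technical heart is therefore identical in spirit to Lemma~\ref{lem:independence of p}: one must choose $p$ on $\fou\cap\fou'$ avoiding the lower-dimensional bad locus $A'$ of Remark~\ref{degenerate broken lines}, realize the local picture near each joint on the wall as a perturbed-trajectory scattering diagram, and apply the existence/uniqueness in Proposition~\ref{prop: trajectory scattering} to equate $\theta$ of the incoming packet with the outgoing packet. Once this local matching at each joint is in place, the global identity $\theta(W^k_{g,\fou})=W^k_{g,\fou'}$ follows by composing the wall-crossings and summing, and combined with the stratum-changing step this gives the asserted compatibility, completing the construction of the canonical extension $W^k$ of $W^0$ to $X_k$.
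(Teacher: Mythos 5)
Your proposal is correct and follows essentially the same route as the paper: stratum change is immediate from the definitions, and chamber change is proved by placing the endpoint on $\fou\cap\fou'$ (away from the bad locus $A'$), classifying generalized broken lines by the direction of their final monomial relative to the wall, identifying the transport rule \eqref{eqn: transport} with the expansion of $\theta(a_\beta z^{m_\beta})$, and invoking Proposition~\ref{prop: trajectory scattering} together with the arguments of Lemma~\ref{lem:independence of p} at joints on the wall. The paper phrases this via a path $\gamma$ crossing the wall and a ``reference chamber'' for the limiting generalized broken lines, and treats separately the case of $m_\beta$ tangent to $\fou\cap\fou'$ (where $\theta$ acts trivially), but these are the same mechanism you describe.
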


\begin{proof}
Compatibility with changing strata follows trivially from the definitions. As
for changing from a chamber $\fou$ to a neighboring chamber $\fou'$ ($\dim
\fou\cap \fou'=n-1$) the argument is similar to the proof
Lemma~\ref{lem:independence of p}. Let $g:\omega\to\tau$ be such that
$\omega\cap\fou \cap\fou' \neq\emptyset$, $\tau\subseteq
\sigma_\fou\cap\sigma_{\fou'}$ and
\[
	\theta: R^k_{g,\fou}\lra R^k_{g,\fou'}
\]
be the corresponding change of chamber isomorphism \cite[\S2.4]{affinecomplex}.
We have to show $\theta \big(W^k_{g,\fou}\big) = W^k_{g,\fou'}$.

Let $A\subseteq \fou\cup\fou'$ be the set of endpoints of degenerate broken
lines. Consider a path $\gamma:[-1,1]\to \fou\cap\fou'$ connecting general
points $\gamma(-1)\in\fou$, $\gamma(1)\in \fou'$ and with
$\gamma^{-1}(\fou\cap\fou')=\{0\}$. We may also assume that $\gamma(s)\in A$ at
most for $s=0$, and that any degenerate broken line with endpoint $\gamma(0)$ is
transverse to joints. For $s\neq 0$ we then consider the space $\foB_s$ of
broken lines $\beta_s$ with endpoint $\gamma(s)$ and with deformation for $s\to
0$ a fixed underlying map of a degenerate broken line $\overline\beta_0$. By
transversality of $\overline\beta_0$ with the set of joints the limits of
families $\beta_s$, $s\to 0$, group into generalized broken lines
(Remark~\ref{rem:generalized broken lines}). Each such generalized broken line
$\beta$ has as endpoint $p_0:= \gamma(0)$, but viewed as an element either of
$\fou$ or of $\fou'$. We call this chamber the \emph{reference chamber} of
$\beta$. Generalized broken lines with reference chambers $\fou$ and $\fou'$
contribute to $W^k_{g,\fou}$ and $W^k_{g,\fou'}$, respectively. Moreover,
$m_\beta$ is either tangent to $\fou\cap\fou'$ or points properly into $\fou$ or
into $\fou'$. We claim that $\theta$ maps each of the three types of
contributions to $W^k_{g,\fou}$ to the three types of contributions to
$W^k_{g,\fou'}$. Then $\theta \big(W^k_{g,\fou}\big) = W^k_{g,\fou'}$ and the
proof is finished.

Let us first consider the set of degenerate broken lines $\beta$ with
$m_\beta$ tangent to $\fou\cap\fou'$. Then changing the reference
chamber from $\fou$ to $\fou'$ defines a bijection between the
considered generalized broken lines with endpoint $p_0$ and reference
cell $\fou$ and those with reference cell $\fou'$. Note that in
this case $\beta$ has to intersect a joint, so this statement
already involves the arguments from the proof of
Lemma~\ref{lem:independence of p}. Because $\theta(a_\beta
z^{m_\beta})= a_\beta z^{m_\beta}$ this proves the claim in this case. 

Next assume $m_\beta$ points from $\fou\cap\fou'$ into the interior
of $\fou$. This means that $\beta$ approaches $p_0$ from the
interior of $\fou$. If we want to change the reference chamber to
$\fou'$ we need to introduce one more point $t_{r+1}:=t_r=0$ and
chamber $\fou_{r+1}:=\fou'$. According to Equation~\eqref{eqn:
transport} in Definition~\ref{def: monomial transport} the possible
monomials $a_{r+1}z^{m_{r+1}}$ are given by the summands in
$\theta(a z^m)= \sum_i a_{r+1,i} z^{m_{r+1,i}}$. Thus for each
summand we obtain one generalized broken line with reference cell
$\fou'$. Clearly, this is exactly what is needed for compatibility
with $\theta$ of the respective contributions to the local
superpotentials.

By symmetry the same argument works for generalized broken lines
$\beta$ with reference cell $\fou'$ and $m_\beta$ pointing from
$\fou\cap\fou'$ into the interior of $\fou'$, and $\theta^{-1}$
replacing $\theta$. Inverting $\theta$ means that a number of
generalized broken lines with reference cell $\fou_{r+1}=\fou$ and
two points $t_{r+1}=t_r$ (and necessarily $\fou_r=\fou'$), one for
each summand of $\theta^{-1}(a_\beta z^{m_\beta})$, combine into a
single generalized broken line with reference cell $\fou_r=\fou'$.
This process is again compatible with applying $\theta$ to the
respective contributions to the local superpotentials. This finishes
the proof of the claim, which was left to complete the proof of the
lemma.
\end{proof}

Summarizing the construction and Lemmas~\ref{lem:independence of p}
and~\ref{lem:change of chambers}, we now state the unique
existence of the superpotential $W$ on canonical toric degenerations:

\begin{theorem}
\label{Thm: all order W}
Let $\pi:\X\to \Spf\kk\lfor t\rfor$ be the canonical toric degeneration given
by the compatible system of wall structures in Assumption~\ref{overall
assumption}. Then there exists a unique formal function $W:\X\to \AA^1$ that
modulo $t^{k+1}$ agrees with the expressions \eqref{Eqn: W^k} and~\eqref{Eqn: W
from degenerate broken lines} at each point $p$.
\qed
\end{theorem}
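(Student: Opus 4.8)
The plan is to glue the locally defined expressions of \eqref{Eqn: W^k} into a regular function on each finite-order thickening $X_k$ of $X_0$ and then pass to the inverse limit. Recall that $\scrS_k$ presents $X_k$ by gluing the affine charts $\Spec R^k_{g,\sigma_\fou}$, indexed by $g:\omega\to\tau$ with $\omega\cap\fou\neq\emptyset$, $\tau\subseteq\sigma_\fou$ and $\fou$ a chamber of $\scrS_k$, along the change-of-strata and change-of-chamber isomorphisms of \cite[Constr.\,2.24]{affinecomplex}. So a morphism $X_k\to\AA^1$ is the same as a collection $W^k_{g,\fou}\in R^k_{g,\sigma_\fou}$ compatible with all these isomorphisms, and, since $\Gamma(\X,\O_\X)=\liminv_k\Gamma(X_k,\O_{X_k})$, a formal function $W:\X\to\AA^1$ is the same as a compatible system $(W^k)_k$ under the reduction maps $\Gamma(X_{k+1},\O)\to\Gamma(X_k,\O)$.

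First I would fix $k$. For a chamber $\fou$ and a general $p\in\fou$, \eqref{Eqn: W^k} defines an element of $R^k_{g,\sigma_\fou}$, the sum being finite since $\scrS_k$ admits only finitely many types of broken lines. By Lemma~\ref{lem:independence of p} this element is independent of the general $p\in\fou$, hence equals \eqref{Eqn: W from degenerate broken lines} for any $p\in\fou\setminus A'$; call the common value $W^k_{g,\fou}$. By Lemma~\ref{lem:change of chambers} the collection $\{W^k_{g,\fou}\}$ is compatible with changing strata and changing chambers, which is exactly the data needed to glue them into a morphism $W^k:X_k\to\AA^1$ restricting to $W^k_{g,\fou}$ on $\Spec R^k_{g,\sigma_\fou}$. (At $k=0$ formula \eqref{Eqn: W^k} unwinds to the order-zero superpotential \eqref{Eqn: W^0(sigma)}, the surviving broken lines being the straight rays asymptotic to the unbounded edges of $\sigma_\fou$, one per parallel class by Assumption~\ref{Ass: trivial gluing data}.)

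Next I would verify $W^{k+1}_{g,\fou}\equiv W^k_{g,\fou}\pmod{t^{k+1}}$; since both sides are compatible with the gluing, this produces $W\in\Gamma(\X,\O_\X)$. Here the hypothesis in Assumption~\ref{overall assumption} that the $\scrS_k$ form a \emph{compatible} sequence enters. Two observations drive the comparison: the order $\ord_{\sigma_\fou}$ of the monomial carried along a broken line is non-decreasing, and modulo $t^{k+1}$ each wall of $\scrS_{k+1}$ either carries the function of the corresponding wall of $\scrS_k$ or, if absent from $\scrS_k$, carries the function $1$. Hence a broken line for $\scrS_{k+1}$ ending at $p$ with $\ord_{\sigma_\fou}(m_\beta)\le k$ is the unique continuation through the new walls of a unique broken line for $\scrS_k$ ending at $p$, with $a_\beta z^{m_\beta}$ having the same reduction modulo $t^{k+1}$; conversely every broken line for $\scrS_k$ arises exactly once; and the remaining broken lines for $\scrS_{k+1}$ have $\ord_{\sigma_\fou}(m_\beta)=k+1$ and contribute $0$ modulo $t^{k+1}$. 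Summing over $\foB(p)$ gives the congruence.

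Uniqueness then needs no further work: for any $W$ as claimed, $W\bmod t^{k+1}$ restricts on each chart $\Spec R^k_{g,\sigma_\fou}$ to \eqref{Eqn: W^k} evaluated at a general point, these charts cover $X_k$, so $W\bmod t^{k+1}$ and hence $W$ is determined. All the substantive content --- independence of the chosen endpoint and compatibility with the change-of-chamber isomorphisms --- has already been done in Lemmas~\ref{lem:independence of p} and~\ref{lem:change of chambers}, resting on the scattering analysis of Proposition~\ref{prop: trajectory scattering}. The only genuinely new, and only mildly delicate, point is the bookkeeping in the order-compatibility step: one must be sure that reducing $\scrS_{k+1}$-broken lines modulo $t^{k+1}$ matches $\scrS_k$-broken lines exactly, neither losing nor double-counting terms, which follows from the definition of a compatible sequence of wall structures in \cite[Def.\,2.28]{affinecomplex}.
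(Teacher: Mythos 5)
Your proposal is correct and follows essentially the same route as the paper, which proves the theorem simply by summarizing the construction \eqref{Eqn: W^k}--\eqref{Eqn: W from degenerate broken lines} together with Lemma~\ref{lem:independence of p} (independence of the endpoint) and Lemma~\ref{lem:change of chambers} (compatibility with changing strata and chambers), the gluing and inverse limit being immediate. Your explicit bookkeeping of the congruence $W^{k+1}_{g,\fou}\equiv W^k_{g,\fou}\pmod{t^{k+1}}$ is exactly the point the paper leaves implicit in the word ``compatible'' of Assumption~\ref{overall assumption}, and it is argued correctly.
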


Having defined the superpotential $W$ as a regular function on the formal scheme
$\X$, a natural question concerns algebraizablity of $W$ assuming
$\X\to\Spf\kk\lfor t\rfor$ is algebraizable. This generally appears to be a
difficult question, but sometimes more can be said by methods going beyond the
scope of this paper, as detailed in the following remark.

\begin{remark}
\label{Rem: algebraizability}
Given a toric degeneration, we have defined the superpotential $W^k$ locally at
$p\in B$ in the interior of a chamber $\fou$ as an expression in the Laurent
polynomial ring $A_k[\Lambda_\sigma]$, $A_k=\kk[t]/(t^{k+1})$, with
$\sigma=\sigma_\fou$ the maximal cell containing $\fou$. Increasing $k$ may make
$\fou$ smaller, but if we choose $p$ sufficiently transcendental we can achieve
that $p$ never hits a wall of any order. Then the all order potential $W:=
\varprojlim W^k$ still has an expression in the ring
\[
\kk[\Lambda_\sigma]\lfor t\rfor = \varprojlim_k A_k[\Lambda_\sigma].
\]
To understand the dependence on $p$ note that by the construction of $\X$ as a
colimit, a point $p\in B$ contained in the interior of a chamber for each $k$
defines an open embedding of the formal algebraic torus
\begin{equation}
\label{Eqn: formal algebraic torus}
\hat \GG_m(\Lambda_\sigma)= \Spf \kk[\Lambda_\sigma]\lfor t\rfor
\end{equation}
into $\X$. Here $\sigma\in \P$ is the maximal cell containing $p$. The
expression $W(p)=\varprojlim W^k(p)$ computes the pull-back of $W$ to this open
formal subscheme. But the embedding $\hat \GG_m(\Lambda_\sigma)\to \X$ depends
on $p$. Expressions for $W(p)$ for different choices of $p$ inside the same
maximal cell $\sigma$ are related by a (typically infinite) series of wall
crossing automorphisms. If $p$ moves to a $p'$ in a neighboring maximal cell
then one needs to use the codimension one relations $XY=f_\rho t^e$ from the
proof of \cite[Lem.\,2.34]{affinecomplex} to see that a similar wall crossing
transformation involving quotients by slab functions relate $W(p)$ and $W(p')$.

In any case, for each general $p$ we obtain an expression for $W$ in $\kk
[\Lambda_\sigma]\lfor t\rfor$ rather than in the algebraic subring $\kk\lfor
t\rfor [\Lambda_\sigma]$.

However, as we will see in the examples in the last three section,
sometimes $\X$ is algebraizable and there exists a point $p\in B$ such that only
finitely many broken lines have endpoint $p$. Then $W(p)$ lies in the subring of
finite type
\[
\kk[\Lambda_\sigma][t] \subset \kk\lfor t\rfor[\Lambda_\sigma] \subset \kk[\Lambda_\sigma]\lfor t \rfor,
\]
hence is even a Laurent polynomial with algebraic coefficients in $t$. It is
then tempting to believe that this algebraic expression describes a lift of $W$
to an algebraization of $\X$. But this may not be the case: Formal local
representability by an algebraic expression neither means that $W$ lifts to an
algebraization, nor that such a lift could locally be represented with
polynomial coefficients in $t$ rather than with coefficients that are formal
power series. In such a situation we therefore say that $W$ is \emph{ostensibly
algebraic}.

To conclude algebraizability of $W$, one rather has to write $W$ as a finite sum
of quotients of sections of the polarizing line bundle $\O_\X(1)$. This can
indeed sometimes be achieved by using \emph{generalized theta functions},
defined analogously to $W$ via sums of broken lines, see
\cite{thetasurvey,Theta}. This method, which is beyond the scope of this paper,
appears to work for example in all cases derived from reflexive polytopes via
Construction~\ref{Const: Reflexive polytope degeneration}.
\end{remark}

%===========================================================
%===========================================================

\section{Fibers of the superpotential over $0,1,\infty$, and the LG mirror map}
\label{sect: special fiber}

Given a finite scattering diagram $\scrS_k$ for a non-compact $(B,\P,\varphi)$
consistent to order $k$ as in Assumption~\ref{overall assumption}, we have
constructed in the last section the superpotential
\begin{equation}
W:\X\lra \AA^1.
\end{equation}
as a morphism of formal schemes.

In this section we discuss some general features of $W$. Denote by
$A\subset|\X|=X_0$ the union of complete irreducible components of $X_0$,
that is, the union of the irreducible components $X_\sigma\subset X_0$ defined
by the bounded maximal cells $\sigma\in\P$.

We start with $W^{-1}(0)$, viewed as a Cartier divisor on the ringed
space $\X$. To compute the order of $W$ along $X_\sigma\subset\X$ we define the following notion.

\begin{definition}
For a maximal cell $\sigma\in\P$, $\depth\sigma$ is the minimum
of $\ord_\sigma m_\beta$ for all broken lines $\beta$ with endpoint in $\Int\sigma$.
\end{definition}

\begin{proposition}
\label{Prop: 0-fiber of W}
The multiplicity of the Cartier divisor $W^{-1}(0)\subset \X$ on the irreducible
component $X_\sigma\subset X_0$ given by a maximal cell $\sigma\in\P$ equals
$\depth\sigma$.
\end{proposition}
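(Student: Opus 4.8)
The plan is to identify the multiplicity with a $t$-adic order at the generic point of $X_\sigma$, to compute that order from broken lines, and then to rule out cancellation among the contributions of minimal order.

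\emph{Reduction to a valuation.} The multiplicity of $W^{-1}(0)$ along $X_\sigma$ can be computed on any open formal subscheme of $\X$ meeting $X_\sigma$ in a dense open set. Choose a sufficiently transcendental $p\in\Int\sigma$ lying in the interior of a chamber of $\scrS_k$ for every $k$, as in Remark~\ref{Rem: algebraizability}; it gives an open formal embedding $\hat\GG_m(\Lambda_\sigma)=\Spf\kk[\Lambda_\sigma]\lfor t\rfor\hookrightarrow\X$ meeting $X_\sigma$ in a dense open of its big torus. Since $p\in\Int\sigma$, the only cell of $\P$ meeting its chamber is $\sigma$, so the relevant chart is $R^k_{\id_\sigma,\sigma}=A[\Lambda_\sigma]$, in which a monomial $z^m$ restricts to $z^{\ol m}t^{\ord_\sigma m}$. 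By \eqref{Eqn: W^k} and~\eqref{Eqn: W from degenerate broken lines} the pull-back of $W$ to $\kk[\Lambda_\sigma]\lfor t\rfor$ is therefore
\[
\sum_{\beta\in\ol\foB(p)} a_\beta z^{m_\beta}
\;=\;\sum_{l\ge 0} P_l\,t^l,\qquad
P_l:=\sum_{\beta:\ \ord_\sigma m_\beta=l} a_\beta z^{\ol m_\beta}\in\kk[\Lambda_\sigma].
\]
As $\kk[\Lambda_\sigma]\lfor t\rfor$ localized at $(t)$ is a discrete valuation ring with uniformiser $t$ and each $z^{\ol m_\beta}$ is a unit, the multiplicity equals $\min\{l\mid P_l\neq0\}$. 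By definition $\ord_\sigma m_\beta\ge\depth\sigma$ for every broken line ending in $\Int\sigma$, so $P_l=0$ for $l<\depth\sigma$; thus the multiplicity is $\ge\depth\sigma$, with equality if and only if $P_{\depth\sigma}\neq0$.

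\emph{The unbounded case, and reduction of the bounded case.} If $\sigma$ is unbounded then $\depth\sigma=0$: the broken lines running straight in from infinity along the unbounded edges of $\sigma$ have $\ord_\sigma m_\beta=0$ and account precisely for $P_0=W^0(\sigma)=\sum_{m\in\scrR(\sigma)}z^m$, a sum of pairwise distinct monomials with coefficient $1$; hence $P_0\neq0$ and the multiplicity is $0=\depth\sigma$. So assume $\sigma$ bounded. Then $W^0(\sigma)=0$, which forces $P_0=0$, hence $d:=\depth\sigma\ge1$; working modulo $t^{d+1}$ with the finite structure $\scrS_d$ it now suffices to prove $P_d\neq0$.

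\emph{Non-cancellation at the bottom: the main obstacle.} The structural tool I would use is a monotonicity of $\ord_\sigma$ along broken lines inside the closed star of $\sigma$: there every change-of-chamber map is a wall-crossing automorphism of $R^k_{\id_\sigma,\sigma}$ that is the identity modulo $I^{>0}_{\id_\sigma,\sigma}$, and at a genuine bend the new monomial $z^{m_{i+1}}$ arises by multiplying $z^{m_i}$ with a non-constant term of $f_{\foc}^{\langle\cdot\rangle}$, which has strictly positive $\ord_\sigma$. Hence $\ord_\sigma m$ is non-decreasing on the final stretch of $\beta$ contained in $\sigma$ and strictly increases at each bend there; in particular a broken line with $\ord_\sigma m_\beta=d$ can bend at most $d$ times after last entering $\sigma$, so the order-$d$ broken lines form a combinatorially constrained family. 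From this I would deduce $P_d\neq0$ by isolating a \emph{leading} contribution that cannot be cancelled — for instance, taking $\ol m$ to be a vertex of $\conv\{\ol m_\beta\mid\ord_\sigma m_\beta=d\}$ and showing that it is realized by order-$d$ broken lines whose history inside $\sigma$ is forced, with constants $a_\beta$ adding up to a nonzero element of $\kk$. Establishing this last point — ruling out that all lowest-order contributions cancel — is where one must use the precise form of the transport and of the slab and wall functions of $\scrS_d$ rather than formal manipulations, and it is the step I expect to be the real work. Granting it, $P_d\neq0$, so the multiplicity equals $\depth\sigma$.
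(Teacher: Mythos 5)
Your first two paragraphs are exactly what the paper's own (one-line) proof amounts to: the paper simply says the statement is obvious from \eqref{Eqn: W^k}, i.e.\ it reads the multiplicity of $W^{-1}(0)$ along $X_\sigma$ off the local broken-line expansion $\sum_\beta a_\beta z^{m_\beta}$ at a general point of $\Int\sigma$, where $z^{m_\beta}$ restricts to $z^{\ol m_\beta}t^{\ord_\sigma m_\beta}$, so the multiplicity is the minimal $t$-order occurring and $\depth\sigma$ is by definition the minimal $\ord_\sigma m_\beta$. Your identification of the multiplicity with the $t$-adic valuation at the generic point of $X_\sigma$, and the treatment of the unbounded case via $W^0(\sigma)$, are fine and match the intended argument.

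The problem is your last paragraph. You correctly isolate the only non-formal point — that the order-$d$ part $P_d=\sum_{\ord_\sigma m_\beta=d}a_\beta z^{\ol m_\beta}$ is nonzero, i.e.\ that the lowest-order broken-line contributions do not cancel — but you do not prove it: the paragraph is a plan (``I would deduce\dots'', ``Granting it\dots''), and the vertex-of-the-Newton-polytope idea is not carried out (nothing forces the broken lines realizing an extremal $\ol m$ to be unique, nor their coefficients to have a fixed sign in general, since wall and slab coefficients need not be positive once scattering and normalization are involved). So as a self-contained argument the proposal stops one step short of the claimed equality; it only establishes multiplicity $\ge\depth\sigma$ in the bounded case. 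For what it is worth, the paper does not address this cancellation issue either — it implicitly identifies the multiplicity with the minimal order among the monomials indexed by broken lines — so your scruple is a fair one, but flagging it and then assuming it away does not constitute a proof; you should either prove non-cancellation (e.g.\ under a positivity hypothesis on the wall structure) or state it as the convention/fact the proposition relies on.
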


\begin{proof}
This is obvious from the definition of $W_k$ in \eqref{Eqn: W^k}.
\end{proof}

If $(B,\P)$ is asymptotically cylindrical, properness of $W|_{X_0}$
(Proposition~\ref{properness criterion}) implies that if $\sigma\in\P$ is a
maximal cell with $W^{-1}(0)\cap X_\sigma\neq \emptyset$ then $\sigma\in\P$ is
bounded. In other words, $W^{-1}(0)\cap X_0\subseteq A$. Otherwise not much can
generally be said about $W^{-1}(0)$.
\medskip

We next turn to the behavior of $W$ over $\infty$. This discussion makes only
sense in the case that $\X\to\Spf\kk\lfor t\rfor$ is compactifiable, that is, if
it extends to a proper family $\ol\X\to\Spf\kk\lfor t\rfor$. But even in this
case, $W$ may not be a formal meromorphic function
\cite[Tag~01X1]{stacks-project} near $\infty$. In other words, $W$ may have
essential singularities near a compactifiying formal closed subscheme. We
therefore assume $(B,\P)$ compactifiable and $W$ to extend to a meromorphic
function on the corresponding partial completion $\ol\X$ of $\X$. A sufficient
condition is if $W$ itself is algebraizable, as discussed in Remark~\ref{Rem:
algebraizability}.

Another problem is that the meromorphic lift may have a locus of indeterminacy
containing components of the added divisor at $\infty$ on the central fiber.
Here is a purely toric example.

\begin{example}
Let $(B,\P)$ be given by the complete fan in $\RR^2$ with rays generated by
$(1,0)$, $(0,1)$, $(0,-1)$ and $(-1,k)$ for some $k\ge 2$, and $\varphi$ the
convex PL function with value $1$ at all the ray generators. The discriminant
locus is empty. With trivial gluing data and trivial wall structure we obtain
the completion at $t=0$ of a toric threefold $\shX$ over $\AA^1=\Spec\kk[t]$.
Letting $x,y$ be the monomial functions defined by $(1,0)$ and $(0,1)$, the
superpotential equals
\[
W=x+y+ty^{-1}+tx^{-1}y^k
\]
on the big torus. Using $\varphi$ for the truncation, we obtain a
compactification $\ol\shX$ of $\shX$ by intersecting $B$ with the $4$-gon $\ol
B$ with vertices $(k,0)$, $(0,k)$, $(0,-k)$ and $(-1,k)$. Now express $W$ in the
neighborhood
\[
\Spec\kk[u,v^{\pm 1},t]\subset \ol\shX,\quad y=u^{-1}, x=u^{-1}v,
\]
of the divisor $\shD_\rho\subset \partial\shX$ defined by the face $\rho\subset
\ol B$ with vertices $(k,0),(0,k)$:
\[
W= vu^{-1}+u^{-1}+tu+tu^{-k+1}v^{-1}= \frac{(1+v)u^{k-2}+tu^k+tv^{-1}}{u^{k-1}}.
\]
This rational function on $\ol\shX$ has locus of indeterminacy $u=t=0$.
\end{example}

In the asymptotically cylindrical case a meromorphic extension of $W$ has empty
locus of indeterminacy. We therefore now restrict ourselves to the following
situation. Let $(B,\P)$ be compactifiable and asymptotically cylindrical and let
$(\ol B_\nu,\ol\P_\nu)$ be an associated sequence of truncations obtained by
Lemma~\ref{Lem: Exhaustion}. Denote by $\ol\X\to \Spf\kk\lfor t\rfor$ the
corresponding formal toric degeneration constructed in Proposition~\ref{Prop:
wall structures exists for compactifiable cases} from the associated sequence
$(\ol B_\nu,\ol \P_\nu)$ of truncations from Lemma~\ref{Lem: Exhaustion}, and
$\shZ\subset\ol\shX$ the compactifying reduced divisor. Assume further that $W$
lifts to a meromorphic function $\shW$ on an algebraization $\ol\shX$ of
$\ol\X$.

For a sequence $F=(F_\nu)$ of maximal flat affine subspaces of $\partial \ol
B_\nu$ denote by $\shZ_F\subset\ol\shZ$ the irreducible component defined in
Remark~\ref{Rem: compactifying divisor}. For such an $F$ define a multiplicity
$\mu_F\in\NN\setminus\{0\}$ as follows. Let $\rho\subseteq F$ be an
$(n-1)$-cell, $\sigma\in\P$ the unique maximal cell containing $\rho$ and
$\xi\in\Lambda_\sigma$ the primitive generator of $\Lambda_\omega$ for an
unbounded $1$-cell $\omega$ pointing in the unbounded direction. Then
\begin{equation}
\label{Eqn: mu_F}
\mu_F= \big[ \Lambda_\sigma: \Lambda_\rho+ \ZZ\cdot\xi\big].
\end{equation}
Note that $\mu_F$ depends not on the choice of $\rho\subset F$.

\begin{proposition}
\label{Prop: infty-fiber of W}
Let $\shW$ be a meromorphic lift of the superpotential $W$ to an algebraization
$\ol\shX$ of the partial compactification $\ol\X$ of $\X$ from
Proposition~\ref{Prop: wall structures exists for compactifiable cases} of the
compactifiable, asymptotically cylindrical $(B,\P,\varphi)$. Then $\shW$ defines
a morphism of schemes $\shW:\ol\shX\to\PP^1$, and it holds
\[
\ol\shW^{-1}(\infty)= \sum_F \mu_F\cdot \shZ_F.
\]
The sum is over all sequences of parallel maximal flat affine subspaces of $\partial B_\nu$ and $\mu_F$ is defined in \eqref{Eqn: mu_F}.
\end{proposition}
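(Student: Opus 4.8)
The plan is to show that in a neighbourhood of the compactifying divisor $\shZ=\bigcup_F\shZ_F$ the superpotential factors as $\shW=z^{m_\omega}\cdot u$, where $z^{m_\omega}$ is the order-zero monomial of \eqref{Eqn: W^0(sigma)} on the component carrying the local chart and $u$ is a local unit; granting this, $1/\shW=z^{-m_\omega}u^{-1}$ is regular near $\shZ$ while $\shW=W$ is regular on $\X=\ol\shX\setminus\shZ$, so the two pieces glue to a morphism $\ol\shX\to\PP^1$, and the polar divisor is read off from $\ord_{\shZ_F}(z^{m_\omega})$. To set this up I would first record the local toric models at infinity. Fix a sequence $F=(F_\nu)$ of parallel maximal flat affine subspaces of $\partial\ol B_\nu$, choose for $\nu\gg0$ an $(n-1)$-cell $\rho\subseteq F_\nu$ of $\ol\P_\nu$ and let $\sigma\in\P$ be the unique maximal cell containing $\rho$. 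By the gluing construction \cite[\S2.6]{affinecomplex} and Remark~\ref{Rem: compactifying divisor}, a neighbourhood in $\ol\shX$ of each stratum of $\shZ$ is the completion over $\Spec\kk\lfor t\rfor$ of a toric model built from $\ol\sigma_\nu=\sigma\cap\ol B_\nu$, and near a generic point of $\shZ_F$, where by convexity no walls intervene, it is the trivial deformation of a Zariski open in $X_{\ol\sigma_\nu}$ with $\shZ_F$ the toric prime divisor of $F_\nu$. In such a chart the Laurent monomials $z^m$, $\overline{m}\in\Lambda_\sigma$, are rational functions with $\ord_{\shZ_F}(z^m)=-\langle\overline{m},n_F\rangle$, where $n_F\in\Lambda_\sigma^\vee$ is the primitive integral normal of $F_\nu$ pointing into $\ol\sigma_\nu$; in particular $z^m$ is regular on $X_\sigma$ precisely when $-\overline{m}$ lies in the recession cone of $\sigma$, as in the discussion after \eqref{Eqn: W^0(sigma)}. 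Finally, since the sublattice $\Lambda_\rho$ is the annihilator of $n_F$ in $\Lambda_\sigma$ and $n_F$ is primitive, one has $-\langle\xi,n_F\rangle=[\Lambda_\sigma:\Lambda_\rho+\ZZ\cdot\xi]=\mu_F$, the index of \eqref{Eqn: mu_F}, with $\xi=-\overline{m_\omega}$ the primitive recession vector of $\sigma$.

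Because $(B,\P)$ is asymptotically cylindrical, $\sigma$ has a single unbounded direction, $\scrR(\sigma)=\{m_\omega\}$, and $W^0|_{X_\sigma}=z^{m_\omega}$; hence $\shW\equiv z^{m_\omega}\pmod{t}$ in the chart, so $\shW=z^{m_\omega}+\sum_\beta a_\beta z^{m_\beta}$ with the sum a collection of monomials coming from nontrivial broken lines, each of $t$-order $\ge1$. The core of the argument — and the one place where asymptotic cylindricity is essential — is the estimate $\langle\overline{m_\beta},n_{F^{(i)}}\rangle\ge\langle\overline{m_\omega},n_{F^{(i)}}\rangle$ for every such $\beta$ and every facet $F^{(i)}_\nu$ of $\ol\sigma_\nu$ at infinity that meets the chart. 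I would prove it by writing $\overline{m_\beta}-\overline{m_\omega}$ as a non-negative integral combination of the exponents occurring in the wall functions that $\beta$ crosses, and observing that, by convexity of the compactifiability datum $\psi$, no wall near $\partial\ol B_\nu$ emanates from the boundary, so the monomials of those wall functions are regular along the $\shZ_{F^{(i)}}$, i.e.\ pair non-negatively with the $n_{F^{(i)}}$; summing gives the estimate. It then follows that each $z^{m_\beta}/z^{m_\omega}$ is regular near $\shZ$ and lies in the maximal ideal of the local ring at the relevant stratum — either it vanishes along some $\shZ_{F^{(i)}}$, or it is tangent to all of them and then, having $t$-order $\ge1$, is divisible by $t$, which vanishes on $\shZ$ — so $u:=\shW/z^{m_\omega}=1+\sum_\beta a_\beta z^{m_\beta}/z^{m_\omega}$ is a unit along $\shZ$. (Without asymptotic cylindricity the several monomials of $\scrR(\sigma)$ scatter into a non-monomial leading term whose zero locus can meet $\shZ$, producing an indeterminacy, exactly as in the toric example preceding the proposition.)

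It remains to assemble the conclusions. With $u$ a unit near $\shZ$, $1/\shW=z^{-m_\omega}u^{-1}$ is regular in a neighbourhood of $\shZ$: the monomial $z^{-m_\omega}$ vanishes there, and $\shW$ has no zeros near $\shZ$ because in the asymptotically cylindrical case $W^{-1}(0)$ is proper over $\Spf\kk\lfor t\rfor$ (Proposition~\ref{properness criterion}) and therefore disjoint from $\shZ$. Gluing the regular function $W$ on $\X$ with $1/\shW$ on a neighbourhood of $\shZ$ exhibits $\shW$ as a morphism of schemes $\shW\colon\ol\shX\to\PP^1$, and its fibre $\ol\shW^{-1}(\infty)$ over $\infty$ is the polar divisor of $\shW$, which by the above is supported on $\shZ$ and has multiplicity $-\ord_{\shZ_F}(z^{m_\omega})=\langle\overline{m_\omega},n_F\rangle=-\langle\xi,n_F\rangle=\mu_F$ along each $\shZ_F$, yielding $\ol\shW^{-1}(\infty)=\sum_F\mu_F\cdot\shZ_F$. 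I expect the genuinely delicate points to be the estimate in the middle step — handling the (in general infinitely many) higher-order broken-line corrections uniformly near every stratum of $\shZ$ — and making the local toric structure of $\ol\shX$ near the deeper strata of $\shZ$ precise enough to justify the orders-of-vanishing computation there; the rest is routine toric and lattice bookkeeping.
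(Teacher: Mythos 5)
Your endgame coincides with the paper's: the multiplicity along $\shZ_F$ is the polar order of the order-zero monomial along the toric divisor at infinity of $\ol X_\sigma$, evaluated by the standard lattice-index formula. But the route you take through the middle is genuinely different and substantially heavier. The paper settles both remaining points in one step each: properness of $W$ (Proposition~\ref{properness criterion}) forces the indeterminacy locus of the meromorphic lift to be empty -- a point of indeterminacy on $\shZ$ would lie in the closure of fibers $W^{-1}(c)$, which are proper over the base and hence already closed in $\ol\shX$ -- so $\shW$ is a morphism to $\PP^1$; and since $\shW$ neither vanishes identically nor has a pole along any component $X_\sigma$ of the central fiber, no component of $\ol X_0$ appears in $\shW^{-1}(\infty)$, so this Cartier divisor restricts to $\ol X_0$ and its coefficient along $\shZ_F$ is read off from the polar order of $W^0=z^\xi$ along $D_\rho\subset\ol X_\sigma$. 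In particular the paper never needs any control of the higher-order broken-line corrections. You instead derive both the morphism statement and the multiplicity from an all-orders factorization $\shW=z^{m_\omega}\cdot u$ with $u$ a unit along all of $\shZ$ (including deep strata), which is precisely the part you flag as delicate; your additional appeal to properness (no zeros near $\shZ$) is redundant once $u$ is a unit, while the place where the paper actually uses properness -- emptiness of the indeterminacy locus -- is in your scheme carried by the hard factorization.

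The justification you offer for the central estimate has a genuine gap. You argue that $\ol{m_\beta}-\ol{m_\omega}$ is a non-negative combination of exponents of the wall functions crossed by $\beta$, each pairing non-negatively with the normals $n_{F^{(i)}}$; but a broken line ending near $\shZ_F$ may have bent at walls or slabs deep in the compact part of $B$, and those exponents are not constrained against $n_F$ at all -- non-negativity is only automatic for the asymptotic walls tangent to $\xi$, so the wall-by-wall monotonicity does not give the inequality. What is true, and what the paper proves separately later in this same section (the analysis leading to \eqref{Eqn: N(d,l)}), is that every broken line ending sufficiently far out has $\ol{m_\beta}=c\xi$, with $c=1$ for the unbent line and $c<0$, $t$-order $\ge 1$, otherwise; that is exactly what makes your $u$ a unit, but it needs its own argument (finiteness of $\scrS_k$ and the fact that only walls tangent to $\xi$ meet the far region), not the one you give. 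You would also still have to make the local models at the deeper strata of $\shZ$ precise to exclude indeterminacy there, which you acknowledge. So the statement is correct and your outline could be completed along the lines of the paper's later asymptotic analysis, but as written the key step fails, and the paper's reduction to the central fiber shows that the heavy part of your argument is not needed for this proposition.
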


\begin{proof}
Since $W$ defines a proper map to $\AA^1$ the locus of indeterminacy of $\shW$
is empty. Thus $\shW$ defines a morphism to $\PP^1$. Let $\sigma\in\P$ be an
unbounded maximal cell and $X_\sigma\subseteq X_0$ the corresponding irreducible
component of the central fiber. Then $\shW|_{X_\sigma} =W|_{X_\sigma}\neq0$, so
the multiplicity of a prime divisor $\shZ_F\subseteq \shW^{-1}(\infty)$ can be
computed after restriction to the central fiber $X_0$, that is, from $W^0$, and
$X_\sigma$ is a component of the reduced divisor $\shZ\cap X_0$. Moreover, in
the coordinate ring $\kk[\Lambda_\sigma]$ of the big torus of $X_\sigma$ we have
$W^0= z^\xi$. Thus the coefficient of $\shZ_F\cap X_0$ in
$(W^0)^{-1}(0)$ equals
\[
\ord_{D_\rho}W^0= \ord_{D_\rho}z^\xi = - [ \Lambda_\sigma: \Lambda_\rho+ \Lambda_{\omega_i}],
\]
by a standard fact in toric geometry.
\end{proof}
\medskip

The most interesting result in this section concerns a tropical description of
the special fiber $W^{-1}(1)$ in the asymptotically cylindrical case. We will
see that it is described by the asymptotic behaviors of $(B,\P,\varphi)$ and
$\scrS_k$. Moreover, $W^{-1}(1)$ is canonically the mirror toric degeneration of
the anticanonical divisor $\check\D\to T$ from Theorem~\ref{Thm: Properness}, up
to a tropically interesting mirror map reparametrizing the codomain $\AA^1$ of
$W$.

\begin{construction} (\emph{Asymptotic tropical manifold and asymptotic
scattering diagram.})
\label{Constr: B_infinity}
Assume that $(B,\P)$ is asymptotically cylindrical. Denote by $K\subseteq B$ the
compact subset defined by the union of bounded cells of $\P$. Then there exists
a non-zero integral vector field $\xi\in\Gamma(B\setminus K, i_*\Lambda)$ that
is parallel to all unbounded $1$-cells of $\P$. We fix $\xi$ uniquely by
requiring it to be primitive (indivisible) and pointing in the unbounded
direction. The integral curves of $\xi$ generate an equivalence relation $\sim$
on $B\setminus K$. Define the \emph{asymptotic tropical manifold} $B_\infty$
associated to $B$ as the quotient $(B\setminus K)/\!\sim$.

An explicit description of $B_\infty$ runs as follows. Let $\sigma\in\P$ be an
unbounded cell. Let $\bar\sigma$ be the convex hull of the vertices. Since $B$
is asymptotically cylindrical it holds
\[
\sigma= \bar\sigma+\RR_{\ge 0}\cdot\xi,
\]
as an equation of subsets of $\Lambda_{\sigma,\RR}$. Then define $\sigma_\infty$
as the image of $\sigma$ or $\bar\sigma$ under the canonical quotient
\begin{equation}
\label{Eqn: asymptotic monomial map}
\Lambda_{\sigma,\RR} \to \Lambda_{\sigma,\RR}/\RR_{\ge 0}\cdot\xi.
\end{equation}
Clearly, this construction is compatible with the inclusion of faces. Taking the
colimit of the $\sigma_\infty$ defines $B_\infty$ along with a polyhedral
decomposition $\P_\infty$. The charts for the affine structure at vertices of
$B_\infty$ are induced by the charts of $B$ along unbounded $1$-cells of $\P$.
Note that unbounded $1$-cells of $\P$ are disjoint from $\Delta$. It is also
clear that the strictly convex PL function $\varphi$ on $(B,\P)$ induces a
strictly convex PL function $\varphi_\infty$ on $(B_\infty,\P_\infty)$. We have
thus constructed a polarized tropical manifold
$(B_\infty,\P_\infty,\varphi_\infty)$ of dimension $\dim B-1$, the
\emph{asymptotic tropical manifold} of the asymptotically cylindrical
$(B,\P,\varphi)$.

A monomial at a point $x$ on an unbounded component of $\tau\setminus\Delta$ for
$\tau \in\P$ induces a monomial at the image $x_\infty$ of the corresponding
cell $\tau_\infty =\tau/{\RR_{\ge0} \xi}$. Since $\scrS_k$ is finite there exist
a compact subset $K'\subseteq B$ such that only unbounded slabs or walls
intersect $B\setminus K'$. We call these slabs or walls
\emph{asymptotic}. In the present asymptotically cylindrical case $\xi$ is then
tangent to the suppport of any such asymptotic slab or wall. Thus for
any asymptotic slab $\fob$ or wall $\fop$ in $\scrS_k$ the image under
the quotient map $B\setminus K'\to B_\infty$ defines the support of a slab or
wall in $(B_\infty,\P_\infty,\varphi_\infty)$. The associated slab function or
exponent is defined via the projection of monomials \eqref{Eqn: asymptotic
monomial map}. Define by $\scrS_k^\infty$ the structure on $B_\infty$ obtained
in this way.
\qed
\end{construction}

To further relate the wall structures $\scrS_k$ on $B$ and $\scrS_k^\infty$ on
$B_\infty$, only the rings $R^k_{g,\sigma}$ are relevant with $g:\omega\to \tau$
an inclusion of unbounded cells. Denote by $g_\infty:\omega_\infty\to
\tau_\infty$ the induced inclusion of cells in $\P_\infty$. Taking a splitting
of the inclusion $\ZZ\cdot\xi\subseteq \Lambda_\sigma$ provides a
(non-canonical) isomorphism
\begin{equation}
\label{Eqn: w}
\big(R^k_{g,\sigma}\big)_{z^\xi} \simeq  R^k_{g_\infty,\sigma_\infty}[w,w^{-1}],
\end{equation}
where by abuse of notation $\xi$ denotes the unique monomial $m$ of order $0$
with $\ol m=\xi\in\Lambda_\sigma$. The isomorphism identifies $z^\xi$
with $w$.

Mapping $w$ to $1$ now induces the canonical isomorphism of quotients
\begin{equation}
\label{eqn: quotient hom}
\big(R^k_{g,\sigma}\big)_{z^\xi} /(z^\xi-1)\simeq R^k_{g_\infty,\sigma_\infty}.
\end{equation}
Note this isomorphism is compatible with the map of monomials discussed in
Construction~\ref{Constr: B_infinity} and does not depend on choices. In
particular, there is a well-defined formal function $w$ on $\X\setminus A$
locally given by $w$ in \eqref{Eqn: w}. From~\eqref{Eqn: w} it is also obvious
that consistency of $\scrS_k$ implies consistency of $\scrS_k^\infty$.

\begin{proposition}
\label{Prop: w^(-1)(1)}
Let $(\pi:\X\to\Spf\kk\lfor t\rfor, W)$ be the Landau-Ginzburg model with an
asymptotically cylindrical intersection complex $(B,\P)$. Then the composition
$w^{-1}(1)\to \X\stackrel{\pi}{\to} \Spf\kk\lfor t\rfor$ of the inclusion
followed by $\pi$ is canonically isomorphic to the toric degeneration defined by
the compatible system of wall structures $\scrS_k^\infty$ on
$(B_\infty,\P_\infty,\varphi_\infty)$.
\end{proposition}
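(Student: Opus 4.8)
The plan is to compare $w^{-1}(1)$ with the toric degeneration $\X_\infty$ defined by $\scrS^\infty_k$ on $(B_\infty,\P_\infty,\varphi_\infty)$ (via \cite[Prop.\,2.42]{affinecomplex}) chart by chart, using the gluing description of \cite[\S2.6]{affinecomplex} together with the canonical isomorphism \eqref{eqn: quotient hom}, and then to check that the chart identifications respect all transition maps.

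First I would recall that $\X$ is glued from the affine models $\Spec R^k_{g,\sigma_\fou}$ for inclusions $g:\omega\to\tau$ of cells and chambers $\fou$ with $\omega\cap\fou\neq\emptyset$, $\tau\subseteq\sigma_\fou$, the transition data being the change-of-strata localizations, the toric change-of-chart maps, and the wall-crossing automorphisms attached to the walls and slabs of $\scrS_k$. The function $w$ lives on $\X\setminus A$ and, being locally equal to $z^\xi$, vanishes along every stratum $X_\omega$ with $\omega$ bounded (a bounded face cannot contain the ray direction $\xi$, so at least one facet through it pairs positively with $\xi$); hence $w^{-1}(1)$ is disjoint from those strata and is covered by the charts $\Spec R^k_{g,\sigma}$ attached to inclusions of \emph{unbounded} cells $g:\omega\to\tau$ — exactly the ones appearing in Construction~\ref{Constr: B_infinity}. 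On such a chart $w=z^\xi$, so $w^{-1}(1)$ is cut out by $z^\xi-1$; imposing $z^\xi=1$ turns the localization \eqref{Eqn: w} into the canonical isomorphism \eqref{eqn: quotient hom}, identifying $w^{-1}(1)$ on this chart with $\Spec R^k_{g_\infty,\sigma_\infty}$, the affine model of $\X_\infty$ on the chart $(g_\infty,\sigma_\infty)$.

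Next I would check that these canonical chart isomorphisms glue, which is the main point. The structure map $\pi$ acts by $t\mapsto t$ and survives the quotient, matching the structure map of $\X_\infty$; the change-of-strata and toric change-of-chart maps of $\X$ are induced by inclusions of faces and by the projection $\Lambda_\sigma\to\Lambda_\sigma/\ZZ\xi$ once $z^\xi=1$ is imposed, so by the compatibility of \eqref{eqn: quotient hom} with the monomial map noted in Construction~\ref{Constr: B_infinity} they descend to the corresponding maps for $\X_\infty$. The step genuinely using asymptotic cylindricity is the wall-crossing automorphisms: a non-asymptotic wall or slab is supported in a compact set disjoint from all the charts in play, while an asymptotic wall $\fop$ (resp.\ slab $\fob$) has $\xi$ tangent to its support, so its primitive integral conormal $n_\fop$ kills $\xi$ and the automorphism $z^m\mapsto z^m f_\fop^{\langle n_\fop,\overline m\rangle}$ fixes $z^\xi$; it therefore descends to $R^k_{g,\sigma}/(z^\xi-1)$, and under the monomial projection \eqref{Eqn: asymptotic monomial map} one has $f_\fop\mapsto f_{\fop_\infty}$, $\overline m\mapsto\overline m_\infty$, and $\langle n_\fop,\overline m\rangle=\langle n_{\fop_\infty},\overline m_\infty\rangle$, so the descended automorphism is precisely the wall-crossing automorphism of $\scrS^\infty_k$ attached to $\fop_\infty$. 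Hence the chart isomorphisms are compatible with all transition data and glue to an isomorphism $w^{-1}(1)\bij\X_\infty$ over $\Spf\kk\lfor t\rfor$.

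Finally, everything is compatible with increasing $k$, because the $\scrS_k$ and the $\scrS^\infty_k$ form compatible systems and \eqref{eqn: quotient hom} is functorial in $k$; passing to the limit yields the asserted canonical isomorphism of formal schemes over $\Spf\kk\lfor t\rfor$. I expect the middle step to be the real work: pushing the bookkeeping of \cite[\S2.6]{affinecomplex} through the quotient by $(z^\xi-1)$ and matching each piece of gluing data with its $\scrS^\infty_k$-counterpart, the essential mechanism being that asymptotic cylindricity makes $\xi$ tangent to every asymptotic wall and slab, so that multiplication by $z^\xi$ is central for all the relevant wall-crossing automorphisms. A smaller point to be careful about is that $w^{-1}(1)$ is genuinely exhausted by the unbounded charts; for this one invokes that $w$ is only defined off $A$ and that $W^{-1}(0)\cap X_0\subseteq A$ in the asymptotically cylindrical case, so $w^{-1}(1)$ stays away from every bounded stratum.
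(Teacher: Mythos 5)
Your proposal follows essentially the same route as the paper's proof: reduce to fixed finite order $k$, identify the relevant charts via the quotient isomorphism \eqref{eqn: quotient hom}, and use that $\xi$ is tangent to all asymptotic walls and slabs so that the wall-crossing (and change-of-strata/chamber) maps fix $z^\xi$ and descend to exactly the gluing data of $\scrS_k^\infty$. The only place you are looser than the paper is the claim that non-asymptotic walls are ``disjoint from all the charts in play'': the paper makes this precise by gluing only the rings attached to chambers in the complement $U$ of the compact set $K'$ from Construction~\ref{Constr: B_infinity} (which meets only unbounded walls and still suffices to describe $X_k\setminus A$, hence $w^{-1}(1)$), and your argument should be phrased the same way, since charts attached to unbounded cells but to chambers near the bounded region do meet bounded walls.
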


\begin{proof}
It is enough to check the statement for a fixed finite order $k$. The key
observation is that the asymptotic vector field $\xi$ is tangent to all
unbounded walls on $B$. Now for fixed $k$ the complement $U\subset B$ of the
compact subset $K'\subset B$ in Construction~\ref{Constr: B_infinity} only
intersects unbounded walls. Moreover, gluing the rings associated to chambers in
$U$ is enough to describe $X_k\setminus A$. The statement now readily follows
from the construction of $\scrS_k^\infty$ and \eqref{eqn: quotient hom}.
\end{proof}

\begin{remark}
\label{Rem: parallel asymptotic monomials}
Monomials in unbounded walls or slabs proportional to $z^\xi$ map to elements
of the base ring $\kk\lfor t\rfor$ under the homomorphism~\ref{eqn: quotient
hom}. Such constant terms do not appear in the wall structures constructed
either by the algorithm in \cite{affinecomplex} or in the canonical wall
structure of \cite{CanonicalWalls}. Thus $\scrS_\infty$ is a more general wall
structure than previously constructed that also involves \emph{undirectional
walls}, that is, walls $\fop$ with $f_\fop\in\kk\lfor t\rfor^\times$. This fact
has been overlooked in earlier versions of this paper and affects the mirror
statements for $w^{-1}(1)$.

The corrected statement is given in Proposition~\ref{Prop: mirror statement for
w^{-1}(0)} below.
\end{remark}

To understand the influence of undirectional walls, we observe a close
relationship to gluing data. 

\begin{remark}
\label{Rem: Unidirectional walls from gluing data}
Consider a wall structure $\scrS$ on an affine manifold with singularities $B$
with all walls and slabs undirectional. To emphasize the constant nature of the
slab and wall functions, we now write $c_\fop, c_\fob\in \kk\lfor t\rfor^\times$
rather than $f_\fop, f_\fob$. Let $\foj$ be a zero-codimensional joint, that is,
intersecting the interior of a maximal cell $\sigma$. Then the automorphism
$\theta_\fop$ of $\kk\lfor t\rfor[\Lambda_\sigma]$ associated to a wall $\fop$
containing $\foj$ equals
\begin{equation}
\label{Eqn: Undirectional wall automorphism}
\theta_\fop: z^m\longmapsto
\big\langle c_\fop\otimes n_\fop,m\big\rangle\cdot z^m,
\end{equation}
with $n_\fop\in\check\Lambda_\sigma=\Hom(\Lambda_\sigma,\ZZ)$ the primitive
normal vector spanning $\Lambda_\fop^\perp$, with sign depending on the
direction of wall crosssing. Now all such automorphisms $\theta_\fop$ with
$\foj\subset\fop$ commute. Moreover, their product is trivial iff
\begin{equation}
\label{Eqn: balancing in codim 0}
\prod_\fop c_\fop \otimes n_\fop=1\otimes 0\in
\kk\lfor t\rfor^\times\otimes_\ZZ\check\Lambda_\sigma.
\end{equation}
Note that in the tensor product the first factor is written multiplicatively,
the second additively, so $1\otimes 0$ is the unit in this abelian group. 

We observe that the consistency condition \eqref{Eqn: balancing in codim 0} for
$\scrS$ at $\foj$ is the cocycle condition at $\foj$ for the \emph{tropical
$1$-cocycle} on $B$ supported on $|\scrS|$ that assigns $c_\fop\otimes n_\fop$,
$c_\fob\otimes n_\fop$ to the elements of $\scrS$. This motivates to view a
consistent, undirectional wall structure as a tropical 1-cocycle, with the
cocycle condition reflected by consistency in all codimensions. Let us denote
the group of tropical $1$-cocyles by $C^{n-1}_{\mathrm{trop}}(B)$.

Next note that the ring homomorphisms defined by undirectional walls have the
same form as in applying open gluing data, see e.g.\ \cite[(5.2)]{Theta}. In the
simple singularity case, the group of equivalence classes of lifted open gluing
data obeying a similar local consistency condition is given by the cohomology
group $H^1(B,\iota_*\check\Lambda\otimes\kk^\times)$ \cite[Prop.\,4.25, Def.\,5.1,
Lem.\,5.5]{logmirror}. Now there is an obvious map
\[
C^{n-1}_{\mathrm{trop}}(B)\lra
H_{n-1}(B_0,\check\Lambda\otimes \kk\lfor t\rfor^\times),
\]
which by consistency factors over $H_{n-1}(B,\iota_*\check\Lambda\otimes
\kk\lfor t\rfor^\times)$. Moreover, by \cite[Thm.\,1]{Ruddat} we have an
isomorphism
\[
H_{n-1}(B,\iota_*\check\Lambda\otimes \kk\lfor t\rfor^\times)
\simeq H^1(B,\iota_*\check\Lambda\otimes \kk\lfor t\rfor^\times).
\]
Thus we obtain a homomorphism
\[
C^{n-1}_{\mathrm{trop}}(B)\lra
H^1(B,\iota_*\check\Lambda\otimes \kk\lfor t\rfor^\times.
\]
This map associates to the tropical $1$-cocycle given by the undirectional
wall structure $\scrS$ certain open gluing data that we denote $s_\scrS$.

Now one can run the algorithm in \cite{affinecomplex} in two ways. First as
usual with a cocycle representative of the gluing data $s_\scrS$, leading to a
wall structure $\scrS'$. Second, starting with an initial wall structure that
takes into account the reduction modulo $t$ of $\scrS$, interpreted as gluing
data; then run the algorithm with the undirectional walls inserted at each order
to obtain a wall structure $\scrS''$. Consistency of the undirectional wall
structure $\scrS$ should be a necessary and sufficient condition for this to
work. We conjecture that the two families $\foX',\foX''\to T$ obtained from
$\scrS',\scrS''$ are isomorphic.

One can compare $\foX',\foX''$ by choosing a general point in each cell as a
reference point and relate the diagrams of schemes in
\cite[\S\,2.6]{affinecomplex} by sequences of wall crossing automorphisms. To
prove that this procedure induces an isomorphism of diagrams would require to
carefully analyze the scattering algorithm on a neighborhood of the interior of
each cell of $\P$, including the difference of the presence of undirectional
walls versus the associated gluing data.

For the application to our asymptotic wall structure $\scrS_\infty$ on
$B_\infty$, one takes for the walls of the undirectional wall structure all
undirectional walls of the wall structure on the asymptotically
cylindrical $B$. Applying \cite[Prop.\,3.10,(2)]{affinecomplex} one can prove
consistency at codimension $0$ joints order by order. For the slabs one observes
that undirectional monomials in an unbounded slab $\fob$ are never of order $0$.
Hence as in \cite[Thm.\,5.2]{affinecomplex} we can factor
\[
f_\fob= \bar f_\fob\cdot f_\fob^\parallel
\]
with $f_\fob^\parallel\in \kk\lfor t\rfor[z^{\pm\xi}]^\times$ and $\bar f_\fob$
having a product decomposition with no factors in $\kk\lfor t\rfor[z^{\pm\xi}]$.
We use $f_\fob^\parallel$ to define the slab of the undirectional wall structure
on $B_\infty$ induced by $\fob$. A very careful analysis of the algorithm, which
we have not carried out, should now show that this undirectional wall structure
is consistent.
\qed
\end{remark}

We finally relate the mirror of $\check\foD\to T$ to the LG-fiber $w^{-1}(1)$.
This makes precise and proves a conjecture of Auroux in our setup
\cite[Conj.\,7.4]{auroux1}. 

\begin{proposition}
\label{Prop: mirror statement for w^{-1}(0)}
If $\X\to\Spf\kk\lfor t\rfor$ is mirror dual to $(\check\X\to T,\check\D)$ then
$w^{-1}(1)\to \Spf\kk\lfor t\rfor$ is the mirror family to $\check\foD\to T$
twisted by an undirectional wall structure as discussed in Remarks~\ref{Rem:
parallel asymptotic monomials} and~\ref{Rem: Unidirectional walls from gluing
data}.\footnote{In the interpretation of wall structures via punctured
invariants \cite{CanonicalWalls}, undirectional walls count punctured
Gromov-Witten invariants with one positive contact order with $\check\foD$,
hence relate to traditional log Gromov-Witten invariants of
$(\check\X,\check\foD)$. Further details will appear in \cite{GRS}.
}
\end{proposition}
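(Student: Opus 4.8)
The plan is to derive this directly from Proposition~\ref{Prop: w^(-1)(1)} together with a Legendre-duality bookkeeping and the analysis of undirectional walls in Remarks~\ref{Rem: parallel asymptotic monomials} and~\ref{Rem: Unidirectional walls from gluing data}. By Proposition~\ref{Prop: w^(-1)(1)}, the family $w^{-1}(1)\to\Spf\kk\lfor t\rfor$ is canonically the toric degeneration attached to the compatible system $\scrS_k^\infty$ on the asymptotic polarized tropical manifold $(B_\infty,\P_\infty,\varphi_\infty)$. So it remains to check (i) that $(B_\infty,\P_\infty,\varphi_\infty)$ is the (cone-picture) intersection complex of the mirror of $\check\foD\to T$, and (ii) that the extra undirectional walls carried by $\scrS_k^\infty$ amount precisely to the gluing-data twist of Remark~\ref{Rem: Unidirectional walls from gluing data}.

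For (i), I would first invoke Theorem~\ref{Thm: Properness}: the asymptotically cylindrical hypothesis forces $\check\foD\to T$ to be itself a toric degeneration, with intersection complex the boundary datum $(\partial\check B,\partial\check\P,\partial\check\varphi)$ obtained by restriction from $(\check B,\check\P,\check\varphi)$. By \cite[Constr.\,1.16]{affinecomplex} the mirror of $\check\foD\to T$ then has intersection complex the discrete Legendre dual of $(\partial\check B,\partial\check\P,\partial\check\varphi)$, so (i) reduces to the statement that discrete Legendre duality intertwines \emph{restriction to $\partial\check B$} with \emph{passage to $B_\infty$}. I would verify this cell by cell: an unbounded maximal cell $\sigma\in\P$ is dual to a vertex $\check v\in\check\P^{[0]}$ lying on $\partial\check B$, with the cylinder direction $\xi$ the conormal of $\partial\check B$ at $\check v$; the quotient map~\eqref{Eqn: asymptotic monomial map} then replaces the Newton-polytope data of $\check\varphi$ near $\check v$ by its restriction to the boundary facet, i.e.\ by the cell of $\partial\check\P$ dual to $\check v$, so $\sigma_\infty$ is dual to $\check v\in(\partial\check\P)^{[0]}$, and in general cells of $\P_\infty$ match cells of $\partial\check\P$ reversing dimension and inclusion. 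One then checks that the affine charts of $B_\infty$ supplied by Construction~\ref{Constr: B_infinity} (induced from the charts of $B$ along unbounded $1$-cells) are exactly the fans of $\partial\check\P$ at the corresponding maximal cells, with the induced monodromy, and that $\varphi_\infty$ has kinks dual to the bending of $\partial\check\varphi$. This is the defining recipe for the discrete Legendre dual, and gives (i).

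For (ii), I would split $\scrS_k^\infty$ into a directional part and its undirectional walls. By Remark~\ref{Rem: parallel asymptotic monomials} the undirectional contributions are precisely the images under the homomorphism~\eqref{eqn: quotient hom} of the $z^\xi$-proportional monomials of the unbounded walls and slabs of $\scrS_k$; factoring each unbounded slab function as $f_\fob=\bar f_\fob\cdot f_\fob^\parallel$ with $f_\fob^\parallel\in\kk\lfor t\rfor[z^{\pm\xi}]^\times$ and peeling off the undirectional walls, as in Remark~\ref{Rem: Unidirectional walls from gluing data}, presents $\scrS_k^\infty$ as a directional wall structure twisted by a consistent undirectional wall structure $\scrS$; the latter is encoded by open gluing data $s_\scrS\in H^1(B_\infty,\iota_*\check\Lambda\otimes\kk\lfor t\rfor^\times)$, and the toric degeneration of $\scrS_k^\infty$ is that of its directional part twisted by $s_\scrS$. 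Combining this with (i) and Proposition~\ref{Prop: w^(-1)(1)} identifies $w^{-1}(1)\to\Spf\kk\lfor t\rfor$ with the mirror of $\check\foD\to T$ twisted by $\scrS$, which is the assertion.

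The main obstacle is the delicate part of (ii): to know that the directional part of $\scrS_k^\infty$ is genuinely the canonical consistent wall structure produced by the algorithm of \cite{affinecomplex} on $(B_\infty,\P_\infty,\varphi_\infty)$, and that inserting the undirectional walls is the same operation as the gluing-data twist $s_\scrS$, one needs the careful analysis of the scattering algorithm with undirectional walls present that Remark~\ref{Rem: Unidirectional walls from gluing data} only carries out conjecturally. The proof is therefore best presented as a reduction to that statement, with (i) being a routine, if somewhat lengthy, Legendre-duality computation.
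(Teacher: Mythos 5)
Your argument is essentially the paper's own: the proposition is stated there without a separate proof precisely because it is the combination of Proposition~\ref{Prop: w^(-1)(1)}, the identification under discrete Legendre duality of $(B_\infty,\P_\infty,\varphi_\infty)$ with the dual of the boundary data of $(\check B,\check\P,\check\varphi)$ via Theorem~\ref{Thm: Properness}, and the treatment of undirectional walls in Remarks~\ref{Rem: parallel asymptotic monomials} and~\ref{Rem: Unidirectional walls from gluing data} --- exactly the ingredients you assemble. Your caveat about the conjectural status of identifying the undirectional twist with gluing data also mirrors the paper's own: the statement is deliberately phrased as ``twisted by an undirectional wall structure as discussed in'' those remarks, so nothing beyond what you outline is required.
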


Most interestingly, the function $w$ in Proposition~\ref{Prop: w^(-1)(1)} and
\eqref{Eqn: w} is closely related to the superpotential $W$. To explain this
relation, note that for any fixed finite order $k$ we may restrict to the
complement $U\subset B$ of a compact set such that each broken line $\beta$
ending in $U$ is parallel to the asymptotic vector field $\xi$. In other words,
there exists $c\in\ZZ\setminus\{0\}$ with $m_\beta= c\cdot\xi$. There are two
types of such broken lines, depending on the sign of $c$. If $c>0$ then $\beta$
is (part of) a broken line not intersecting any wall, and hence $c=1$ and the
monomial carried by $\beta$ equals $z^\xi$. Otherwise $\beta$ is a broken line
that returned from entering the compact set due to some non-trivial interaction
with walls outside of $U$. We call these broken lines $\beta$ and the
corresponding monomial $m_\beta$ at the root vertex \emph{outgoing}. In this
case we can write $a_\beta z^{m_\beta} = a_\beta t^l z^{-d\xi}$ for some $l>0$
and $d=-c>0$. Summing over all such broken lines with general endpoint $p\in U$
leads to a polynomial with coefficients $N_{d,l}\in\kk$:
\begin{equation}
\label{Eqn: N(d,l)}
h_k=\sum_{l=1}^k\sum_{d>0} N_{d,l}t^l z^{-d\xi}\in \kk[t,z^{-\xi}].
\end{equation}

\begin{lemma}
The coefficients $N_{d,l}\in\kk$ in \eqref{Eqn: N(d,l)} do not depend on the
choices of $U$, $p\in U$ or $k\ge l$.
\end{lemma}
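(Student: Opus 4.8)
The plan is to recognise $h_k$ as a canonical ``negative part'' of the already–constructed superpotential $W^k$ in the region near infinity, and then to check how each of the three choices can affect it. Fix $p\in U$ general, lying in a maximal cell $\sigma=\sigma_\fou$. By Proposition~\ref{prop: broken line moduli polyhedral} together with the finiteness of $\scrS_k$, once $U$ is small enough every $\beta\in\foB(p)$ has $m_\beta=c\,\xi$ with $c\in\ZZ\setminus\{0\}$: the incoming ones ($c>0$, hence $c=1$) contribute multiples of $z^\xi$, while the outgoing ones ($c=-d<0$) contribute exactly the monomials $a_\beta t^l z^{-d\xi}$, $l\ge 1$. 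Writing $W^k_{g,\fou}(p)=\sum_{\beta\in\foB(p)}a_\beta z^{m_\beta}$ and collecting the monomials whose direction is a negative multiple of $\xi$ therefore reproduces $h_k$, with $N_{d,l}=\sum\{\,a_\beta:\overline{m_\beta}=-d\xi,\ \ord_\sigma m_\beta=l\,\}$; here $z^{-d\xi}$ is canonically identified across charts with $w^{-d}$ for the function $w$ of \eqref{Eqn: w}. In this description $N_{d,l}$ depends a priori only on $p$ and $\scrS_k$, the role of $U$ being merely to guarantee $p$ is ``deep enough''; and since for two admissible $U,U'$ the set $U\cap U'$ is again admissible while $\foB(p)$ does not depend on $U$, independence of $U$ will follow from independence of $p$.

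Next I would prove independence of $p$. Within a fixed chamber $\fou$, $W^k_{g,\fou}(p)$ (hence $h_k$) is independent of general $p$ by Lemma~\ref{lem:independence of p}. To move $p$ across a wall or slab into a neighbouring chamber $\fou'$ inside the unbounded region, Lemma~\ref{lem:change of chambers} gives $\theta\big(W^k_{g,\fou}(p)\big)=W^k_{g,\fou'}(p')$ for the change-of-chamber isomorphism $\theta$. Because $(B,\P)$ is asymptotically cylindrical, $\xi$ is tangent to every wall and slab met far out (Construction~\ref{Constr: B_infinity}), so $\theta$ multiplies $z^{\pm d\xi}$ by the relevant wall or slab function raised to the power $\langle n,\pm d\xi\rangle$, where $n$ is the primitive covector conormal to that wall; this exponent vanishes, so $\theta$ fixes each monomial $t^l z^{-d\xi}$ and thus fixes $h_k$ and every $N_{d,l}$. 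Chaining such crossings shows $N_{d,l}$ is the same for all general $p$ reachable within the union of unbounded cells; equivalently, $W|_{\X\setminus A}$ is a single formal function whose expansion in an asymptotic chart $(R^k_{g,\sigma})_{z^\xi}$ has a $z^{-d\xi}$–part ($d\ge 1$) that is chart-independent, as all transition maps over $U$ fix those monomials.

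Independence of $k$ for $k\ge l$ is then a $t$-adic argument: fix $l$ and, by the previous step, a point $p$ general for all relevant orders. The coefficient $N_{d,l}$ is the sum of the scalars $a_\beta$ over broken lines with $\overline{m_\beta}=-d\xi$ and $\ord_\sigma m_\beta=l$. Each such broken line, its type and its coefficient are produced by the transport rule \eqref{eqn: transport} at the walls and slabs it crosses; extracting the part of those identities in $t$-degree $\le l$ uses only the wall and slab functions modulo $t^{l+1}$, and crossing a wall or slab of order $>l$ contributes, at $t$-degree $\le l$, only the unbroken continuation of $\beta$. Hence the order-$l$ broken-line contributions, and with them $N_{d,l}$, are computed entirely from $\scrS_k$ modulo $t^{l+1}$; by compatibility of the sequence $\{\scrS_j\}$ (Assumption~\ref{overall assumption}) these reductions coincide for all $k\ge l$, which finishes the proof.

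The main obstacle is the middle step: one must be sure that $h_k$ really is a well-defined, chart-independent slice of $W^k$ near infinity. This rests on two inputs — that all broken lines reaching far into $U$ are $\xi$-parallel (from the polyhedrality of broken-line moduli in Proposition~\ref{prop: broken line moduli polyhedral} and finiteness of $\scrS_k$), and the precise form of the change-of-chamber and slab-gluing isomorphisms which, under asymptotic cylindricity, fix every monomial $z^{\pm d\xi}$. Granting these, the independence of $k$ is routine.
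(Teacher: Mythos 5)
Your proposal is correct and follows essentially the same route as the paper: identify $h_k+w$ with $W^k_{g,\fou}(p)$ near infinity, use that $\xi$ is tangent to all walls met in $U$ so that wall-crossing fixes the monomials $z^{\pm d\xi}$, and then invoke Lemma~\ref{lem:independence of p} and Lemma~\ref{lem:change of chambers}, with the $k$-independence coming from the fact that interaction with wall terms of order $>l$ only produces outgoing monomials of order $>l$ together with compatibility of the $\scrS_k$. No gaps; your treatment of the $U$- and $k$-dependence is just a slightly more explicit spelling-out of the paper's argument.
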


\begin{proof}
Observe first that $h_k+w$ equals $W^k_{g,\fou}(p)$ from \eqref{Eqn: W^k}, for
$\fou$ any unbounded chamber of $\scrS_k$ containing the chosen endpoint
$p\in\fou$ of broken lines in~\eqref{Eqn: N(d,l)}, and $g:\omega\to\tau$ any
inclusion of cells relevant to $\fou$, that is, with $\omega\cap\fou\neq
\emptyset$, $\tau\subseteq\sigma_\fou$. Since $\xi$ is tangent to all walls
intersecting $U$, this expression is unchanged under wall crossing
automorphisms. The statement now follows from the independence of
$W^k_{g,\fou}(p)$ of the choice of $p\in \fou$ (Lemma~\ref{lem:independence of
p}) and the compatibility of $W^k_{g,\fou}$ with changing strata and chambers
(Lemma~\ref{lem:change of chambers}).

The statement on the independence of $k\ge l$ follows since a broken line
$\beta$ interacting with a term in a wall of order $>k$ has an outgoing monomial
$m_\beta$ of order $>k$.
\end{proof}

We are now in position to define the map relating $w$ and $W$ as the
automorphism $\Phi$ of the formal algebraic torus 
\[
\hat\GG_m=\GG_m\times \Spf\kk\lfor t\rfor
=\Spf\!\big( \kk[u^{\pm1}]\hat\otimes_\kk\kk\lfor t\rfor\big)
= \Spf \kk[u^{\pm1}]\lfor
t\rfor
\]
over $\kk\lfor t\rfor$ defined by
\begin{equation}
\label{Def: mirror map}
\Phi^\sharp(u)= u+\sum_{l>0} \Big(\sum_{d>0} N_{d,l} u^{-d}\Big) t^l=
u\left(1+\sum_{l>0} \Big(\sum_{d>0} N_{d,l} u^{-d-1}\Big) t^l\right).
\end{equation}
Note that for each fixed $l$ there are only finitely many broken lines
contributing to the coefficient of $t^l$ in \eqref{Eqn: N(d,l)}. Hence
$\Phi^\sharp(u)\in \kk[u^{\pm1}]\lfor t\rfor$ as required to define $\Phi$. Note
also that $\Phi$ induces the identity on $\GG_m\subset \GG_m\times\Spf\kk\lfor
t\rfor$, the reduction modulo $t$.

\begin{definition}
\label{Def: LG mirror map}
We call the automorphism $\Phi$ of $\GG_m\times\Spf \kk\lfor t\rfor$ the
\emph{Landau-Ginzburg (LG-) mirror map}.
\end{definition}

We emphasize that $\Phi$ is not in general induced by an automorphism of the scheme
\[
(\GG_m)_{\Spec {\kk\lfor t\rfor}}= \Spec \kk\lfor t\rfor [u^{\pm1}].
\]

For the following mirror map statement in the asymptotically cylindrical case we
consider both $W|_{\X\setminus A}$ and $w$ as maps to $\hat\GG_m=
\GG_m\times\Spf\kk\lfor t\rfor$.

\begin{theorem}
\label{Thm: W versus w}
In the situation of Proposition~\ref{Prop: w^(-1)(1)} it holds
$W|_{\X\setminus A}= \Phi\circ w$.
\end{theorem}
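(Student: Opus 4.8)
The plan is to deduce the identity from the finite-order local computation already performed in the proof of the lemma preceding Definition~\ref{Def: LG mirror map}, and then to propagate it over $\X\setminus A$ using that $W$ is determined by a compatible system of local expressions. Since $W=\varprojlim_k W^k$ by Theorem~\ref{Thm: all order W}, while $w$ and $\Phi$ are compatible over all orders, it suffices to prove $W^k|_{X_k\setminus A}=(\Phi\circ w)^k$ for each fixed $k$. Fix $k$, let $U\subset B$ be the complement of a compact set large enough that every broken line for $\scrS_k$ with endpoint in $U$ is parallel to the asymptotic vector field $\xi$, and choose a general point $p$ in an unbounded chamber $\fou\subseteq U$ together with an inclusion $g:\omega\to\tau$ relevant to $\fou$. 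As recorded at the beginning of the proof of that lemma, one then has the equality of ring elements
\[
W^k_{g,\fou}(p)\;=\;w+h_k,\qquad h_k=\sum_{l=1}^{k}\sum_{d>0}N_{d,l}\,t^{l}z^{-d\xi},
\]
with the $N_{d,l}$ as in \eqref{Eqn: N(d,l)}: among the broken lines with endpoint $p$, the unique one not interacting with any wall contributes $z^{\xi}$, while those returning from the compact region are the outgoing ones and contribute the terms of $h_k$. Via the isomorphism \eqref{Eqn: w} the monomial $z^{\xi}$ is the coordinate $w$, whence $z^{-d\xi}=w^{-d}$.

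Next I would compare this with the LG mirror map. Regarding $W|_{\X\setminus A}$ and $w$ as morphisms to $\hat\GG_m=\Spf\kk[u^{\pm1}]\lfor t\rfor$ as in the statement, the composite $\Phi\circ w$ is the morphism whose pullback of the coordinate is, directly from \eqref{Def: mirror map},
\[
(\Phi\circ w)^{\sharp}(u)=w^{\sharp}\big(\Phi^{\sharp}(u)\big)=w+\sum_{l>0}\Big(\sum_{d>0}N_{d,l}\,w^{-d}\Big)t^{l}.
\]
Modulo $t^{k+1}$ the right-hand side equals $w+h_k$, so the previous display yields $W^k_{g,\fou}(p)=(\Phi\circ w)^{\sharp}(u)$ in the chart of $X_k\setminus A$ indexed by $\fou$ and $g$.

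It remains to propagate this equality over all of $X_k\setminus A$. Both $W^k$ and $\Phi\circ w$ are global regular functions there: the former is glued from its local expressions $W^k_{g',\fou'}(p')$ via change of strata and change of chambers (Lemmas~\ref{lem:independence of p} and~\ref{lem:change of chambers}), while the latter is global because $w$ is a well-defined function on $\X\setminus A$ (the remark following \eqref{eqn: quotient hom}) and $N_{d,l},t$ are scalars. Hence the local expressions of the two functions obey the \emph{same} transition isomorphisms. By the independence assertions used above one gets the identity in a far-out chart inside \emph{each} unbounded maximal cell, and since the chamber adjacency within each unbounded maximal cell links every chart to such a far-out chart by a chain of wall crossings, the equality propagates to every chart, i.e.\ $W^k|_{X_k\setminus A}=(\Phi\circ w)^k$. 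Letting $k\to\infty$ gives $W|_{\X\setminus A}=\Phi\circ w$.

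I expect the main obstacle to be the care this last step requires: one must not conflate the globally defined function $w$ on $\X\setminus A$ with the chart-local monomial $z^{\xi}$ (which coincides with $w$ only in the far-out charts, where $\xi$ is tangent to all walls and slabs), and one must check that the identity $W^k_{g,\fou}(p)=w+h_k$, \emph{a priori} only an equality of ring elements in a far-out chart, is genuinely preserved under the gluing that defines $W^k$. The remaining ingredients — the identification of the outgoing-broken-line sum with $h_k$, and the bookkeeping with \eqref{Eqn: w} and \eqref{Def: mirror map} — are already contained in the surrounding material.
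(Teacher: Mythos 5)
Your proof is correct and takes essentially the same route as the paper: order by order it reduces the claim to the identity $W^k_{g,\fou}=w+h_k$ from the lemma preceding Definition~\ref{Def: LG mirror map}, compared with the pullback formula \eqref{Def: mirror map} for $\Phi^\sharp(u)$. The globalization step you carry out by chains of change-of-chamber isomorphisms is handled in the paper implicitly, via the observation in the proof of Proposition~\ref{Prop: w^(-1)(1)} that the charts attached to chambers in the far-out region $U$ already suffice to describe $X_k\setminus A$, so agreement there is agreement everywhere.
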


\begin{proof}
The statement follows by observing that modulo $t^{k+1}$,
\[
(w^\sharp\circ\Phi^\sharp)(u)= w+\sum_{l>0}
\Big(\sum_{d>0} N_{d,l} w^{-d}\Big) t^l
\]
equals $W^k_{g,\fou}=h_k+w$ with $h_k$ as in \eqref{Eqn: N(d,l)}, for all unbounded chambers $\fou$.
\end{proof}

Theorem~\ref{Thm: W versus w} together with Proposition~\ref{Prop: mirror statement for w^{-1}(0)} yields the following.

\begin{corollary}
\label{Cor: fiber of W over 1 is mirror to check D}
Let $\X\to\Spf\kk\lfor t\rfor$ be mirror dual to $(\check\X\to T,\check\D)$.
Then $(\Phi^{-1}\circ W)^{-1}(1)\to \Spf\kk\lfor t\rfor$ is the mirror
family of $\check\D\to T$ twisted by the undirectional wall structure discussed
in Remarks~\ref{Rem: parallel asymptotic monomials} and~\ref{Rem: Unidirectional
walls from gluing data}.
\qed
\end{corollary}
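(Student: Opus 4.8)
The plan is to obtain the corollary as a formal consequence of Theorem~\ref{Thm: W versus w} and Proposition~\ref{Prop: mirror statement for w^{-1}(0)}, so that the only real work is the bookkeeping needed to pass between $W$, which is defined on all of $\X$, and $w$, which is only defined on $\X\setminus A$.

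First I would observe that $(\Phi^{-1}\circ W)^{-1}(1)$ is, by its very definition, a closed formal subscheme of $\X\setminus W^{-1}(0)$: since $\Phi$ is an automorphism of $\hat\GG_m=\GG_m\times\Spf\kk\lfor t\rfor$, the composition $\Phi^{-1}\circ W$ only makes sense on the locus where $W$ factors through $\hat\GG_m$, that is, on $\X\setminus W^{-1}(0)$, and there it takes values in $\hat\GG_m$. Because $W^0(\sigma)=0$ for every bounded maximal cell $\sigma\in\P$ — equivalently $\depth\sigma\ge 1$, so that by Proposition~\ref{Prop: 0-fiber of W} the divisor $W^{-1}(0)$ contains $X_\sigma$ — the function $W^0$ vanishes identically on $A$, hence $A\subseteq W^{-1}(0)$ as subsets of $|\X|=X_0$. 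Consequently $(\Phi^{-1}\circ W)^{-1}(1)$ is disjoint from $A$, so it is a closed formal subscheme of $\X\setminus A$.

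Next, on $\X\setminus A$ the function $W$ reduces modulo $t$ to the monomial $z^\xi$ and is therefore invertible, so it is legitimate to regard $W|_{\X\setminus A}$ as a morphism to $\hat\GG_m$; Theorem~\ref{Thm: W versus w} then gives $W|_{\X\setminus A}=\Phi\circ w$, i.e.\ $\Phi^{-1}\circ W|_{\X\setminus A}=w$. Combining this with the previous observation one obtains an equality of closed formal subschemes of $\X$
\[
(\Phi^{-1}\circ W)^{-1}(1)=w^{-1}(1)
\]
compatible with the projections to $\Spf\kk\lfor t\rfor$. Proposition~\ref{Prop: mirror statement for w^{-1}(0)} now identifies $w^{-1}(1)\to\Spf\kk\lfor t\rfor$, and hence $(\Phi^{-1}\circ W)^{-1}(1)\to\Spf\kk\lfor t\rfor$, with the mirror family of $\check\D\to T$ twisted by the undirectional wall structure of Remarks~\ref{Rem: parallel asymptotic monomials} and~\ref{Rem: Unidirectional walls from gluing data}, which is exactly the assertion.

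The step that needs the most care — though it is not deep — is checking that $(\Phi^{-1}\circ W)^{-1}(1)=w^{-1}(1)$ is an isomorphism of families \emph{over $\Spf\kk\lfor t\rfor$}, compatible with the order filtration, and not merely an equality of supports or of central fibers. This reduces to the facts that $\Phi$ is an automorphism over $\kk\lfor t\rfor$ and that $W$ and $w$ are morphisms over $\Spf\kk\lfor t\rfor$ by construction, so that the identity $\Phi^{-1}\circ W|_{\X\setminus A}=w$ already holds at each finite order $k$ on $\X_k\setminus A$; one then only has to record the order-$k$ disjointness of the fiber of $\Phi^{-1}\circ W$ over $1$ from $A$, which again follows from $\depth\sigma\ge1$ for bounded $\sigma$ together with the fact that $\Phi^{-1}$ does not alter the reduction modulo $t$.
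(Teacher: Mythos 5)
Your proposal is correct and follows exactly the paper's route: the corollary is stated as an immediate consequence of Theorem~\ref{Thm: W versus w} combined with Proposition~\ref{Prop: mirror statement for w^{-1}(0)}, which is precisely your argument. The extra bookkeeping you supply — that $W\equiv 0 \bmod t$ on $A$ so the fiber of $\Phi^{-1}\circ W$ over $1$ lies in $\X\setminus A$, where $W$ is a unit and $\Phi^{-1}\circ W=w$ — is the routine verification the paper leaves implicit, and it is carried out correctly.
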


%===========================================================
%===========================================================

\section{Wall structures and broken lines via tropical disks}
\label{sect:tropical disks}

We now aim for an alternative construction of the potential $W$ in terms of
tropical disks.\footnote{ The interpretation of wall structures and the
superpotential in terms of tropical disks of Maslov indices $0$ and $2$ in this
section has a more speculative and inconclusive nature than the rest of the
paper. Recent advances in our understanding of wall structures in the context of
intrinsic mirror symmetry \cite{CanonicalWalls} makes it now feasible to develop
the picture given in this section in full generality. This section is therefore
included with only minor changes from the original version, but should be read
with some caution.}

\subsection{Tropical disks}
\label{subsec:disks}
Our definition of tropical disks depends only on the integral affine geometry of
$B$ and not on its polyhedral decomposition $\P$. As usual let $i:
B\setminus\Delta \to B$ denote the inclusion for $\Delta$` the singular locus of
the integral affine structure, and let $\Lambda_B$ be the sheaf of integral
tangent vectors. We restrict to the asymptotically cylindrical case
(Definition~\ref{Def: parallel edges}). Without reference to $\P$ we require
that $B$ is non-compact and for some orientable compact subset $K\subset B$,
$\Gamma(B\setminus K, i_*\Lambda)$ has rank one. Then there exists a unique
primitive integral affine vector field $\xi$ on $B\setminus K$ pointing away
from $K$. We assume the semiflow of $\xi$ is complete and call its integral
curves the \emph{asymptotic rays}.

\begin{definition}
\label{def:disk}	
Let $\Gamma$ be a tree with root vertex $\rootvertex$. Denote by $\Gamma^{[1]},
\Gamma^{[0]}, \leafvertices$ the set of edges, vertices, and leaf vertices
(univalent vertices different from the root vertex), respectively. We allow
unbounded edges, that is, edges adjacent to only one vertex, defining a subset
$\Gamma_\infty^{[1]}\subseteq \Gamma^{[1]}$. Let $w: \Gamma^{[1]} \to
\mathbb{N}\setminus \{0\}$ be a weight function.

Let $x\in B\setminus \Delta$. A \emph{tropical disk bounded by $x$} is a proper,
locally injective, continuous map
\[
h: \big( |\Gamma|, \{\rootvertex\} \big) \lra \big(B,\{x\} \big)
\]
with the following properties.
\begin{enumerate}
\item
$h^{-1}(\Delta)= \leafvertices$.
\item
For every edge $E\in \Gamma^{[1]}$ the image $h(E\setminus\partial E)$ is a
locally closed integral affine submanifold of $B\setminus \Delta$ of dimension
one.
\item
If $V\in\Gamma^{[0]}$ there is a primitive integral vector $m\in
\Lambda_{B,h(V)}$ extending to a local vector field tangent to $h(E)$ and
pointing away from $h(V)$. Define the \emph{tangent vector of $h$ at $V$ along
$E$} as $\overline m_{V,E}:= w(E)\cdot m$.
\item 
For every $V\in \Gamma^{[0]}\setminus \leafvertices$ the following
\emph{balancing condition} holds:
\[
\sum_{\{E\in \Gamma^{[1]}| \,V\in E\} } \overline m_{V,E}= 0.
\]
\item 
The image of an unbounded edge is an asymptotic ray.
\end{enumerate}
Two disks $h: |\Gamma| \to B$, $h': |\Gamma'| \to B$ are identified if
$h=h'\circ \phi$ for a homeomorphism $\phi: |\Gamma|\to |\Gamma'|$ respecting
the weights.

The \emph{Maslov index} of $h$ is defined as $\displaystyle \mu(h):=
2\sum_{E\in\Gamma^{[1]}_\infty} w(E)$.
\qed
\end{definition}

Note that for a tropical disk $h^*(i_*\Lambda_B)$ is a trivial local system. In
particular, there is a unique parallel transport of tangent vectors along $h$.

\begin{example} \label{triangle}
\begin{figure}[h!] 
\input{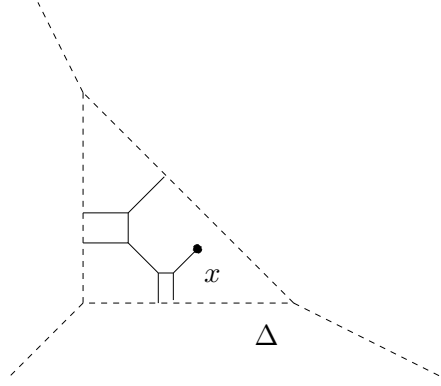}
\caption{A tropical Maslov index zero disk bounding $x$ belonging to a moduli
space of dimension $5$. The dashed lines indicate a part of the discriminant
locus.}
\label{fig:moduli}
\end{figure}
Suppose $\dim B=3$ and $\Delta$ bounds an affine $2$-simplex $\sigma$ with
$T_x\sigma$ contained in the image of $i_*\Lambda_{B,x}$ for all $x\in \Delta$.
Such a situation occurs in the mirror toric degenerations of local Calabi-Yau
threefolds, for example in the mirror of $K_{\mathbb P^2}$
\cite[Expl.\,5.2]{Invitation}. Then any point $x\in \sigma \setminus \Delta$
bounds a family of tropical Maslov index zero disks of arbitrary dimension, as
illustrated in Figure~\ref{fig:moduli}.
\end{example}

So far, our definition of tropical disks only depends on $|\Gamma|$ and not on
its underlying graph $\Gamma$. A distinguished choice of $\Gamma$ is by assuming
that there are no divalent vertices. At an interior vertex $V\in\Gamma^{[0]}$
(that is, neither the root vertex nor a leaf vertex) the rays $\RR_{\ge 0}\cdot
\overline m_{E,V}$ of adjacent edges $E$ define a fan $\Sigma_{h,V}$ in
$i_*\Lambda_{B,h(V)} \otimes_\ZZ\RR$. Denote by $\Sigma_{h,V}^0$ the parallel
transport along $h$ of $\Sigma_{h,V}$ to $\rootvertex$. The \emph{type} of $h$
consists of the weighted graph $(\Gamma,w)$ along with the $\Sigma_{h,V}^0$, $V
\in \Gamma^{[0]}\setminus \Gamma^{[0]}_\infty$. For $x\in B\setminus \Delta$ and
$\overline m\in \Lambda_{B,x}$ denote by $\M_\mu(\overline m)$ the moduli space
of tropical disks of Maslov index $\mu$ and root tangent vector $\overline m$.
It comes with a natural stratification by type: A stratum consists of disks of
fixed type, and the boundary of a stratum is reached when the image of an
interior edge contracts to a vertex of higher valency.
\smallskip

From now on assume $B$ is equipped with a compatible polyhedral structure $\P$
as defined in \cite[\S1.3]{logmirror}. It is then natural to adapt $\Gamma$ to
$\P$ by appending Definition~\ref{def:disk} as follows:
\begin{list}{(6)}
\item
For any $E\in\Gamma^{[1]}$ there exists $\tau\in \P$ with $h(\Int E)\subseteq
\Int(\tau)$, and if $V\in E$ is a divalent vertex then
$h(V)\subseteq\partial\tau$.
\end{list}
\noindent 
In other words, we insert divalent vertices precisely at those points of
$|\Gamma|$ where $h$ changes cells of $\P$ locally. Note however that we still
consider the stratification on $\M_\mu(\overline m)$ defined with all divalent
vertices removed.

\begin{example}
\begin{figure}[h]
\input{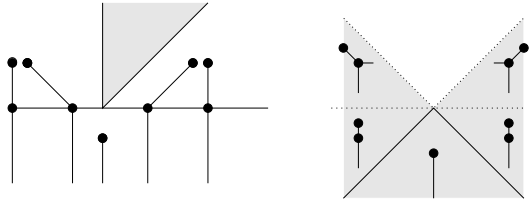}
\caption{Disks near $\Delta$ (left) and their moduli cell complex (right).}
\label{fig:cells}
\end{figure}
As it stands, the type does not define a good stratification of the moduli space
of tropical disks. For each vertex $V\in\Gamma$ mapping to a codimension one
cell $\rho\in\P$ we also need to specify the connected component of $\rho
\setminus \Delta$ containing $h(V)$ (that is, specify a reference vertex
$v\in\rho$). This is illustrated in Figure~\ref{fig:cells}. Here the dotted
lines in the right picture correspond to generalized tropical disks, fulfilling
all but (1) in Definition~\ref{def:disk}.
\end{example}

Tropical disks are closely related to broken lines as follows. We place
ourselves in the context of \S\ref{Ch: Broken lines}. In particular, we assume
given a structure $\scrS_k$ that is consistent to order $k$.

\begin{lemma}[Disk completion]\footnote{Cf.\ \cite[Prop.4.13]{CanonicalWalls} for a refined treatment in a slightly different setup.}
\label{line-disk} 
As a map, any broken line is the restriction of a Maslov index two disk $h:
|\Gamma| \to B$ to the smallest connected subset of $|\Gamma|$ containing the
root vertex and the (unique) unbounded leaf. The restriction of $h$ to the
closure of the complement of this subset consists of Maslov index zero disks.
\end{lemma}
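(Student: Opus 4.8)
The plan is to realise $\beta$ as the \emph{spine} of $h$ — the unique path in $|\Gamma|$ joining the root vertex $\rootvertex=\beta(0)$ to the single unbounded leaf — and to obtain the rest of $h$ by grafting a Maslov index zero disk onto the spine at each break point of $\beta$. The unbounded edge of $h$ is forced by Definition~\ref{def: broken lines},(2): $\overline m_1$ is primitive along an unbounded edge $\omega\in\P$ with $\ord_\omega(m_1)=0$, which in the asymptotically cylindrical case (Remark~\ref{rem: broken lines},(1)) is an asymptotic ray, so the initial segment of $\beta$ is the image of a weight-one unbounded edge. If $\beta$ has no break points then $h=\beta$ is already the desired Maslov index two disk.

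Now fix a break point and write $V_i:=\beta(t_i)$; since $\im\beta$ is disjoint from the joints of $\scrS_k$, in particular $V_i\notin\Delta$. The change-of-chamber map $\theta$ at $V_i$ in Definition~\ref{def: monomial transport} is a composition of the wall- and slab-crossing isomorphisms of \cite[\S2.4]{affinecomplex} for the walls and slabs met along a path from $\fou_i$ to $\fou_{i+1}$, each of which multiplies a monomial by a power of the corresponding wall or slab function. Selecting the summand $a_{i+1}z^{m_{i+1}}$ of $\theta(a_iz^{m_i})$ amounts to selecting one monomial $z^{p_{i,l}}$ out of each such power, so that
\[
m_{i+1}-m_i=\sum_l p_{i,l},
\]
with each $\overline{p_{i,l}}$ tangent to the wall or slab it comes from. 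The structural input we invoke is that, by the scattering construction of $\scrS_k$ in \cite{affinecomplex} — equivalently, by the disk interpretation of wall and slab functions, cf.\ \cite[Prop.~4.13]{CanonicalWalls} — every monomial $z^{p_{i,l}}$ appearing in such a function is the output monomial of a Maslov index zero tropical disk $h_{i,l}$ whose trunk edge runs along that wall or slab with tangent vector $\overline{p_{i,l}}$. Grafting the trees $h_{i,l}$ onto the spine at $V_i$, for all $i$ and $l$, and letting the spine leave each $V_i$ in direction $-\overline m_{i+1}$, defines the candidate disk $h$.

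It then remains to check Definition~\ref{def:disk} for $h$. The one non-formal point is the balancing condition at each new interior vertex $V_i$: the incoming spine edge contributes the tangent vector $\overline m_i$, the outgoing spine edge contributes $-\overline m_{i+1}$, and the grafted disks contribute $\sum_l\overline{p_{i,l}}$, so balancing at $V_i$ reduces exactly to the identity $m_{i+1}-m_i=\sum_l p_{i,l}$ above (after tracking the orientation in which each crossing isomorphism is traversed). Conditions (1)--(3), (5) and (6) for $h$ follow from those of $\beta$ and of the $h_{i,l}$, inserting divalent vertices where $\beta$ crosses cell boundaries, and using that $\beta$ and the grafting points avoid $\Delta$ and that each trunk stays inside the cell carrying its wall or slab; local injectivity holds after a generic choice. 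Finally $\mu(h)=2$: the spine supplies its unique weight-one unbounded edge, while each $h_{i,l}$ has $\mu(h_{i,l})=0$ and no unbounded edge. By construction $\beta$ is $h$ restricted to the spine, and the closure of the complement of the spine in $|\Gamma|$ is the union of the grafted trees, each mapping to a Maslov index zero disk, which is the remaining assertion.

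The main obstacle is the structural input of the second paragraph: identifying every wall- or slab-function monomial with the output of a Maslov index zero disk rooted on that wall or slab, with the tangent vectors and signs matched to the balancing condition at the grafting vertices. Making this rigorous means unwinding the inductive scattering construction of $\scrS_k$ — the initial walls and slabs encoding the basic disks with a single edge running out to a leaf on $\Delta$ via the affine monodromy, and scattering at a joint corresponding to the gluing of disks at an interior vertex — and matching the exponent bookkeeping with balancing. In the present setup this correspondence is only sketched; a fully rigorous treatment, phrased via punctured invariants, is \cite[Prop.~4.13]{CanonicalWalls}.
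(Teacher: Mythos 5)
Your construction coincides with the paper's: each break point of $\beta$ is turned into a balanced interior vertex by grafting Maslov index zero disks whose root tangent vectors account for the change of exponent under the change-of-chamber map, and the spine with its single weight-one unbounded edge gives $\mu=2$. The difference is that the single nontrivial input of the whole lemma --- that every exponent of a wall or slab function in $\scrS_k$ is the root tangent vector of a Maslov index zero disk rooted on that wall or slab --- is precisely what the paper proves and what you declare as ``structural input'', deferring to \cite[Prop.\,4.13]{CanonicalWalls}. Since you yourself say this correspondence is ``only sketched'' in the present setup, your write-up has a gap at exactly the step that constitutes the proof.

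The missing argument is short and internal to \cite{affinecomplex}, and is an induction on the order $l$ of the structure. For $\scrS_0$, simplicity forces the exponents of a slab function $f_{\rho,v}$ to be the monodromy-invariant directions, i.e.\ root tangent vectors of one-edge Maslov index zero disks with leaf on $\Delta$. For a wall of $\scrS_{l+1}\setminus\scrS_l$ produced by scattering at a joint, one must see that the exponents of the outgoing rays are generated by those of the incoming rays and cuts: if there were an additional exponent, it would be preserved under composition with the log automorphisms attached to the rays and cuts, since up to higher order these are multiplications by polynomials with nontrivial constant term, and this contradicts consistency of the scattering diagram. The outgoing disk is then obtained by gluing the incoming disks at a new interior vertex, balanced exactly because the outgoing exponent is a sum of incoming ones --- the same bookkeeping you perform at the break points. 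Inserting this two-step induction (rather than the citation) closes the gap; no punctured-invariant technology is needed. The remaining discrepancies with the paper are cosmetic: the paper attaches one disk per break point, producing a trivalent vertex, whereas you allow several grafts at one vertex, which is harmless since the transported exponent difference is in general a sum of wall-function exponents.
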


\begin{proof} 
We continue to use the terminology of \cite{affinecomplex}. First we show that
any projected exponent $\overline m$ at a point $p$ of a wall or slab in
$\scrS_k$ is the root tangent vector of a Maslov index zero disk $h$ rooted in
$h(\rootvertex)=p$. This is true for $\scrS_0$ since by simplicity the exponents
of a slab function $f_{\rho,v}$ are root tangent vectors of Maslov index zero
disks with only one edge. Assume inductively this holds as well for $\scrS_{l}$,
$0\le l \le k$, and show the claim for walls in $\scrS_{l+1}\setminus \scrS_{l}$
arising from scattering. We must show that the exponents of the outgoing rays
are generated by those of the incoming rays or cuts. But if there existed an
additional exponent, it would be preserved by any product with log automorphisms
attached to the rays or cuts, as up to higher orders the latter are
multiplications by polynomials with non trivial constant term. This contradicts
consistency.

In particular, if $p=\beta(t_i)$ is a break point of a broken line $\beta$ then
$t_i$ can be turned into a balanced trivalent vertex by attaching a Maslov index
zero disk $h$ with root tangent vector $\overline m$ equal to the projected
exponent taken from the unique wall or slab containing $p$.
\end{proof}

\noindent
We call any tropical disk as in the lemma a \emph{disk completion} of the broken
line. The disk completion is in general not unique due to the following reasons:
\begin{enumerate}
\item
First, Example~\ref{triangle} shows that tropical Maslov index zero disks may
come in families of arbitrarily high dimension.
\item
Even if the moduli space of tropical Maslov index zero disks is of expected
finite dimension, there may be joints with different incoming root tangent
vectors.
\item
There may exist several Maslov index zero disks with the same root
tangent vector, for example a closed geodesic of different winding numbers.
\end{enumerate}

We now take care of these issues.

%===========================================================

\subsection{Virtual tropical disks}
Example~\ref{triangle} illustrates that for $\dim B\geq 3$, a tropical disk
whose image is contained in a union of slabs leads to an unbounded dimension of
the moduli space of tropical Maslov index $2\mu$ disks. In order to get
enumerative invariants which recover broken lines we need a virtual count of
tropical disks. Throughout we assume $B$ is oriented.
\smallskip

Suppose $\Delta$ is straightened as in \cite[Rem.\,1.49]{logmirror}, that is,
$\Delta$ defines a subcomplex $\Delta^\bullet$ of the barycentric refinement of
the polyhedral decomposition $\P$ of $B$. Note that the simplicial structure of
$\Delta^\bullet$ refines the natural stratification of $\Delta$ given by local
monodromy type. Let $\Delta_\max$ denote the set of maximal cells of $\Delta
^\bullet$ together with an orientation, chosen once and for all. Each $\tau \in
\Delta_\max$ is contained in a unique $(n-1)$-cell $\rho\in\P$. Then monodromy
along a small loop about $\tau$ defines a monodromy transvection vector
$m_\tau\in\Lambda_\rho$, where the signs are fixed by the orientations via some
sign convention. In view of the orientations of $\tau$ and $B$ we can then also
choose a maximal cell $\sigma_\tau\supset \tau$ unambiguously.

For each $\tau \in \Delta_\max$ let $w_\tau$ be the choice of a partition of
$|w_\tau|\in \mathbb N$ (with $w_\tau = \emptyset$ for $|w_\tau|=0$). To
separate leaves of tropical disks we will now locally replace $\Delta$ by a
branched cover. We can then consider deformations of a disk $h$ whose leaves end
on that cover instead of $\Delta$, with weights and directions prescribed by the
partitions $\mathbf{w}:=(w_\tau|\tau \in \Delta_\max)$.
\medskip

\paragraph{\emph{Deformations of $\Delta$}}
We first define a deformation of the barycentric refinement $\Delta_\bary$ of
$\Delta$ as a polyhedral subset of $B$. For each $\tau \in \Delta_\max$, denote
by $\ray_\tau \subseteq \sigma_\tau$ the 1-cell connecting the barycenter
$b_\tau$ of $\tau$ to the barycenter of $\sigma_\tau$. Note that
$\Lambda_{\ray_\tau} \otimes_\ZZ \RR$ intersects $i_*\Lambda_{b_\tau}
\otimes_\ZZ \RR= \mathrm{span}(\Lambda_\tau,m_\tau)$ transversally. Moving the
barycenter of the barycentric refinement $\tau_\bary$ of $\tau$ along
$\ray_\tau$ while fixing $\partial \tau_\bary$ now defines a piecewise linear
deformation $\tau_s$ of $\tau$ over $s\in \ray_\tau$ as a polyhedral subset of
$\sigma_\tau$. Thus we obtain a deformation $\{ \Delta_s |s \in S\}$ of $
\Delta$ over the cone $S:=\prod_\tau \ray_\tau$. It is trivial as deformation of
cell complexes, as parallel transport in direction $\ray_\tau$ in each cell
$\sigma_\tau$ induces an isomorphism of cell complexes $\Delta_\bary \cong
\Delta_s$.

For an infinitesimal point of view let $i_\tau:\tau\to\sigma$ be the inclusion.
Consider the preimage of the deformation of $\tau\subseteq\Delta$ under the
natural inclusion $\sigma_\tau \hookrightarrow i_\tau^*T\sigma_\tau$. For
$s=(s_1,\ldots, s_{\length w_\tau})\in \ray_\tau^{\length w_\tau}$ with $s_i$
pairwise different,
\[
\cover:= \bigcup_{k=1}^{\length w_\tau}
\tau_{s_k} \subseteq i_\tau^*T\sigma_\tau
\]
is then a $\length w_\tau$-fold branched cover of $\tau$, via the natural
projection
\[
\pi: i_\tau^*T\sigma_\tau \to \tau.
\]

Note that $\cover = \emptyset$ if $|w_\tau|=0$ and $\cover \subseteq
\tau_{s'}^{\mathbf w'}$ if $\length w_\tau \leq \length w_\tau'$ and if the
entries of $s$ agree with the first $\length w_\tau$ entries of $s'$. We
make $\tau_s^{\mathbf w}$ into a weighted cell complex by equipping each cell of
$\cover$ with the weight defined by the partition $w_\tau$. Finally, set
$\defbase:= \prod_\tau \ray_\tau^{\length w_\tau}$ and $\Cover:= \bigcup_\tau
\tau^{\mathbf w}_{s_\tau}$, where $s \in \defbase$. We still call $\Cover$ a
deformation of $\Delta$, as for $\epsilon \to 0$, $\tilde \Delta_{\epsilon
s}^{\mathbf w}$ converges to $\Delta$ as \emph{weighted} complexes in an obvious
sense.
\medskip
 
\paragraph{\emph{Deformations of tropical disks}} 
 
We now want to define a virtual tropical disk as an infinitesimal deformation
$\tilde h$ of a tropical disk $h$ such that the leaves of $\tilde h$ end on
$\Cover$ as prescribed by $\mathbf w$.

The idea is that for small $\epsilon>0$ and suitable environments $ U_v\subseteq
T_vB$ of $0$, $v=h(V)$ the image of an internal vertex, the rescaled exponential
map $\exp\big|_{\bigcup_v \left(\frac1\epsilon U_v\right)} \circ \epsilon
\id_{T(B\setminus \Delta)}$ maps the union of the tropical curves $\tilde h_V$
in the following Definition~\ref{Def: virtual tropical disk} onto the image of a
tropical disk $\tilde h_\epsilon: \tilde \Gamma \to B$ with leaves emanating
from $ \Delta_{\epsilon s}^{\mathbf w} $. By choosing $\epsilon>0$ sufficiently
small, the image of $\tilde h_\epsilon$ is contained in an arbitrary small
neighborhood of the image of $h$. Thus $\tilde h_\epsilon$ indeed defines a
deformation of $h$. Conversely, for $\epsilon$ sufficiently small, $\tilde
h_\epsilon$ determines $\tilde h$ uniquely. Hence in order to simplify language
and visualization, we may and will identify a virtual curve $\tilde h$ with its
``images" $\tilde h_\epsilon$ in $B$ for small $\epsilon>0$.

\begin{definition}
\label{Def: virtual tropical disk}
Let $h: |\Gamma|\to B$ be a tropical disk not intersecting $|\Delta^{[\dim
B-3]}|$. A \emph{virtual tropical disk $\tilde h$ of intersection type $\mathbf
w$ deforming $h$} consists of:
\begin{enumerate}
\item
For each interior vertex $V\in\Gamma^{[0]}$, a possibly disconnected genus zero
ordinary tropical curve $\tilde h_V: \tilde \Gamma_V \to T_vB$ with respect to
the fan $\Sigma_{h,V}$. This means that $\tilde \Gamma_v$ is a possibly
disconnected graph with simply connected components and without di- and
univalent vertices, the map $\tilde h_V$ satisfies conditions (2)--(4) of
Definition \ref{def:disk}, while instead of (5) the unbounded edges are parallel
displacements of rays of $\Sigma_{h,V}$.
\item
A cover $\tilde h_E$ of each edge $E$ of $h$ by weighted parallel sections of
the normal bundle $h|_E^*TB/Th(E)$. For each edge $E$ adjacent to an interior
vertex $V$, we require that the inclusion defines a weight-respecting bijection
between the cosets of $\tilde h_E^{-1}(V)$ over $V$ and rays of $\tilde h_V$ in
direction $E$. Moreover, the intersection defines a weight-preserving bijection
between the cosets of $\{ \tilde h_E^{-1}(V) \, | \, h( V)\in \tau, E\ni V \}$
over the leaf vertices in $\tau$ and branches of $\cover$.
\item
A virtual root position, that is, a point $\tilde h_{\rootvertex}
(\widetilde\rootvertex)$ in $T_{h(\rootvertex)}B$ such that $\tilde
h_{\rootvertex}(\widetilde\rootvertex) + Th(E) = \tilde h_E^{-1}({\rootvertex})$, where $E$ is
the root edge.
\end{enumerate}
\end{definition}

We denote by $\M_{\mu}(\Cover,h)$ the moduli space of virtual Maslov index $\mu$
disks of intersection type $\mathbf w$ deforming $h$.

In order to exclude the phenomenons in Example~\ref{triangle}, we now restrict
to sufficiently general tropical disks. For such tropical disks a local
deformation of the constraints on $\tilde h(\tilde \Gamma^{[0]})$ lifts to a
local deformation of $\tilde h$ preserving the type. Formally, we define:

\begin{definition}
Let $ s\in \defbase$ and $\mu \in\{0,1\}$. A virtual tropical disk $\tilde h \in
\M_{2\mu}(\Cover,h)$ is \emph{sufficiently general} if:
\begin{enumerate}
\item
$\tilde h$ has no internal vertices of valency higher than three,
\item
all intersections of $\tilde h$ with the codimension one cells of $\P$ are
transverse intersections at divalent vertices outside $|\P^{[\dim B-2]}|$,
\item 
there exists a subspace $L\subseteq T_{h(\rootvertex)}B$ of dimension
$\max\{1-\mu,0\}$ and an open cone $\defcone \subseteq \defbase$ containing $s$
such that the natural map 
\begin{equation}
\label{eq:constraintmap}
\pi \times \mathrm{ev}_{\widetilde \rootvertex}: \quad 
\bigcup_{s\in \defcone}\M_\mu(\Cover,h) \lra \defcone \times
\big(T_{h(\rootvertex)}B\big)/ L
\end{equation}
is open.
\end{enumerate}
$\Cover$ is in \emph{general position} if for all Maslov index zero disks $h$
the complement of the set $\M_0(\Cover,h)^{gen}$ of sufficiently general disks
in $\M_0(\Cover, h)$ is nowhere dense.
\end{definition}

\begin{lemma} \label{dimModuli}
Given $\mathbf w$, the space of non-general position deformations of $\Delta$ is
nowhere dense in $S^{\mathbf w}$. For general position, $\M_{2\mu}(\Cover,h)$ is
of expected dimension $\dim B+\mu-1$.
\end{lemma}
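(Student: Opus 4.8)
The plan is to stratify the total space $\bigcup_{s\in\defbase}\M_{2\mu}(\Cover,h)$ by combinatorial type and to run a parameter count on each stratum. By the \emph{combinatorial type} $\Theta$ of a virtual disk $\tilde h$ deforming a fixed $h$ I mean the isomorphism class of the refined tree $\tilde\Gamma$ --- recording the genus-zero graphs $\tilde\Gamma_V$ placed at each interior vertex $V\in\Gamma^{[0]}$ together with the divalent vertices inserted wherever $\tilde h$ crosses a codimension-one cell of $\P$ --- enriched with all the discrete data: the cell of $\P$ carrying each edge and the direction of each edge, the combinatorial type of $\tilde h_V$ relative to the fan $\Sigma_{h,V}$, and the assignment of the leaves of $\tilde\Gamma$ to the branches of $\Cover$ with the weights dictated by $\mathbf w$. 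The pairs $(s,\tilde h)$ with $s\in\defbase$ and $\tilde h$ of type $\Theta$ form the relative interior of a rational convex polyhedron $P_\Theta$, because each condition in Definition~\ref{Def: virtual tropical disk} --- balancing at the internal vertices, matching of the parallel sections $\tilde h_E$ with the rays of $\tilde h_V$ across each edge, incidence of the leaves with the branches of $\Cover$, and the root condition~(3) --- is affine-linear in $s$, in the positions of the vertices, and in the lengths of the bounded edges. This produces an affine projection $p_\Theta\colon P_\Theta\to\defbase$, and for fixed $s$ the moduli space $\M_{2\mu}(\Cover,h)$ is the union of the finitely many fibres $p_\Theta^{-1}(s)$; finiteness holds since, for fixed $\mathbf w$, the total weight of leaves of $h$ on each $\tau\in\Delta_{\max}$ is pinned to $|w_\tau|$, leaving only finitely many combinatorial types of $h$ and of $\Theta$.

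The next step is the dimension count. Since $h^*(i_*\Lambda_B)$ is a trivial local system, parallel transport along $h$ identifies all tangent spaces with $V_0:=i_*\Lambda_{B,h(\rootvertex)}\otimes_\ZZ\RR\cong\RR^{\dim B}$, so a virtual disk of type $\Theta$ is just an ordinary, possibly disconnected, genus-zero tropical curve in $V_0$ of prescribed combinatorial type, with fixed asymptotic direction for its $\mu$ weight-one unbounded ends, with its interior leaves constrained to the $(\dim B-2)$-dimensional affine branches of $\Cover$, and with the marked root constrained as in Definition~\ref{Def: virtual tropical disk}(3). The standard bookkeeping --- $\dim B$ translation parameters per connected component and one length parameter per bounded edge, against the matching conditions at the edges of $\Gamma$ and two conditions for each leaf lying on a branch --- collapses, using the trivalence of the interior vertices of $\Gamma$ and the fact that $h$ itself is balanced, to $\dim P_\Theta=\dim\defbase+\dim B+\mu-1$ for the top-dimensional types, with strictly smaller dimension for every degenerate type. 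The normalizing constant is fixed by the two extreme cases: for $\mu=1$ the top-dimensional fibres of $p_\Theta$ are the polyhedra $\Xi$ of Proposition~\ref{prop: broken line moduli polyhedral}, of dimension $\dim B$; for $\mu=0$ the root of such a disk sweeps out a wall, as in Lemma~\ref{line-disk}, of dimension $\dim B-1$. Hence over a generic $s$ every stratum of $\M_{2\mu}(\Cover,h)$ has dimension at most $\dim B+\mu-1$, attained by the top-dimensional types, which is the asserted expected dimension.

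For general position I would isolate the types $\Theta$ whose generic virtual disk fails sufficient generality, namely those for which (a) some interior vertex of $\tilde h$ has valence $>3$, or (b) some edge of $\tilde h$ meets a codimension-one cell of $\P$ non-transversely or inside $|\P^{[\dim B-2]}|$, or (c) the map \eqref{eq:constraintmap} is not open at $(s,\tilde h)$. Types of kinds (a) and (b) are genuine degenerations: each imposes an additional affine-linear equation on $P_\Theta$, so the corresponding stratum has dimension strictly below $\dim\defbase+\dim B+\mu-1$ and can never form a top-dimensional part of any $\M_{2\mu}(\Cover_s,h)$, hence never violates general position. For (c), openness of \eqref{eq:constraintmap} restricted to a stratum is equivalent to surjectivity of an explicit linear map whose entries are affine-linear in $s$, which fails exactly over the proper subpolyhedron of $\defbase$ defined by the vanishing of the relevant maximal minors, hence over a nowhere-dense set. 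Taking the union over the finitely many bad types $\Theta$, for the finitely many combinatorial types of Maslov index zero (and, for $\mu=1$, Maslov index two) disks $h$ compatible with $\mathbf w$, yields a nowhere-dense polyhedral subset of $\defbase=S^{\mathbf w}$ outside of which $\Cover$ is in general position and $\M_{2\mu}(\Cover,h)$ has the expected dimension $\dim B+\mu-1$.

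I expect the main obstacle to be the dimension bookkeeping of the second step: one must verify that for the top-dimensional types the matching conditions across the edges of $\Gamma$ and the incidence conditions at the leaves are genuinely independent, so that the naive parameters-minus-constraints count is actually attained. This is precisely where the transversality built into $\Cover$ enters --- namely that $\Lambda_{\ray_\tau}\otimes_\ZZ\RR$ meets $i_*\Lambda_{b_\tau}\otimes_\ZZ\RR=\mathrm{span}(\Lambda_\tau,m_\tau)$ transversally for each $\tau\in\Delta_{\max}$ --- and one must also check that inserting the divalent vertices at the $\P$-cell crossings changes neither the dimension count nor the genericity analysis. The remaining verifications are, as the text says of the companion statement, an exercise in polyhedral geometry.
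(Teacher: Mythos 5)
Your proposal is correct in outline and takes essentially the same route as the paper's own (explicitly labelled) sketch: both reduce the lemma to polyhedral geometry by parametrizing virtual disks of a fixed combinatorial type affine-linearly --- the paper via an open embedding of $\bigcup_{s\in\defbase}\M_\mu(\Cover)$ into $B\times\RR^{N}_{\ge 0}$ given by the root position and the lengths of the bounded edges of a trivalent graph, you via the strata $P_\Theta$ of the incidence variety over $\defbase$ --- and then observe that the leaf-incidence conditions each cut down the dimension by two while every failure of sufficient generality is confined to finitely many proper affine-linear subsets, hence nowhere dense. The one step you should replace is the calibration of the ``normalizing constant'' by the broken-line polyhedra of Proposition~\ref{prop: broken line moduli polyhedral}: in the logic of this section tropical disks are meant to underlie broken lines (via Lemma~\ref{line-disk} and Conjecture~\ref{conject}), not conversely, so the count must be closed directly, as the paper does, by elementary bookkeeping for a trivalent tree --- the number of bounded edges is determined by $|\leafvertices|$ and $\mu$, and subtracting two for each leaf constrained to $\Cover$ gives exactly $\dim B+\mu-1$ --- rather than anchored to the dimension of broken-line moduli. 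Finally, both you and the paper assert rather than argue that the loci in $\defbase$ where openness of \eqref{eq:constraintmap} fails are \emph{proper} subsets (that the relevant minors do not vanish identically in $s$); a sentence exhibiting, for each top-dimensional type, a deformation of the branches of $\Cover$ that makes the evaluation submersive would close this, but it is a gap shared with the published sketch rather than one specific to your argument.
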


\begin{proof}(Sketch)
Consider a generalized class of tropical disks by forgetting the leaf
constraints, allowing edge contractions and replacing condition~(6) by the
assumption that the graph contains no divalent vertices. Fix a type with a
trivalent graph $\Gamma$. Then any tropical disk of the given type is determined
by the position of the root vertex $x$ and the length of the $N:=| \Gamma^{[1]}
| -\mu = 2| \leafvertices|-1 -\mu$ bounded edges. This shows that the inverse
map restricts to an open embedding $ \bigcup_{s\in \defbase} \M_\mu(\Cover)\to
B\times \mathbb R^{N}_{\geq 0}$ in obvious identifications dictated by $\Gamma$.
The statement now follows from the observation that the map
\eqref{eq:constraintmap} expressed in the induced affine structure on
$\M_\mu(\Cover,h)$ is piecewise linear, and any violation of stability defines a
subset of a finite union of hyperplanes. In particular, the dimension statement
follows by noting that the positions of the leaf vertices define constraints of
codimension $2 (| \leafvertices | - \mu)$. 
\end{proof}

\begin{remark}
\label{foj}
A stratum of $\M_0(\overline m)$ admits a natural affine structure. Hence a disk
$h\in \M_0(\overline m)^{[k]}$ belonging to a $k$-dimensional stratum naturally
comes with the $k$-dimensional subspace of induced infinitesimal vertex
deformations
\[
\foj_V(h)^{[k]}:=T_h \mathrm{ev}_{V}(T_h\M_0^{[k]}(\overline m))
\subseteq T_{h(V)}B.
\]
Likewise, infinitesimal deformations of a sufficiently general Maslov index zero
disk $\tilde h$ give rise to \emph{virtual joints}, that is, the codimension two
subsets defined by restricting the deformation family to the vertices. Such
virtual joints converge to some codimension two space $\foj_V(\tilde h)\in
T_{h(V)}B$, as $s \to 0 \in\defcone$. Moreover, if the limiting disk $h$ of
$\tilde h$ belongs to a $(\dim B- 2)$-stratum of $\M_0(\overline m)$ such that
\eqref{eq:constraintmap} extends to an open map at $0\in \partial \defcone$,
then $ \foj_V(h)^{[\dim B-2]} = \foj_V(\tilde h)$. This may be used to define
stability for tropical disks.
\end{remark}

%===========================================================

\subsection{Structures via virtual tropical disks}
We now relate the counting of virtual Maslov index zero disks with the
structures of \cite{affinecomplex}. Let $\scrB$ be the set of closures of
connected components of the codimension one cells of $\P$ when $\Delta$ is
removed. For $\fob\in \scrB$ contained in $\rho\in \P^{[n-1]}$ and $v\in\rho$
a vertex contained in $\fob$ denote by $f_\fob:= f_{\rho,v}$ the order zero
slab function attached to $\fob \in \scrB$ via the log structure. Then $f_\fob
\in \kk[C_\fob]$ where $C_\fob$ is the monoid generated by one of the two
primitive invariant $\tau$-transverse vectors $\pm m_{\tau}$ for each positively
oriented $\tau \in \Delta^{[max]}$ with $\overline \fob \cap \tau \not =
\emptyset$. 

Let $k \in \mathbb{N}$.  Define the order $k$ scattering parameter
ring by
\[
R^{k}:= \kk[t_{ \tau} \, | \,  \tau  \in 
\Delta_\max ] / \mathcal I_{k}, \quad \mathcal I _{k}:=  (t_{
\tau}^{k+1}|  \tau \in  \Delta_\max),
\]
and let $\widehat R$ be its completion as $k\to \infty$. 

As $f_\fob$ has a non-trivial constant term, we can take its logarithm as in
\cite{gps}
\begin{equation}
\label{logslab}
\log f_{\fob} = \sum_{\overline m\in C_{\fob} } 
\length(\overline m) a_{\fob, \overline m} z^{\overline m} \in
\kk\lfor C_\fob\rfor,
\end{equation}
defining virtual multiplicities $a_{\fob, \overline m}\in \kk$. We consider
$\log f_{\fob}$ as an element of $\kk\lfor C_\fob\rfor \otimes _k\widehat R$ via
the completion of the inclusions
\[
\iota_k:  \kk[C_\fob]\to \kk[C_{\fob}]\otimes_\kk R^{k}, \quad
z^{m_{\tau}} \mapsto z^{m_{\tau }} t_{\tau}.
\]

\begin{definition} 
Attach the following numbers to a sufficeintly general virtual tropical disk
$\tilde h \in \M_\mu(\Cover,h)^{gen} $:
\begin{enumerate}
\item
The virtual multiplicity of a vertex $V\in\effvert$ of $\tilde h $ is
\[
\vmult_V(\tilde h):=\left\{ \begin{array}{ll} a_{\fob, \overline m}   
& \text{if $V$ is univalent, $\pi (\tilde h(V)) \in \fob  $}\\
s(\overline m) &  \text{if $V$ is divalent } \\
\big|\overline m \wedge \overline m'\big|_{\foj_V(h)}
& \text{if $V$ is trivalent} \end{array}\right.
\]
where $\overline m$ denotes the tangent vector of $\tilde h$ at $V$ in the
direction leading to the root, $\overline m'$ the tangent vector of a different
edge of $\tilde h$ at $V$, $a_{\fob,\overline m}$ the coefficients in
\eqref{logslab}, $s: \Lambda_{\tilde h(V)}B \to k^\times$ the change of stratum
function at $\tilde h(V)$ coming from the gluing data, and the last expression
the quotient density on $T_{\tilde h(V)}B/\foj_V(\tilde h)$ induced from the
natural density on $B\setminus \Delta$. Explicitly, letting $\overline j_1,
\ldots,\overline j_{n-2}$ be generators of $ \foj_V(\tilde h) \cap
\Lambda_{B,h(V)} $ (cf. Remark~\ref{foj}) then
\[
\big|\overline m \wedge \overline m'\big|_{\foj_V(h)}
:= |\overline m\wedge \overline m' \wedge \overline j_1 \wedge
\ldots \wedge \overline j_{n-2}|.
\]
\item 
The virtual multiplicity $\vmult(\tilde h)$ of $\tilde h$ is the total product
\[
\vmult(\tilde h):=\frac{1}{|\Aut(\mathbf w)|} \cdot
\prod_{V \in \effvert } \vmult_{V}(\tilde h).
\]
Here $\mathbf w$ is the intersection type of $\tilde h$, and $\Aut(\mathbf
w)$ is the product of the automorphisms\footnote{that is, the number of
permutations of the entries of $w_\tau$ that do not change the partition } of
the partitions $w_\tau$ over all $\tau \in \Delta_{[\max]}$.
\item 
The $t$-order of $\tilde h$ is the sum of the changes in the $t$-order of the
tangent vectors $\overline m_V$ at divalent vertices $V$ under changing the
adjacent maximal strata $\sigma^\pm_V$, that is
\[
\ord \tilde h := \sum _{\substack{V\in \Gamma^{[1]}:\\  \tilde h(V)
\in \sigma^+_V\cap \sigma^-_V \in \P^{[n-1]} } } \left|\left<
d\varphi|_{\sigma^-_V} - d\varphi|_{\sigma^+_V}, \overline
m_V\right>\right|.
\]
\end{enumerate}
\end{definition}

\begin{remark}
\label{t-order}
The $t$-order may be considered as a combinatorial analogue of the
symplectic area of a holomorphic disk.
\end{remark}

Note that the virtual multiplicity of a sufficiently general tropical disk
depends only on its type. Moreover, we have:

\begin{lemma}
The virtual multiplicity of a (type of) sufficiently general tropical disk
$\tilde h$ of intersection type $\mathbf w$ deforming a Maslov index zero disk
$h$ is independent of the choice $s\in \defbase$ of the general position
deformation $\Delta_s$.
\end{lemma}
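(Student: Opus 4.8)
The plan is to verify that every factor appearing in $\vmult(\tilde h)=\frac{1}{|\Aut(\mathbf w)|}\prod_{V\in\effvert}\vmult_V(\tilde h)$ is determined by the combinatorial type of $\tilde h$ together with the fixed data $h$, $\mathbf w$ and the integral affine geometry of $B$ — quantities that do not involve the deformation parameter $s\in\defbase$. I would treat the three vertex types in turn. For a univalent vertex $V$ with $\pi(\tilde h(V))\in\fob$, recall that the branch of the cover $\cover$ carrying $V$ is a parallel translate of $\tau$ inside $i_\tau^*T\sigma_\tau$; moving the point $s_k\in\ray_\tau$ that indexes this branch translates the branch but changes neither the adjacent slab $\fob\in\scrB$ nor the admissible leaf directions. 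Hence the pair $(\fob,\overline m)$ is read off from the type, and with it the virtual multiplicity $a_{\fob,\overline m}$ from the logarithmic expansion \eqref{logslab}. For a divalent vertex $V$ the codimension-one cell crossed and the side into which $\tilde h$ passes are part of the type, so the change-of-stratum function coming from the gluing data (denoted $s$ in the definition, not to be confused with the deformation parameter), and thus its value on the type-determined tangent vector $\overline m$, is type-determined. Finally $|\Aut(\mathbf w)|$ manifestly depends only on $\mathbf w$.

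The only place a genuine dependence on the deformation parameter could enter is the trivalent factor $|\overline m\wedge\overline m'|_{\foj_V(h)}$. Here $\overline m,\overline m'$ are the type-determined tangent vectors, while the subspace $\foj_V(h)$ is controlled by Remark~\ref{foj}: since $\tilde h$ is sufficiently general and its limiting disk $h$ lies in a $(\dim B-2)$-stratum of $\M_0(\overline m)$ along which \eqref{eq:constraintmap} remains open at $0\in\partial\defcone$, the virtual joint $\foj_V(\tilde h)$ — the limit as the deformation parameter tends to $0$ of the codimension-two loci swept by the vertex in the deforming family — coincides with the tangent space $\foj_V(h)^{[\dim B-2]}$ of that stratum. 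The latter is intrinsic to the fixed disk $h$, so the lattice $\foj_V(\tilde h)\cap\Lambda_{B,h(V)}$, a choice of generators $\overline j_1,\dots,\overline j_{n-2}$, and hence the quotient density $|\overline m\wedge\overline m'\wedge\overline j_1\wedge\cdots\wedge\overline j_{n-2}|$, are all independent of the deformation parameter. I would spell this out at each trivalent vertex.

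Assembling these observations, $\vmult(\tilde h)$ is a number attached to the type of $\tilde h$ (together with $h$, $\mathbf w$, $B$), with no reference to $s$; equivalently, it is locally constant in $s$ on the open set of general-position parameters realizing the given type, and $\defbase=\prod_\tau\ray_\tau^{\length w_\tau}$ being a cone, hence connected, this forces constancy. I expect the trivalent case to be the only real point: one must make sure that the density is computed against the limiting codimension-two space $\foj_V(h)^{[\dim B-2]}$ rather than an $s$-dependent approximant, which is precisely what the sufficiently-general hypothesis together with Remark~\ref{foj} secures.
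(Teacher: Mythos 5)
There is a genuine gap. What you prove is only that the multiplicity formula for a \emph{fixed} type makes no reference to the deformation parameter: every factor of $\vmult(\tilde h)$ is read off from the type, the slab data and the affine geometry. But this is essentially the remark the paper makes immediately before the lemma (``the virtual multiplicity of a sufficiently general tropical disk depends only on its type''), not the lemma itself. The actual content, as the paper's proof makes clear, is a wall-crossing invariance in the spirit of Gathmann--Markwig: the parameter space $\defbase$, once the non-general-position locus is removed, breaks into chambers, and as $s$ crosses a wall the sufficiently general disks deforming $h$ can change type --- in higher dimensions the relevant degenerations are disks acquiring a four-valent vertex, which resolve into \emph{different} trivalent types on the two sides. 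One must show that the multiplicities of these different types match up (in dimension two this is the standard invariance of the tropical count, cited as [GM]; in higher dimension the paper reduces the four-valent degeneration to the two-dimensional case by splitting edges and acting with the stabilizer $SL(n,\ZZ)_{\foj_V(h)}$ on the local fans). Your argument never touches this.

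Concretely, the step ``locally constant in $s$ on the open set of parameters realizing the given type, and $\defbase$ is a cone, hence connected, so constancy follows'' is where the proof breaks: the locus on which a given type is realized by a sufficiently general disk is in general a union of chambers, not all of $\defbase$, and connectedness of the ambient cone says nothing about what happens when that type ceases to exist across a wall. A secondary point: your identification $\foj_V(\tilde h)=\foj_V(h)^{[\dim B-2]}$ at trivalent vertices relies on the conditional part of Remark~\ref{foj} (the limit disk lying in a $(\dim B-2)$-stratum with the evaluation map remaining open at $0$), which is exactly the hypothesis that fails on the walls where the type changes --- so the delicate locus is precisely the one your argument excludes rather than handles.
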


\begin{proof}
We only give a very rough sketch here: If the type only changes by the number of
divalent vertices, the claim follows immediately from the definition of
$\varphi$ as a continuous and piecewise linear function. In dimension two, the
result then reduces to a standard one, cf.~\cite{gat}. In higher dimension, the
only remaining instable \emph{hyper}planes consist of disks with a four-valent
vertex. Here the independence of their stable deformations reduces to the
dimension two case, as the virtual multiplicity is invariant under splitting
each edge of $\Gamma$, acting with the stabilizer $SL(n,\mathbb Z)_{\foj_V(h)}$
on each fan $\Sigma_{h,V}$, and regluing formally.
\end{proof}

We are now ready for our central definitions: Denote by $\# \mathcal M_0(\mathbf
w,\overline m,\ell)^{gen}$ the number of types of sufficiently general virtual
tropical disks with intersection type $\mathbf w$ and $t$-order $\ell$ which
deform a tropical Maslov index zero disk with root tangent vector $\overline
m\in \Lambda_{B\setminus \Delta, x}$, counted with virtual multiplicity. More
generally, for $\mu\in \{0,2\}$ we can define $\# \mathcal M_\mu(\mathbf
w,\overline m,\ell)^{gen} $ by counting the corresponding disks themselves, but
specifying the virtual root as follows: The virtual root is $0\in T_xB$ if $\mu
=2$, and belongs to a line in $T_{h(\rootvertex)}B$ transverse to
$\foj_{\rootvertex}(\tilde h)$ if $\mu = 0$.

Let $\sigma \in \P_\max$, and $ P_{v,\sigma}$ the associated monoid at $v\in
\sigma$ which is determined by $\varphi$ as in
\cite[Constr.\,2.7]{affinecomplex}. Define the \emph{counting function} to
order $k$ in $x\in \sigma$ by
\begin{equation}
\label{countfct}
\log f_{\sigma, x}:= \sum_
{\substack{\overline m \in \Lambda_{x,B}, \\
\ell \leq k} }
\sum_ {\mathbf w}   \length (\overline m)\#\mathcal M_0
(\mathbf w, \overline m, \ell)^{gen}z^{\overline m}t^{\ell}
\prod_{ \tau} t_{ \tau}^{|w_{ \tau}| },
\end{equation}
which is an element of the ring $\kk[P_{v,\sigma}]\otimes_\kk R^k$.

\begin{conjecture}
\label{conject}
For each $k \in \mathbb N$, the counting polynomial \eqref{countfct} modulo
$(t^{k+1})$ stabilizes in $\mathbf w$ and then maps to the rings $\targetring$
via $t_\tau \mapsto 1$. The sets
\begin{equation}
\label{wall}
\fop^k[x]:= \overline{  \set{ y\in \sigma \setminus \partial \sigma }
{\log f_{\sigma,y}  =\log f_{\sigma,x} \not = 0 \in \targetring}},
\end{equation}
are either empty or define polyhedral subsets of codimension at least one. Up to
refinement and adding trivial walls, the $\fop^k[x]$ together with their
functions $\log f_{\sigma,y}$ reproduce the consistent wall structure $\scrS_k$
constructed in \cite{affinecomplex}.
\end{conjecture}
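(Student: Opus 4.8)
The plan is to prove Conjecture~\ref{conject} by induction on the order $k$, matching the virtual disk count in \eqref{countfct} against the inductive scattering construction of $\scrS_k$ from \cite{affinecomplex}, with the disk completion Lemma~\ref{line-disk} serving as the bridge between tropical disks and wall crossing. First I would establish the stabilization statement: fix $k$ and show that only finitely many intersection types $\mathbf w$ contribute nonzero terms modulo $(t^{k+1})$ to \eqref{countfct}. A leaf ending on a branch of $\Cover$ of weight $w$ forces, via $\varphi$ and the $t$-order bookkeeping at divalent vertices, a lower bound growing with $w$ on the total $t$-order $\ell$; hence past a threshold, enlarging the partitions $w_\tau$ only adds disks of $t$-order $>k$, so the truncated counting polynomial stabilizes, and once stabilized the substitution $t_\tau\mapsto 1$ makes the exponential of \eqref{countfct} land in $\targetring$. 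This combines Lemma~\ref{dimModuli} with a uniform $t$-order estimate.

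For the base case $k=0$ the only elements of the wall structure are slabs, and $\log f_\fob$ in \eqref{logslab} is by construction $\sum_{\overline m\in C_\fob}\length(\overline m)\,a_{\fob,\overline m}z^{\overline m}$. Under simplicity each $\overline m\in C_\fob$ is the root tangent vector of a single-edge Maslov index zero disk ending on $\Delta$ (exactly as in the proof of Lemma~\ref{line-disk}), with virtual multiplicity equal to $a_{\fob,\overline m}$; so the order-zero part of $\log f_{\sigma,x}$ recovers $\sum_\fob\log f_\fob$ read on the chart at $x$, i.e.\ $\scrS_0$. For the inductive step, assume \eqref{countfct} reproduces $\scrS_{k-1}$. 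A wall of $\scrS_k\setminus\scrS_{k-1}$ is produced by scattering at a joint $\foj$, and its function is determined uniquely by consistency (\cite[Prop.\,3.9]{affinecomplex}, resp.\ Proposition~\ref{prop: trajectory scattering} in the form with trajectories). On the disk side, Lemma~\ref{line-disk} turns every break point of an incoming configuration at $\foj$ into a balanced trivalent vertex by grafting a lower-order Maslov index zero disk, and conversely every sufficiently general virtual Maslov index zero disk with root near $\foj$ and $t$-order $\le k$ arises this way. The heart of the step is then to verify that the virtual multiplicity $\vmult(\tilde h)$ — the product over $V\in\effvert$ of slab coefficients $a_{\fob,\overline m}$, change-of-stratum factors $s(\overline m)$, and wedge densities $|\overline m\wedge\overline m'|_{\foj_V(h)}$ at trivalent vertices — reproduces precisely the coefficient that consistency forces on the new wall. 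In $\dim B=2$ this is the classical identification of scattering with tropical vertex / disk counts (cf.\ \cite{gps,gat}); in higher dimension one reduces to the two-dimensional case along the lines of the proof of Lemma~\ref{dimModuli}, splitting each edge of $\Gamma$, acting with $SL(n,\ZZ)_{\foj_V(h)}$ on each fan $\Sigma_{h,V}$, and regluing formally. This identifies the order-$k$ part of $\log f_{\sigma,y}$ with the order-$k$ transport of $z^{m_1}$ across all joints met, which by Lemmas~\ref{lem:independence of p} and~\ref{lem:change of chambers} is exactly the wall-function data of $\scrS_k$.

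It remains to address polyhedrality. For a fixed type of virtual disk the locus of admissible root positions is the affine-immersed image of an integral polyhedron, of dimension $\dim B-1$ for $\mu=0$ by Lemma~\ref{dimModuli}; since $\scrS_k$ is finite there are only finitely many types with $t$-order $\le k$, so $\overline{\{y : \log f_{\sigma,y}=\log f_{\sigma,x}\neq 0\}}$ is a finite union of closures of such loci, hence a polyhedral subset of codimension $\ge 1$, or empty — exactly the structure required of $\fop^k[x]$. Matching the collection $\{\fop^k[x]\}$ with $\scrS_k$ up to refinement and addition of trivial walls is then formal, since the scattering algorithm's output is itself only well-defined up to such moves.

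The genuinely hard part is the inductive matching of virtual multiplicities in dimension $\ge 3$. One must establish that $\Cover$ in general position makes $\M_{2\mu}(\Cover,h)$ of expected dimension uniformly in $k$, that $\vmult(\tilde h)$ is deformation-invariant (only sketched in the present text), and — the crux — that the combinatorial product $\vmult(\tilde h)$ with the $\foj_V(h)$-wedge densities equals the consistency-forced coefficient \emph{on the nose}, rather than up to a universal correction factor. Controlling the recursive interplay between a disk's $t$-order, the partitions $w_\tau$, and the orders $\ord_\sigma z^m$ in $\targetring$, and checking compatibility with the change-of-stratum gluing factors $s(\overline m)$, is where the present argument remains inconclusive; a complete proof would plausibly route through the punctured Gromov--Witten technology of \cite{CanonicalWalls}, as the section's footnote anticipates.
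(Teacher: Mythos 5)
There is a genuine gap here, and it is worth being explicit about its nature: the statement you set out to prove is stated in the paper as Conjecture~\ref{conject}, not as a theorem. The paper itself supplies no proof in general; it only sketches a proof of the two-dimensional case, and the authors flag the whole section as speculative. Your proposal does not close this gap either, as you candidly admit in your last paragraph, so what you have written is a strategy outline rather than a proof of the statement. In dimension two your outline is essentially consistent with the paper's route (disk completion as in Lemma~\ref{line-disk}, identification of the counts with scattering, uniqueness of consistent scattering), but the paper's actual argument runs through a specific technical device you do not reproduce: a thickening $t_\tau\mapsto\sum_i u_{\tau i}$ of the scattering parameter ring with $u_{\tau i}^2=0$, together with ``special disks'' whose leaves end on a $2^k$-fold branched cover of $\Delta$, so that the tropical vertex results of \cite{gps} can be applied joint by joint; this is what turns the heuristic ``virtual multiplicities reproduce scattering coefficients'' into a verifiable identity in dimension two.

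The two places where your argument is missing rather than merely compressed are exactly the points that keep the statement a conjecture. First, the matching of $\vmult(\tilde h)$ with the consistency-forced wall coefficients in dimension $\ge 3$: your proposed reduction to dimension two by splitting edges and acting with the stabilizer of $\foj_V(h)$ in $SL(n,\ZZ)$ is the (itself only sketched) deformation-invariance argument of Lemma~\ref{dimModuli}, not an identification with the output of Proposition~\ref{prop: trajectory scattering}; no higher-dimensional substitute for the GPS step is given. Second, the liftability claim — that after stabilization and the substitution $t_\tau\mapsto 1$ the counting function lands in $\targetring$ and the loci \eqref{wall} carry the correct functions — is asserted, not argued. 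In the paper's two-dimensional sketch this is a separate claim requiring a case analysis on the codimension of joints: at codimension-zero joints all incoming non-constant monomials have positive $t$-order, at codimension-one joints there is a single zero-$t$-order monomial coming from the log structure and one invokes \cite{kansas}, and there are no codimension-two joints. None of this transfers automatically to $\dim B\ge 3$, where Example~\ref{triangle} shows Maslov index zero disks moving in families of unbounded dimension, so that even the existence of a general-position $\Cover$ making the moduli spaces of expected dimension is only sketched. A complete proof would have to supply these ingredients (or, as the paper's footnote suggests, route through the punctured Gromov--Witten framework of \cite{CanonicalWalls}); as it stands, your proposal reproduces the paper's two-dimensional heuristic and leaves open precisely what the paper leaves open.
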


\begin{remark}
Note that general position of $\Delta$ is not essential as long as we obtain the
same virtual counts.
\end{remark}

\begin{proposition}
The conjecture is true for $\dim B = 2$.
\end{proposition}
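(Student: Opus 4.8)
In dimension two the discriminant $\Delta$ is a finite set of points, so every joint of $\scrS_k$ is a point, each quotient $(\Lambda_{B,v}/\Lambda_{\foj,v})\otimes\RR$ is just the tangent plane $\Lambda_{B,v}\otimes\RR\simeq\RR^2$, and the scattering of monomials is the classical two-dimensional scattering. The moduli space of (virtual) Maslov index zero disks is then of expected dimension one once the points of $\Delta$ are in general position (Lemma~\ref{dimModuli}), so the unbounded families of Example~\ref{triangle} do not occur here; moreover a sufficiently general virtual disk has only univalent, divalent and trivalent vertices, and at a trivalent vertex $\foj_V(h)$ is zero-dimensional, so $|\overline m\wedge\overline m'|_{\foj_V(h)}$ is the ordinary lattice index $|\overline m\wedge\overline m'|$. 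Thus, apart from the spreading of the points of $\Delta$ along the rays $\ray_\tau$, a virtual tropical disk of intersection type $\mathbf w$ is exactly a tropical disk with prescribed ends of the kind in \cite{PP2mirror} and in the ``tropical vertex'' of \cite{gps}, the only new ingredients being the slab multiplicities $a_{\fob,\overline m}$ of \eqref{logslab}, the gluing-data factors $s(\overline m)$ at divalent vertices, and the $t$-order bookkeeping coming from $\varphi$. The plan is to (i) check that $\{\fop^k[x]\}$ with functions $\log f_{\sigma,x}$ is a well-defined wall structure, (ii) show it is consistent to order $k$ and carries the order-zero slabs of $\scrS_k$, and then (iii) conclude by uniqueness of the consistent completion.

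For (i), I would first observe that for fixed $\ell\le k$ a virtual disk of $t$-order $\le\ell$ has bounded total weight: each divalent vertex crossing an $(n-1)$-cell contributes a positive integer to $\ord\tilde h$, and the balancing condition then bounds the number of leaves lying over each $\tau\in\Delta_\max$. Hence, once each $|w_\tau|$ exceeds that bound, enlarging $\mathbf w$ only adds unused branches to $\Cover$ and, after division by $|\Aut(\mathbf w)|$, leaves \eqref{countfct} unchanged modulo $(t^{k+1})$; this is the claimed stabilization in $\mathbf w$. Independence of $s\in\defbase$ is the $s$-independence of virtual multiplicities established just above, so specializing $t_\tau\mapsto 1$ is legitimate and \eqref{countfct} descends to $\log f_{\sigma,x}\in\targetring$. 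Finally, along each stratum of the one-dimensional moduli space both the root position and the value $\log f_{\sigma,x}$ depend affine-linearly on the moduli, so the locus where $\log f_{\sigma,x}$ equals a fixed nonzero value is the image of a union of $1$-cells under an affine-linear map, hence a polyhedral subset of codimension $\ge 1$, which gives \eqref{wall}.

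For (ii), at order zero the only disks contributing to \eqref{countfct} are single-edge disks ending transversally on a branch of $\Cover$ over a slab $\fob$, with root tangent vector a multiple of the monodromy vector $m_\tau$ and virtual multiplicity $a_{\fob,\overline m}$; comparing with \eqref{logslab} gives $\log f_{\sigma,x}\equiv\log f_\fob$ on that slab, so the order-zero data matches that of $\scrS_k$. For consistency I would use the deformation $\Cover$ to pull the points of $\Delta$ apart generically along the $\ray_\tau$; together with the partition data $w_\tau$ this plays the role of the perturbation-and-factorisation trick for scattering diagrams in \cite{gps}, so that after the perturbation a sufficiently general Maslov index zero disk meets the $1$-skeleton of $\P$ only at isolated transverse divalent vertices and only finitely many incoming walls interact at any joint. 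One then shows that on the two sides of an outgoing wall the generating functions of disk counts differ by precisely the wall-crossing automorphism attached to that wall: a disk rooted on one side is carried to the other side by attaching, respectively removing, a Maslov index zero sub-disk at the joint (Lemma~\ref{line-disk}), and summing over such sub-disks is the scattering of monomials of Proposition~\ref{prop: trajectory scattering}. Identifying the two sides is the two-dimensional tropical-vertex computation of \cite{gps} -- classical in this dimension, cf.~\cite{gat} -- corrected by the factors $a_{\fob,\overline m}$ when an incoming wall is a slab and $s(\overline m)$ when the joint lies on a slab; since $\ord\tilde h$ is exactly the power of $t$ produced by the change of $\varphi$ across the crossed cells, the identification respects the $t$-adic filtration. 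Running this at every joint yields $\prod_\fop\theta_\fop=\id$ around each joint, i.e.\ consistency to order $k$.

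For (iii), the consistent wall structure on $(B,\P,\varphi)$ extending a given collection of order-zero slabs is unique up to refinement and the addition of trivial walls; in dimension two this follows from the joint-by-joint uniqueness of the scattering in Proposition~\ref{prop: trajectory scattering}, together with an induction on $t$-order and the connectedness argument of \cite[\S3]{affinecomplex}. Since $\{\fop^k[x]\}$ is consistent to order $k$ and has the order-zero slabs of $\scrS_k$, it agrees with $\scrS_k$ in this sense, which is the assertion of Conjecture~\ref{conject} for $\dim B=2$. \textbf{The hardest part} will be step~(ii): unlike the pure tropical-vertex setting, the initial data here are slabs carrying nonmonomial functions $f_\fob$ together with open gluing data, and one must verify that inserting precisely the factors $a_{\fob,\overline m}$ and $s(\overline m)$ into $\vmult_V$ reproduces the scattering prescribed by \cite[Constr.\,2.7, Prop.\,3.9]{affinecomplex}; this requires tracking how the spreading $\Cover$ interacts with chambers and changes of strata, in the spirit of the proofs of Lemmas~\ref{lem:independence of p} and~\ref{lem:change of chambers}.
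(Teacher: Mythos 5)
Your overall strategy coincides with the paper's: in dimension two everything reduces to the Gross--Pandharipande--Siebert scattering picture, one checks that the disk counts reproduce the order-zero slab data and scatter consistently, and one concludes by uniqueness of consistent scattering. However, there are two genuine gaps. First, the step you defer as ``the hardest part'' is in fact the substance of the proof. The paper's Claim~1 handles general (non-binomial) slab functions $f_\fob$ and the symmetry factors $1/|\Aut(\mathbf w)|$ by a concrete device: the square-zero thickening $t_\tau\mapsto\sum_i u_{\tau i}$ with $u_{\tau i}^2=0$ (equivalently, decomposing the exponential of \eqref{countfct} into binomials), a $2^k$-fold branched cover of $\Delta$ with branches labeled by subsets $J$, and the class of ``special disks'' with attached binomial functions $f_{\tilde h}$ as in \eqref{f_h}; only after this reduction do \cite[Lem.\,1.9, Thm.\,2.7]{gps} apply and identify $\sum_{\tilde h}\log f_{\tilde h}$ with the thickened terms of \eqref{countfct}. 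Invoking ``the two-dimensional tropical-vertex computation corrected by the factors $a_{\fob,\overline m}$ and $s(\overline m)$'' skips exactly the bookkeeping that makes those prefactors and the factors $|J|!\,u_{\tau J}$ come out right, and no alternative mechanism is offered.

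Second, the stabilization argument in your step (i) rests on a false claim: a virtual disk of $t$-order $\le\ell$ need not have bounded total weight. A single-edge disk of weight $d$ running along a slab has $t$-order zero for every $d$ (these are precisely the disks accounting for the degree-$d$ terms of $\log f_\fob$ in \eqref{logslab}), and its leaf direction $m_\tau$ is tangent to the codimension-one cell containing $\tau$, so no transverse divalent crossing is forced and your balancing argument does not bound the number of leaves. Stabilization in $\mathbf w$ instead comes from the truncation $t_\tau^{k+1}=0$ in $R^k$, and the real issue---that after $t_\tau\mapsto1$ the counting functions lift to $\targetring$, i.e.\ only finitely many disks contribute modulo $t^{k+1}$---is the paper's Claim~2: at codimension-zero joints every incoming non-constant monomial has positive $t$-order, at codimension-one joints there is exactly one non-constant zero-order monomial (the one furnished by the log structure), where one invokes \cite{kansas}, and codimension-two joints are excluded. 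Without this joint-by-joint liftability analysis, your step (iii), the appeal to uniqueness of the consistent completion, has nothing well-defined to compare with $\scrS_k$.
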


\begin{proof} (Sketch)
It is sufficient to show the following two claims: \paragraph{\underline{Claim
1}: Over $R^k$, the counting monomials arise via scattering.} This can be proved
by first decomposing $\exp f_{\sigma,x} $ into products of binomials and then
proceeding inductively by applying \cite[Thm.\,2.7]{gps} to each joint.
Alternatively, one can adapt their proof directly:

Consider the thickening
\[
\iota_\tau: \; \kk[t_\tau ]/t^{k+1} \lra \frac{\kk[u_{\tau i} | 1\leq i
\leq k, ]}{(u_{\tau i}^2 | 1\leq i \leq k )  },\quad t_{\tau}
\longmapsto \sum_{i=1}^{k} u_{\tau i}.
\]
inducing a thickening $\bigotimes_{ \tau} \iota_{ \tau}: R^k\to \tilde R^k$ of
the scattering parameter ring. Then consider virtual tropical disks with respect
to the $2^k$-fold branched covering of $\Delta$ whose branches $\tau_s^J$ over
$\tau$ are labeled by $J \subseteq \{1,...,k\}$. Denote $u_{\tau J}:=
\prod_{i\in J}u_{\tau i}$. We say such a disk is special if it has the
following additional properties: The weight of a leaf is $|J|$ if it emanates
from $\tau_s^J$, and $u_{\tau J} u_{\tau J'} \not = 0$ whenever there are leaf
vertices in $\tau_s^J$ and $\tau_s^{J'}$. We can now attach the following
function to the root tangent vector $-\overline m_{\tilde h}$ of such disks:
\begin{equation}
\label{f_h}
f_{\tilde h} :=   1 + \length(m_{\tilde h}) \vmult'(\tilde h) \cdot
z^{\overline m_{\tilde h}}t^{\ord \tilde h}   \hspace{-1.5em}
\prod_{\tau^J_s \cap \tilde h(\vvert) \not = \emptyset} \hspace {-1em}  |J|!
\cdot u_{\tau J}.
\end{equation}
where $\vmult'$ equals $\vmult$ without the combinatorial factor $|\Aut(\mathbf w)|$.

Now the order~0 terms appearing in the thickening of the exponential of
\eqref{countfct} are precisely the $f_{\tilde h}$ of those special disks $\tilde
h$ that contain only one edge. The others indeed arise from scattering, meaning
the following: Whenever the root vertices of two special disks $\tilde h, \tilde
h'$ map to the same point $p$ with transverse root leaves, there at most two
ways to extend both disks beyond $p$ locally: Either glue them to a single
tropical disk, which is possible only if $f_{\tilde h}f_{\tilde h'}\not = 0$, or
enlarge the root leaves such that $p$ stays a point of intersection, which is
always possible.

The functions attached to the two old and the three new roots then define a
consistent scattering diagram, that is the counterclockwise product of the
automorphisms 
\[
z^{\overline m} \mapsto z^{\overline m}
f_{\tilde h}^{|\overline m \wedge \overline m_{\tilde h}|}
\]
equals one. This is the content of \cite[Lem.\,1.9]{gps}, to which we refer for
details. Now the proof of \cite[Thm.\,2.7]{gps} shows that the sum
$\sum_{\tilde h} \log f_{\tilde h}$ over all special disks with $k$-intersection
type $\mathbf w$, root tangent vector $-\overline m$ and $t$-order $\ell$ equals
the thickening of the corresponding monomial in \eqref{countfct}.

\paragraph{\underline{Claim 2}: The counting functions
\eqref{countfct} can be lifted.}

We must show that the scattering diagrams at each joint $\foj_V(\tilde h)$
produce liftable monomials:

In case of a codimension zero joint this follows from the observation that each
incoming non constant ray monomial has $t$-order greater than zero. Hence
working modulo $(t^\ell)$ implies working up to a finite $k$-order. In case of
codimension one joints, by assumption there is only one non constant monomial of
zero $t$-order present in each scattering diagram, namely that given by the log
structure. In this case, we can apply \cite{kansas}. Finally, there are no
codimension two joints by assumption. \smallskip From both claims it follows
that the gluing functions of both constructions must indeed coincide, as by our
assumption on $\Delta$ both rely on scattering only, and scattering is unique up
to equivalence.
\end{proof}

%===========================================================

\subsection{Virtual counts of tropical Maslov index two disks}

Assume now $(B,\P,\varphi)$ fulfills Conjecture~\ref{conject}, for example $\dim
B=2$.

\begin{proposition}
\label{Prop: Virtual count}
The coefficient $a_\beta $ of the last monomial $z^{m_\beta}$ of a general
broken line $\beta$ is the virtual number of tropical Maslov index two disks
with root tangent vector $m_\beta$ which complete $\beta$ as in
Lemma~\ref{line-disk} and whose $t$-order equals the total change in the
$t$-order of the exponents along $\beta$.
\end{proposition}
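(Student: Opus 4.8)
The plan is to prove this by matching the recursive structure of broken lines with the recursive structure of virtual tropical disk counts, using the disk completion of Lemma~\ref{line-disk} as the bridge. First I would recall that by Lemma~\ref{line-disk} any broken line $\beta$ of type $(\fou_i,m_i)$ extends to a Maslov index two tropical disk $h$, whose "spine" (the path from root to the unique unbounded leaf) carries the monomial data of $\beta$, and whose remaining branches at each break point $t_i$ are Maslov index zero disks $h^{(i)}$ attached so that $t_i$ becomes a balanced trivalent (or, after inserting divalent vertices, possibly higher-valent) vertex. The root tangent vector of $h^{(i)}$ equals the projected exponent $\ol m$ taken from the wall or slab containing $\beta(t_i)$, which is precisely the exponent governing the scattering/transport formula \eqref{eqn: transport}.

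The core of the argument is an induction on the number $r$ of break points of $\beta$, or equivalently on the number of interior trivalent vertices of $h$ along the spine. The base case $r=0$ (or $r=1$ with $t_r=t_{r-1}$) is the unbroken line: $a_\beta=a_1=1$, and there is a unique Maslov index two disk, the single asymptotic ray with weight one, whose virtual count is $1$ by the normalization; the $t$-order on both sides is $0$. For the inductive step, I would isolate the last break point $t_r$ along the spine. The transport formula \eqref{eqn: transport} says $a_\beta z^{m_\beta}$ appears as a summand of $\theta(a_{r-1}z^{m_{r-1}})$; interpreting $\theta$ via the wall- or slab-crossing automorphism attached to the wall $\fop$ through $\beta(t_r)$, the coefficient $a_\beta/a_{r-1}$ is a polynomial expression in the wall-function coefficients $a_{\fop,\ol m}$ — which by \eqref{logslab} are exactly the virtual multiplicities of the Maslov index zero disks attached at $t_r$. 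Combining this with the inductive hypothesis that $a_{r-1}$ counts the Maslov index two disks completing the shorter broken line $\beta|_{(-\infty,t_{r-1}]}$, and with the multiplicativity of $\vmult(\tilde h)$ over vertices, gives the claimed identity. The $t$-order matches because each wall crossing contributes the $t$-exponent $\langle d\varphi|_{\sigma^-}-d\varphi|_{\sigma^+},\ol m\rangle$ at a divalent vertex, and the Maslov index zero disks attached at trivalent vertices contribute their own $\ord$, so the total $t$-order of $h$ is the sum of the changes along the spine, as asserted.

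The reference to Conjecture~\ref{conject} enters precisely here: to know that the polynomial coefficient $a_{\fop,\ol m}$ really is the count $\#\M_0(\mathbf w,\ol m,\ell)^{gen}$ with the stated virtual multiplicity, I need the wall functions $f_\fop$ of $\scrS_k$ to arise from the counting functions \eqref{countfct}, which is exactly what Conjecture~\ref{conject} provides (and which is established for $\dim B=2$). Given that, the bookkeeping is a matter of unwinding definitions: the transport formula expands the wall-crossing automorphism applied to $z^{m_{r-1}}$, each monomial in the expansion corresponds to a choice of Maslov index zero disks glued on at the break point, and the coefficients multiply as the virtual multiplicities of these disks. One also needs that distinct disk completions correspond to distinct terms (or are counted with correct multiplicity), which follows from the uniqueness-up-to-equivalence of scattering together with the fact that sufficiently general disks have virtual multiplicity depending only on type.

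The main obstacle will be the combinatorial reconciliation at a single vertex when the break point lies on a slab rather than an interior wall, and when several Maslov index zero disks (coming from several monomials in $\log f_\fop$) get attached at once: one must check that the product structure $\vmult(\tilde h)=\frac{1}{|\Aut(\mathbf w)|}\prod_V\vmult_V(\tilde h)$ reproduces exactly the polynomial coefficient appearing in the $\theta$-expansion, including the automorphism factors $|\Aut(\mathbf w)|$ and the $\length(\ol m)$ factors from \eqref{logslab}. This is precisely the kind of matching carried out in the proof of the $\dim B=2$ case of Conjecture~\ref{conject} via \cite{gps}, so the strategy is to reduce the Maslov index two statement to that Maslov index zero machinery by treating the spine as an additional incoming trajectory in the sense of Proposition~\ref{prop: trajectory scattering}, whose propagation is governed by \eqref{eqn:monomial scattering}. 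I expect that once the Maslov index zero bookkeeping is granted, the Maslov index two case adds only the trivial ray as the "seed" and is then formal.
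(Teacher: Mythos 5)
Your proposal is correct and follows essentially the same route as the paper's own argument: at each break point the ratio of successive coefficients is read off as a coefficient in the exponential of (index factor)$\cdot\log f_\fob$, identified via \eqref{logslab} and \eqref{countfct} (i.e.\ Conjecture~\ref{conject}) with a virtual count of (possibly disconnected) Maslov index zero disks weighted by the break-point multiplicity, and these contributions are multiplied along the broken line with the $t$-order bookkeeping you describe. Your inductive phrasing on the number of break points and the framing via Proposition~\ref{prop: trajectory scattering} are only cosmetic differences from the paper's per-break-point computation.
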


\begin{proof}
Let $az^m, a'z^{m'}$ be the functions attached to the edges adjacent to a fixed
break point $\beta(t)$ of $\beta$. Let $h$ be a virtual Maslov index zero disk
bounded by $\beta(t)$ and with root tangent vector the required difference
$\overline m- \overline m'$. Define the completed multiplicity of $h$ as the
virtual multiplicity of $h$ times that of the break point. By definition, $a'/a$
is a coefficient in the exponential of $a_\fob:=|\overline m \wedge \overline
m'| \log f_{\fob}$ for the function $f_\fob$ belonging to the wall or slab with
tangent space containing $\overline m-\overline m'$. By formula
\eqref{countfct}, $a_\fob$ equals the virtual number of completing virtual
Maslov index zero disks with root tangent leaf $\overline m - \overline m'$ and
$t$-order $\ell$, completed by the break point multiplicity. Hence the required
coefficient in $\exp a_\fob$ is given by counting {\em disconnected} virtual
tropical disks of total $t$-order $\ell$ and total root tangent leaf $\overline
m$ with completed multiplicity.
\end{proof}

%===========================================================
%===========================================================

\section{Toric degenerations of del Pezzo surfaces and their mirrors}
\label{sect:delpezzo}
In this section we will compare superpotentials for mirrors of toric
degenerations of del Pezzo surfaces, using broken lines and tropical Maslov
index two disks. Recall that apart from $\PP^1 \times \PP^1$ all non-singular
del Pezzo surfaces $dP_k$ can be obtained by blowing up $\PP^2$ in $0 \leq k
\leq 8$ points. Note that $dP_k$ for $k\ge 5$ is not unique up to isomorphism
but has a $2(k-4)$-dimensional moduli space. For the anticanonical bundle to be
ample the blown-up points need to be in sufficiently general position. This
means that no three points are collinear, no six points lie on a conic and no
eight points lie on an irreducible cubic which has a double point at one of the
points. However, rather than ampleness of $-K_X$ the existence of certain toric
degenerations is central to our approach. For example, our point of view
naturally includes the case $k=9$.

%===========================================================

\subsection{Toric del Pezzo surfaces}
Up to lattice isomorphism there are exactly five toric del Pezzo surfaces
$X(\Sigma)$ whose fans $\Sigma$ are depicted in Figure~\ref{fig:toric_fans},
namely $\PP^2$ blown up torically in at most three distinct points and $\PP^1
\times \PP^1$. To construct distinguished superpotentials for these surfaces we
consider the following class of toric degenerations. For the definition note
that by the Grothendieck algebraization theorem a toric degeneration
$(\check\X\to T,\check\D)$ with $\check\D$ relatively ample can be algebraized.
By abuse of notation we write $(\check\X_\eta,\check\D_\eta)$ for the generic
fiber of an algebraization, and also simply speak of the \emph{generic fiber of
$(\check\X\to T,\check\D)$}.

\begin{figure}
\input{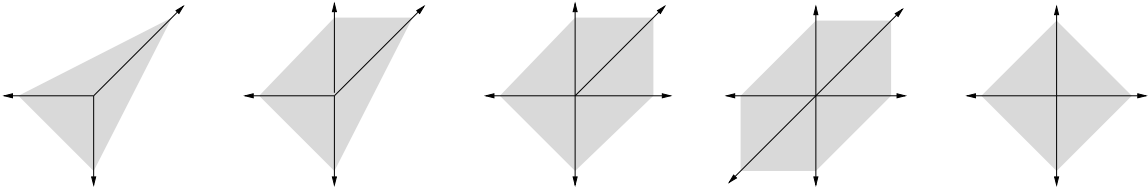}
\caption{Fans of the five toric del Pezzo surfaces}
\label{fig:toric_fans}
\end{figure}
\begin{definition}
\label{Def: toric degen dP_k}
A \emph{distinguished toric degeneration of Fano varieties} is a toric
degeneration $(\check\X \to T, \check\D)$ of compact type (Definition~\ref{Def:
compact type}) with $ \check\D$ relatively ample over $T$ and with generic fiber
$ \check\D_{\eta}\subseteq \check \X_{\eta}$ of an algebraization an
anticanonical divisor in a Gorenstein surface.

The \emph{associated intersection complex} $( \check B, \check\P, \check\varphi)$ is the intersection complex of $( \check\X \to T,  \check\D)$ polarized by $\O_{ \check\X}( \check\D)$.
\end{definition}

The point of this definition is both the irreducibility of the anticanonical
divisor and the fact that this divisor extends to a polarization on the central
fiber.

If the generic fiber $\check\X_\eta$ is a surface then it is a $dP_k$ for some
$k$, together with a smooth anticanonical divisor. 

Starting from a reflexive polytope, there is a canonical construction of the
intersection complex of a distinguished toric degeneration as follows.

\begin{construction}
\label{Const: Reflexive polytope degeneration}
Let $\Xi$ be a reflexive polytope and $v_0\in \Xi$ the unique interior integral
vertex. Define the polyhedral decomposition $\check \P$ of $\check B=\Xi$ with
maximal cells the convex hulls of the facets of $\Xi$ and $v_0$. The affine
chart at $v_0$ is the one defined by the affine structure of $\Xi$. At any other
vertex define the affine structure by the unique chart compatible with the
affine structure of the adjacent maximal cells and making $\partial \check B$
totally geodesic. This works because by reflexivity for any vertex $v$ the
integral tangent vectors of any adjacent facet together with $v-v_0$ generate
the full lattice. Moreover, $(\check B,\check \P)$ has a natural polarization
by defining $\check\varphi(v_0)=0$ and $\check\varphi(v)=1$ for
each other vertex $v$.

The tropical manifold $(\check B,\check \P)$ obtained in this way does not
generally have simple singularities. Using a similar computation from \cite{GBB}
on the Legendre dual side one can show, however, that $(\check B,\check\P)$ has
simple singularities iff the normal fan $\Sigma$ of $\Xi$ is elementary
simplicial, meaning that each cone is the cone over an elementary simplex. In
dimensions two and three this is equivalent to requiring the toric variety
$\check X(\Xi)$ with momentum polytope $\Xi$ to be smooth. See
\cite[Constr.\,5.2]{MaxThesis} for more details.

Assuming $(\check B,\check \P)$ has locally rigid singularities (e.g.\ simple
singularities or in dimension two) so we can run \cite{affinecomplex}, or there
is a consistent compatible sequence of wall structures on $(\check B,\check
\P,\check\varphi)$ by other means, we obtain a distinguished, anticanonically
polarized toric degeneration $\check\X\to\Spf\kk\lfor t\rfor$ together with an
irreducible anticanonical divisor $\check\D\subset\check\X$.

The generic fiber $\check\X_\eta$ of this toric degeneration may not be
isomorphic to the toric variety $\check X(\Xi)$. But by introducing an
additional parameter $s$ scaling the non-constant coefficients of the slab
functions one can produce a two-parameter family with $\check\X\to\Spf\kk\lfor
t\rfor$ the restriction to $s=1$ and the constant family with fiber $\check
X(\Xi)$ for $t\neq0$ the restriction to $s=0$.

The discrete Legendre transform $(B,\P,\varphi)$ has a unique bounded cell
$\sigma_0$, isomorphic to the dual polytope of $\Xi$. Up to the addition of a
global affine function, the dual polarizing function $\varphi$ is the
unique piecewise affine function changing slope by one along the unbounded
facets.\footnote{In the present case $\check\varphi$ is single-valued.}
\end{construction}

\begin{remark}
Alternatively, one can use an MPCP resolution \cite[Thm.\,2.2.24]{Bat} of
$\check X(\Xi)$ defined by a simplicial subdivision of $\Sigma$ to split the
discriminant locus of $(\check B,\check\P$ into simple singularities. On the
Legendre-dual side the subdivision is given by writing the bounded maximal cell
$\sigma_0$ as a union of elementary simplices. The resolution process leads to
the introduction of more K\"ahler parameters on the log Calabi-Yau side, hence
more complex parameters on the Landau-Ginzburg side, reflected in the choice of
$\varphi$. See \cite{MaxThesis} for some discussions in this direction.
\end{remark}

\begin{example}
Specializing to del Pezzo surfaces, we start from the momentum polytopes of the
five non-singular toric Fano surfaces. The result of the construction is
depicted in Figure~\ref{fig: toric_del_pezzo}, which shows a chart in the
complement of the dotted segments. 
\begin{figure}
\input{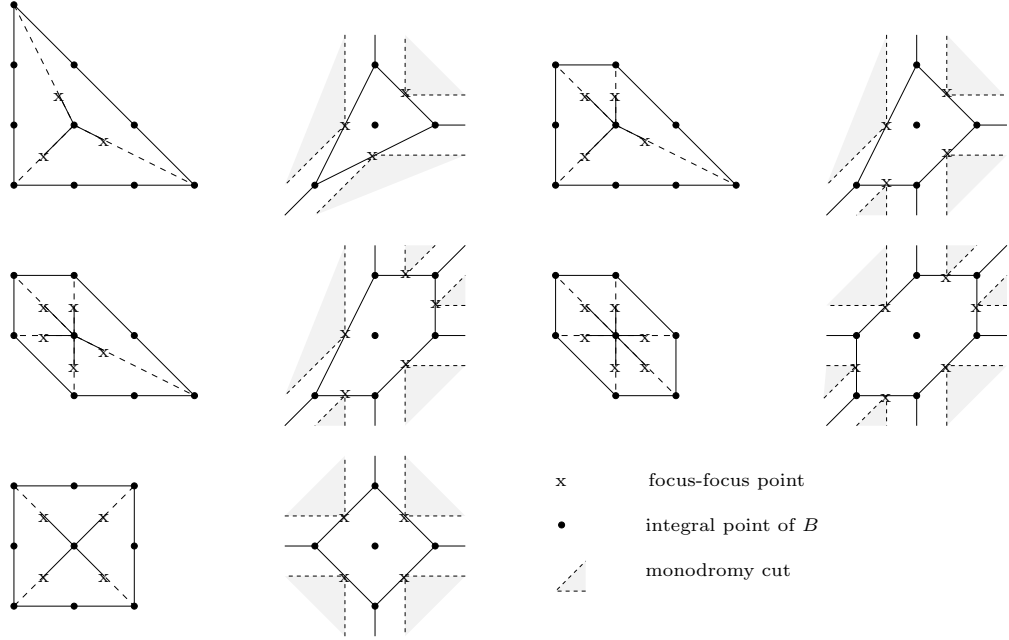}
\caption{The intersection complexes $(\check B,\check\P)$ of the five
distinguished toric degenerations of del Pezzo surfaces with simple
singularities and their Legendre duals $(B,\P)$.}
\label{fig: toric_del_pezzo}
\end{figure}
Note that the discrete Legendre transform $(B,\P,\varphi)$, also depicted in
Figure~\ref{fig: toric_del_pezzo}, indeed has parallel outgoing rays.
\end{example}

Conversely, in dimension~$2$ we have the following uniqueness result.

\begin{theorem}
\label{Thm: unique}
Let $(\pi:\check \X\to T,\check \D)$ be a distinguished toric degeneration of
del Pezzo surfaces. Then the associated intersection complex $(\check B,
\check\P)$ is a subdivision of the star subdivision of a reflexive polygon
$\Xi$, with the edges containing a singular point precisely those connecting the
interior integral point to a vertex of $\Xi$.

If furthermore the toric degeneration has simple singularities
\cite[\S1.5]{logmirror}, then $(\check B,\check\P)$ is isomorphic to an integral
subdivision of one of the cases listed in Figure~\ref{fig: toric_del_pezzo}.
\end{theorem}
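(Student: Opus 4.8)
The plan is to reconstruct the reflexive polygon $\Xi$ directly from the integral affine geometry of $(\check B,\check\P)$ by a cut-and-develop argument, and then to pin down the simply-singular case via the classification of reflexive polygons. Since $(\pi:\check\X\to T,\check\D)$ is a distinguished toric degeneration (Definition~\ref{Def: toric degen dP_k}), it is of compact type (Definition~\ref{Def: compact type}), so Theorem~\ref{Thm: Properness} gives that $(\check B,\check\P)$ has flat boundary. Hence $\partial\check B$ is an affine-straight lattice polygon whose corners $w_1,\dots,w_m$ are regular points: the discriminant $\check\Delta$ lies in the interior of the first barycentric subdivision, disjoint from vertices of $\check\P$, so it misses a neighbourhood of $\partial\check B$ and of every vertex. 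Because $\check X_\eta$ is rational, $\check B$ is a topological disk, with finitely many focus--focus points in its interior, each carrying unipotent monodromy $\big(\begin{smallmatrix}1&0\\ \ell&1\end{smallmatrix}\big)$ and a well-defined monodromy-invariant affine line. Since $\check\D$ is relatively ample and anticanonical on a Gorenstein family, the polarizing function $\check\varphi$ is $\ZZ$-valued with a unique minimum, attained at a single interior integral vertex $v_0\in\check\P$ (a regular point), and after normalization $\check\varphi(v_0)=0$ while $\check\varphi\equiv 1$ on $\partial\check B$.

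\textbf{The radial structure (the crux).} The heart of part (a) is to show that every point of $\check\Delta$ lies on an edge of $\check\P$ joining it to $v_0$, with invariant direction along that edge, and that $\check\P$ refines the coarse decomposition $\check\P_\star$ whose maximal cells are the wedges from $v_0$ to the boundary edges $\rho_j$. I would deduce this from strict convexity of $\check\varphi$ together with the toric nature of the components $X_\sigma\subset\check X_0$: each maximal cell of $\check\P$ is a lattice polytope, the two cells adjacent to an edge carrying a focus--focus point glue by $\big(\begin{smallmatrix}1&0\\ \ell&1\end{smallmatrix}\big)$ with invariant direction that edge, and the requirement that $\check\D$ restrict to the anticanonical divisor of each (Gorenstein toric) component $X_\sigma$ forces the edges carrying singular points to be precisely those meeting $v_0$ — any other configuration contradicts integrality and strict convexity of $\check\varphi$ with minimum $v_0$. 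One then coarsens $\check\P$ to $\check\P_\star$.

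\textbf{Producing and identifying $\Xi$.} Next I cut $\check B$ along the invariant segments running from each $p\in\check\Delta$ to the adjacent boundary vertex; the result is simply connected and flat, hence develops affinely onto a lattice polygon $\Xi\subset\RR^2$, with re-gluing along each cut turning the corresponding focus--focus point into a genuine corner of $\Xi$. By construction $(\check B,\check\P_\star)$ is the star subdivision of $\Xi$ with exactly the charts of Construction~\ref{Const: Reflexive polytope degeneration} (standard at $v_0$, boundary-geodesic at the other vertices), $v_0$ maps to an interior lattice point, and the singular points sit on the spokes $[v_0,v]$ for $v$ a vertex of $\Xi$. Since $\check\varphi(v_0)=0$ and $\check\varphi\equiv 1$ on $\partial\check B$, every edge of $\Xi$ is at lattice distance $1$ from $v_0$, and Gorenstein index~$1$ forces $v_0$ to be the \emph{unique} interior lattice point; thus $\Xi$ is reflexive. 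This completes part (a).

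\textbf{The simply-singular case, and the main obstacle.} For part (b) I invoke the criterion recorded in Construction~\ref{Const: Reflexive polytope degeneration}: $(\check B,\check\P_\star)$ has simple singularities iff the normal fan of $\Xi$ is elementary simplicial, which in dimension two means $\check X(\Xi)$ is smooth, i.e.\ $\Xi$ is one of the five smooth reflexive polygons ($\PP^2$, $\PP^1\times\PP^1$, and the three torically blown-up planes, the complete list of reflexive polygons with smooth spanning fan). Their star subdivisions are precisely the five pictures in Figure~\ref{fig: toric_del_pezzo}, so $(\check B,\check\P)$ is an integral subdivision of one of them. The delicate point throughout is the radial-structure claim of the second paragraph: showing that the invariant direction at each singular point points toward $v_0$ and that $\check\P$ genuinely refines the wedge decomposition. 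This is exactly where the full strength of ``distinguished'' is used — relative ampleness of $\check\D$, compact type, and the Gorenstein generic anticanonical fibre — translated into a statement forcing all singular rays to emanate from the centre; the remaining steps are bookkeeping in integral affine geometry and the classical enumeration of reflexive polygons.
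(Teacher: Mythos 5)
Your proposal has a genuine gap at its core: nothing in it actually proves that $\check B$ has a unique interior integral point, i.e.\ that the developed polygon $\Xi$ is reflexive. You assert that ``$\check\varphi$ is $\ZZ$-valued with a unique minimum, attained at a single interior integral vertex $v_0$, and after normalization $\check\varphi(v_0)=0$ while $\check\varphi\equiv 1$ on $\partial\check B$'', but none of this is part of Definition~\ref{Def: toric degen dP_k}: the remark following the theorem in the paper points out that for a given $(\check B,\check\P)$ there is a whole discrete family of admissible $\check\varphi$, so you cannot normalize $\check\varphi$ to be $1$ on the boundary, and in any case the values of the PL function $\check\varphi$ do not compute lattice distances of the edges of $\Xi$ from $v_0$. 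Likewise ``Gorenstein index~$1$ forces $v_0$ to be the unique interior lattice point'' is stated without any mechanism linking the generic fibre to the lattice geometry of $\check B$. The paper supplies exactly this missing link cohomologically: using $h^1(\check X_\eta,\O)=0$, hence $h^1(\check X_0,\O_{\check X_0})=0$, together with the theorem on formal functions and cohomology and base change, it shows $h^0(\check X_0,\shL_0)=h^0(\check X_\eta,-K)=10-k$ (resp.\ $9$) and $h^0(\check D_0,\shL_0)=K^2=9-k$ (resp.\ $8$), and identifies these dimensions with the numbers of integral points of $\check B$ and of $\partial\check B$ via monomial sections on the toric components. The difference being $1$ gives the unique interior integral point, and a two-dimensional lattice polygon with exactly one interior lattice point is reflexive. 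This counting argument is the actual content of part (a) and is absent from your proposal.

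Your ``radial structure'' paragraph, which you yourself flag as the crux, is only a declaration of intent (``I would deduce this from strict convexity \dots any other configuration contradicts integrality and strict convexity'') rather than an argument; but once the unique interior integral point is established it is also unnecessary in that form, since then every interior edge automatically joins $v_0$ to a boundary integral point, and local straightness of $\partial\check B$ (which you correctly extract from Theorem~\ref{Thm: Properness}) immediately forces a singular point on each spoke ending at a vertex of $\Xi$ and forbids singular points on all other interior edges. Your part (b) — simple singularities force the five smooth reflexive polygons, via the elementary-simplicial/smoothness criterion recorded in Construction~\ref{Const: Reflexive polytope degeneration} — is consistent with the paper's case check, but it rests on part (a), so the proposal as written does not constitute a proof.
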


\begin{proof}
Let $(\pi:\check \X \to T,\check\D)$ be the given toric degeneration. Thus the
generic fiber $\check \X_\eta$ is isomorphic to a del Pezzo surface $dP_k$ over
$\eta$ for some $0\le k\le 8$, or to $\PP^1\times\PP^1$. By assumption
$\partial\check B$ is locally straight in the affine structure.

First we determine the number of integral points of $\check B$. Let
$\shL=\O_{\check\X}(\check\D)$ be the polarizing line bundle on $\check\X$. By
assumption
\begin{equation}
\label{H^0 dP_k}
h^0(\check \X_\eta,\shL|_{\check\X_\eta})=
h^0(dP_k,-K_{dP_k})=
\begin{cases}
10-k,&\check\X_\eta\simeq dP_k\\
9,&\check\X_\eta\simeq \PP^1\times\PP^1.
\end{cases}
\end{equation}
Let $t\in \O_{T,0}$ be a uniformizing parameter and $\check X_n:= \Spec \big(
\kk[t]/(t^{n+1})\big) \times_T \check\X$ the $n$-th order neighborhood of
$\check X_0:= \pi^{-1}(0)$ in $\check \X$. Denote by $\shL_n=\shL|_{\check
X_n}$.

Then for any $n$ there is an exact sequence of sheaves on $\check X_0$,
\[
0\lra \O_{\check X_0}\lra \shL_{n+1}\lra \shL_n\lra 0.
\]
By the analogue of \cite[Thm.\,4.2]{PartII} for log Calabi-Yau varieties, we
know
\[
h^1(\check X_0, \O_{\check X_0}) =
h^1(\check \X_\eta, \O_{\check\X_\eta})= 0.
\]
Thus the long exact cohomology sequence induces a surjection $H^0(\check
X_0,\shL_{n+1}) \twoheadrightarrow H^0(\check X_0,\shL_n)$ for each $n$. By the
theorem on formal functions and cohomology and base change \cite[Thms.\,11.1 \&
12.11]{Hartshorne}, we thus conclude that $\pi_*\shL$ is locally free, with
fiber over $0$ isomorphic to $H^0(\check X_0,\shL_0)$. In view of \eqref{H^0
dP_k} we thus conclude
\[
h^0(\check X_0,\shL_0)=
\begin{cases}
10-k,&\check\X_\eta\simeq dP_k\\
9,&\check\X_\eta\simeq \PP^1\times\PP^1.
\end{cases}
\]
Now on a toric variety the dimension of the space of sections of a polarizing
line bundle equals the number of integral points of the momentum polytope. Since
$\check X_0$ is a union of toric varieties, each integral point $x\in \check B$
provides a monomial section of $\shL_0$ on any irreducible component $\check
X_\sigma\subseteq \check X_0$ with $\sigma\in \P$ containing $x$. These provide
a basis of sections of $H^0 (\check X_0,\shL_0)$.\footnote{This also follows by
the description of $(\check X_0,\shL_0)$ by a homogeneous coordinate ring in
\cite[Def.\,2.4]{logmirror}.} Hence $\check B$ has $10-k$ integral points.

An analogous argument shows that the number of integral points of $\partial
\check B$ equals
\[
h^0(\check D_0,\shL_0)= h^0(\check\D_\eta,\shL_\eta),
\]
which by Riemann-Roch equals $K_{dP_k}^2 = 9 -k$ or $K_{\PP^1\times\PP^1}^2=8$.
In either case we thus have a unique integral interior point $v_0\in \check B$.
In particular, $\check B$ has the topology of a disk, and each interior
edge connects $v_0$ to an integral point of $\partial \check B$.

Viewed in the chart at the interior integral point, $\check B$ is therefore a
reflexive polygon $\Xi$. Moreover, since $\partial\check B$ is locally straight,
each of the interior edges with endpoint a vertex of $\Xi$ has to contain a
singular point. None of the other interior edges can contain a singular point
for otherwise $\partial\check B$ would not be straight in the affine structure
(and $\check B$ would not even be locally convex on the boundary).

If the singularities are simple, one finds that $\partial\check B$ is only
locally straight in the five cases shown in Figure~\ref{fig: toric_del_pezzo},
up to adding some edges connecting $v_0$ to $\partial \check B$ without singular
point. The remaining 11~cases are discussed in \S\ref{Subsect: Singular Fano
surfaces} below.
\end{proof}

\begin{remark}
1)\ The proof shows that the five types can be distinguished by $\dim H^0(\check
\X_\eta, \shL_\eta)$, except for $\PP^1\times\PP^1$ and $dP_1$. Alternatively,
by Proposition~\ref{Prop: Number of singular points} below one could use
$H^1(\check \X_\eta, \Omega^1_{\check\X_\eta})$.\\[1ex]
2)\ For each $(\check B,\check \P)$ there is a discrete set of choices of $\check \varphi$, which determines the local toric models of $\check\X \to\check\D$. This reflects the fact that the base of (log smooth) deformations of the central fiber $\check X_0^\ls$ as a log space over the standard log point $\kk^\ls$ is higher dimensional. In fact, let $r$ be the number of vertices on $\partial \check B$. Then taking a representative of $\check \varphi$ that vanishes on one maximal cell, $\check\varphi$ is defined by the value at $r-2$ vertices on $\partial \check B$. Convexity then defines a submonoid $Q\subseteq \NN^{r-2}$ with the property that $\Hom(Q,\NN)$ is isomorphic to the space of (not necessarily strictly) convex, piecewise affine functions on $(\check B,\check \P)$ modulo global affine functions. Running the construction of \cite{affinecomplex} with parameters then produces a log smooth deformation over the completion at the origin of $\Spec \kk[Q]$ with central fiber $(\check X_0,\check D_0)$. For the minimal polyhedral decompositions of Figure~\ref{fig: toric_del_pezzo} with $r=l$ the number of singular point we have $\rk Q=l-2$, which by Remark~\ref{Rem: H^1(B,Lambda)},2 below agrees with the dimension of the space $H^1(\check X_0, \Theta_{\check X_0^\ls/\kk^\ls})$ of infinitesimal log smooth deformations of $X_0^\ls/ \kk^\ls$. One can show that in this case the constructed deformation is in fact semi-universal.\footnote{\cite[Thm.\,4.4]{RS} proves semi-universality for all simple toric degenerations of compact type.}
\qed
\end{remark}

The technical tool to compute superpotentials in the toric del Pezzo cases and
in related examples in finitely many steps is the following lemma, suggested to
us by Mark Gross. It greatly reduces the number of broken lines to be
considered, especially in asymptotically cylindrical situations and with a
finite structure on the bounded cells. 

\begin{lemma}
\label{Lem: Broken lines are rays}
Let $\scrS$ be a structure for a non-compact, polarized tropical manifold
$(B,\P,\varphi)$ that is consistent to all orders. We assume that there is a
subdivision $\P'$ of a subcomplex of $\P$ with vertices disjoint from $\Delta$
and with the following properties.
\begin{enumerate}
\item
Each $\sigma\in \P'$ is affine isomorphic to $\rho\times\RR_{\ge0}$ for some
bounded face $\rho\subseteq \sigma$.
\item
$B\setminus \Int(|\P'|)$ is compact and locally convex at the vertices (this
makes sense in an affine chart).
\item
If $m$ is an exponent of a monomial of a wall or unbounded slab intersecting
some $\sigma\in\P'$, $\sigma=\rho+\RR_{\ge0}\overline m_{\sigma}$, then
$-\overline m\in\Lambda_\rho+\RR_{> 0}\cdot \overline m_{\sigma}$.
\end{enumerate}

Then the first break point $t_1$ of a broken line $\beta$ with
$\im(\beta)\not\subseteq |\P'|$ can only happen after leaving $\Int|\P'|$, that
is,
\[
t_1\ge \inf\big\{t\in (-\infty,0]\,\big|\, \beta(t)\not\in |\P'|\big\}.
\]
\end{lemma}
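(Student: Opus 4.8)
The plan is to track a broken line $\beta$ as it travels through the region $|\P'|$ and show that the hypotheses force its direction to remain unchanged until it exits. The key observation is that a break point of $\beta$ can only occur when $\beta$ crosses a wall or slab, and by condition (2) the only walls or slabs that $\beta$ can meet while its image lies in $\Int|\P'|$ are those intersecting some $\sigma\in\P'$. I would argue by contradiction: suppose the first break point $t_1$ occurs at a point $p=\beta(t_1)$ with $\beta((-\infty,t_1))\subseteq \Int|\P'|$. By Definition~\ref{def: broken lines},(1) the monomial $a_1z^{m_1}$ carried on $\beta((-\infty,t_1])$ has $\beta'(t)=-\overline m_1$, and by (2) of that definition $\overline m_1\in\Lambda_\omega$ is primitive for an unbounded edge $\omega$, so $\overline m_1$ is (a positive multiple of, hence by primitivity equal to) $\overline m_\sigma$ for the cell $\sigma=\rho+\RR_{\ge 0}\overline m_\sigma$ containing the relevant part of $\beta$. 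Thus $\beta$ travels in the $-\overline m_\sigma$ direction.

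Next I would use condition (3) to get the contradiction. At the break point $p$, $\beta$ meets a wall or slab $\fop$ intersecting $\sigma\in\P'$, carrying some exponent $m$. For a break to actually occur at $p$, the monomial $\overline m_1 = \overline m_\sigma$ must be crossing $\fop$ transversally, i.e.\ $\overline m_\sigma\notin\Lambda_\fop$. But the direction $-\overline m_\sigma$ points in the $-\RR_{\ge 0}\overline m_\sigma$ direction, and I claim the wall $\fop$ (whose support contains points of $\sigma$) must be tangent to $\overline m_\sigma$: indeed by condition (3) applied to any exponent $m$ carried by $\fop$ we have $-\overline m\in \Lambda_\rho+\RR_{>0}\overline m_\sigma$, so in particular the tangent space $\Lambda_\fop$ of the support of $\fop$ (which contains such $\overline m$'s up to sign, together with $\Lambda_{\sigma_\foj}$) lies in $\Lambda_\rho+\RR\overline m_\sigma$; combined with the product structure $\sigma\cong\rho\times\RR_{\ge 0}$ from (1), this forces $\overline m_\sigma\in\Lambda_\fop$, so $\beta$ runs \emph{parallel} to $\fop$ and cannot cross it transversally inside $\Int|\P'|$. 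Hence no break point can occur before $\beta$ leaves $\Int|\P'|$, which is exactly the asserted inequality.

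The main obstacle I anticipate is pinning down precisely which walls and slabs are "visible" to $\beta$ while it remains in $\Int|\P'|$ and verifying the tangency claim rigorously. Condition (2) guarantees $B\setminus\Int|\P'|$ is compact and locally convex, but I need to be careful that $\beta$, being proper and eventually straight in the $-\overline m_\sigma$ direction far out, actually enters $\Int|\P'|$ from infinity and that the cells $\sigma\in\P'$ it passes through form a coherent family with compatible $\overline m_\sigma$; here the asymptotically cylindrical-type product structure (1) does most of the work, but matching up the $\rho$-factors across adjacent cells of $\P'$ requires the subdivision to be "cylindrical" in the stated sense. I would also double-check the edge case of the first segment of $\beta$ (the $i=1$ interval $(-\infty,t_1]$): condition (2) in Definition~\ref{def: broken lines} is exactly what guarantees $\overline m_1$ is primitive along an unbounded edge, hence equal to a $\overline m_\sigma$, which is what makes condition (3) applicable. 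Beyond that the argument is a short piece of polyhedral bookkeeping, consistent with the authors' remark that such verifications are "an exercise in polyhedral geometry."
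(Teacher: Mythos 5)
Your central claim --- that every wall or slab met by $\beta$ inside $\Int|\P'|$ must be tangent to $\overline m_\sigma$, so that no break can occur there at all --- does not follow from the hypotheses and is not what the lemma asserts. The containment you invoke is vacuous: since $\rho$ is a facet of the cylinder $\sigma=\rho+\RR_{\ge0}\overline m_\sigma$, the space $\Lambda_{\rho,\RR}+\RR\,\overline m_\sigma$ is all of $\Lambda_{\sigma,\RR}$, so knowing that $\Lambda_\fop$ lies in it carries no information, and the jump to $\overline m_\sigma\in\Lambda_\fop$ is a non sequitur. Condition (3) constrains only the \emph{exponents} carried by walls and unbounded slabs, not their supports; walls transverse to $\overline m_\sigma$ are perfectly allowed (in dimension two the support of a wall is a segment or ray in the direction of its exponent $-\overline m=\lambda+c\,\overline m_\sigma$ with $c>0$, which is transverse to $\overline m_\sigma$ whenever $\lambda\neq0$), and an incoming segment with velocity $-\overline m_\sigma$ crossing such a wall can acquire a genuine break. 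Tellingly, your argument never uses the hypothesis $\im(\beta)\not\subseteq|\P'|$; if your stronger statement were true, that hypothesis would be superfluous, whereas it is exactly what excludes the broken lines that do break inside $|\P'|$.

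The intended mechanism is a trapping argument, not a transversality obstruction. Suppose the first break occurs at $\beta(t_1)\in\sigma\setminus\rho$ for some $\sigma\in\P'$. The incoming velocity is $-\overline m_\sigma$ (your observation that $\overline m_1=\overline m_\sigma$ is correct and is the starting point), and by condition (3) together with integrality ($\Lambda_\sigma=\Lambda_\rho\oplus\ZZ\,\overline m_\sigma$) every wall exponent satisfies $-\overline m=\lambda+c\,\overline m_\sigma$ with $\lambda\in\Lambda_\rho$, $c\ge1$. Hence any result of nontrivial transport has $-\overline m_2\in\Lambda_{\rho,\RR}+\RR_{\ge0}\overline m_\sigma$, the outward-pointing half-space, so the velocity no longer points towards $\rho$. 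By induction the same holds at all later breaks inside $\sigma$, and the local convexity required in condition (2) shows that the outward-pointing property persists when $\beta$ passes to a neighbouring cell of $\P'$. Such a $\beta$ therefore never leaves $|\P'|$, contradicting $\im(\beta)\not\subseteq|\P'|$. This is the paper's proof; the tangency step in your proposal has to be replaced by this argument.
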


\begin{proof}
Assume $\beta(t_1)\in \sigma\setminus\rho$ for some $\sigma=
\rho+\RR_{\ge0}\overline m_{\sigma} \in\P'$. Then $\beta|_{(-\infty,t_1]}$ is an
affine map with derivative $-\overline m_{\sigma}$, and $\beta(t_1)$ lies on a
wall. By the assumption on exponents of walls on $\sigma$, the result of
nontrivial scattering at time $t_1$ only leads to exponents $m_2$ with
$-\overline m_2\in \Lambda_\rho+\RR_{\ge0} \overline m_{\sigma}$, the outward
pointing half-space. In particular, the next break point can not lie on $\rho$.
Going by induction one sees that any further break point in $\sigma$ preserves
the condition that $\beta'$ does not point inward. Moreover, by the convexity
assumption, this condition is also preserved when moving to a neighboring cell
in $\P'$. Thus $\im(\beta)\subseteq |\P'|$.
\end{proof}

\begin{proposition}
\label{Prop: Simple Hori-Vafa cases}
Let $(B,\P)$ be the dual intersection complex of a distinguished toric
degeneration of del Pezzo surfaces with simple singularities and let
$\sigma_0\subseteq B$ be the bounded cell. Then there is a neighborhood $U$ of
the interior vertex $v_0\in \sigma_0$ such that for any $p\in U$ there is a
canonical bijection between broken lines with endpoint $p$ and rays of $\Sigma$,
the fan over the proper faces of $\sigma_0$.
\end{proposition}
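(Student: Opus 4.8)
The plan is to reduce, via Lemma~\ref{Lem: Broken lines are rays}, to the study of broken lines inside the bounded cell $\sigma_0$, and then to carry out a local count near $v_0$. First I would record that a distinguished toric degeneration is by definition of compact type, so by Theorem~\ref{Thm: Properness} the complex $(B,\P)$ is asymptotically cylindrical; fixing the primitive outward asymptotic vector field $\xi$ of Construction~\ref{Constr: B_infinity}, every unbounded maximal cell is $\sigma=\rho_\sigma+\RR_{\ge0}\cdot\xi$ with $\rho_\sigma=\sigma\cap\sigma_0$ a bounded edge of $\sigma_0$, and an unbounded $1$-cell $\omega$ of $\P$ emanates from each vertex of $\sigma_0$. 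By Theorem~\ref{Thm: unique}, in the chart at $v_0$ the cell $\sigma_0$ is the dual polygon of a reflexive polygon, so its vertices are primitive and are precisely the primitive generators of the rays of $\Sigma$. The bijection I aim for sends a broken line $\beta\in\foB(p)$ to the ray $\RR_{\ge0}\cdot\overline m_\beta$, where $\overline m_\beta\in\Lambda_{\sigma_0}$ will turn out always to be a vertex of $\sigma_0$.

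Next I would apply Lemma~\ref{Lem: Broken lines are rays} with $\P'$ the set of unbounded maximal cells, so that $B\setminus\Int|\P'|=\sigma_0$. Hypotheses~(1) and~(2) are immediate since $\sigma_0$ is a convex polytope; the point is hypothesis~(3), namely that every wall or unbounded slab meeting an unbounded cell $\sigma$ carries only monomials $z^m$ with $-\overline m\in\Lambda_{\rho_\sigma}+\RR_{>0}\cdot\xi$. In these five cases the only such walls and slabs are the asymptotic ones of Construction~\ref{Constr: B_infinity}, whose monomials are proportional to $\xi$ and outward-directed; this can also be read off the explicit wall structures produced by the algorithm of \cite{affinecomplex} on any compact set (cf.\ Example~\ref{Expl: PP2}). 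The lemma then gives: for $p\in\Int\sigma_0$, every $\beta\in\foB(p)$ has no break point in the interior of an unbounded cell, so it runs straight in from a unique unbounded cell $\sigma$ in the asymptotic direction, crosses the single edge $\rho_\sigma$ (where it may split according to the slab function), and -- $\sigma_0$ being convex and a re-entry into an unbounded cell being a forbidden break point -- does all further bending inside $\sigma_0$ before reaching $p$.

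For the count I would first treat the straight broken lines. For each vertex $w$ of $\sigma_0$, the unbounded edge $\omega$ at $w$, translated within the two unbounded cells adjacent to $w$ and flowed in the inward asymptotic direction, sweeps out a two-dimensional set of endpoints whose interior contains $v_0$; hence for $p$ general in a small enough neighborhood $U$ of $v_0$ exactly one such translate passes through $p$, producing the straight broken line $\beta_w\in\foB(p)$ which takes the order-zero term of the transport at its unique edge crossing (and at any wall it might meet). A short computation in the chart at $v_0$ identifies $\overline m_{\beta_w}=w$, so that $\beta_w$ restricts on $X_{\sigma_0}$ to the ray monomial $z^w$, i.e.\ to the corresponding term of the Hori--Vafa potential (Corollary~\ref{Cor: reproduce Hori-Vafa}); by Remark~\ref{rem: broken lines},(2) it is the unique broken line of its type ending at $p$. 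The $\beta_w$ are pairwise distinct, one per vertex of $\sigma_0$, hence in bijection with the rays of $\Sigma$.

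The hard part is the last step: ruling out, after shrinking $U$, any further broken line -- one that uses a nonconstant term at its slab crossing or crosses a wall of $\scrS$ inside $\sigma_0$. At any such interaction the underlying tangent vector of $\beta$ jumps by a nonzero element of $\Lambda_{\rho_\sigma}$, or by a nonzero integer multiple of the primitive exponent $\overline{m_\fop}$ of the crossed wall $\fop$ (which emanates in the direction $-\overline{m_\fop}$ from a vertex of $\sigma_0$ or from a point on one of the finitely many wall-rays issuing from $v_0$). I would argue that each such jump turns $\beta$ by a definite angle out of the narrow cone of directions in which a point of $\partial\sigma_0$, or a point on a chamber boundary near $v_0$, can see $v_0$, so that a once-deflected broken line is pushed off every sufficiently small $U$. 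The delicate point is that this estimate must survive the a priori infinite scattering cascade emanating from $v_0$; for the five toric del Pezzo surfaces it can instead be carried out by direct inspection of the explicit -- and, on each compact set, finite -- wall structures, which is essentially the well-definedness analysis of Gross for $\PP^2$ \cite{PP2mirror}. Granting this, $\foB(p)=\{\beta_w\}_w$ and $\beta_w\mapsto\RR_{\ge0}\cdot w$ is the required canonical bijection.
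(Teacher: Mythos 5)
Your skeleton is the same as the paper's: use Lemma~\ref{Lem: Broken lines are rays} with $\P'$ the unbounded cells to force straightness up to $\sigma_0$, obtain the straight broken lines by perturbing the degenerate broken lines through $v_0$ (one per ray of $\Sigma$, since the unbounded edges extend through $v_0$), and then exclude everything else for $p$ near $v_0$. The first two steps are essentially the paper's argument. The gap is in the last step, which is where the content of the proposition sits, and your treatment of it rests on a partly incorrect picture of the wall structure and ultimately defers to an inspection that is never carried out.

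Concretely: (i) your verification of hypothesis~(3) of Lemma~\ref{Lem: Broken lines are rays} claims that the only walls or unbounded slabs meeting the unbounded cells are ``asymptotic ones \dots proportional to $\xi$''. This is false. In these five models the focus-focus points lie on the bounded edges of $\sigma_0$, and consistency at the vertices of $\sigma_0$ forces those slab functions to propagate past the vertices as walls inside the unbounded cells, with exponents parallel to the edges of $\sigma_0$; for $\PP^2$ (and other cases where the two edge directions at a vertex span an index~$>1$ sublattice) the vertex scattering even produces infinitely many walls in every neighborhood of the vertex, so ``finite on each compact set'' is also not available -- only finiteness at each $t$-order. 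Hypothesis~(3) does hold, but only because one checks that all these walls propagate with strictly positive $\xi$-component; that check is exactly the inspection you skip. (ii) More seriously, the fact the paper's proof turns on -- that in these five cases \emph{no} walls enter $\Int\sigma_0$, so a broken line ending in $\Int\sigma_0$ can bend at most once, at a slab of $\partial\sigma_0$, and one then only has to see (again by inspection) that such once-bent lines miss a neighborhood of $v_0$, as in the hexagon of Example~\ref{ex: dP_3} -- is never identified or established in your proposal. Instead you hypothesize walls and an ``infinite scattering cascade emanating from $v_0$'' inside $\sigma_0$ and hope a ``definite angle'' estimate survives it; but if walls really crossed $\Int\sigma_0$ the statement could simply fail near $v_0$, which is precisely what the extra wall does in the $dP_4$ case of Example~\ref{ex: dP4}, where additional bent broken lines reach every chamber of $\sigma_0$. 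So the decisive case-by-case facts (no walls in $\Int\sigma_0$; nontrivial bending at $\partial\sigma_0$ cannot end near $v_0$) remain unproved, and the fallback you offer is both unexecuted and mischaracterized; as it stands the converse direction of the bijection is asserted rather than proved.
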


\begin{proof}
We can embed $\Sigma$ in the tangent space at $v_0$ by extending the unbounded
edges to $v_0$ in the chart shown in Figure~\ref{fig:toric_fans}. Each ray of
$\Sigma$ can then be interpreted as the image of a unique degenerate broken
line. Because each such degenerate broken line has a positive distance from the
shaded regions in Figure~\ref{fig: toric_del_pezzo} they can be moved with small
perturbations of $p$ to deform to a proper broken line. Conversely, by
inspection of the five cases, the result of non-trivial scattering at
$\partial\sigma_0$ leads to a broken line not entering $\Int(\sigma_0)$.
There are no walls entering $\Int\sigma_0$, so by Lemma~\ref{Lem: Broken
lines are rays} any such broken line can bend at most at the intersection with
$\partial\sigma_0$.
\end{proof}

\begin{corollary}
\label{Cor: reproduce Hori-Vafa}
Let $(\X\to\Spf\kk\lfor t\rfor,W)$ be the Landau-Ginzburg model mirror to a
distinguished toric degeneration of del Pezzo surfaces with simple
singularities. Then there is an open subset $U\simeq \Spf \kk[x^{\pm1},y^{\pm1}]
\lfor t\rfor \subseteq \X$ such that $W|_U$ equals the usual Hori-Vafa
monomial sum times $t$.
\qed
\end{corollary}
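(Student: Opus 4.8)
The plan is to read off $W|_U$ directly from the broken-line formula \eqref{Eqn: W^k} together with the bijection of Proposition~\ref{Prop: Simple Hori-Vafa cases}. First I would fix the open set $U$: let $U_0\subseteq\sigma_0$ be the neighbourhood of the interior vertex $v_0$ from Proposition~\ref{Prop: Simple Hori-Vafa cases} and pick $p\in U_0$ sufficiently transcendental, so that (following Remark~\ref{Rem: algebraizability}) it lies in the interior of a chamber of $\scrS_k$ and is general for every $k$. The colimit description of $\X$ then gives an open embedding $\Spf\kk[\Lambda_{\sigma_0}]\lfor t\rfor\hookrightarrow\X$ of the formal torus \eqref{Eqn: formal algebraic torus}, whose image is the required $U\simeq\Spf\kk[x^{\pm1},y^{\pm1}]\lfor t\rfor$, with $x,y$ the monomials of a lattice basis of $\Lambda_{\sigma_0}$ in the chart at $v_0$. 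By Theorem~\ref{Thm: all order W} and \eqref{Eqn: W^k}, the pull-back of $W$ to $U$ is $\sum_{\beta\in\foB(p)}a_\beta z^{m_\beta}$, computed to each finite order $k$.

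Next I would enumerate $\foB(p)$ and the monomials $a_\beta z^{m_\beta}$. By Proposition~\ref{Prop: Simple Hori-Vafa cases} the elements of $\foB(p)$ are in canonical bijection with the rays of the fan $\Sigma$ over the proper faces of $\sigma_0$, and by its proof (using Lemma~\ref{Lem: Broken lines are rays} and the absence of walls in $\Int\sigma_0$) each $\beta$ is a straight ray with a single break point, lying on the facet $\rho_j\subset\partial\sigma_0$ along which the unbounded cell $\sigma_j$ carrying the relevant asymptotic edge is attached. For the ray with primitive generator $m_j$ (expressed in the chart at $v_0$, obtained by extending the unbounded edge of $\sigma_j$ back through $v_0$ as in the proof of Proposition~\ref{Prop: Simple Hori-Vafa cases}), the corresponding broken line starts at infinity along $\sigma_j$ with $a_1=1$ and $\ord$ equal to zero, and crosses $\rho_j$ exactly once. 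I would then evaluate the transport \eqref{eqn: transport} at this break point: the primitive direction $\overline m_1$ meets $\rho_j$ with minimal lattice index, so the change-of-chamber isomorphism multiplies $z^{m_1}$ by $f_{\rho_j}\,t^{e_j}$, where $XY=f_{\rho_j}t^{e_j}$ is the codimension-one relation from the proof of \cite[Lem.\,2.34]{affinecomplex} and $e_j$ is the kink of $\varphi$ across $\rho_j$. Only the constant term $1$ of $f_{\rho_j}$ produces a summand whose direction points into $\Int\sigma_0$; the other summands have direction tilted along $\rho_j$ and give broken lines which, by the proof of Proposition~\ref{Prop: Simple Hori-Vafa cases}, do not re-enter $\Int\sigma_0$. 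Hence $a_\beta=1$ and $m_\beta$ is the monomial with $\overline{m_\beta}=m_j$ and $\ord_{\sigma_0}m_\beta=e_j$. Finally, the normalization of $\check\varphi$ in Construction~\ref{Const: Reflexive polytope degeneration} forces $\varphi$ to have kink exactly one across each facet of $\sigma_0$, i.e.\ $e_j=1$ for all $j$; since a monomial of order $l$ restricts to $t^l$ times the corresponding monomial in $\kk[\Lambda_{\sigma_0}]$, we get $z^{m_\beta}=t\,z^{m_j}$ and therefore $W|_U=\sum_j t\,z^{m_j}=t\sum_{\rho_j\in\Sigma(1)}z^{m_j}$, the classical Hori-Vafa monomial sum times $t$.

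The main obstacle will be the bookkeeping in the middle paragraph: verifying that each break point contributes coefficient exactly $1$ and $t$-power exactly $1$, which requires carefully unwinding the slab-crossing change-of-chamber isomorphism in the sign and lattice conventions of \cite{affinecomplex} and matching them with the normalization of $\varphi$ from Construction~\ref{Const: Reflexive polytope degeneration}, and identifying the extended unbounded edges with the rays of $\Sigma$ so that $\{m_j\}$ is genuinely the set of primitive ray generators of the fan of the toric del Pezzo surface (so that the output is literally $\sum_{\rho\in\Sigma(1)}z^{v_\rho}$, not merely a Laurent polynomial of the same shape). A secondary point is to check that the identity $W|_U=t\sum_j z^{m_j}$ holds on all of $U$ and not just at the reference point $p$: this follows from Lemmas~\ref{lem:independence of p} and~\ref{lem:change of chambers}, since any other general $p'\in\sigma_0$ is related to $p$ by wall-crossing automorphisms under which $\sum_{\beta}a_\beta z^{m_\beta}$ is invariant.
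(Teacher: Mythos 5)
Your proposal is correct and follows essentially the same route the paper intends: the corollary is drawn directly from Proposition~\ref{Prop: Simple Hori-Vafa cases} (together with Lemma~\ref{Lem: Broken lines are rays} and the absence of walls in $\Int\sigma_0$), with each broken line contributing coefficient $1$ and a single factor of $t$ from the kink of $\varphi$ along $\partial\sigma_0$, exactly as the paper's Example~\ref{ex: dP_3} and the argument in Theorem~\ref{singular del pezzo} carry out. The only cosmetic discrepancy is calling the slab crossing a ``break point'': in the paper's terminology these broken lines do not bend, since only the constant term of the slab function continues into $\Int\sigma_0$ — which is precisely the observation you make.
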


\begin{remark}
1)\ For other than the anticanonical polarization of the del Pezzo surface, the
terms in the superpotential receive different powers of $t$, just as in the
Hori-Vafa proposal.\\[1ex]
2)\ Analogous arguments work for Landau-Ginzburg mirrors of smooth toric Fano
varieties of any dimension \cite[Thm.\,5.4]{MaxThesis}.
\end{remark}

\begin{figure}
\input{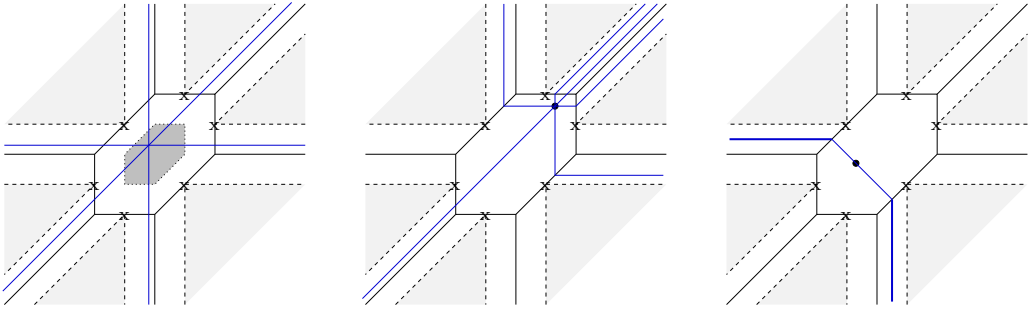}
\caption{Tropical disks in the mirror of the distinguished base of
$dP_3$ indicating the invariance under change of root vertex.}
\label{fig:example_dp3}
\end{figure}

\begin{example} \label{ex: dP_3}
Let us study the mirror of the distinguished toric degeneration of $dP_3$ with
the minimal polarization $\check\varphi$ explicitly. In
Figure~\ref{fig:example_dp3} the first two pictures show all Maslov index two
tropical disks, respectively broken lines using disk completion
(Lemma~\ref{line-disk}), for two choices of root vertex. Moving the root
vertex within the shaded open hexagon yields the same result, that is, none of
the six broken lines has a break point. In contrast, moving the root vertex
inside $\sigma_0$ out of the shaded hexagon leads to some bent broken lines,
but the set of root tangent vectors always remains $(1,0),(1,1),(0,1),
(-1,0),(-1,-1)$ and $(0,-1)$, all with coefficient~$1$. This illustrates the
invariance under the change of root vertex proved in Lemma~\ref{lem:independence
of p}.

The potential in the chart for the bounded cell $\sigma_0$ is thus computed as
\[
W_{dP_3}(\sigma_0) = \Big(x + y + xy + \frac{1}{x} + \frac{1}{y} +
\frac{1}{xy}\Big) \cdot t,
\]
which for $t\neq 0$ has six critical points.

The picture on the right shows two tropical disks with weight two unbounded
leaves. These do not contribute to the superpotential.

An analogous picture arises for the other four distinguished del
Pezzo degenerations.
\end{example}

Morally speaking the last example shows that in toric situations ray
generators of the fan are sufficient to compute the superpotential,
but really they should be seen as special cases of tropical disks
or broken lines.

%===========================================================

\subsection{Non-toric del Pezzo surfaces}
In this section we consider del Pezzo surfaces $dP_k$ for $k\ge
4$, referred to as higher del Pezzo surfaces. Let us first determine
the topology of $B$ and the number of singular points of the
affine structure.

We need the following statement for Fano varieties with smooth anticanonical divisor

\begin{lemma}
\label{Lem: No log holomorphic 1-form}
Let $X$ be a smooth Fano variety and $D\subset X$ a smooth
anticanonical divisor. Then $H^0(X,\Omega_X(\log D))=0$.
\end{lemma}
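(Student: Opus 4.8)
The plan is to use the exact sequence relating $\Omega_X(\log D)$ to $\Omega_X$ together with vanishing theorems coming from the Fano (hence anticanonical-ample) hypothesis. Concretely, for the smooth divisor $D\subset X$ there is the residue short exact sequence
\[
0\lra \Omega_X \lra \Omega_X(\log D)\lra \O_D\lra 0,
\]
so the long exact cohomology sequence gives
\[
0\lra H^0(X,\Omega_X)\lra H^0(X,\Omega_X(\log D))\lra H^0(D,\O_D)\lra H^1(X,\Omega_X).
\]
Since $X$ is Fano we have $H^0(X,\Omega_X)=H^1(X,\O_X)=0$ (the latter by Kodaira--Akizuki--Nakano vanishing, or simply rational connectedness), so the first term vanishes; and since $D$ is connected (it is ample, being anticanonical on a Fano variety, hence connected by e.g. Lefschetz-type arguments, so $H^0(D,\O_D)=\kk$) it remains to see that the connecting map $\kk=H^0(D,\O_D)\to H^1(X,\Omega_X)$ is injective.

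First I would identify this connecting map. The extension class of the residue sequence above, as an element of $\Ext^1(\O_D,\Omega_X)=H^1(X,\Omega_X\otimes\O_X(D)^{\vee}\otimes\,?)$ — more cleanly, the connecting homomorphism $H^0(D,\O_D)\to H^1(X,\Omega_X)$ sends $1$ to the image of the first Chern class $c_1(\O_X(D))=c_1(-K_X)\in H^1(X,\Omega_X)$ under the cycle/Chern-class map (this is the standard fact that the residue sequence twisted appropriately realizes $c_1$). So injectivity of the connecting map is equivalent to $c_1(-K_X)\neq 0$ in $H^1(X,\Omega_X)$, equivalently the anticanonical class is nonzero in $H^1(X,\Omega_X^1)\cong \mathrm{NS}(X)_\CC$ (using $H^1(X,\O_X)=0=H^2(X,\O_X)$ so that $H^1(X,\Omega^1_X)$ is pure of Hodge--Tate type and equals the complexified Néron--Severi group). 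But $-K_X$ is ample, hence certainly nonzero in $\mathrm{NS}(X)$, so $c_1(-K_X)\neq 0$ and the connecting map is injective. Chasing through the exact sequence then forces $H^0(X,\Omega_X(\log D))=0$.

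The main obstacle, and the one place I would be careful, is pinning down that the connecting map really is given by $c_1$, and in particular that $H^1(X,\Omega^1_X)$ contains $c_1(-K_X)$ nontrivially — i.e. checking the Hodge-theoretic input $h^{0,1}(X)=h^{0,2}(X)=0$ for Fano $X$ (true by Kodaira--Nakano vanishing since $-K_X$ is ample, or via $\chi(\O_X)=1$ and rational connectedness) so that the first Chern class map $\mathrm{Pic}(X)\otimes\CC\to H^1(X,\Omega^1_X)$ is injective. The characteristic-zero hypothesis is what licenses the use of Hodge theory and Kodaira-type vanishing here. An alternative, more algebraic route avoiding Hodge theory: a global section $\omega\in H^0(X,\Omega_X(\log D))$ restricts to a section of $\Omega_X$ away from $D$; pulling back to a resolution / using that a nonzero log $1$-form gives a nonzero map $\O_X\to\Omega_X(\log D)$ and hence, via the residue, either a nonzero holomorphic $1$-form (impossible on Fano) or a nonzero constant residue along $D$, which one shows is obstructed because it would split off a factor contradicting $-K_X$ ample; but the cohomological argument above is cleaner and I would present that one.
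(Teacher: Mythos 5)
Your proof is correct and follows exactly the same route as the paper: residue sequence, identification of the connecting homomorphism with cup product by $c_1(\O_X(D))$, nonvanishing of that class by ampleness, and Kodaira vanishing plus Hodge symmetry for $H^0(X,\Omega_X)=0$. Your extra paragraph justifying that $c_1(\O_X(D))\neq 0$ in $H^1(X,\Omega_X)$ (via $h^{0,1}=h^{0,2}=0$, so that $c_1$ lands injectively in the Hodge piece) is a sensible elaboration of what the paper states as "by ampleness"; a slightly shorter argument would be to note $\int_X c_1(\O_X(D))^n>0$ directly.
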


\begin{proof}
It is a folklore result in Hodge theory that the connecting homomorphism of the residue sequence
\[
0\lra \Omega_X\lra \Omega_X(\log D)\stackrel{\text{res}}{\lra} \O_D\lra 0
\]
maps $1\in H^0(D,\O_D)$ to $c_1(\O_X(D))\in H^{1,1}(X)= H^1(X,\Omega_X)$. By
ampleness of $D$ this class is nonzero. Thus $H^0(X,\Omega_X(\log D))\simeq
H^0(X,\Omega_X)$.

The claim now follows from the Kodaira vanishing theorem:
\[
H^0(X,\Omega_X)\simeq H^n(X,\O_X)
= H^n(X,K_X\otimes K_X^{-1})=0.
\qedhere
\]
\end{proof}
\smallskip

\begin{proposition}
\label{Prop: Number of singular points}
Let $(B,\P)$ be the dual intersection complex of a distinguished toric
degeneration $(\pi:\check\X\to T, \check\D)$ of del Pezzo surfaces with simple
singularities (Definition~\ref{Def: toric degen dP_k}). In particular, we assume
the generic fiber $\check\X_\eta$ is a proper surface with $\check\D_\eta$ a
smooth anticanonical divisor.

Then $B$ is homeomorphic to $\RR^2$, and the affine structure has $l=\dim
H^1(\check\X_\eta, \Omega^1_{\check\X_\eta})+2$ singular points.
\end{proposition}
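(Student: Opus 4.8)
The plan is to prove the two assertions separately: first the topology of $B$, then the count of singular points.

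\textbf{Topology of $B$.} First I would observe that $B$ is the discrete Legendre dual of $\check B$, and by Theorem~\ref{Thm: unique}, $\check B$ is the star subdivision of a reflexive polygon $\Xi$ — in particular $\check B$ is a closed topological disk with a unique interior vertex $v_0$ and locally straight boundary. Under discrete Legendre duality the maximal cells of $\check B$ correspond to vertices of $B$ and vice versa; the unique interior vertex $v_0\in\check B$, being the unique $0$-cell not on $\partial\check B$, corresponds to the unique bounded maximal cell $\sigma_0\subseteq B$, dual to $\Xi$. The remaining maximal cells of $B$ are unbounded, one for each vertex of $\Xi$ (equivalently each ray of the normal fan $\Sigma$), and they are glued cyclically around $\sigma_0$ exactly as the corresponding cones of $\Sigma$ surround the origin. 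Since $\check B$ is a disk and the dual cell complex of a triangulated disk with one interior vertex has the combinatorial type of a ``fan-like'' stratification, $B$ is obtained by taking $\sigma_0$ and attaching unbounded cells cyclically; this is manifestly homeomorphic to $\RR^2$. (Alternatively: $(B,\P)$ is a $2$-dimensional tropical manifold without boundary by Assumption~\ref{overall assumption}, and its one-point compactification — gluing in a point at infinity along each asymptotic ray — is the sphere, since the same holds for $\check B \cup \partial\check B$; removing that point gives $\RR^2$.)

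\textbf{Count of singular points.} The singular locus $\Delta\subseteq B$ is a finite set of points, and by Legendre duality it is in bijection with the singular points of $\check\Delta\subseteq\check B$. For the toric degenerations under consideration (simple singularities, locally rigid), the controlling cohomology group classifying the log structure — equivalently infinitesimal log smooth deformations of the central fiber $(\check X_0,\check D_0)$ over the standard log point — is $H^1(B, i_*\Lambda_B\otimes_\ZZ\GG_m(\kk)) \cong H^1(\check B,\check\imath_*\check\Lambda_{\check B}\otimes_\ZZ\GG_m(\kk))$, as recalled in \S1. For $\dim B=2$ with simple singularities each singular point contributes one dimension to $H^1(B,i_*\Lambda_B\otimes\QQ)$ (the ``focus-focus'' contribution), so the number of singular points equals $\dim H^1(B,i_*\Lambda_B\otimes\QQ)$. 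It remains to identify this number with $\dim H^1(\check\X_\eta,\Omega^1_{\check\X_\eta})+2$. The key input is that $\dim H^1(B,i_*\Lambda_B\otimes\QQ)$ equals $\dim H^1(\check X_0^\dagger,\Theta_{\check X_0^\dagger/\kk^\dagger})$ (the space of first-order log smooth deformations of the central fiber), which by a base-change/upper-semicontinuity argument — using that the relevant higher obstruction spaces vanish, analogous to \cite[Thm.\,4.2]{PartII} invoked in the proof of Theorem~\ref{Thm: unique} — equals $\dim H^1(\check\X_\eta, \Theta_{(\check\X_\eta,\check\D_\eta)})$, the infinitesimal deformations of the log pair. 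Finally, the log tangent sheaf sits in $0\to\Theta_{(\check\X_\eta,\check\D_\eta)}\to\Theta_{\check\X_\eta}\to N_{\check\D_\eta/\check\X_\eta}\to 0$; taking cohomology on the del Pezzo surface $\check\X_\eta$ (where $H^0(\Theta)$, $H^1(\Theta)$ and the normal bundle cohomology on the elliptic curve $\check\D_\eta$ are all computable, and $H^0(\Omega_{\check\X_\eta}(\log\check\D_\eta))=0$ by Lemma~\ref{Lem: No log holomorphic 1-form} pins down $H^0(\Theta_{(\check\X_\eta,\check\D_\eta)})$), together with Serre duality identifying $H^1(\Theta_{\check\X_\eta})$ with $H^1(\Omega^1_{\check\X_\eta})^\vee$ up to the correction coming from the anticanonical twist, yields precisely $\dim H^1(\check\X_\eta,\Omega^1_{\check\X_\eta})+2$. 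The ``$+2$'' should come from the two extra parameters: the modulus of the anticanonical elliptic curve and the ``period'' direction, matching the two-dimensional piece of $H^1(\Theta_{(\check\X_\eta,\check\D_\eta)})$ beyond the $K$-part.

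\textbf{Main obstacle.} The topological statement is essentially formal once Theorem~\ref{Thm: unique} is in hand. The real work is the identification of the number of singular points with $\dim H^1(\check\X_\eta,\Omega^1_{\check\X_\eta})+2$: specifically, matching the \emph{affine-geometric} count of focus-focus points on $B$ with the \emph{deformation-theoretic} dimension $\dim H^1(\check X_0^\dagger, \Theta_{\check X_0^\dagger/\kk^\dagger})$ (this is where simplicity of singularities and the $2$-dimensionality are used crucially, and where a result like \cite[Rem.~1.49]{logmirror} or the explicit figures would be invoked), and then carrying that invariant across the degeneration to the generic fiber via cohomology and base change. Assembling the final numerical identity is then a bookkeeping exercise with the residue sequence on $(\check\X_\eta,\check\D_\eta)$ and Riemann–Roch on the del Pezzo surface and its anticanonical elliptic curve.
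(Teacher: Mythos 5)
Your topology argument is fine, though it takes a different route from the paper: you deduce $B\cong\RR^2$ combinatorially from Theorem~\ref{Thm: unique} (one bounded cell dual to the interior vertex, unbounded cells attached cyclically), whereas the paper argues intrinsically that $B$ is orientable (triviality of the relative log dualizing sheaf) and that $H^1(B,\kk)=H^1(\check X_0,\O_{\check X_0})=0$, and then invokes the classification of noncompact surfaces. Either way this part is essentially formal.

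The count of singular points, however, contains a genuine gap. Your pivotal claim that ``each singular point contributes one dimension to $H^1(B,i_*\Lambda_B\otimes\QQ)$, so the number of singular points equals $\dim H^1(B,i_*\Lambda_B\otimes\QQ)$'' is false: $H^1$ with coefficients in $i_*\Lambda$ (or $i_*\check\Lambda$) is not a sum of local contributions, and the global terms are exactly where the $+2$ in the statement comes from. The paper computes this group by an explicit \v Cech cover (the complement $U_0$ of disjoint slits emanating from the $l$ singular points, plus small disks $U_i$ around them): $\dim C^0=2+l$, $\dim C^1=2l$, $C^{\ge2}=0$, and $H^0(B,i_*\check\Lambda_\RR)=H^0(\check\X_\eta,\Omega_{\check\X_\eta}(\log\check\D_\eta))=0$ by Lemma~\ref{Lem: No log holomorphic 1-form}, giving $\dim H^1(B,i_*\check\Lambda_\RR)=2l-(l+2)=l-2$. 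Combined with the affine/log Hodge comparison $\dim H^1(\check\X_\eta,\Omega^1_{\check\X_\eta})=\dim H^1(B,i_*\check\Lambda\otimes\RR)$ (the log Calabi--Yau analogue of \cite[Thms.\,3.21, 4.2]{PartII}), this yields $l=h^1(\Omega^1_{\check\X_\eta})+2$ directly, with no detour through $\Theta$.

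Because of this, your subsequent bookkeeping cannot close up: the chain $l=h^1(B,i_*\Lambda)=h^1(\check X_0,\Theta_{\check X_0^\ls/\kk^\ls})=h^1(\check\X_\eta,\Theta_{\check\X_\eta}(\log\check\D_\eta))$ is correct only after replacing $l$ by $l-2$ (this is exactly Remark~\ref{Rem: H^1(B,Lambda)},2), and the identity you then need, $h^1(\Theta_{\check\X_\eta}(\log\check\D_\eta))=h^1(\Omega^1_{\check\X_\eta})+2$, is numerically false. For instance for $\PP^2$ with a smooth cubic $\check D_\eta$, the sequence $0\to\Theta(\log\check D_\eta)\to\Theta\to N_{\check D_\eta}\to0$ gives $h^1(\Theta(\log\check D_\eta))=9-8=1=h^1(\Omega^1_{\PP^2})$, not $3$; in general one has $h^1(\Theta(\log D))=h^1(\Omega^1)$ on a del Pezzo surface with smooth anticanonical $D$ (indeed $\Theta(\log D)\cong\Omega^1(\log D)$ since $\det\Omega^1(\log D)\cong\O$, and the residue sequence shows $h^1(\Omega^1(\log D))=h^1(\Omega^1)$). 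So the ``$+2$'' cannot be recovered from the residue sequence on the generic fiber; it lives in the global part of the \v Cech computation on $B$ (the rank-two constant sections over the slit complement, together with the vanishing of $H^0$ supplied by Lemma~\ref{Lem: No log holomorphic 1-form}). To repair your argument you would have to replace the local-contribution claim by this (or an equivalent) global computation of $H^1(B,i_*\check\Lambda_\RR)$.
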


\begin{proof}
Since the relative logarithmic dualizing sheaf $\omega_{\check\X/ \check\D}
(-\log \check\D)$ is trivial, the generalization of \cite[Thm.\,2.39]{logmirror}
to the case of log Calabi-Yau varieties shows that $B$ is orientable. By the
classification of surfaces with effective anticanonical divisor we know
$H^i(\check\X_\eta, \O_{\check\X_\eta})=0$, $i=1,2$. As in the proof of
Theorem~\ref{Thm: unique} this implies $H^1(\check X_0,\O_{\check X_0})=0$. Thus
by the log Calabi-Yau analogue of \cite[Prop.\,2.37]{logmirror},
\[
H^1(B,\kk)= H^1(\check X_0,\O_{\check X_0})=0.
\]
In particular, $B$ has the topology of $\RR^2$.

As for the number of singular points, the generalization of \cite[Thms.\,3.21 \&
4.2]{PartII} to the case of log Calabi-Yau varieties \cite[Thm.\,3.5 \&
Thm.\,3.9]{Tsoi} shows that $\dim H^1(\check\X_\eta, \Omega^1_{\check \X_\eta})$
is related to an affine Hodge group:\footnote{The proof of
\cite[Thm.\,4.2]{PartII} has a gap fixed in \cite[Thm.\,1.10]{FFR}.}
\begin{equation}
\label{Eqn: H^1 affine}
\dim_{\kk((t))} H^1(\check\X_\eta, \Omega^1_{\check\X_\eta})
=\dim_\RR H^1(B,i_*\check\Lambda \otimes_{\ZZ} \RR).
\end{equation}
To compute $H^1(B,i_*\check\Lambda \otimes_{\ZZ} \RR)$ we choose the following
\v Cech cover of $B=\RR^2$. Let $\ell_1,\ldots,\ell_l$ be disjoint real
half-lines emanating from the singular points $p_1,\ldots,p_l$. Define
$U_0=\RR^2\setminus \bigcup_{i=1}^l \ell_i$, and $U_i=B_\eps(p_i)$ for $\eps$
sufficiently small to achieve $U_i\cap \ell_j=\emptyset$ unless $i=j$. Then
$\foU:= \big\{U_0,U_1,\ldots,U_l\big\}$ is a Leray covering of $B$ for
$i_*\check\Lambda_\RR:= i_*\check\Lambda\otimes_\ZZ \RR$, cf.\
\cite[Lem.\,5.5]{logmirror}. The terms in the \v Cech complex are
\[
C^0(\foU,i_*\check\Lambda_\RR)  =  \RR^2 \times 
\prod_{i=1}^{l} \RR,\quad
C^1(\foU,i_*\check\Lambda_\RR) = \prod_{i=1}^{l} \RR^2,\quad
C^k(\foU,i_*\check\Lambda_\RR) = 0 \text{ for $k\ge 2$.}
\]
The analogue of \eqref{Eqn: H^1 affine} for degree~0 cohomology groups shows
that the kernel of the \v Cech differential $C^0(\foU,i_*\check\Lambda_\RR) \to
C^1 (\foU,i_*\check\Lambda_\RR)$ computes $H^0(\X_\eta,\Omega_{\X_\eta}(\log
\D_\eta))$. This latter group vanishes by Lemma~\ref{Lem: No log holomorphic
1-form}. Hence
\[
\dim H^1(B,i_*\check\Lambda_\RR)= 2l - (l + 2) = l-2
\]
determines the number $l$ of focus-focus points as claimed.
\end{proof}

\begin{remark}
\label{Rem: H^1(B,Lambda)}
1)\ From the analysis in Proposition~\ref{Thm: unique} and
Proposition~\ref{Prop: Number of singular points} it is clear that for del Pezzo
surfaces $dP_k$ with $k\ge4$ the anticanonical polarization is too small to
extend over a toric degeneration with simple singularities. The associated
tropical manifold would simply not have enough integral points to admit the
required number of singular points.\\[1ex]
2)\ Essentially the same argument also computes the dimension of the
space of infinitesimal deformations:
\[
h^1(\check \X_\eta, \Theta_{\check\X_\eta}(\log \check\D_\eta))=
h^1(\check X_0,\Theta_{\check X_0^\ls/\kk^\ls})=
h^1(B, i_*\Lambda_\RR)= l-2.
\]
\vspace{-7ex}

\qed
\end{remark}

It is easy to write down toric degenerations of non-toric del Pezzo
surfaces, since they can be represented as hypersurfaces or complete
intersections in weighted projective spaces, as for example done for
$dP_6$ in~\cite[Expl.\,4.4]{Invitation}. The most natural toric
degenerations in this setup have central fiber part of the toric
boundary divisor of the ambient space. But because this construction
gives nodal $\D_\eta$ such toric degenerations are never
distinguished. To obtain proper superpotentials we therefore need a
different approach.

\begin{construction}
\label{con: nontoric} 
Start from the intersection complex $(\check B, \check\P)$ of the distinguished
toric degeneration of $dP_3$ depicted as a hexagon in Figure~\ref{fig:
toric_del_pezzo}. The six focus-focus points in the interior of the bounded two
cell make the boundary $\rho$ straight. There is no space to introduce more
singular points of the affine structure because all interior edges already
contain a singular point. To get around this, polarize by $-2 \cdot K_{dP_3}$
and adapt $\P$ in the obvious way, see Figure~\ref{fig:nontoric_del_pezzo}. This
scales the affine manifold $B$ by two, but keeps the singular points fixed. The
new boundary now has $12$ integral points and the union $\gamma$ of edges
neither intersecting the central vertex nor $\partial B$ is a geodesic. We can
then introduce new singular points on the boundary of the interior hexagon as
visualized in Figure~\ref{fig:nontoric_del_pezzo}. Moreover, let $\check
\varphi$ be unchanged on the interior cells and change slope by one when passing
to a cell intersecting $\partial B$. Plugging in up to five singular points,
the Hodge numbers from Proposition~\ref{Prop: Number of singular points}
show that the toric degenerations obtained from the tropical data are in
fact toric degenerations of $dP_k$, $4\le k\le 8$.
\qed
\end{construction}

\begin{figure}
\input{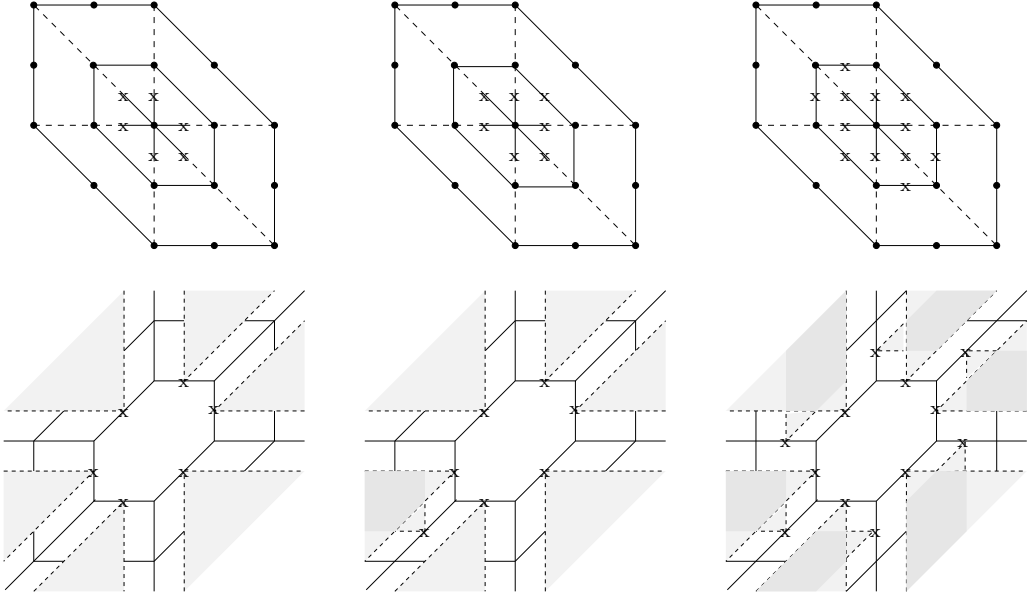}
\caption{Straight boundary models for higher del Pezzo surfaces
obtained by changing affine data for $dP_3$ and their
Legendre duals.} 
\label{fig:nontoric_del_pezzo}
\end{figure}

Unlike in the anticanonically polarized case, the models constructed in this
way are not unique. The geodesic $\gamma$ is divided into six segments by $\P$,
and the choice on which of these segments we place the singular points, modulo
the $\ZZ/6$-rotational symmetry, results in non-isomorphic models. We will see
in Example~\ref{ex: dp5} how this choice influences tropical curve counts.

Although there are other ways to define distinguished models for higher del
Pezzo surfaces, for example by choosing another polarization, in this way we can
extend the unique toric models most easily, since all tropical disks and
broken lines we studied before arise in these models without any change.

\begin{figure} 
\input{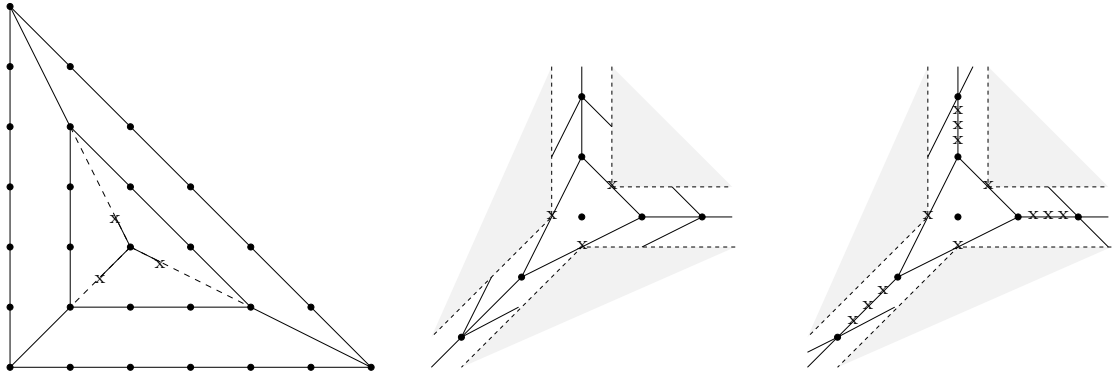}
\caption{An alternative base for higher del Pezzo surfaces and their
mirror.}
\label{fig: AKO}
\end{figure}

\begin{remark}
Note that introducing six new points, for instance as in the rightmost picture
in Figure~\ref{fig:nontoric_del_pezzo}, corresponds to a blow up of $\PP^2$ in
nine points, which is not Fano anymore, but from our point of view still has a
Landau-Ginzburg mirror.

From a different point of view this has already been
noted in~\cite{AKO}, where the authors construct a compactification of the
Hori-Vafa mirror as a symplectic Lefschetz fibration as follows. Start with the
standard potential $x+y+\frac{1}{xy}$ for $\PP^2$ and compactify by a divisor at
infinity consisting of nine rational curves. Then by a deformation argument it
is possible to push $k$ of those rational curves to the finite part and
decompactify to obtain a potential for $dP_k$, including $k=9$.

We can reproduce this result from our point of view by starting with $\PP^2$
rather than with $dP_3$, as illustrated in Figure~\ref{fig: AKO}. Moving
rational curves from infinity to the finite part is analogous to introducing new
focus-focus points. In the present case one may put three focus-focus points on
each unbounded ray of $(B,\P)$ until the respective Legendre dual vertex becomes
straight. Figure~\ref{fig: AKO} on the right shows nine such points
(corresponding to the case $k=9$ above), and any additional singular point would
result in a concave boundary. This can be seen as an affine-geometrical
explanation for why the compactification constructed by the authors
in~\cite{AKO} has exactly nine irreducible components.

Note that it is possible to introduce more singular points when passing to
larger polarizations, but in this way we will not end up with degenerations of
del Pezzo surfaces.
\end{remark}

In order to determine the superpotential, we depicted in
Figure~\ref{fig:nontoric_del_pezzo} an appropriate chart of the relevant
$(B,\P)$. When two regions to be removed overlap we shade them darker to
indicate the non-trivial transformation there.

\begin{figure}
\input{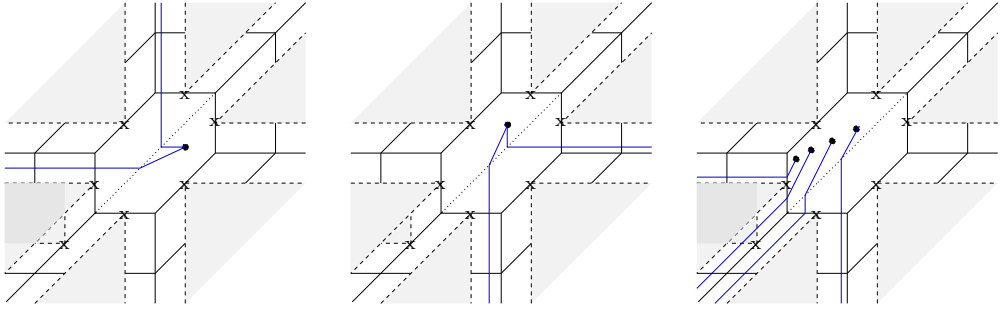}
\caption{Mirror base to $dP_4$ showing new broken lines contributing
to $W_{dP_4}$, indicating the wall crossing phenomenon and
the invariance under change of endpoint within a chamber.}
\label{fig:higher_dp4}
\end{figure}

\begin{example}
\label{ex: dP4}
Figure~\ref{fig:higher_dp4} shows the dual intersection complex $(B,\P)$ of a
toric degeneration of $dP_4$ from Construction~\ref{con: nontoric}. The
additional focus-focus point changes the structure $\scrS$ and allows broken
lines to scatter with the wall in direction $(1,1)$ in the central cell
$\sigma_0$ which subdivides $\sigma_0$ into two chambers $\fou,\fou'$ and
yields new root tangent directions. A broken line coming from infinity in
direction $(\pm 1,0)$ produces root tangent vectors $(-1\pm 1,-1)$, whereas
one with direction $(0,\pm 1)$ takes directions $(-1,-1\pm 1)$. By
construction, every broken line reaching the interior cell $\sigma_0$ has
$t$-order at least $2$.

Note also that by \cite[Expl.\,4.3]{Invitation} only the wall indicated by a
dotted line in Figure~\ref{fig:higher_dp4} enters $\sigma_0$. Thus $\sigma_0$ is
subdivided into two chambers $\fou,\fou'$, in all orders. A broken line can have
at most one break point within $\sigma_0$, in which case the $t$-order increases
by one. So let us compute $W_{dP_4}^3$. We get two new root tangent directions,
namely $(-1,-2)$ and $(-2,-1)$, and possibly more contributions from directions
$(0,-1)$ and $(-1,0)$. The two leftmost pictures in Figure~\ref{fig:higher_dp4}
show all new broken lines for different choices of root vertex, apart from the
six toric ones we have already encountered in Example~\ref{ex: dP_3}. Depending
on this choice, the superpotential to order three is therefore either given by 
\begin{align*}
W_{dP_4}(\fou) &=  \Big( x +  y + xy + \frac{1}{x} +
\frac{1}{y} + \frac{1}{xy}\Big) \cdot t^2 +
\Big(\frac{1}{x} + \frac{1}{x^2y}\Big) \cdot t^3 \quad\text{or } \\
W_{dP_4}(\fou') &= \Big(x + y + xy + \frac{1}{x} +
\frac{1}{y} + \frac{1}{xy}\Big) \cdot t^2  +
\Big(\frac{1}{y} + \frac{1}{xy^2}\Big) \cdot t^3 ,
\end{align*}
both of which have seven critical points, as expected. These superpotentials are
not only related by interchanging $x$ and $y$, for symmetry reasons, but also by
wall crossing along the wall separating $\sigma_0$ into two chambers. This is
the first non-trivial example of an ostensibly algebraic superpotential, as
defined at the end of \S\ref{Ch: Broken lines}.

In the rightmost picture in Figure~\ref{fig:higher_dp4} we indicated the
behaviour of a single broken line of root tangent direction $(-1,-2)$ under
change of root vertex. If the root vertex changes a chamber by passing one of
the dotted lines drawn, the broken lines change accordingly. 
\end{example}

\begin{figure}
\input{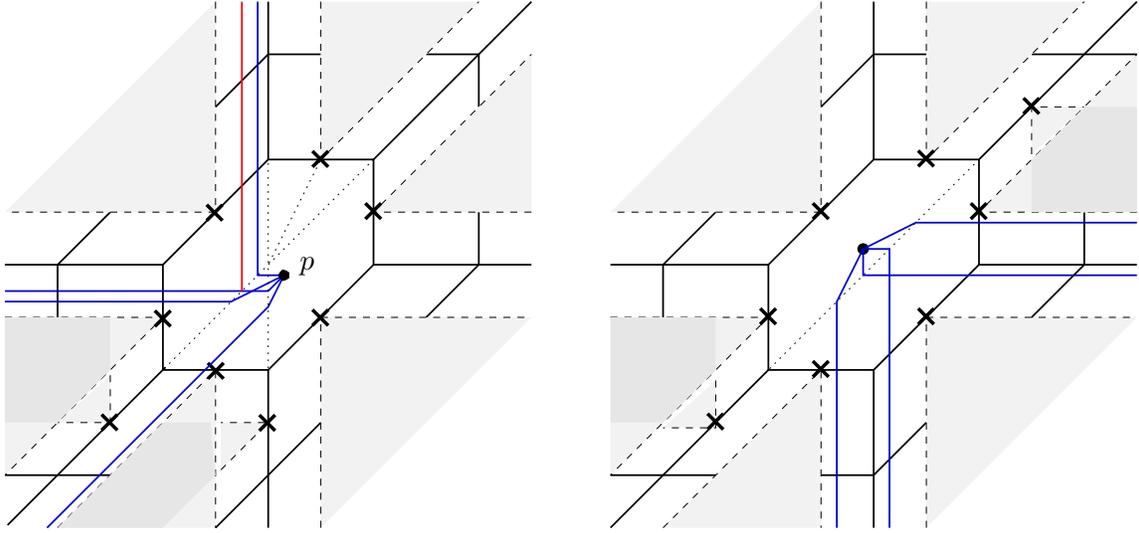}
\caption{Mirror bases to $dP_5$ showing all new broken lines.}
\label{fig:higher_dp5}
\end{figure}

\begin{example} \label{ex: dp5}
Attaching another singular point on the unbounded ray in direction $(0,-1)$ as
in Figure~\ref{fig:higher_dp5} on the left we arrive at a degeneration of
$dP_5$. For a structure consistent to all orders there are three walls in the
bounded maximal cell necessary, indicated by dotted lines in the figure. They
are the extensions of the slabs with tangent directions $(1,1)$ and $(0,1)$
caused by additional singular points, and the result of scattering of these, the
wall with tangent direction $(1,2)$. Because $(1,1)$ and $(0,1)$ form a lattice
basis, the scattering procedure at the origin does not produce any additional
walls. In any case, any broken line coming in from direction $(1,1)$ and with
endpoint $p$ as indicated in Figure~\ref{fig:higher_dp5} can not interact with
any of the scattering products. Tracing any possible broken lines starting from
$t=-\infty$ one arrives at only five broken lines with endpoint $p$, with only
one, drawn in red, having more than one breakpoint. We therefore obtain the
following superpotential on the chamber $\fou$ containing $p$:
\[
 W_{dP_5}(\fou) = \Big(x +  y +  xy + \frac{1}{x} + \frac{1}{y} +
\frac{1}{xy}\Big) \cdot t^2  +  \Big( \frac{1}{y} +  \frac{1}{xy} +
\frac{1}{x^2y} + \frac{1}{xy^2}\Big)
\cdot t^3 +  \frac{1}{xy}t^4.
\]
\end{example}

\begin{figure}
\input{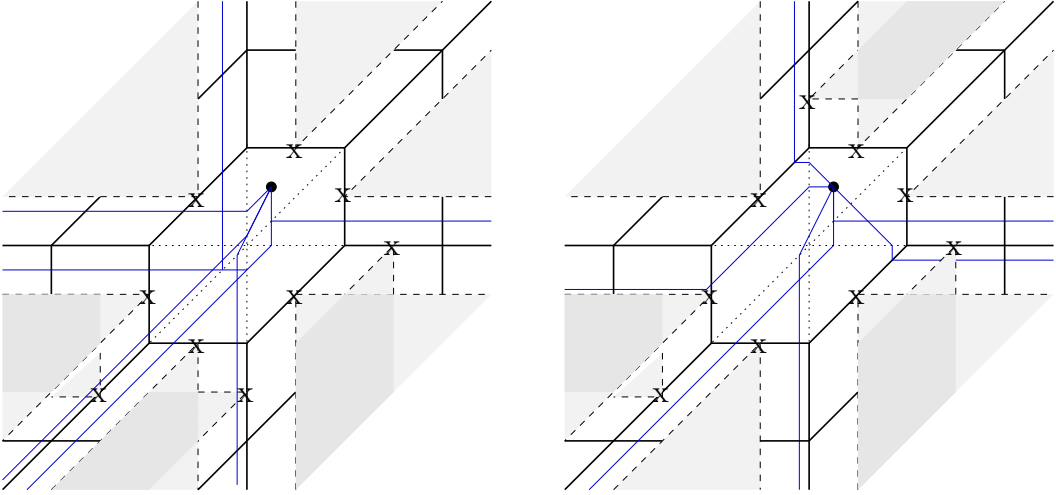}
\caption{Two mirror bases to $dP_6$.}
\label{fig:higher_dp6}
\end{figure}
\begin{example}

We study another model of the mirror to $dP_5$, which differs from
the last one in the position of the second focus-focus point.
Instead of placing it on the ray with generator $(0,-1)$ we move it
to the ray generated by $(1,1)$, as shown in
Figure~\ref{fig:higher_dp5} on the right. This alternative
choice yields the superpotential
\[
W_{dP_5}(\fou) =  \Big(x + y +  xy + \frac{1}{x} +
\frac{1}{y} +
\frac{1}{xy}\Big) \cdot t^2 +
\Big( \frac{1}{y} + x + x^2y  + \frac{1}{xy^2}\Big) \cdot t^3
\]
on the selected chamber $\fou$.
It is an interesting question to understand in detail the effect of
particular choices of singular points and the corresponding
degenerations.
\end{example}

\begin{example}
As a last example, we study broken lines in the mirror of a
distinguished model of $dP_6$, depicted on the left in
Figure~\ref{fig:higher_dp6}. This time we obtain the superpotential
\[
W_{dP_6}(\fou) = \Big(x +  y +  xy + \frac{1}{x} + \frac{1}{y}
+ \frac{1}{xy}\Big) \cdot t^2 + \Big(2 \cdot \frac{1}{xy^2} + \frac{1}{xy} +
2\cdot \frac{1}{y}\Big) \cdot t^3 + \frac{1}{y} \cdot t^4 +
\frac{1}{y} \cdot t^5,
\]
with nine critical points. Again, this potential comes from a special choice of
positions of critical points and root vertex among many others. This
superpotential is ostensibly algebraic although the three walls meeting at the
origin produce an infinite wall structure on the bounded cell $\sigma_0$.

In contrast, Figure~\ref{fig:higher_dp6} on the right shows the mirror base of an
alternative $dP_6$-degeneration featuring a finite wall structure in the bounded
cell $\sigma_0$ by \cite[Expl.\,4.3]{Invitation}. We again obtain an ostensibly
algebraic superpotential
\[
W_{dP_6}(\fou) = \Big(x +  y +  xy +
\frac{1}{x} + \frac{1}{y} + \frac{1}{xy}\Big) \cdot t^2 + \Big(\frac{y}{x} +
\frac{1}{x} + \frac{1}{xy^2} + 2\cdot \frac{1}{y} + \frac{x}{y}\Big)
\cdot t^3.
\]
\end{example}

These examples illustrate that if we leave the realm of toric
geometry, Landau-Ginzburg potentials for del Pezzo surfaces can, at
least locally, still be described by Laurent polynomials, as in the
toric setting.

%===========================================================
%===========================================================

\section{Singular Fano and smooth non-Fano surfaces}

Having studied smooth Fano surfaces, we now show that our approach
also works if we admit Gorenstein singularities or drop the Fano
condition.

%===========================================================

\subsection{Singular Fano surfaces}
\label{Subsect: Singular Fano surfaces}
We now classify the remaining distinguished toric degenerations of del Pezzo
surfaces (Definition~\ref{Def: toric degen dP_k}) from Theorem~\ref{Thm: unique}
with non-simple singularities. In this theorem we have already seen that the
intersection complex $(\check B,\check\P)$ is obtained from a star subdivision
of a reflexive polygon $\Xi$ with a singular point on each of the interior
edges, and then posssibly a further subdivision adding more edges without a
singular point connecting the interior integral point to a non-vertex point on
$\partial\check B$. There are 16~well-known isomorphism classes of reflexive
polygons, among which five give rise to smooth varieties, studied in the last
section. The remaining eleven polygons are characterized by the property that
the dual polygon has at least one integral non-vertex boundary point. These
11~polygons are in one-to-one correspondence with isomorphism classes of
singular toric del Pezzo surfaces.

To obtain a smooth boundary model $(\check B=\Xi,\check \P)$, we now
need to add a non-simple singular point on some of the added edges to
straighten the boundary. Non-simple means that the affine monodromy along a
counterclockwise loop about such a singular point is conjugate to
$\left(\begin{smallmatrix}1&-k\\0&1\end{smallmatrix}\right)$ for some $k>1$.
We refer to $k$ as the \emph{order} of the singular point. Legendre-duality then
yields an affine singularity of the same order~$k$ on the dual edge. As in the
smooth case, we polarize $(\check B,\check\P)$ by the minimal polarizing
function $\check\varphi$ changing slope by one along each interior edge.

Figure~\ref{fig:singular-del-pezzo} depicts the discrete Legendre duals $(B,\P)$
thus obtained from star subdivision of the remaining $11$~reflexive polygons.
Note that the affine monodromy of the singular point is reflected in the shaded
regions. The order~$k$ of a singular point now equals the number of integral
points on the edge of $\P$. We obtain the following addition to
Theorem~\ref{Thm: unique}.

\begin{theorem}
In the situation of Theorem~\ref{Thm: unique} assume that $(\pi:\check\X\to
T,\check\D)$ does not have simple singularities. Then $(\check B,\check\P)$ is
Legendre-dual to one of the cases listed in Figure~\ref{fig:singular-del-pezzo}.
\qed
\end{theorem}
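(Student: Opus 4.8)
The proof is a finite classification that extends the argument of Theorem~\ref{Thm: unique}, so the plan is to reuse everything established there and then enumerate the cases it leaves open. By Theorem~\ref{Thm: unique}, for \emph{any} distinguished toric degeneration of del Pezzo surfaces, $(\check B,\check\P)$ is a subdivision of the star subdivision of a reflexive polygon $\Xi$, the edges carrying a singular point are precisely those joining the interior integral point $v_0$ to a vertex of $\Xi$, and $\partial\check B$ is straight in the affine structure (this straightness is forced by compact type via Theorem~\ref{Thm: Properness}, equivalently by local convexity of the distinguished central fiber at the boundary). There are sixteen reflexive polygons up to $\GL_2(\ZZ)$; by Construction~\ref{Const: Reflexive polytope degeneration} and the second part of Theorem~\ref{Thm: unique}, the five whose normal fan is elementary simplicial — equivalently the five for which $\check X(\Xi)$ is smooth — are exactly the ones producing the models classified in \S\ref{sect:delpezzo}, and these are exactly the cases with simple singularities. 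Since by hypothesis $(\pi:\check\X\to T,\check\D)$ does not have simple singularities, $\Xi$ cannot be one of those five, so it is one of the remaining eleven polygons, namely those whose polar dual has a lattice point in the interior of an edge.

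First I would fix one of these eleven $\Xi$ and analyze the straightening constraint vertex by vertex. At a vertex $v$ of $\Xi$ the two boundary edges of $\check B$ meeting at $v$ have to become collinear in the affine structure; since $v-v_0$ is primitive by reflexivity, there is a monodromy transvection $\left(\begin{smallmatrix}1&-k_v\\0&1\end{smallmatrix}\right)$, unique up to conjugacy, that effects this collinearity, with $k_v\ge 1$ determined by the local lattice geometry of $\Xi$ at $v$ (equivalently by the shape of the corresponding corner of the dual polygon). Hence a singular point of order $k_v$ must be inserted on the edge $[v_0,v]$, or, after a further subdivision introducing a boundary lattice point $w$ near $v$, on an edge $[v_0,w]$; this point is non-simple exactly when $k_v>1$, which happens for at least one $v$ in each of the eleven cases. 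One then checks, case by case, that the resulting tropical datum — the star subdivision of $\Xi$ with these singular points and the minimal strictly convex $\check\varphi$ changing slope by one along each interior edge — agrees, up to the equivalences in the statement, with the model drawn in Figure~\ref{fig:singular-del-pezzo}; it is most convenient to carry this out on the Legendre-dual side $(B,\P)$, where $k_v$ becomes the number of integral points on a boundary edge as explained in \S\ref{Subsect: Singular Fano surfaces} and where the eleven pictures are then read off directly. For uniqueness, note that at each $v$ the realizing transvection is unique up to conjugacy, so the model attached to a given $\Xi$ is unique up to $\GL_2(\ZZ)$ and refinement; that the eleven polygons are pairwise non-isomorphic is classical, and the total number of singular points is pinned independently by the Hodge-number count of Proposition~\ref{Prop: Number of singular points}.

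The main obstacle will be the bookkeeping of the eleven cases, and within each, confirming that the non-simple singular points of the prescribed orders can genuinely be placed on (subdivisions of) the star-subdivision edges while keeping $\partial\check B$ locally convex and $\check\varphi$ strictly convex: the delicate configurations are the borderline ones in which one more singular point, or a different order, would bend $\partial\check B$ inward, and excluding them — equivalently showing that exactly these eleven models arise and not a twelfth — is the least mechanical part of the argument. Here again the Legendre-dual picture helps, since straightness of $\partial\check B$ translates into the unbounded rays of $(B,\P)$ being parallel (asymptotic cylindricity), a condition one verifies at a glance in each of the eleven pictures of Figure~\ref{fig:singular-del-pezzo}.
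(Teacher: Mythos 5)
Your proposal is correct and takes essentially the same route as the paper, which deduces the theorem directly from the first part of Theorem~\ref{Thm: unique} plus the classification of the sixteen reflexive polygons: excluding the five with smooth $\check X(\Xi)$ (exactly the simple-singularity cases) leaves the eleven polygons whose star subdivisions, equipped with the uniquely determined higher-order transvections needed to straighten $\partial\check B$ at the vertices, give precisely the models of Figure~\ref{fig:singular-del-pezzo}. One small point: your alternative of placing the non-simple singular point on an edge $[v_0,w]$ to a non-vertex boundary point is already ruled out by Theorem~\ref{Thm: unique} (which you quote correctly at the start), so no additional cases can arise that way.
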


With non-simple singularities on some edges in $(B,\P)$, the slab functions are
not uniquely determined by the gluing data. If $v$ is a vertex on an edge $\rho$
containing an order~$k$ singularity, the slab function $f_{\rho,v}$ has the form
$1+a_1 x+\ldots a_{k-1} x^{k-1}+a_k x^k$ for $x$ the generating monomial for the
$\rho$-stratum of $X_0$. The coefficient $a_k$ is determined by the gluing data,
with $a_k=1$ for trivial gluing data. Thus in any case, there are $k-1$ free
coefficients for each singular point of order $k$. These coefficients reflect
classes of exceptional curves on the resolution of the Fano side. Ignoring these
classes, as suggested by working with the unresolved del Pezzo surface, leads to
the slab function $(1+x)^k$.

\begin{theorem}\footnote{The closely related superpotential of the corresponding
$11$~semi-Fano surfaces obtained by MPCP resolution has independently been
computed in \cite{ChanLau} by other methods. See \cite{MaxThesis} for the
reproduction and comparison of their results with our method.
%%%%%%%%%%%%
Add reference to MPCP
%%%%%%%%%%%%
}
\label{singular del pezzo}
Let $(\X \to \Spec \kk\lfor t\rfor,W)$ be mirror to a distinguished toric
degeneration $(\check\X\to T,\check\D)$ of del Pezzo surfaces
(Definition~\ref{Def: toric degen dP_k}), and $(B,\P,\varphi)$ the associated
intersection complex for the anticanonical polarization on $\check\X$.

Denote by $\sigma_0$ the bounded cell of the associated intersection complex
$(B,\P,\varphi)$. For an integral point $m$ on an edge
$\omega\subset\partial\sigma_0$ of integral length $k$ define
$N_m=\binom{l}{k}$, where $l$ is the integral distance between $m$ and one of
the vertices of $\omega$. Then it holds
\[
W(\sigma_0) = t\cdot \sum_{m \in \partial \sigma_0\cap\Lambda_{\sigma_0}}
N_m z^m.
\]
\end{theorem}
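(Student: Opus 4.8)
The plan is to reduce the computation of $W(\sigma_0)$ entirely to order $t^1$, using the structure of the distinguished toric degenerations classified in Theorem~\ref{Thm: unique} and its singular analogue. First I would observe that for a distinguished toric degeneration of a del Pezzo surface the bounded cell $\sigma_0$ is the dual of a reflexive polygon, and all singular points of the affine structure lie on the interior edges of $\P$ connecting $v_0$ to $\partial\sigma_0$. By Proposition~\ref{Prop: Simple Hori-Vafa cases} and its extension via Construction~\ref{Const: Reflexive polytope degeneration} (together with Lemma~\ref{Lem: Broken lines are rays}), for $p$ near $v_0$ the only broken lines with endpoint $p$ are the ``toric'' ones: one for each ray of the fan $\Sigma$ over the proper faces of $\sigma_0$, each contributing a monomial $z^m$ with $m$ an outward primitive generator, carrying the factor $t^1$ from $\varphi$ changing slope by one along the unbounded facet. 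So the content of the theorem beyond the smooth case is only the claim that the monomial attached to a broken line asymptotic to a ray hitting $\partial\sigma_0$ transversely at a lattice point $m$ of integral distance $l$ from a vertex, on an edge of integral length $k$, carries coefficient $\binom{l}{k}$.

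The key step is then a local computation at the slab $\omega\subset\partial\sigma_0$ of integral length $k$ containing an order-$k$ singular point. The slab function there, as stated in the discussion preceding the theorem, is $f_{\rho,v} = (1+x)^k$ for trivial gluing data (ignoring the exceptional-curve parameters), where $x = z^{m_\omega}$ is the generating monomial of the $\rho$-stratum pointing along $\omega$. A broken line coming in from infinity along the unbounded direction, upon crossing this slab, undergoes the transport of Definition~\ref{def: monomial transport}: the incoming monomial $z^{m_{\mathrm{in}}}$ (with $\overline{m_{\mathrm{in}}}$ the primitive generator of the corresponding unbounded $1$-cell) is multiplied by a power of $f_{\rho,v}$, and the various monomials $z^{m_{\mathrm{in}}+l\cdot m_\omega}$, $0\le l\le k$, appearing in $(1+x)^k = \sum_l \binom{k}{l}x^l$ become the possible results of transport — wait, more precisely, the structure of the gluing isomorphism across a slab multiplies by $f_{\rho,v}^{\langle m_{\mathrm{in}}, n\rangle}$ where $n$ is the primitive normal; by the reflexivity and the choice of $\varphi$ this exponent is $1$ for the relevant monomials, so one picks up exactly $(1+x)^1$ on the relevant range — I would check carefully that the combinatorics gives $\binom{l}{k}$ rather than $\binom{k}{l}$, which is where the precise indexing (integral distance $l$ from a vertex along an edge of length $k$, with $N_m = \binom{l}{k}$ as written) must be matched against the expansion. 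One must also confirm via Lemma~\ref{Lem: Broken lines are rays} that such a broken line, once bent at $\partial\sigma_0$, does not re-enter $\Int\sigma_0$, so no further contributions arise and the $t$-order stays at $1$.

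The remaining bookkeeping is to sum over all integral points $m\in\partial\sigma_0\cap\Lambda_{\sigma_0}$: vertices of $\sigma_0$ correspond to the ordinary toric rays and contribute $z^m t$ with coefficient $1 = \binom{0}{k}$ (taking $l=0$), while interior-of-edge lattice points on an edge of length $k$ contribute $\binom{l}{k}z^m t$. This exactly assembles into $W(\sigma_0) = t\sum_{m} N_m z^m$. I would cite Lemma~\ref{lem:independence of p} to legitimize computing at a convenient $p$ near $v_0$, and Theorem~\ref{Thm: all order W} to know this local expression is the restriction of the global superpotential.

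The main obstacle I anticipate is the precise slab-crossing computation with a non-simple (order $k>1$) singularity: one must verify that the gluing isomorphism across the slab of length $k$ genuinely multiplies the incoming monomial by $(1+x)^k$ in the relevant range of exponents and not by some twisted or truncated version, and that the exponent $\langle m_{\mathrm{in}},n\rangle$ governing the power of $f_{\rho,v}$ equals $1$ — this uses reflexivity of $\Xi$ (so that $v-v_0$ together with the edge directions span the lattice) and the minimal choice of $\varphi$. Getting the binomial index right — distinguishing $\binom{l}{k}$ as written from $\binom{k}{l}$, and confirming $N_m$ depends only on $m$ and not on which vertex of $\omega$ one measures from (which requires $\binom{l}{k} = \binom{k-l}{k}$, true since these are both zero for $0<l<k$ unless... here one should read $N_m=\binom{l}{k}$ with the convention making it symmetric, or note the statement intends $l$ to be measured consistently) — is the delicate point and the place where I would slow down and do the calculation in full.
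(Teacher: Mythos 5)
Your proposal is correct and follows essentially the same route as the paper's proof: broken lines ending at $p$ near the interior integral point of $\sigma_0$ can bend only once, when crossing $\partial\sigma_0$ (Lemma~\ref{Lem: Broken lines are rays} handles the unbounded part, Lemma~\ref{lem:independence of p} fixes $p$, and the remaining case-specific input---that no walls enter $\Int\sigma_0$ in the eleven singular cases---is checked in the paper by inspection of Figure~\ref{fig:singular-del-pezzo}, while you subsume it into an ``extension'' of Proposition~\ref{Prop: Simple Hori-Vafa cases}), after which the coefficient is read off from the slab function. On the point you flag for careful checking: the pairing $\langle\overline m_{\mathrm{in}},n\rangle$ equals $1$, so the transport multiplies the incoming monomial by the slab function itself, i.e.\ by $(1+x)^k$ and not by $1+x$; a bend by $l$ lattice steps therefore carries the $l$-th coefficient of $(1+x)^k$, and the dependence on which vertex of $\omega$ one measures from is exactly the $l$ versus $k-l$ ambiguity, harmless by the symmetry of binomial coefficients---which is precisely how the paper's proof phrases it (the statement's $\binom{l}{k}$ denotes this binomial coefficient).
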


\begin{figure}[h!]
\input{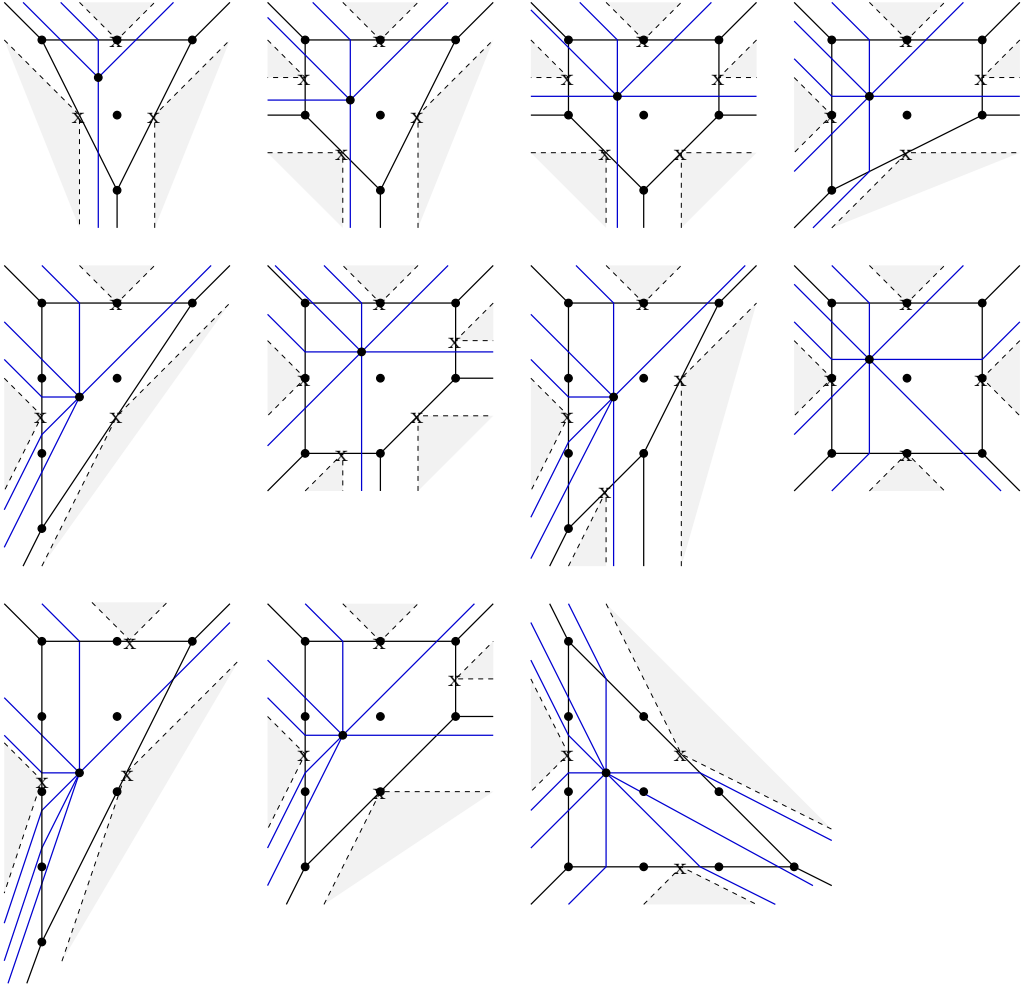}
\caption{The broken lines contributing to the proper superpotential
of the eleven singular toric del Pezzo surfaces.}
\label{fig:singular-del-pezzo}
\end{figure}

\begin{proof}
Corollary~\ref{Cor: reproduce Hori-Vafa} already treats the case with simple
singularities. The remaining 11~cases are most easily done by inspection of the
set of broken lines ending at a specified point $p$ as given in
Figure~\ref{fig:singular-del-pezzo}. Note that there are no walls in
$\Int\sigma_0$, so the result of this computation is independent of the choice
of $p$. It is instructive to check this continuity property explicitly in some
cases.

Alternatively one can argue more generally as follows. If a broken line ends at
$p\in\Int(\sigma_0)$ then the next to last vertex of $\beta$ maps to the
intersection of the ray $p+\RR_{\ge0} m_\beta$ with $\partial\sigma_0$. Denote by
$\omega\subset \partial\sigma_0$ the edge containing this point of intersection.
Then by Lemma~\ref{Lem: Broken lines are rays} there are no more bending points
of $\beta$, and hence the remaining part of $\beta$ has to be parallel to the
unbounded edges of the unbounded cell containing $\omega$. This determines the
kink of $\beta$ when crossing $\omega$. Note that this argument also limits the
exponents $m$ appearing in $W(\sigma_0)$ to be contained in
$\partial \sigma_0\cap\Lambda_{\sigma_0}$. Moreover, each such exponent belongs
to at most one broken line ending at $p$.

Now choose $p$ very close to the unique interior integral point of $\sigma_0$
and let $m\in \partial \sigma_0\cap\Lambda_{\sigma_0}$. Then $p+\RR_{\ge0}$
intersects $ \partial\sigma_0$ very close to $m$. A local computation now shows
that depending on the choice of a vertex $v\in\omega$ the coefficient of $z^m$
comes from either the $l$-th or the $(k-l)$-th coefficient of the associated
slab function $(1+x)^k$. In either case we obtain the stated binomial
coefficient $\binom{l}{k}$.
\end{proof}

%===========================================================

\subsection{Hirzebruch surfaces}
As a last application featuring some non-Fano cases, we will study proper
superpotentials for Hirzebruch surfaces $\mathbb{F}_m$. We fix the fan $\Sigma$
in $N \cong \ZZ^2$ of $\mathbb{F}_m$ to be the fan with rays
$\rho_0,\ldots,\rho_3$ generated by the four primitive vectors
\[
v_0 = (0,1),\ v_1 = (-1,0),\ v_2 = (0,-1),\ v_3 = (1,m).
\]
Since $\mathbb{F}_m$ is only
Fano in the cases $\mathbb{F}_0 = \PP^1 \times \PP^1$ and $\mathbb{F}_1 = dP_1$,
for $m\geq2$ the normal fan of the anticanonical polytope is not the fan of a
Hirzeburch surface.

We fix $m$ in the following and denote by $D_{\rho_i}$ the torus-invariant
divisor associated to $\rho_i$. Instead of the anticanonical divisor, which is
not ample for $m\geq 2$, we now consider a smooth divisor $D$ on $\FF_m$ in the
ample class 
\[
D_{\rho_0} + D_{\rho_1} + D_{\rho_2} + m \cdot D_{\rho_3}.
\]
Define the tropical manifold $(\check B, \check \P, \check \varphi)$
with straight boundary as follows. $\check B$ is obtained from the Newton
polytope
\[
\Xi_{D}=\conv\big\{(-1,-1),(2m,-1),(0,1),(-1,1)\big\}
\]
of $D$ by joining each vertex with one of the endpoints of the line segment
$[0,m-1]\times\{0\}\subset \Int\Xi_D$ as shown in~Figure~\ref{fig:Hirzebruch} on
the top, and then introducing a single focus-focus singularity on each of the
four joining one-cells. It is again elementary to check that this makes
$\partial \check B$ totally geodesic. Moreover, setting $\check \varphi(v) = 1$
for all vertices $v$ of $\Xi_D$ and 
\[
\check\varphi(0,0) =  \check\varphi(m-1,0) = 0
\]
defines a strictly convex, integral PL-function $\check\varphi$ on $(\check
B,\check \P)$. 
\begin{figure}
\input{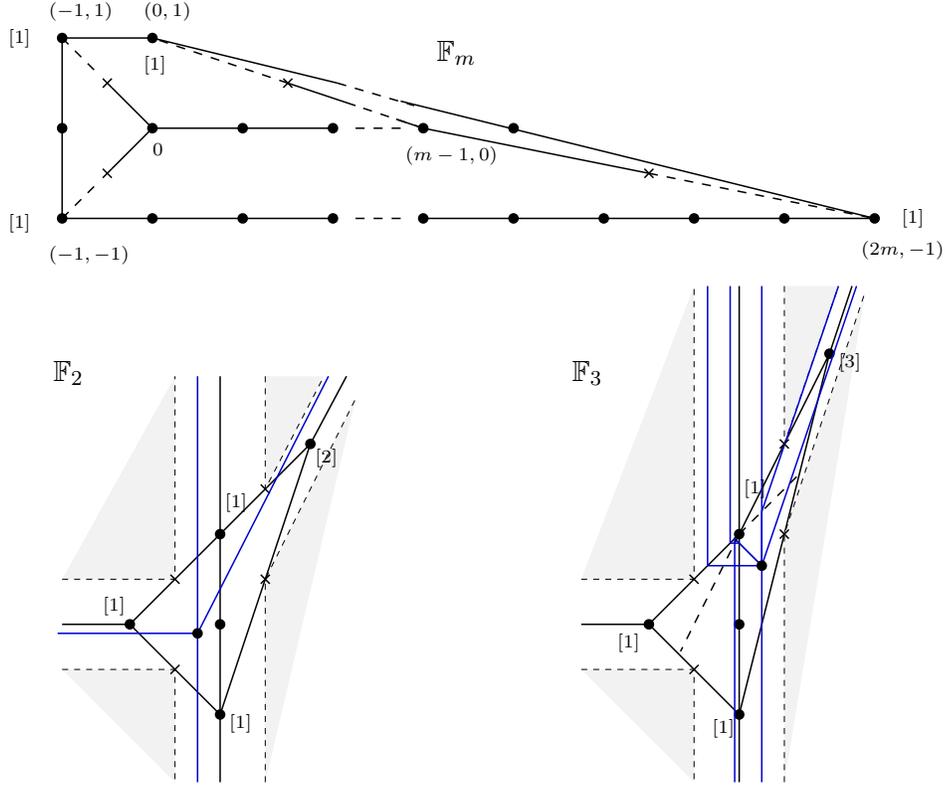}
\caption{A straight boundary model for the Hirzebuch surfaces
$\mathbb{F}_m$ with mirrors for $m=2,3$.}
\label{fig:Hirzebruch}
\end{figure}

The Legendre dual $(B, \P,\varphi)$ has the four vertices $v_0,\ldots,v_3$ we started with, two bounded cells 
\[
\sigma_0 = \conv(v_0,v_1,v_2), \quad \sigma_1 = \conv(v_0,v_2,v_3), 
\]
and four unbounded one-cells contained in $\RR_{\ge0} v_i$, $i=0,\ldots,3$.
These one-cells are indeed parallel in any chart with domain an open set in the
complement of the union of bounded cells. Moreover, $\partial (\sigma_0 \cup
\sigma_1)$ has precisely four integral points, namely $v_0$, $v_1$, $v_2$ and
$v_3$. Note also that by the definition of the Legendre-dual, $\varphi$ is
uniquely determined by
\[
\varphi(v_0) = \varphi(v_1) = \varphi(v_2)= 1,\ \varphi(v_3) = m,
\]
and by the requirement to change slope by one along $\partial(\sigma_0 \cup
\sigma_1)$.

We see that for $m\geq 3$ the union $\sigma_0 \cup \sigma_1$ is non-convex.
In this case the scattering of the two walls emanating from the singular points
on the two edges with vertices $v_0,v_1,v_3$ produce walls entering
$\Int(\sigma_0\cup\sigma_1)$.

For $m>3$ there is even an infinite number of walls to be added to make the wall
structure consistent to all orders. While all these walls are added in the
half-plane below the line $\RR\cdot(1,m)$ and hence there is an open set
contained in $\sigma_0$ not containing any wall, we have not carried out the
necessary analysis to decide if $W(\sigma_0)$ is given to all orders by an
algebraic expression.

We now restrict to the cases $m=2,3$ and explicitly compute the full
superpotentials. 

First, for the case $m=2$ there are no walls in $\Int(\sigma_0\cup\sigma_2)$. The broken lines ending at a specified point $p\in\Int(\sigma_0)$ are depicted on the lower left in Figure~\ref{fig:Hirzebruch}. The Landau-Ginzburg superpotential can then be read off as
 \[
W(\sigma_0) = \Bigl(\frac{1}{x} + \frac{1}{y} + y\Bigr) \cdot t +
xy^2 \cdot t^2.
\]
This is indeed the full potential, as there is no scattering in $\sigma_0\cup
\sigma_1$, so we can apply Lemma~\ref{Lem: Broken lines are rays}.

For $m=3$ let us first compute the walls entering $\Int(\sigma_0\cup\sigma_1)$.
These come from scattering at the point $(0,1)$ of the adjacent edges with
focus-focus singular points in directions $(1,1)$ and $(-1,-2)$. Locally this
scattering situation is equivalent to the scattering of incoming walls from
directions $(-1,0)$ and $(0,-1)$ meeting at the origin. An explicit computation
carried out in~\cite[\S4.1]{Invitation} shows that this scattering diagram can
be made consistent to all orders by introducing outgoing walls in directions
$(1,0)$, $(0,1)$ and $(1,1)$. These translate to walls in directions $(1,1)$,
$(-1,-2)$ and $(0,-1)$ in our situation, as indicated by the dashed lines in the
lower right of Figure~\ref{fig:Hirzebruch}. Of course it will be necessary to
insert more walls \emph{outside} of the bounded part, but this is unessential
for the computation of the superpotential. Hence scattering on the bounded part
$\sigma_0 \cup \sigma_1$ is finite and we can once more apply Lemma~\ref{Lem:
Broken lines are rays}. For the choice of root vertex $p'\in\sigma_1$ as
indicated in the lower right of Figure~\ref{fig:Hirzebruch} we get the
following full superpotential for $(B_3, \P_3)$
 \[
W(\fou) = \Bigl(\frac{1}{x} + \frac{1}{y} + y + \frac{y}{x}\Bigr) \cdot t  + \frac{y}{x} \cdot t^2 +
\Bigl(xy^3  + y^2\Bigr)\cdot t^3.
\]
Thus, neglecting $t$-orders for a moment, we have three new contributions that
differ from the Hori-Vafa potential $\frac{1}{x} + \frac{1}{y} + y + xy^3$,
namely the monomial $y^2$ and twice the term $\frac{y}{x}$. These come from
broken lines that have a break point at the new walls emanating from $(0,1)$ in
direction $(1,1)$, $(0,-1)$ and $(-1,-2)$, respectively. Note that these are
precisely the terms Auroux found in~\cite[Prop.\,3.2]{auroux2}, when we make the
coordinate change $x \mapsto \frac{1}{x}$ and $y \mapsto \frac{1}{y}$.

The computation in~\cite[Prop.\,3.2]{auroux2} is very explicit and rather long
when compared with our derivation. Of course all the hard work is hidden in the
scattering process of \cite{affinecomplex} and the propagation of monomials via
broken lines, but still it is remarkably easy to compute Landau-Ginzburg models
with this approach, once everything is set up.

%===========================================================
%===========================================================

\section{Three-dimensional examples}

So far we restricted ourselves to $\dim B =2$. We now turn to a few simple
examples illustrating some features of higher dimensional cases.

\begin{example}
\label{Expl: PP3}
Starting from the momentum polytope $\Xi$ for $\PP^3$ with its anticanonical
polarization, Construction~\ref{Const: Reflexive polytope degeneration} provides
a model with a distinguished polarized tropical manifold $(\check B,\check
\P,\check \varphi)$ with $\check B=\Xi$ and with flat boundary. In dimension
three this is done by trading corners \emph{and edges} with a one-dimensional
singular locus of the affine structure. Explicitly, $\check \P$ is the
star-subdivision of
\[
\Xi=\conv\big\{ (2,-1,-1),(-1,2,-1),(-1,-1,2),(-1,-1,-1)\big\}
\]
that introduces six two-faces spanned by the origin and two distinct vertices of
$\Xi$. The discriminant locus $\check\Delta$ is the subcomplex of the first
barycentric subdivision of these six affine triangles shown in
Figure~\ref{fig:three-dimensional}. The affine structure is fixed by the
embedding of $\Xi$ into $\RR^3$ at the origin, and by the affine charts at the
vertices of $\check B$ making $\partial \Xi$ flat and inducing the given affine
chart on the maximal cells of $\check \P$. Finally, $\check\varphi$ is
determined by $\check \varphi(v)=1$ for every vertex $v$ of $\Xi$ and $\check
\varphi(0)=0$.

The discrete Legendre dual $(B,\P,\varphi))$, also drawn in
Figure~\ref{fig:three-dimensional}, has four parallel unbounded rays and a
discriminant locus $\Delta$ with six unbounded rays. Note that unlike in the
case of closed tropical manifolds or in dimension two, $\check\Delta$ and
$\Delta$ are not homeomorphic, but $\check \Delta$ is homeomorphic to the
compactification of $\Delta$ that adds a point at infinity to each unbounded
one-cell of $\Delta$. Every bounded two-face is subdivided into three $4$-gons
by $\Delta$ and at every vertex of $B$ three of these $4$-gons
meet. Denote the bounded three-cell by $\sigma_0$. As in the proof of
Theorem~\ref{singular del pezzo} it now follows that any broken line ending at
$p\in\Int(\sigma_0)$ is straight. This shows
\[
W_{\PP^3}(\sigma_0) = \Big(x + y + z + \frac{1}{xyz}\Big) \cdot t.
\]
\begin{figure}
\input{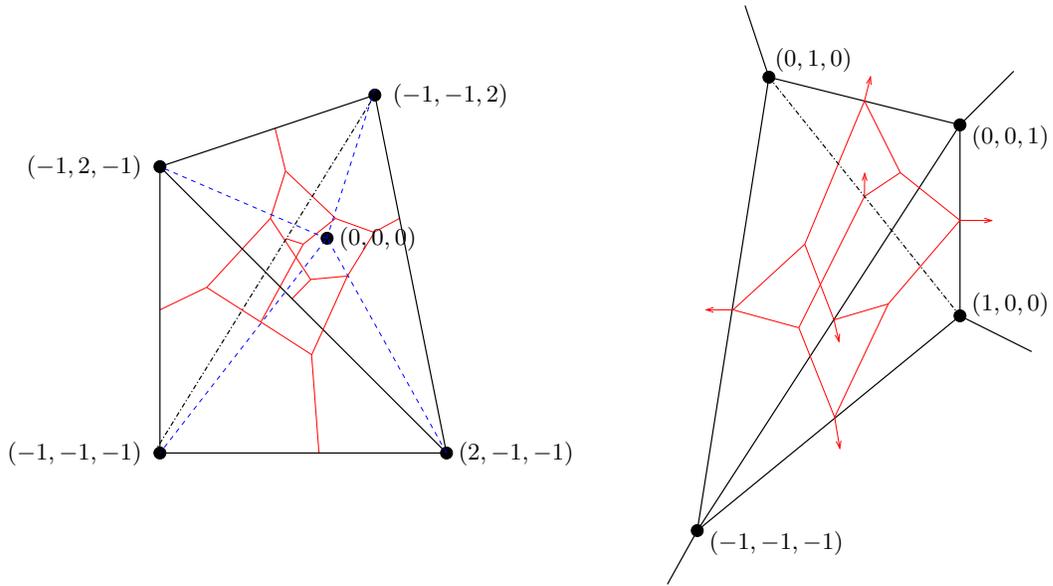}
\caption{The distinguished model of $\PP^3$ and its affine Legendre dual. The
trivalent graphs indicate the discriminant loci, the dashed lines on the left
the one-cells added in the star-subdivision. The little arrows indicate parts of
unbounded cells of the discriminant locus.}
\label{fig:three-dimensional}
\end{figure}
\end{example}

\begin{example}
\label{ex: corti}
Consider the reflexive polytope $\Xi$ depicted on the left in
Figure~\ref{fig:corti}, a truncated tetrahedron with parallel top and bottom
facets that is symmetric under cyclic permutation of the coordinates. The polar
dual is the bounded polyhedron $\sigma_0$ on the right of the same figure.

Note that each edge of $\sigma_0$ has integral length two. This means that the
toric Fano variety $\check X$ with anticanonical Newton polyhedron $\Xi$ is
singular along each one-dimensional toric stratum; each such stratum has a
neighborhood isomorphic to a product of a two-dimensonal $A_1$-singularity with
$\GG_m$.
\begin{figure}
\input{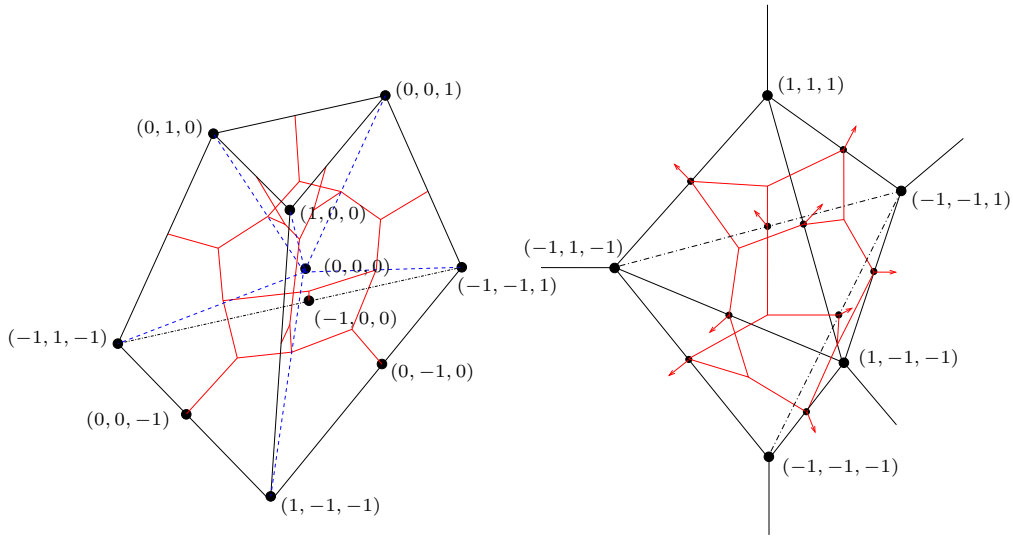}
\caption{A reflexive polytope with edges and corners pushed in and its
Legendre-dual, following the same conventions as in
Figure~\ref{fig:three-dimensional}.}
\label{fig:corti}
\end{figure}

In exactly the same way as in Example~\ref{Expl: PP3}, we arrive at a tropical
manifold $(\check B,\check \P,\check \varphi)$ with flat boundary and $\check
\P$ given by the star subdivision of $\Xi$. The Legendre-dual $(B,\P,\varphi)$
has a double tetrahedron as the unique bounded maximal cell. Both tropical
manifolds are depicted in a chart at the origin in Figure~\ref{fig:corti}. The
discriminant locus $\check\Delta$ of $(\check B,\check \P)$ now is contained in
the union of triangles added in the star-subdivision, with the intersection of
$\check\Delta$ with one such triangle $\tau$ three edges meeting in the
barycenter of $\tau$. The discriminant locus $\Delta$ of $(B,\P)$ has nine rays
emanating from the barycenters of the edges of $\sigma_0$, and then for each
facet $\tau\subset\sigma_0$ again a union of three edges intersecting in the
barycenter of $\tau$.

Now neither side has simple singularities. For example, the affine structure of
$(B,\P)$ at an interior point of an edge of $\Delta$ on $\partial\sigma_0$ is
conjugate to $\left(\begin{smallmatrix} 1&-2&0\\0&1&0\\0&0&1
\end{smallmatrix}\right)$, the square of the monodromy of a focus-focus
singularity times an interval. Thus although it is not hard to see that $(B,\P)$
is compactifiable (Definition~\ref{Def: compactifiable (B,P)}), the assumptions
of \cite[Def.\,1.26]{affinecomplex} can not be fulfilled for this
compactification, and hence the existence of a consistent wall structure is not
immediately clear.
\begin{figure}
\input{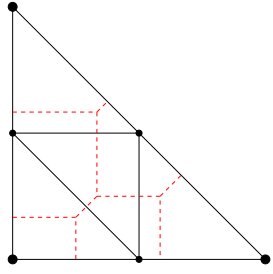}
\caption{Refinement of the discriminant locus on a bounded two-cell of $\P$.}
\label{fig:refinement Fano3}
\end{figure}

In the present case we can proceed as follows. Each of the bounded two-cells of
$(B,\P)$ is integral affine isomorphic to the planar triangle with vertices
$(0,0),(2,0),(0,2)$. Subdivide each such two-cell as in
Figure~\ref{fig:refinement Fano3} and refine $\P$ by intersection with the fan
over the faces of this subdivision. The discriminant locus can then be refined
as indicated by the dashed graph in Figure~\ref{fig:refinement Fano3}, along
with replacing each unbounded cell of $\Delta$ by two copies joined with the
rest of the graph at disjoint trivalent vertices lying on the edges of
$\sigma_0$. Now we can indeed run the algorithm\footnote{There is a technical
problem to assure that the process is finite at each step. This can be done by
working with $d\varphi+\psi$ for $d>0$ and an appropriate $\psi$ or by
inspection of the local scattering situations in the case at hand.} to construct
a compatible sequence $\tilde\scrS_k$ of consistent wall structures. In a second
step undo the refinement process to show that the algorithm indeed works
starting with the non-rigid data $(B,\P,\varphi)$.

As in Theorem~\ref{singular del pezzo} we also have a choice for the initial
slab function, with a distinguished choice a square $f_{\rho,v}=(1+x+y)^2$ for
each bounded $2$-cell $\rho$ and vertex $v\in \rho$.

Now the details of the construction of the wall structure are completely
irrelevant for the computation of the superpotential in the bounded cell
$\sigma_0\in\P$: By the same arguments as in Theorem~\ref{singular del pezzo},
it is again just the sum over broken lines with at most one bend when crossing
$\partial\sigma_0$. Moreover, the set of such broken lines with endpoint at any
$p\in\Int\sigma_0$ are in bijection with the integral points of
$\partial\sigma_0$. For the distinguished slab functions the coefficient carried
by the broken line equals $1$ for the ones without bend and $2$
for the others. The superpotential therefore equals
\[
W(\sigma_0) =  \Big(xyz + \frac{1}{xyz} + \frac{x}{yz}  + \frac{y}{xz}  +
\frac{z}{xy} +2(x^2+y^2+z^2)+\frac{2}{x^2}+\frac{2}{y^2}+\frac{2}{z^2}+\frac{2}{xy}+\frac{2}{yz}+\frac{2}{xz}\Big) \cdot t .
\]

A similar challenge concerns the existence of a consistent wall structure on
$(\check B,\check \P,\check\varphi)$, but analogous arguments apply. The
resulting toric degeneration $(\check\foX\to T,\check\foD)$ then fits into an
algebraizable two-parameter family with the singular toric Fano manifold $\check
X$ with its toric anticanonical divisor $\check D$ as another fiber. We have not
performed a more detailed analysis to identify this family. Possibly it is just
isomorphic to a deformation of $\check D$ inside $\check X$, as in
Example~\ref{Expl: PP2} for $\PP^2$ and $\check D$ a family of elliptic
curves.
\end{example}

\section*{Concluding remarks.}
It would be interesting to more systematically analyze Landau-Ginzburg models
for non-toric Fano threefolds with our method. In~\cite{Pr1, Pr2} so called
\emph{very weak Landau-Ginzburg potentials} are found. The terms and
coefficients of these Laurent polynomials have to be chosen very carefully. As
the potentials presented there do not come from a specific algorithm, but rather
are written down in an ad hoc way, one would like to have an interpretation of
the terms occurring. One might ask whether there are toric degenerations
reproducing the potentials in~\cite{Pr1, Pr2} via tropical disk counting, as in
the examples here.

%===========================================================
%===========================================================

\end{document}